%!TEX encoding = UTF-8 Unicode
%!TEX TS-program = pdflatex
\documentclass[11pt]{amsart}
\usepackage{amssymb}
\usepackage{euscript}
\usepackage[T1]{fontenc}
\usepackage[utf8]{inputenc}
\usepackage[french]{babel}
\usepackage{graphicx}

\setlength{\textwidth}{13cm}
\setlength{\topmargin}{-1cm}
\setlength{\oddsidemargin}{+15mm}
\setlength{\evensidemargin}{+15mm}
\setlength{\textheight}{21.5cm}
\setlength{\abovedisplayskip}{3mm}
\setlength{\belowdisplayskip}{3mm}
\setlength{\abovedisplayshortskip}{0mm}
\setlength{\belowdisplayshortskip}{2mm}
\setlength{\baselineskip}{10pt}
\setlength{\normalbaselineskip}{10pt}
\normalbaselines
\pretolerance=500        \tolerance=1000        \brokenpenalty=5000

\font\frten=eufm10 at 10pt
\font\freight=eufm10
\font\frsix=eufm8
\newfam\frfam\textfont\frfam=\frten
\scriptfont\frfam=\freight
\scriptscriptfont\frfam=\frsix

\newcommand{\CC}{\mathbf{C}}

\newcommand{\RR}{\mathbf{R}}
\newcommand{\NN}{\mathbf{N}}

\theoremstyle{plain}
\newtheorem{thm}{Théorème}[section]
\newtheorem{prop}[thm]{Proposition}
\newtheorem{cor}[thm]{Corollaire}
\newtheorem{lem}[thm]{Lemme}

\theoremstyle{definition}
\newtheorem{dfn}[thm]{Définition}

\theoremstyle{remark}
\newtheorem{rema}[thm]{Remarque}
\newtheorem{rem}[thm]{Remarque}

\newtheorem{ton}[thm]{Notation}

\newtheorem{node}[thm]{Notations et Définitions}
\newtheorem{exem}[thm]{Exemples}
\newtheorem{ex}[thm]{Exemple}

\newtheorem*{ack}{Remerciements}{}      

\newcommand{\tensor}{\otimes}
\newcommand{\iso}{\cong}
\newcommand{\et}{\quad\textrm{et}\quad}
\newcommand{\suchthat}{\mid}
\newcommand{\lra}{\longrightarrow}
\newcommand{\ra}{{\to}}
\newcommand{\union}{\cup}
\newcommand{\inter}{\cap}

\newcommand{\Rad}{\operatorname{Rad}}

\newcommand{\codim}{\operatorname{codim}}

\newcommand{\R}{\mathbf{R}}

\renewcommand{\AA}{\mathbf{A}} % was: angstrom symbol
\newcommand{\AR}{\mathcal{A}\,\mathcal{R}}
\newcommand{\C}{\mathbf{C}}
\newcommand{\SC}{\mathcal{C}}
\newcommand{\D}{\mathcal{D}}
\newcommand{\SI}{\mathcal{I}}
\newcommand{\SO}{\mathcal{O}}
\renewcommand{\P}{\mathbf{P}}

\newcommand{\SQ}{\mathcal{Q}}

\newcommand{\N}{\mathbf{N}}
\newcommand{\SF}{\mathcal{F}}

\newcommand{\SRC}{\mathcal{R}^0}
\newcommand{\SR}{\mathcal{R}}
\newcommand{\U}{\mathcal{U}}
\newcommand{\V}{\mathcal{V}}

\let \Lbarre =\L
\renewcommand{\L}{\mathcal{O}}

\newcommand{\I}{\mathcal{I}}
\newcommand{\Z}{\mathcal{Z}}

\newcommand{\Spec}{\operatorname{Spec}}

\renewcommand{\pm}{\mathfrak{m}}
\newcommand{\M}{\mathfrak{M}}
\newcommand{\dom}{\mathrm{dom}}
\newcommand{\pol}{\mathrm{indet}}

\begin{document}

\title{Fonctions Régulues}

\author[G.~Fichou, J.~Huisman, F.~Mangolte, J.-P.~Monnier]{Goulwen Fichou, Johannes Huisman, Frédéric Mangolte  Jean-Philippe Monnier}

\address{Goulwen Fichou\\
IRMAR (UMR 6625), Universit\'e de
         Rennes 1\\
Campus de Beaulieu, 35042 Rennes Cedex, France}
\email{goulwen.fichou@univ-rennes1.fr}

\address{Johannes Huisman\\
LMBA (UMR 6205), Universit\'e de
Bretagne Occidentale\\
6, Av. Victor Le Gorgeu, CS 93837, 29238 Brest Cedex 3, France}
\email{Johannes.Huisman@univ-brest.fr}

\address{Frédéric Mangolte\\
   LUNAM Universit\'e, LAREMA, Universit\'e d'Angers} 
   \email{frederic.mangolte@univ-angers.fr}

\address{Jean-Philippe Monnier\\
   LUNAM Universit\'e, LAREMA, Universit\'e d'Angers}
\email{jean-philippe.monnier@univ-angers.fr}

\date{}

\maketitle

\begin{quote}\small
\textit{MSC 2000:} 14P99, 14E05, 14F17, 26C15
\par\noindent
\textit{Keywords:} regular function, regulous function, rational function, real algebraic variety
\end{quote}

\begin{abstract}
Nous étudions l'anneau des fonctions rationnelles qui se prolongent
par continuité sur $\RR^n$.  Nous établissons plusieurs propriétés algébriques de
cet anneau dont un Nullstellensatz fort.  Nous étudions les propriétés schématiques associées et montrons une version régulue des Théorèmes A et B de Cartan. Nous caractérisons
géométriquement les idéaux premiers de cet anneau à travers leurs
lieux d'annulation et montrons que les fermés régulus  coïncident avec les fermés algébriquement constructibles.
\begin{center}
\includegraphics[height =.5cm]{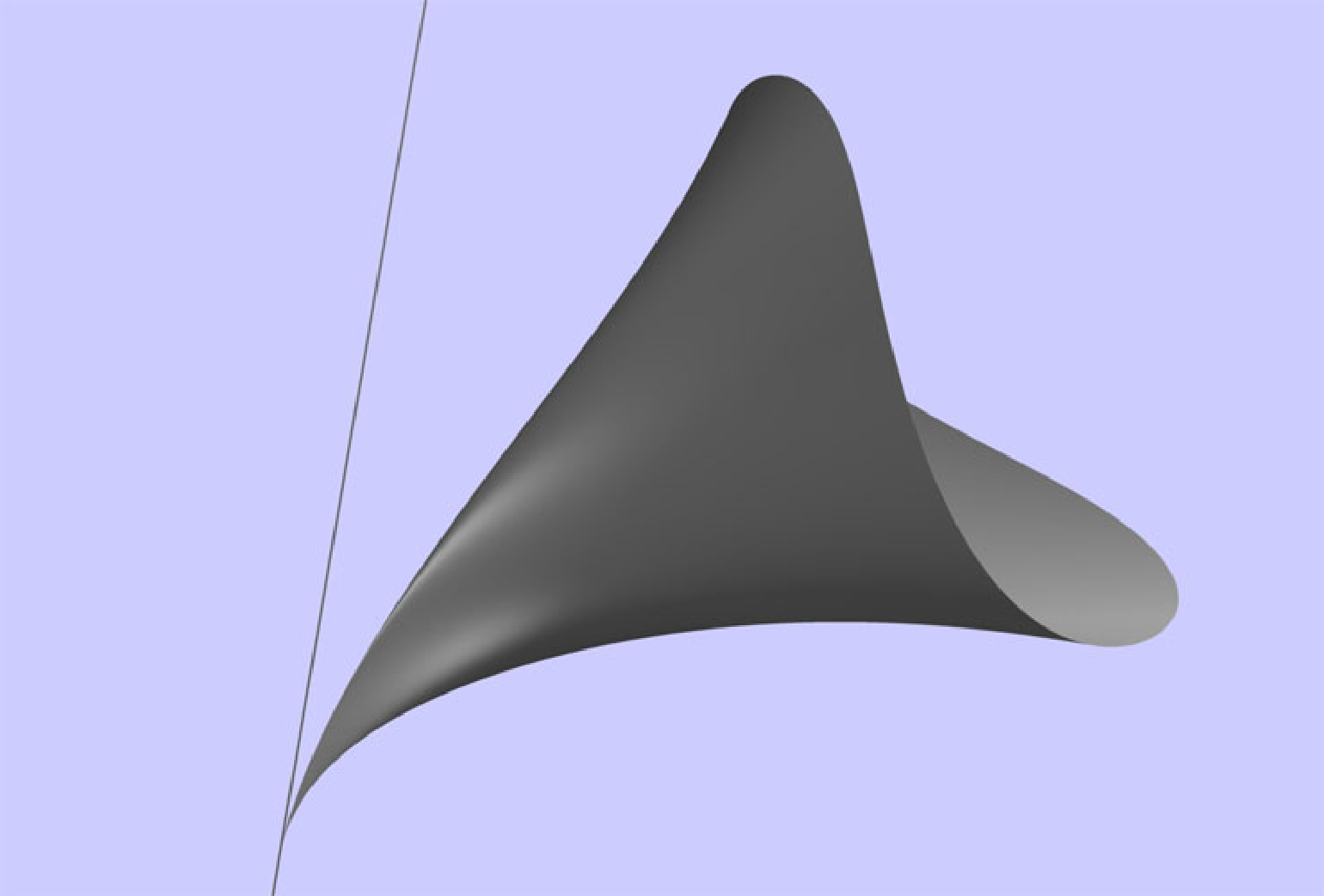}
\end{center}
\end{abstract}

\medskip
\renewcommand{\abstractname}{English title and abstract}
\begin{abstract} 

\textbf{Regulous functions}

We study the ring of rational functions admitting a continuous extension to the real affine space. We establish several properties of this ring. In particular, we prove a strong Nullstelensatz. We study the scheme theoretic properties and prove regulous versions of Theorems A and B of Cartan. We also give a geometrical characterization of prime ideals of this ring in terms of their zero-locus and relate them to euclidean closed Zariski-constructible sets.
 \end{abstract}
%%%%%%%%%%%%

\setcounter{tocdepth}{1}
\tableofcontents

\newpage
\section{Introduction}

Lorsqu'on développe la géométrie réelle algébrique\footnote{Nous ferons la
  distinction ici entre la «géométrie algébrique réelle» et la
  «géométrie réelle algébrique»; cette dernière étudie les variétés
  réelles possédant une structure algébrique, alors que la première
  étudie les variétés algébriques complexes munies d'une structure
  réelle.} selon le
modèle de la géométrie algébrique complexe, des
obstacles techniques apparaissent. L'un des exemples le plus
emblématique en étant que les versions algébriques complexes des
Théorèmes A~et B de Cartan (cf.~\cite[2.4]{FAC}) n'ont pas leurs
analogues en géométrie réelle algébrique. Dans la théorie régulière telle que développée
dans~\cite[Chapitre~3]{BCR}, aucun des Théorèmes A~et B de Cartan n'est valable\footnote{Voir~\cite[Exemple~12.1.5]{BCR} pour un
  contre-exemple au Théorème~A, et \cite[Theorem~1]{CosteDiop} pour un
  contre-exemple au Théorème~B en géométrie réelle régulière.}. De même
à travers l'approche Nash de la géométrie réelle algébrique, les Théorèmes
A~et B de Cartan ne sont pas vérifiés\footnote{Voir~\cite{Hubbard}
  pour des contre-exemples en géométrie réelle Nash.}. 
Dans cet article, nous proposons
une approche nouvelle de la géométrie réelle algébrique, basée sur les
fonction rationnelles continues, et plus généralement les fonctions
rationnelles de classe~$\SC^k$, qu'on appellera $k$-régulues. Pour ces
fonctions, qui constituent un assouplissement des fonctions régulières
lorsque $k<\infty$, les analogues des Théorèmes A~et B de Cartan
deviennent valables (cf. \ref{thAX} et \ref{thBX}). 

Avant de donner un
aperçu des principaux résultats de cet article, il convient de
rappeler certaines notions de géométrie réelle algébrique.
Suivant un abus courant en géométrie algébrique, nous dirons qu'une
\emph{fonction rationnelle} sur $\RR^n$ (cf.~\ref{dfn.ration} pour une définition formelle) est une fonction à valeurs
réelles~$f$ définie sur un ouvert de Zariski non vide~$U$ de~$\R^n$
telle qu'il existe des polynômes~$p,q\in \RR
[x_1,\ldots,x_n]$ pour lesquels
$$
f(x)=\frac{p(x)}{q(x)}
$$ pour chaque $x\in U$. Cette écriture fractionnaire sous-entend que
la fonction polynomiale~$q$ ne s'annule pas sur~$U$. Deux fonctions
rationnelles sur~$\R^n$ sont alors égales si elles coïncident sur un
ouvert de Zariski non vide contenu dans leurs domaines de définition.
Une fonction rationnelle~$f$ sur~$\R^n$ possède un plus grand ouvert de
Zariski sur lequel elle est définie
(cf.~\cite[Proposition~3.2.3]{BCR}). Cet ouvert est le
\emph{domaine} de~$f$, il est noté~$\dom(f)$. 
Nous noterons $\pol(f)$ le complémentaire du domaine de~$f$,
c'est le \emph{lieu d'indétermination} de~$f$.  
A titre d'exemple, le domaine de la fonction
rationnelle~$f$ sur~$\R$ définie par $f(x)=1/x$ est égal
à~$\R\setminus\{0\}$ et~$\pol(f)=\{0\}$.  L'ensemble des
fonctions rationnelles sur~$\R^n$ est un corps et s'identifie au corps
des fractions rationnelles~$\R(x_1,\ldots,x_n)$, il peut aussi être noté~$\R(\R^n)$.

Une fonction rationnelle~$f$ sur~$\R^n$ est une \emph{fonction
  régulière} sur~$\R^n$ si elle est définie sur~$\R^n$ tout entier,
i.e., si~$\dom(f)=\R^n$ \cite[Définition~3.2.1]{BCR}. A titre
d'exemple, la fonction rationnelle~$f(x)=1/(x^2+1)$ est régulière
sur~$\R$.  L'ensemble des fonctions régulières sur~$\R^n$ est un
sous-anneau du corps des fonctions rationnelles sur~$\R^n$,
que nous noterons~$\SR^\infty(\R^n)$. Nous justifierons plus loin ce choix de notation.

Dans ce travail, nous nous proposons donc d'étudier les fonctions rationnelles sur~$\R^n$
qui s'étendent par continuité à~$\R^n$ tout entier. Par
«continuité» nous entendons ici la continuité par rapport à la
topologie euclidienne. Remarquons tout de suite qu'une fonction rationnelle
sur~$\R^n$ étant continue sur son domaine de définition, il s'agit
de fonctions qui s'étendent par continuité à leur lieu
d'indétermination. Plus précisément, une
\emph{fonction régulue} sur $\RR^n$ est une fonction à valeurs réelles
définie en tout point de~$\R^n$, qui est continue pour la topologie
euclidienne et qui est rationnelle sur $\R^n$.
A titre d'exemple, la fonction
régulière
$$
f(x,y)=\frac{x^3}{x^2+y^2}
$$ 
sur~$\R^2\setminus\{0\}$ s'étend par
continuité en l'origine et définit donc une fonction régulue sur
$\RR^2$. Son graphe est la toile du célèbre parapluie de Cartan
(voir~\ref{exem.umbrellas}).  L'ensemble des fonctions régulues
sur~$\R^n$ est un sous-anneau du corps~$\R(\R^n)$ des fonctions
rationnelles sur~$\R^n$, que nous notons~$\SR^0(\RR^n)$. Une
fonction régulière sur~$\R^n$ étant évidemment  régulue, nous obtenons une chaîne de
sous-anneaux
$$
\SR^\infty(\R^n)\subseteq\SRC(\R^n)\subseteq\R(\R^n).
$$

Plus généralement, une fonction sur $\RR^n$ est
\emph{$k$-régulue}, si elle est à la fois régulière sur un ouvert de
Zariski non vide, et de classe~$\SC^k$ sur~$\R^n$. Ici, $k$ désigne un
\emph{entier surnaturel}, i.e., $k$ est ou bien un entier naturel, ou
bien $k$ est égal à~$\infty$.  A titre d'exemple, la fonction
régulière
$$f(x,y)=\frac{x^{3+k}}{x^2+y^2}$$ sur~$\R^2\setminus\{0\}$ s'étend
par continuité en l'origine et définit une fonction $k$-régulue
sur $\RR^2$, si $k$ est un entier naturel. Nous démontrons
(cf. Théorème~\ref{thinfregestreg}) qu'une fonction $\infty$-régulue
sur~$\R^n$ est nécessairement régulière. Pour $k$ un entier surnaturel, l'ensemble des fonctions
$k$-régulues est un sous-anneau du corps~$\R(\R^n)$ des fonctions
rationnelles sur~$\R^n$, qui sera noté~$\SR^k(\R^n)$. Remarquons qu'il n'y a pas de conflit de notation ni
avec l'anneau des fonctions régulues~$\SRC(\R^n)$, ni avec l'anneau de
fonctions régulières~$\SR^\infty(\R^n)$ introduits ci-dessus.  Nous obtenons finalement 
une chaîne de sous-anneaux
$$
\SR^\infty(\R^n)\subseteq\cdots\subseteq\SR^2(\R^n)\subseteq\SR^1(\R^n)
\subseteq\SRC(\R^n)\subseteq\R(\R^n).
$$ Le plus petit de ses sous-anneaux est égal à l'intersection de tous
les autres sous-anneaux de la chaîne, i.e.,
$$
\SR^\infty(\R^n)=\bigcap_{k\in\N}\SR^k(\R^n).
$$

\medskip
Revenons au contenu de cet article.
Dans un premier temps nous déterminons les propriétés algébriques de l'anneau~$\SR^k(\R^n)$
des fonctions $k$-régulues sur~$\R^n$, où $k$ est un entier
naturel. Ces anneaux ont été assez peu étudiés~; les seules références qui
nous sont connues étant~\cite{Ku,Ko}. L'anneau~$\SR^\infty(\R^n)$ des
fonctions régulières sur~$\R^n$, en revanche, a attiré beaucoup
d'attention~\cite{BCR}.
Nous montrons que~$\SR^k(\R^n)$ est un anneau
non-noethérien pour lequel le Nullstellensatz est
valable~(\ref{Nullstellensatz}). Cela est d'autant plus remarquable
que l'intersection de tous ces anneaux, à savoir~$\SR^\infty(\R^n)$,
est un anneau noethérien pour lequel le Nullstellensatz n'est pas
valable~!  Une version affaiblie du Nullstellensatz est néanmoins vraie
pour $\SR^\infty(\R^n)$, cette version fait intervenir le radical
réel d'un idéal de fonctions régulières~\cite[\S~4.4]{BCR}. Ce
Nullstellensatz réel, bien qu'intéressant en lui-même, ne répare en rien
le défaut accablant de l'anneau des fonctions régulières qui est de posséder
trop d'id\'eaux premiers, ce qui ne manque pas de poser des problèmes en
géométrie réelle régulière.  Nous verrons, en revanche, que la géométrie
réelle $k$-régulue, pour $k$ fini, ne pose pas ces problèmes, et,
de ce fait, se rapproche plus de la géométrie algébrique sur un corps
algébriquement clos que la géométrie réelle régulière.

Malgré le caractère non noethérien de l'anneau~$\SR^k(\R^n)$, son usage reste raisonnable en géométrie algébrique car nous démontrons que son spectre de
Zariski~$\Spec\SR^k(\R^n)$ est un espace topologique noethérien. De
manière équivalente (grâce à la validité du Nullstellensatz justement)
l'ensemble~$\R^n$ muni de la topologie $k$-régulue est un espace
topologique noethérien, lorsque $k$ est fini
(cf.~\ref{topregnoeth}). Ici et dans toute la suite de cet article, la «topologie $k$-régulue» est la
topologie dont une base de fermés est la collection des sous-ensembles
de la forme
$$
\Z(f)=\{x\in\R^n\suchthat f(x)=0\},
$$ 
où $f$ est $k$-régulue sur~$\R^n$. Cette topologie est strictement
plus fine que la topologie de Zariski sur~$\R^n$,
lorsque~$n\geq2$. Cette dernière pouvant également être dénommée
topologie régulière ou $\infty$-régulue.

Dans un deuxième temps, nous posons les bases de l'étude des
variétés régulues abstraites. Nous revenons sur un fibré en droites
régulier pathologique sur~$\RR^2$ bien connu qui n'est pas engendré
par ses sections régulières globales~\cite[Example~12.1.5]{BCR}, et
montrons que cette pathologie disparait si ce fibré est interprété comme fibré
en droites régulu. Nous en déduisons alors la validité des 
Théorèmes A~et B de Cartan dans le cadre régulu (\ref{thAX} et \ref{thBX}).

Nous concluons par un chapitre consacré à la caractérisation géométrique des fermés régulus de $\RR^n$ et montrons un résultat fondamental : les fermés régulument irréductibles de $\RR^n$    coïncident avec les sous-ensembles algébriquement constructibles fermés de $\RR^n$ (Théorème.~\ref{const}).
Dans le cas des courbes et des surfaces, nous poussons notre étude et proposons
en particulier une relecture \emph{régulue} des fameux parapluies de
Cartan, de Whitney, et d'un parapluie de Koll\'ar. Nous introduisons aussi un nouveau parapluie
\emph{cornu} (\ref{exem.umbrellas}).

\medskip
A notre connaissance, les fonctions régulues ont été étudiées de
façon systématique pour la première fois par Kucharz dans \cite{Ku}
(où elles sont appelées \emph{continuous rational}). Dans son article,
Kucharz montre que ce sont les bonnes fonctions pour approcher le plus
algébriquement possible les fonctions continues. Il démontre notamment
que toute classe d'homotopie d'une application continue entre deux
sphères de dimensions quelconques contient une application régulue, là
où les applications polynomiales et régulières font défaut~! 

Dans \cite{Ko}, Koll\'{a}r étudie les problèmes de restriction et d'extension
de fonctions continues rationnelles définies sur une variété réelle
algébrique affine.

Lorsque la variété est lisse, notre notion de
fonction régulue (cf. Définition~\ref{dfn.regulue}) co\"\i ncide avec
la notion de fonction continue rationnelle de Koll\'ar et de Kucharz.  
Lorsque la variété est singulière, notre notion de
fonction régulue coïncide avec ce que Koll\'ar appelle
``héréditairement rationnelle continue''
(cf. Remarque~\ref{rem.here}). Mentionnons qu'une nouvelle version \cite{KN} de l'article \cite{Ko}, en collaboration avec K.~J.~Nowak contient une version plus forte du théorème d'extension \cite[Proposition~10]{KN}, donnant un contr\^ole optimal sur la r\'egularit\'e de l'extension, ceci en utilisant des propri\'et\'es \'el\'ementaires des ensembles semi-alg\'ebriques.

Objets naturels, les fonctions régulues apparaissent aussi dans des
résultats antérieurs. En 1978, voir \cite[p.~369]{Del},  Kreisel remarque (sans employer le terme "fonction régulue" bien sûr) que
le Positivstellensatz de Stengle \cite{St} permet de représenter tout
polynôme $f\in \RR[x_1,\ldots,x_n]$ positif sur $\RR^n$ comme une
somme de carrés de fonctions régulues sur $\RR^n$.

Il est à noter que l'étude des isomorphismes $\infty$-régulus sur les surfaces a connu récemment des progrès importants, cf. e.g. \cite{bh,hm3,km1,bm1}.

Pour être complets, rappelons que sur $\CC$, toute fonction rationnelle continue sur une variété \emph{normale} est régulière. Dans le cas d'une variété singulière générale, l'étude des fonctions rationnelles continues amène aux concepts de semi-normalité et de semi-normalisation, cf. \cite{AN67,AB69}.

\begin{ack} Nous remercions J.~Koll\'ar pour nous avoir transmis une version préliminaire de son article, ainsi que S.~Cantat, M.~Coste, L.~Evain, W.~Kucharz, K.~Kurdyka, D.~Naie et A.~Parusi\'nski pour l'intérêt précoce qu'ils ont porté à nos travaux et pour leurs suggestions qui ont contribué à améliorer ce texte. Merci aussi à F.~Broglia et F.~Acquistapace pour nous avoir signalé les références \cite{AN67} et \cite{AB69}. La version finale de cet article doit beaucoup au referee dont la lecture attentive et les remarques ont été très constructives.
    
Ce travail a bénéficié d'un support partiel provenant du contrat ANR "BirPol"  ANR-11-JS01-004-01.
\end{ack}

\section{Variétés réelles algébriques et fonctions régulières}

\subsection*{Ensemble des zéros d'une fonction réelle}

Soit $n$ un entier naturel.  On définit, de manière générale,
l'ensemble des zéros et l'ensemble des non zéros d'une fonction réelle
sur~$\R^n$, ou d'un ensemble de fonctions réelles:

\begin{ton} Soit $f\colon \R^n\ra \R$ une fonction réelle sur~$\R^n$.
On note~$\Z(f)$ l'ensemble des z\'eros de~$f$ dans~$\RR^n$, i.e.,
$$ 
\Z(f)=\{x\in\RR^n|\,\,f(x)=0\}.
$$
L'ensemble des non-zéros de~$f$ est
$$
\D(f)=\{x\in\RR^n|\,\,f(x)\neq0\}.
$$ Soit $E$ un ensemble de fonctions réelles sur~$\R^n$. On
note~$\Z(E)$ l'ensemble des z\'eros communs des fonctions dans~$E$,
i.e.,
$$\Z(E)=\bigcap_{f\in
  E}\,\Z(f),
$$
\end{ton}

On a les propri\'et\'es habituelles suivantes~:

\begin{prop}
Soit $n$ un entier naturel.
\begin{enumerate}
\item $\Z(1)=\emptyset$ et $\Z(\emptyset)=\RR^n$.
\item Soit $E_\alpha$, $\alpha\in A$, une collection d'ensembles de
  fonctions réelles sur~$\R^n$. Alors
$$ 
\Z(\bigcup_{\alpha\in A} E_\alpha)= \bigcap_{\alpha\in
  A}\Z(E_\alpha).
$$
\item Soient $E_1,\ldots,E_m$ un nombre fini d'ensembles de fonctions
  réelles sur~$\R^n$, où $m$ est un entier naturel. Alors
$$ 
\Z(E_1\cdot\ldots\cdot E_m)=\Z(E_1)\union\cdots\union\Z(E_m).
$$
\item Soit $E$ un sous-ensemble d'un anneau~$A$ de fonctions réelles
  sur~$\R^n$. Alors $\Z(E)=\Z(I)$, o\`u $I$ est l'id\'eal de~$A$
  engendr\'e par~$E$.\qed
\end{enumerate}
\end{prop}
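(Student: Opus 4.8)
Le plan est de traiter chacune des quatre assertions par vérification directe sur les points, en revenant à la définition selon laquelle $\Z(E)$ est l'ensemble des points de~$\RR^n$ où toutes les fonctions de~$E$ s'annulent simultanément. Pour l'assertion~(1), je noterais simplement que la fonction constante~$1$ ne s'annule nulle part, d'où $\Z(1)=\emptyset$, et que $\Z(\emptyset)$ est une intersection indexée par l'ensemble vide, donc égale à~$\RR^n$ par la convention usuelle.

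Pour l'assertion~(2), je procéderais par équivalences: un point~$x$ appartient à $\Z(\bigcup_\alpha E_\alpha)$ si et seulement si $f(x)=0$ pour toute~$f$ dans la réunion, ce qui équivaut à demander, pour chaque indice~$\alpha$, que $f(x)=0$ pour toute $f\in E_\alpha$, c'est-à-dire $x\in\Z(E_\alpha)$ pour tout~$\alpha$; ceci revient exactement à $x\in\bigcap_\alpha\Z(E_\alpha)$. Pour l'assertion~(3), où $E_1\cdots E_m$ désigne l'ensemble des produits $f_1\cdots f_m$ avec $f_i\in E_i$, je montrerais la double inclusion. Si $x\in\Z(E_i)$ pour un certain~$i$, alors le $i$-ème facteur de tout produit s'annule en~$x$, donc le produit aussi, ce qui donne l'inclusion de droite à gauche. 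Réciproquement, si $x$ n'appartient à aucun des $\Z(E_i)$, on choisit pour chaque~$i$ une fonction $f_i\in E_i$ telle que $f_i(x)\neq0$; le produit $f_1\cdots f_m$ ne s'annule alors pas en~$x$ puisque $\RR$ est intègre, et ce produit appartient à $E_1\cdots E_m$, d'où $x\notin\Z(E_1\cdots E_m)$.

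Pour l'assertion~(4), je fixerais $I$ l'idéal de~$A$ engendré par~$E$. Comme $E\subseteq I$, on a immédiatement $\Z(I)\subseteq\Z(E)$. Pour l'inclusion inverse, je prendrais $x\in\Z(E)$ et un élément quelconque de~$I$, qui s'écrit comme combinaison finie $\sum_i a_i f_i$ avec $a_i\in A$ et $f_i\in E$; en évaluant en~$x$, chaque terme $a_i(x)f_i(x)$ est nul car $f_i(x)=0$, donc l'élément s'annule en~$x$, ce qui établit $x\in\Z(I)$.

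Aucune de ces vérifications ne présente de difficulté sérieuse. Le seul point méritant attention est l'emploi, dans l'assertion~(3), de l'intégrité du corps~$\RR$ (absence de diviseurs de zéro) pour garantir qu'un produit fini de réels non nuls reste non nul; les autres assertions ne reposent que sur la manipulation des quantificateurs et sur les conventions usuelles concernant la réunion et l'intersection indexées par l'ensemble vide.
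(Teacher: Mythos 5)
Votre démonstration est correcte et complète ; le papier énonce cette proposition sans preuve (elle est qualifiée de «~propriétés habituelles~» et close par un \qed), et vos vérifications point par point sont exactement l'argument de routine sous-entendu. Le seul point non trivial — l'usage de l'intégrité de~$\RR$ pour l'inclusion réciproque dans l'assertion~(3) — est correctement identifié et traité.
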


Un propriété moins habituelle, mais cruciale en géométrie réelle, est
la suivante:

\begin{prop}\label{prop.princip}
Soit $n$ un entier naturel. Soit $A$ un anneau de fonctions réelles
sur~$\R^n$, et soient $f_1,\ldots,f_m$ un nombre fini d'éléments
de~$A$.  Alors, il existe une fonction réelle~$f\in A$ telle que
$$
\Z(f)=\Z(f_1,\ldots,f_m).
$$
Plus précisément, la fonction~$f=f_1^2+\cdots+f_m^2$ convient.\qed
\end{prop}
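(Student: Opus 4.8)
Le plan est de vérifier l'égalité ensembliste directement, point par point, en exploitant la seule propriété algébrique de~$\R$ qui intervienne ici, à savoir que c'est un corps ordonné.

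D'abord, je m'assure que la fonction candidate appartient bien à l'anneau~: comme $A$ est un anneau, il est stable par produit et par somme, de sorte que chaque carré $f_i^2=f_i\cdot f_i$ est dans~$A$, et donc $f=f_1^2+\cdots+f_m^2\in A$. Cette première étape est immédiate mais essentielle, car c'est elle qui garantit que la fonction produite reste dans la classe considérée et non pas seulement dans le corps ambiant~$\R(\R^n)$.

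Ensuite, je fixe $x\in\R^n$ et j'évalue. Les valeurs $f_i(x)$ étant des nombres réels, chacun des carrés $f_i(x)^2$ est positif ou nul, si bien que $f(x)=\sum_{i=1}^m f_i(x)^2$ est une somme de réels positifs ou nuls. Une telle somme est nulle si et seulement si chacun de ses termes l'est, c'est-à-dire si et seulement si $f_i(x)^2=0$ pour tout~$i$, ce qui équivaut à $f_i(x)=0$ pour tout~$i$. Autrement dit, $x\in\Z(f)$ si et seulement si $x\in\bigcap_{i=1}^m\Z(f_i)=\Z(f_1,\ldots,f_m)$, d'où l'égalité voulue.

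Il n'y a pas ici d'obstacle technique sérieux~: le seul point véritablement utilisé est la positivité des carrés dans le corps ordonné~$\R$, qui force l'annulation simultanée de tous les $f_i(x)$ lorsque la somme des carrés s'annule. C'est précisément cet argument qui échoue sur un corps algébriquement clos comme~$\C$ (où, par exemple, $1^2+i^2=0$ sans que ni~$1$ ni~$i$ ne s'annule), ce qui explique le caractère spécifiquement réel — et, comme le souligne l'énoncé, « crucial » — de cette proposition.
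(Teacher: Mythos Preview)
Ta démonstration est correcte et suit exactement le raisonnement sous-entendu par l'article, qui se contente d'énoncer la proposition avec la fonction explicite $f=f_1^2+\cdots+f_m^2$ et de conclure par un \qed\ sans détailler. Il n'y a rien à ajouter~: l'argument de positivité des carrés dans~$\R$ que tu donnes est précisément celui que les auteurs considèrent comme immédiat.
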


Rappelons qu'un sous-ensemble $F$ de~$\R^n$ est un \emph{fermé de
  Zariski}, s'il existe un sous-ensemble~$E$ de l'anneau des fonctions
polynomiales réelles sur~$\R^n$ tel que
$$
\Z(E)=F.
$$ Comme ce dernier anneau est isomorphe à l'anneau noethérien des
polynômes réels~$\R[x_1,\ldots,x_n]$, un sous-ensemble~$F$ de~$\R^n$
est un fermé de Zariski si et seulement s'il existe un nombre fini de
fonctions polynomiales~$f_1,\ldots,f_m$ sur~$\R^n$ telles que
$$
\Z(f_1,\ldots,f_m)=F.
$$ En appliquant la proposition précédente \ref{prop.princip}, on pourra encore dire
qu'un sous-ensemble~$F$ de~$\R^n$ est un fermé de Zariski si et
seulement s'il existe une fonction polynomiale~$f$ sur~$\R^n$ telle
que
$$
\Z(f)=F.
$$

Les fermés de Zariski de~$\R^n$ constituent la collection des fermés
d'une topologie sur~$\R^n$, la \emph{topologie de Zariski} sur~$\R^n$.

\subsection*{Fonctions régulières sur~$\R^n$}

Soit~$n$ un entier naturel, et soit $U$ un ouvert de Zariski de~$\R^n$.
Soit~$f$ une fonction réelle sur~$U$, et $x\in U$. La fonction~$f$ est
\emph{régulière en~$x$} s'il existe un voisinage Zariski
ouvert~$V$ de~$x$ dans~$U$ et des fonctions polynomiales $p$~et $q$
sur~$\R^n$ tel que~$f(y)=p(y)/q(y)$ pour tout~$y\in V$, où il est
sous-entendu que~$q$ ne s'annule pas sur~$V$. La fonction~$f$ est
régulière sur~$U$ si elle est régulière en tout point de~$U$. On
note~$\SQ(U)$ l'ensemble des fonctions régulières sur~$U$. Cet
ensemble est une algèbre réelle de manière évidente. 

Soit $V$ un ouvert de Zariski contenu dans~$U$, la restriction à~$V$
d'une fonction régulière sur~$U$ est régulière sur~$V$. Il s'ensuit
que~$\SQ$ est un préfaisceau d'algèbres réelles sur~$\R^n$.  Compte
tenu de la définition locale d'une fonction régulière, il est évident
que~$\SQ$ est un faisceau sur~$\R^n$. 

Soit $x$ un point de~$\R^n$.  La fibre~$\SQ_x$ s'identifie avec
l'algèbre réelle locale des fonctions rationnelles~$p/q$ définies
en~$x$, i.e., $q(x)\neq 0$. Le faisceau~$\SQ$ sur~$\R^n$ est donc un
faisceau en algèbres réelles locales. 

Il est remarquable que les sections du faisceau~$\SQ$
au-dessus d'un ouvert de Zariski~$U$ de~$\R^n$ admettent une
description globale (cf.~\cite[Proposition~3.2.3]{BCR}):

\begin{prop}
\label{prq(U)}
Soit $n$ un entier naturel, et soit $U$ un ouvert de Zariski
de~$\R^n$. Soit~$f$ une fonction réelle sur~$U$. Alors~$f$ est
régulière sur~$U$ si et seulement s'il existe des fonctions
polynomiales $p$~et $q$ sur~$\R^n$ telles que~$f(x)=p(x)/q(x)$, avec $q(x) \neq 0$, pour
tout~$x\in U$.
\end{prop}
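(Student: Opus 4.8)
The plan is to treat the two implications separately, the forward one (global quotient $\Rightarrow$ regular) being immediate and the converse requiring the real structure in an essential way. If $f = p/q$ on all of $U$ with $q$ nowhere vanishing on $U$, then for each $x \in U$ one may take $V = U$ together with $p, q$ in the definition of regularity, so $f$ is regular on $U$; this settles the trivial direction. For the converse, suppose $f$ is regular on $U$. By definition, each point $x \in U$ has a Zariski-open neighborhood $V_x \subseteq U$ (hence Zariski-open in $\R^n$) together with polynomials $p_x, q_x$ such that $q_x$ is nowhere zero on $V_x$ and $f = p_x/q_x$ on $V_x$. The sets $(V_x)_{x \in U}$ form an open cover of $U$. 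Since $\R[x_1,\ldots,x_n]$ is Noetherian, the Zariski topology on $\R^n$ is Noetherian and $U$ is quasi-compact; I extract a finite subcover $U = \bigcup_{i=1}^m V_i$ with polynomials $p_i, q_i$, where $q_i$ is nowhere zero on $V_i$ and $f q_i = p_i$ on $V_i$.

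The key step is to manufacture a single pair $(p, q)$ out of this local data, and here the passage from $\C$ to $\R$ matters: the idea is to introduce nonnegative cutoffs that vanish exactly off each $V_i$. For each $i$, the complement $\R^n \setminus V_i$ is Zariski closed, so by Proposition~\ref{prop.princip} there is a polynomial $s_i$, which I may take to be a sum of squares (hence $s_i \geq 0$ everywhere), with $\Z(s_i) = \R^n \setminus V_i$; thus $s_i(x) > 0$ precisely when $x \in V_i$. I then set
\[
q = \sum_{i=1}^m s_i\, q_i^2 \quad\text{and}\quad p = \sum_{i=1}^m s_i\, q_i\, p_i,
\]
both genuine polynomials on $\R^n$.

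It remains to verify that this pair works on $U$. For the denominator: given $x \in U$, choose $i$ with $x \in V_i$; then $s_i(x) > 0$ and $q_i(x)^2 > 0$, so $q(x) \geq s_i(x) q_i(x)^2 > 0$, and in particular $q$ is nowhere zero on $U$. For the identity $f = p/q$, at any $x \in U$ one computes
\[
p(x) - f(x) q(x) = \sum_{i=1}^m s_i(x)\, q_i(x)\bigl(p_i(x) - f(x) q_i(x)\bigr),
\]
and each summand vanishes: if $x \in V_i$ then $p_i(x) = f(x) q_i(x)$, while if $x \notin V_i$ then $s_i(x) = 0$. Hence $f(x) = p(x)/q(x)$ for every $x \in U$ with $q(x) \neq 0$, as required.

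I expect the only real obstacle to be the global gluing of the second paragraph. The naive choice $q = \sum_i q_i^2$ fails because the local numerators and denominators bear no relation to $f$ away from their own patch $V_i$, so the cross terms need not cancel. It is exactly the availability over $\R$ of the nonnegative cutoffs $s_i$ furnished by Proposition~\ref{prop.princip}, which kill each contribution outside its patch while keeping the total denominator strictly positive on $U$, that repairs this defect and produces a single global representation.
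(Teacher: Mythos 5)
Your proof is correct and follows essentially the same route as the paper's (which proves the more general statement \ref{prqF(U)} for a Zariski closed $F$ and deduces \ref{prq(U)} as the case $F=\R^n$): finite subcover by noetherianity, cutoff polynomials $s_i$ vanishing exactly off each patch, and the same weighted combination $p/q$ — the only cosmetic difference being that the paper writes $s_i^2$ where you take $s_i$ itself to be a sum of squares.
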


\begin{proof}
Voir Proposition~\ref{prqF(U)} ci-dessous où on démontre l'énoncé dans
un cadre plus général.
\end{proof}

\subsection*{Variétés réelles algébriques affines}

Soit~$n$ un entier naturel, et soit $F$ un fermé de Zariski de~$\R^n$. On
considère~$F$ muni de la topologie induite par la topologie de
Zariski, qu'on appelle \emph{topologie de Zariski} sur~$F$. Notons que
cette topologie est encore noethérienne.

Soit~$\SI$ le faisceau d'idéaux de~$\SQ$ des fonctions régulières
s'annulant sur~$F$. Le faisceau quotient $\SQ/\SI$ sera
noté~$\SQ_F$. Son support est égal à~$F$. De ce fait, on
considère~$\SQ_F$ comme un faisceau sur~$F$. C'est un faisceau en
algèbres réelles locales de corps résiduel~$\R$. On peut donc le
considérer comme un sous-faisceau des fonctions réelles sur~$F$, et on
l'appelle le faisceau des fonctions régulières sur~$F$.

On a à nouveau une description globale des sections de~$\SQ_F$
(cf.~\cite[Proposition~3.2.3]{BCR}):

\begin{prop}
\label{prqF(U)}
Soit $n$ un entier naturel, et soit $F$ un fermé de Zariski de~$\R^n$.
Soit $U$ un ouvert de Zariski de~$F$ et soit~$f$ une fonction réelle
sur~$U$.  Alors~$f$ est régulière sur~$U$ si et seulement s'il existe
des fonctions polynomiales $p$~et $q$ sur~$\R^n$ telles
que~$f(x)=p(x)/q(x)$, avec $q(x) \neq 0$, pour tout~$x\in U$.
\end{prop}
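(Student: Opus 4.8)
The plan is to handle the two implications separately, with essentially all of the content in the \emph{only if} direction. For \emph{if}, suppose $f(x)=p(x)/q(x)$ on $U$ with $q$ nowhere zero on $U$; then for each $x\in U$ the very same pair $(p,q)$ witnesses regularity of $f$ at $x$, taking the neighbourhood to be $U$ itself, so $f$ is regular on $U$ directly from the definition. No gluing is needed here.

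For \emph{only if}, I would start from the local definition of regularity and upgrade it to a single global fraction by means of the real sum-of-squares mechanism recorded in Proposition~\ref{prop.princip}. First, since $f$ is regular on $U$, every point $x\in U$ has a Zariski-open neighbourhood on which $f=p/q$ with $q$ nonvanishing; shrinking, I may assume this neighbourhood is a basic open set $\D(s)\cap U$ for some polynomial $s$ on $\R^n$, so that its complement in $U$ is $U\cap\Z(s)$. Because the Zariski topology on $F$ is Noetherian, $U$ is quasi-compact, hence finitely many such charts $V_i=\D(s_i)\cap U$ cover $U$, with $f=p_i/q_i$ and $q_i\neq 0$ on $V_i$ for $i=1,\dots,m$.

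The key step is to localise each chart's contribution so that it vanishes off $V_i$. I would set $\tilde q_i=q_i s_i^2$ and $\tilde p_i=p_i s_i^2$: then $\tilde q_i$ is still nonzero on $V_i$, while both $\tilde q_i$ and $\tilde p_i$ vanish identically on $U\setminus V_i=U\cap\Z(s_i)$. Now define
$$
q=\sum_{i=1}^m \tilde q_i^{\,2}, \qquad p=\sum_{i=1}^m \tilde p_i\,\tilde q_i,
$$
which are polynomials on $\R^n$. Since every $x\in U$ lies in some $V_i$, the summand $\tilde q_i(x)^2>0$ forces $q(x)>0$, so $q$ is nowhere zero on $U$. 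To check $f=p/q$ on $U$ it suffices to verify $f(x)\,q(x)=p(x)$ for each $x\in U$, term by term: if $x\in V_j$ then $f(x)\tilde q_j(x)=\tilde p_j(x)$ gives $f(x)\tilde q_j(x)^2=\tilde p_j(x)\tilde q_j(x)$, whereas if $x\notin V_j$ both $\tilde q_j(x)$ and $\tilde p_j(x)$ vanish and the $j$-th summands agree trivially. Summing over $j$ yields $f(x)q(x)=p(x)$, hence $f=p/q$ on $U$ with $q$ nonvanishing, as required.

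I expect the only genuine obstacle to be this global gluing in the \emph{only if} direction: over an arbitrary field the statement is false, since a regular function on an open set need not be a single global quotient, and what rescues it here is precisely the positivity of real squares. The factor $s_i^2$ is what kills each local numerator/denominator pair off its own chart, and passing to $\sum\tilde q_i^{\,2}$ is what keeps the combined denominator from ever vanishing on $U$; both are instances of Proposition~\ref{prop.princip}. Finally, the argument is insensitive to whether the ambient object is $\R^n$ or a Zariski-closed $F\subseteq\R^n$, which is why it simultaneously proves Proposition~\ref{prq(U)}.
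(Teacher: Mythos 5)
Your proposal is correct and follows essentially the same route as the paper: a finite cover obtained from Noetherianity, cut-off polynomials $s_i$ vanishing off each chart, and the sum-of-squares denominator $\sum s_i^{2}q_i^{2}$ (you use $s_i^{4}$, which changes nothing) whose positivity on $U$ comes from real squares, with the identity $fq=p$ checked term by term exactly as in the paper. No substantive difference to report.
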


Pour la commodité du lecteur, on reproduit la démonstration
de~\cite[Proposition~3.2.3]{BCR}.

\begin{proof}
Il suffit de démontrer l'implication directe. Supposons donc que~$f$
est régulière sur~$U$. Comme la topologie de Zariski sur~$F$ est
noethérienne, il existe un recouvrement ouvert fini
$\{U_1,\ldots,U_m\}$ de~$U$ et des fonctions
polynomiales~$p_1,q_1,\ldots,p_m,q_m$ sur~$\R^n$ telles
que~$f_i(x)=p_i(x)/q_i(x)$ pour tout~$x\in U_i$,
pour~$i=1,\ldots,m$. Soit $s_i$ une fonction polynomiale sur~$\R^n$
avec $\Z(s_i)=F\setminus U_i$, pour tout~$i$. Montrons que
$$
f=\frac{s_1^2p_1q_1+\cdots+s_m^2p_mq_m}{s_1^2q_1^2+\cdots+s_m^2q_m^2}
$$ sur~$U$. 

Montrons d'abord que le second membre, qu'on notera~$g$ dans la suite,
est bien une fonction régulière sur~$U$. Soit~$x\in U$.  Il existe $i$
avec $x\in U_i$. On a donc $s_i^2(x)q_i^2(x_i)>0$. Comme
$s_j^2(x)q_j^2(x)\geq0$, pour $j\neq i$, le dénominateur de~$g$ ne
s'annule pas en $x$.  Le second membre est donc bien une fonction
régulière sur~$U$.

Il reste à montrer que~$f=g$ sur~$U$. Soit~$x\in U$. Soit $I$
l'ensembles des indices~$i$ pour lesquels~$x\in U_i$. Lorsque~$i\in
I$, on a~$f(x)=p_i(x)/q_i(x)$, et donc aussi
$$
s_i^2(x)q_i^2f(x)=s_i^2(x)p_i(x)q_i^2(x)\;.
$$ 

Observons que cette dernière formule est également valable
lorsque~$i\not\in I$ puisque~$s_i(x)=0$ dans ce cas. Il s'ensuit que
$$
(s_1^2(x)q_1^2(x)+\cdots+s_m^2(x)q_m^2(x))f(x)=
(s_1^2(x)p_1(x)q_1(x)+\cdots+s_m^2(x)p_m(x)q_m(x))
$$ 
et donc aussi que~$f(x)=g(x)$.
\end{proof}

\begin{dfn}
Une \emph{variété réelle algébrique affine} est un espace localement
annelé~$(X,\SO)$, où~$\SO$ est un faisceau en algèbres réelles
locales ayant la propriété suivante.  Il existe un entier naturel~$n$
et un fermé de Zariski~$F$ de~$\R^n$ tels que~$(F,\SQ_F)$ soit
isomorphe à~$(X,\SO)$. Si~$(X,\SO)$ est une variété réelle algébrique
affine, on appellera encore sa topologie la topologie de Zariski
sur~$X$, et une section de~$\SO$ sur un ouvert $U$ 
une fonction régulière sur~$U$.
\end{dfn}

\begin{rem}
On pourrait penser que cette définition induit une notion naturelle
de variété réelle algébrique abstraite, 
mais l'intérêt d'une telle notion est limité par le fait que tout sous-ensemble localement fermé de~$\R^n$ (voire de~$\P^n(\R)$) muni
de son faisceau naturel des fonctions régulières est une variété
réelle algébrique affine (voir \cite[Proposition~3.2.10]{BCR},
\cite[Théorème~3.4.4]{BCR} et la remarque qui suit).
\end{rem}

\begin{dfn}\label{dfn.ration}
Soit $X$ une variété réelle algébrique affine. Une \emph{fonction
  rationnelle} sur~$X$ est une classe d'équivalence de paires~$(f,U)$, où~$U$ est un ouvert
dense de~$X$ et $f$ est une fonction régulière sur~$U$. Deux telles
paires $(f,U)$~et $(g,V)$ étant équivalentes s'il existe un ouvert
dense~$W$ de~$X$ contenu dans~$U\inter V$ tel que $f_{|W}=g_{|W}$. 
\end{dfn}

Notons que cette dernière condition est équivalente à la condition 
$$
f_{|U\inter V}=g_{|U\inter V}\;.
$$

Soit $f$ une fonction rationnelle sur la variété réelle algébrique
affine~$X$. Considérons l'ensemble~$\SF$ de toutes les paires~$(g,V)$
qui sont équivalentes à~$(f,U)$.
On introduit une relation d'ordre partiel sur~$\SF$ en définissant
$(g,V)\leq(h,W)$ lorsque~$V\subseteq W$. Comme $\SO$ est un faisceau,
l'ensemble~$\SF$ contient un plus grand élément~$(g,V)$. Comme cet
élément est unique, on appelle~$V$ le domaine de définition de~$f$ et
on le note~$\dom(f)$. Du coup, on pourra également considérer~$f$
comme fonction régulière définie sur~$\dom(f)$, son extension
rationnelle étant
unique.

\begin{prop}\label{prop.pq}
Soit $X$ une variété réelle algébrique affine et soit~$f$ une fonction
rationnelle sur~$X$. Alors il existe des fonctions régulières $p$~et
$q$ sur~$X$, avec $q\neq0$ sur~$\dom(f)$, telles que~$f=p/q$
sur~$\dom(f)$.
\end{prop}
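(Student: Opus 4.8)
The plan is to reduce the statement to the affine situation already treated in Proposition~\ref{prqF(U)}. Let $X$ be realized as a fermé de Zariski~$F$ of some~$\R^n$, so that~$\SO=\SQ_F$, and let~$U=\dom(f)$. By Proposition~\ref{prqF(U)}, since~$f$ is régulière sur~$U$, there exist des fonctions polynomiales~$\tilde p,\tilde q$ sur~$\R^n$ telles que~$f(x)=\tilde p(x)/\tilde q(x)$ avec~$\tilde q(x)\neq0$ pour tout~$x\in U$. I would then set~$p$ et~$q$ to be the restrictions to~$F$ of~$\tilde p$ et~$\tilde q$; these are régulières sur~$F$ puisqu'elles sont images de fonctions polynomiales par le morphisme de restriction~$\SQ(\R^n)\to\SQ_F(F)$.

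The main point to verify is that~$q$ ne s'annule pas sur~$\dom(f)$, ce qui est immédiat de la construction puisque~$\tilde q$ ne s'annule pas sur~$U=\dom(f)$, et que~$f=p/q$ sur~$\dom(f)$, ce qui résulte directement de l'égalité~$f(x)=\tilde p(x)/\tilde q(x)$ pour tout~$x\in U$. Il faut aussi s'assurer que~$q\neq0$ en tant qu'élément de l'anneau~$\SO(X)$, c'est-à-dire que~$q$ n'est pas la fonction nulle~: cela découle du fait que~$q$ ne s'annule sur aucun point de l'ouvert dense~$\dom(f)$, donc ne peut être identiquement nulle sur~$X$.

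The one subtlety I would watch is whether~$\tilde q$ restreinte à~$F$ pourrait s'annuler en dehors de~$U$ mais malgré tout donner~$q=0$ comme section globale~; ceci ne pose pas problème précisément parce que~$\dom(f)$ est dense dans~$X$ et~$q$ y est partout non nulle. La difficulté principale---si tant est qu'il y en ait une---réside donc uniquement dans l'application correcte de la description globale de Proposition~\ref{prqF(U)}~; le reste n'est qu'une vérification de routine. En fait l'énoncé est essentiellement une reformulation de Proposition~\ref{prqF(U)} appliquée à~$U=\dom(f)$, transportée le long de l'isomorphisme~$(F,\SQ_F)\iso(X,\SO)$.
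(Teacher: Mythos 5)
Votre démonstration est correcte et suit exactement la voie du texte, dont la preuve se réduit à «~Conséquence immédiate de la Proposition~\ref{prqF(U)}~»~: vous ne faites qu'expliciter cette réduction (réalisation de~$X$ comme fermé de Zariski~$F$ de~$\R^n$, application de~\ref{prqF(U)} à~$U=\dom(f)$, restriction des polynômes à~$F$). Les vérifications que vous ajoutez sont justes et routinières.
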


\begin{proof}
Conséquence immédiate de la Proposition~\ref{prqF(U)}.
\end{proof}

\begin{rem}
On aurait pu s'attendre à ce que les fonctions $p$~et $q$ ci-dessus
soient des fonctions polynomiales sur~$X$. 
La notion de «fonction polynomiale» sur une
variété réelle algébrique affine n'as pas de sens intrinsèque. Cette notion
dépend du choix d'un plongement de~$X$ dans un
espace affine~$\R^n$. Cela dit, si $X$ est un
fermé de Zariski de~$\R^n$, pour un certain entier naturel~$n$, on
peut supposer que $p$~et $q$ sont des fonctions polynomiales avec
les propriétés voulues, comme le montre la démonstration de~\ref{prqF(U)}.
\end{rem}

Soit~$X$ une variété réelle algébrique affine. L'ensemble des
fonctions rationnelles sur~$X$ est un anneau de manière évidente,
\emph{l'anneau total des fonctions rationnelles} sur~$X$. On le note~$\R(X)$.

\begin{prop}
Soit~$X$ une variété réelle algébrique affine irréductible. Alors
$\R(X)$ est un corps.
\end{prop}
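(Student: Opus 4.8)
The plan is to show that for an irreducible affine real algebraic variety $X$, every nonzero element of $\R(X)$ is invertible. Let $f\in\R(X)$ be a nonzero rational function. By Proposition~\ref{prop.pq}, we may write $f=p/q$ on $\dom(f)$, where $p,q$ are regular functions on $X$ with $q$ nowhere vanishing on $\dom(f)$. To prove invertibility it suffices to produce a rational inverse, and the natural candidate is $q/p$; the whole difficulty is to check that this candidate is a well-defined nonzero element of $\R(X)$, i.e. that $p$ does not vanish on a dense open subset.

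First I would invoke irreducibility in its topological form: since $X$ is irreducible, any two nonempty Zariski-open subsets meet, and no nonempty open subset can be written as a disjoint union of two proper closed subsets. The key step is then the following. Since $f$ is a \emph{nonzero} element of $\R(X)$, it is not equivalent to the zero function; by the equivalence relation defining $\R(X)$ (Définition~\ref{dfn.ration}), this means $f$ does not vanish identically on any dense open subset of $\dom(f)$. Hence the zero set $\Z(p)\cap\dom(f)$ is a proper Zariski-closed subset of $\dom(f)$. Because $\dom(f)$ is itself a nonempty open subset of the irreducible space $X$, it is irreducible, so its nonempty open subset $W=\dom(f)\setminus\Z(p)$ is again dense.

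On $W$ both $p$ and $q$ are regular and nonvanishing, so $q/p$ is a well-defined regular function on $W$ (the quotient of two regular functions with nonvanishing denominator is regular on the locus where the denominator does not vanish). Thus $(q/p,W)$ represents an element $g\in\R(X)$, and on the dense open set $W$ one has $fg=(p/q)(q/p)=1$. Since two rational functions agreeing on a dense open subset are equal in $\R(X)$, we conclude $fg=1$ in $\R(X)$, so $f$ is invertible. As $f$ was an arbitrary nonzero element and $\R(X)$ is already known to be a commutative ring, this shows $\R(X)$ is a field.

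The main obstacle, and the only place where irreducibility is genuinely used, is the density argument in the second paragraph: one must rule out that $\Z(p)$ could contain a full irreducible component, which is exactly what irreducibility of $X$ (hence of $\dom(f)$) prevents. I would take care to phrase this using the fact that $f\neq 0$ in $\R(X)$ precisely forbids $p$ from vanishing on a nonempty open subset of $\dom(f)$; everything else (that $q/p$ is regular where defined, and that agreement on a dense open set implies equality in $\R(X)$) is routine and follows directly from the sheaf property of $\SO$ and the definition of $\R(X)$.
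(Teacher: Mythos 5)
Votre démonstration est correcte; le texte énonce d'ailleurs cette proposition sans démonstration, et votre argument est précisément l'argument standard attendu : on écrit $f=p/q$ via la Proposition~\ref{prop.pq}, on utilise l'irréductibilité de $X$ pour voir que $W=\dom(f)\setminus\Z(p)$ est un ouvert dense dès que $f\neq0$, et $(q/p,W)$ fournit l'inverse. Le seul point à ne pas escamoter est bien celui que vous identifiez, à savoir que $f\neq0$ dans $\R(X)$ interdit à $p$ de s'annuler sur tout $\dom(f)$, puis que l'irréductibilité transforme « non vide » en « dense ».
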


\begin{prop}
Soit $X$ une variété réelle algébrique affine et notons $X_1,\ldots,X_m$ ses composantes irréductibles. Les morphismes de restriction $\R(X)\ra\R(X_i)$ induisent un isomorphisme d'algèbres réelles
$$
\R(X)\lra \prod_{i=1}^m\R(X_i).
$$ En particulier, l'anneau~$\R(X)$ est un produit de~$m$ corps.
\end{prop}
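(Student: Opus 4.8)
The plan is to show that the natural homomorphism of $\R$-algebras
$$
\varphi\colon \R(X) \lra \prod_{i=1}^m \R(X_i),
$$
induced by the restrictions, is bijective; the final assertion then follows at once, since each $X_i$ is irreducible, so that $\R(X_i)$ is a field by the preceding proposition. First I would record the two facts that make $\varphi$ well defined. As the Zariski topology on $X$ is Noetherian there are only finitely many irreducible components, each closed in $X$; and a nonempty open set $U$ of $X$ is dense if and only if it meets every $X_i$, in which case $U \inter X_i$ is a nonempty, hence dense, open subset of the irreducible space $X_i$. Thus, for a rational function represented by $(f,U)$ with $U=\dom(f)$, each restriction $f_{|U\inter X_i}$ is a regular function on a dense open of $X_i$ and defines an element of $\R(X_i)$; compatibility with the equivalence relation of Définition~\ref{dfn.ration} and with the ring operations is routine, so $\varphi$ is a homomorphism of $\R$-algebras.

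For injectivity, suppose $\varphi([(f,U)])=0$. For each $i$ this means $f_{|U\inter X_i}$ agrees with $0$ on some dense open subset of $X_i$. Since $U\inter X_i$ is irreducible and the zero set of the regular function $f$ is a closed subset of $U\inter X_i$ that is dense, it must equal all of $U\inter X_i$, so $f\equiv 0$ there. As $X=\bigcup_i X_i$ gives $U=\bigcup_i (U\inter X_i)$, I conclude $f\equiv 0$ on $U$, i.e. $[(f,U)]=0$.

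The substance of the proof is surjectivity. Given rational functions $g_i$ on the $X_i$, each regular on a dense open $U_i$ of $X_i$, I would introduce the closed set $Z=\bigcup_{i\neq j}(X_i\inter X_j)$. Since the components are distinct and irreducible, $Z\inter X_i=\bigcup_{j\neq i}(X_i\inter X_j)$ is a proper closed subset of $X_i$, so $X_i\setminus Z$ is a dense open subset of $X_i$. The key observation is that every point of $X\setminus Z$ lies in exactly one component, whence $X\setminus Z=\bigsqcup_{i=1}^m (X_i\setminus Z)$ is a disjoint union in which each piece $X_i\setminus Z$, being both closed and open in $X\setminus Z$, is open in $X$. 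Setting $V_i=U_i\inter(X_i\setminus Z)$ --- a dense open of $X_i$ that is moreover open in $X$, since $U_i$ is the trace on $X_i$ of an open of $X$ and $X_i\setminus Z$ is open in $X$ --- the $V_i$ are pairwise disjoint, so $V=\bigcup_i V_i$ is an open set meeting every component, hence dense in $X$. Defining $g$ on $V$ by $g_{|V_i}=(g_i)_{|V_i}$ gives a function regular on each member of the disjoint open cover $\{V_i\}$ of $V$, hence regular on $V$; the class $[(g,V)]$ then satisfies $\varphi([(g,V)])_i=g_i$, because $g$ and $g_i$ coincide on the dense open $V_i\subseteq X_i$.

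The main obstacle is precisely this surjectivity step: the functions $g_i$ need not agree on the overlaps $X_i\inter X_j$, so a naive gluing fails. The point I expect to require the most care is the structural fact that deleting the pairwise-intersection locus $Z$ decomposes $X\setminus Z$ into the \emph{disjoint} clopen pieces $X_i\setminus Z$, together with the verification that these pieces, and then the $V_i$, are open in $X$ and not merely in the $X_i$; once this is in place, the gluing and the density of $V$ are immediate.
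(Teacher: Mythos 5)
Votre démonstration est correcte et complète~; l'article énonce cette proposition sans preuve, il n'y a donc rien à quoi la confronter. Votre argument — bien-définition et injectivité via la densité de $U\inter X_i$ dans chaque composante irréductible, puis surjectivité en retirant le lieu $Z=\bigcup_{i\neq j}(X_i\inter X_j)$ pour obtenir la décomposition en morceaux ouverts-fermés disjoints $X_i\setminus Z$ et recoller — est exactement la démarche standard attendue ici, et tous les points délicats (le fait que les $V_i$ soient ouverts dans $X$ et non seulement dans $X_i$, l'identification $V\inter X_i=V_i$) sont traités correctement.
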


\subsection*{Variétés réelles algébriques affines lisses et fonctions
  $k$-r\'egulues}

Rappelons la définition d'une variété réelle algébrique lisse
(cf.~\cite[Definition~3.3.9]{BCR}):

\begin{dfn}
Soit~$X$ une variété réelle algébrique affine. La variété~$X$ est
\emph{lisse} en un point~$x$ de~$X$ si l'anneau local~$\SO_x$ est
régulier. La variété~$X$ est \emph{lisse} si elle l'est en chacun de
ses points.
\end{dfn}

On rappelle le résultat suivant (cf.~\cite[Proposition~3.3.10]{BCR}):

\begin{prop}
Soit $X$ une variété réelle algébrique affine et notons $X_1,\ldots,X_m$ ses
composantes irréductibles. Alors, $X$ est lisse si et seulement si
\begin{enumerate}
\item $X$ est réunion disjointe des $X_i$, et
\item chaque variété réelle algébrique affine~$X_i$ est lisse.
\end{enumerate}
\end{prop}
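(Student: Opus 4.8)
Le plan consiste à traduire la lissité, définie ponctuellement par la régularité des anneaux locaux, en une condition sur la disposition des composantes irréductibles, en s'appuyant sur le fait classique qu'un anneau local régulier est intègre.

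Pour l'implication directe, je supposerais $X$ lisse et je fixerais un point $x\in X$. La topologie de Zariski sur $X$ étant noethérienne, $X$ n'a qu'un nombre fini de composantes irréductibles, et celles qui passent par $x$ sont en bijection avec les idéaux premiers minimaux de l'anneau local $\SO_x$ (via la correspondance entre les premiers minimaux de l'anneau des fonctions régulières contenus dans l'idéal maximal de $x$ et les composantes passant par $x$). Comme $X$ est lisse en $x$, l'anneau $\SO_x$ est régulier, donc intègre~; son seul premier minimal est l'idéal nul, de sorte qu'une unique composante irréductible passe par $x$. J'en déduirais que les $X_i$ sont deux à deux disjointes~: un point de $X_i\cap X_j$ avec $i\neq j$ serait situé sur deux composantes distinctes. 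Les $X_i$ étant des fermés de Zariski en nombre fini et deux à deux disjoints, chacun est aussi ouvert (son complémentaire étant la réunion finie des autres, donc fermé)~; comme ils recouvrent $X$, on obtient la réunion disjointe~(1) au sens des espaces localement annelés.

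Pour~(2), j'utiliserais que $X_i$ est ouvert dans $X$~: l'anneau local $\SO_{X_i,x}$ en un point $x\in X_i$ s'identifie alors à $\SO_{X,x}$, qui est régulier, donc $X_i$ est lisse. Réciproquement, sous les hypothèses~(1) et~(2), tout point $x$ de $X$ appartient à une unique composante $X_i$, ouverte dans $X$ par l'hypothèse de réunion disjointe~; l'anneau local $\SO_{X,x}$ coïncide donc avec $\SO_{X_i,x}$, régulier puisque $X_i$ est lisse, et $X$ est ainsi lisse en chacun de ses points.

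Le point le plus délicat sera de justifier soigneusement la correspondance entre composantes irréductibles passant par $x$ et idéaux premiers minimaux de $\SO_x$, ainsi que le recours au théorème selon lequel tout anneau local régulier est intègre~; c'est sur ces deux ingrédients d'algèbre commutative que repose réellement l'équivalence, le reste de l'argument étant de nature purement topologique (réunion finie de fermés disjoints dans un espace noethérien).
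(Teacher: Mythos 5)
Votre démonstration est correcte. Notez que l'article ne donne aucune preuve de cet énoncé~: il se contente de le rappeler en renvoyant à la Proposition~3.3.10 de [BCR], de sorte qu'il n'y a pas de preuve interne à laquelle comparer la vôtre. Votre argument est l'argument standard — l'intégrité des anneaux locaux réguliers force l'unicité du premier minimal de $\SO_x$, donc l'unicité de la composante irréductible passant par $x$, d'où des composantes à la fois fermées et ouvertes et l'identification $\SO_{X,x}=\SO_{X_i,x}$ — et les deux points que vous signalez comme délicats (intégrité d'un anneau local régulier~; correspondance entre composantes passant par $x$ et premiers minimaux de $\SO_x$, laquelle utilise que la fibre $\SI_x$ ne voit que les composantes passant par $x$ et que les premiers minimaux au-dessus de l'idéal réel $\I(X)$ sont réels) sont bien les seuls ingrédients non triviaux.
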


On va étendre la notion de fonction $k$-régulue de l'introduction aux
fonctions réelles définies sur une variété réelle algébrique affine
lisse.  Remarquons que si $X$ est une variété réelle algébrique affine
lisse, alors $X$ possède une structure sous-jacente de variété
différentiable de classe~$\SC^k$, pour tout entier surnaturel~$k$
(cf.~\cite[Proposition~3.3.6]{BCR}). La définition suivante est donc
naturelle.

\begin{dfn}\label{dfn.regulue}
Soit $k$ un entier surnaturel.  Soit $X$ une variété réelle algébrique
affine lisse, et soit~$f$ une fonction réelle sur~$X$. La fonction~$f$
est \emph{$k$-régulue} sur~$X$ si
\begin{enumerate}
\item $f$ est de classe $\SC^k$ sur~$X$, et
\item il existe un ouvert de Zariski~$U$ dense dans~$X$ tel que
  $f_{|U}$ est régulière.
\end{enumerate}
On dira qu'une fonction réelle sur~$X$ est \emph{régulue} lorsqu'elle
est $0$-régulue sur~$X$.  

On note~$\SR^k(X)$ l'ensemble des fonctions $k$-régulues sur~$X$. Cet
ensemble est de toute évidence un anneau.
\end{dfn}

\begin{rem}
Si $k=0$, la définition ci-dessus a même un sens lorsque la
variété réelle algébrique affine est singulière. Pourtant, elle ne
donne pas lieu à la bonne notion de fonction régulue sur une telle
variété. En effet, Koll\'ar montre qu'une fonction rationnelle sur une
variété réelle algébrique affine singulière qui s'étend par continuité
à toute la variété peut avoir des propriétés non
souhaitables~\cite[Ex.~2]{Ko}. Voilà pourquoi on introduit, pour
l'instant, uniquement la notion de fonction $k$-régulue sur une
variété réelle algébrique affine lisse. Cela nous suffira jusqu'en page~\pageref{p.dfn.regul.sing} où nous reviendrons sur cette définition.
\end{rem}

Pour une variété réelle algébrique affine lisse~$X$ donnée on dispose
d'une chaîne croissante
$$ \SO(X)\subseteq \SR^\infty(X)\subseteq \cdots\subseteq
\SR^1(X)\subseteq \SR^0(X).
$$ Le but du paragraphe suivant est de montrer que
$\SO(X)=\SR^\infty(X)$.

\section{Fonctions régulues sur les variétés affines}

\subsection*{Fonctions $\infty$-régulues}

Soit $f$ une
fonction réelle définie sur~$\R^n$. Le but de cette section est de
démontrer que si~$f$ est $\infty$-régulue sur~$\R^n$, alors elle est
régulière sur~$\R^n$. Cet énoncé est connu (voir par exemple
\cite[Proposition 2.1]{Ku}), mais nous allons en donner une
démonstration un peu plus détaillée.

On a bien l'inclusion~$\SQ(\R^n)\subseteq\SR^\infty(\R^n)$.  Le but de cette
section est de montrer que cette inclusion est une égalité. Dans
le reste de l'article, comme dans l'introduction, il sera ainsi
justifié d'utiliser la notation~$\SR^\infty(\R^n)$ pour l'anneau des
fonctions régulières sur~$\R^n$.

Soit $f$ une fonction réelle définie sur~$\R^n$.  Rappelons que~$f$
est \emph{semi-algébrique} si son graphe est un sous-ensemble
semi-algébrique de~$\R^n\times \R$~\cite[D\'efinition~2.2.5]{BCR}.
Rappelons également que~$f$ est \emph{Nash} si $f$ est à la fois
semi-algébrique et de
classe~$\SC^\infty$~\cite[Définition~2.9.3]{BCR}. De manière
équivalente, $f$ est Nash si $f$ est une fonction analytique réelle et
elle est algébrique sur l'anneau des fonctions régulières
sur~$\R^n$, i.e., il existe des fonctions polynomiales
$a_0,\ldots,a_d$ sur~$\R^n$ telles que
$$
a_d f^d+a_{d-1} f^{d-1}+\cdots+a_0=0
$$ sur~$\R^n$, où $a_d$ est non identiquement
nulle~\cite[Proposition~8.1.8]{BCR}.

\begin{prop}[Kucharz~\protect{\cite{Ku}}]
\label{regulsa}
Soient~$n$ un entier naturel et $k$ un entier surnaturel.  Une fonction
$k$-régulue sur~$\R^n$ est semi-algébrique. En particulier, une
fonction $\infty$-régulue sur~$\R^n$ est Nash.
\end{prop}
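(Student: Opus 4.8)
Le plan est de montrer que le graphe de $f$ dans $\R^n\times\R$ est semi-algébrique, la conclusion pour $k=\infty$ s'en déduisant par la définition même des fonctions Nash. D'abord, puisque $f$ est $k$-régulue, la condition~(2) de la Définition~\ref{dfn.regulue} fournit un ouvert de Zariski dense~$U$ de~$\R^n$ sur lequel $f$ est régulière ; par la Proposition~\ref{prqF(U)}, il existe donc des fonctions polynomiales $p,q$ sur~$\R^n$, avec $q$ ne s'annulant pas sur~$U$, telles que $f=p/q$ sur~$U$. Le graphe de la restriction $f_{|U}$,
$$
\Gamma_U=\{(x,t)\in U\times\R\suchthat q(x)t-p(x)=0\},
$$
est alors manifestement un sous-ensemble semi-algébrique de~$\R^n\times\R$, comme intersection du semi-algébrique $U\times\R$ avec l'ensemble des zéros du polynôme $q(x)t-p(x)$.

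Ensuite, je montrerais que le graphe~$\Gamma$ de~$f$ sur~$\R^n$ tout entier coïncide avec l'adhérence euclidienne~$\overline{\Gamma_U}$ de~$\Gamma_U$. Comme~$U$ est un ouvert de Zariski non vide de~$\R^n$, son complémentaire (c'est-à-dire le lieu d'indétermination $\pol(f)$) est contenu dans l'ensemble des zéros d'un polynôme non nul, donc~$U$ est dense pour la topologie euclidienne. La continuité de~$f$ entraîne alors que tout point $(x_0,f(x_0))$ de~$\Gamma$ est limite de points $(x,f(x))$ avec $x\in U$, d'où $\Gamma\subseteq\overline{\Gamma_U}$. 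Réciproquement, si $(x_0,t_0)\in\overline{\Gamma_U}$ est limite d'une suite $(x_\nu,f(x_\nu))$ avec $x_\nu\in U$, alors $x_\nu\to x_0$ dans~$\R^n$ et la continuité de~$f$ donne $f(x_\nu)\to f(x_0)$, soit $t_0=f(x_0)$ et $(x_0,t_0)\in\Gamma$. Ainsi $\Gamma=\overline{\Gamma_U}$.

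Enfin, l'adhérence euclidienne d'un ensemble semi-algébrique étant semi-algébrique (conséquence du théorème de Tarski--Seidenberg, cf.~\cite[\S~2.2]{BCR}), l'égalité $\Gamma=\overline{\Gamma_U}$ montre que~$\Gamma$ est semi-algébrique, et donc que~$f$ est semi-algébrique. Pour l'énoncé particulier, une fonction $\infty$-régulue est par définition de classe~$\SC^\infty$ ; étant de plus semi-algébrique d'après ce qui précède, elle est Nash au sens de~\cite[Définition~2.9.3]{BCR}.

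L'étape délicate sera l'identification $\Gamma=\overline{\Gamma_U}$ : c'est précisément là qu'intervient de façon essentielle l'hypothèse de continuité contenue dans la notion de fonction régulue, qui permet de contrôler le comportement de~$f$ au-dessus du lieu d'indétermination~$\pol(f)$, là où la seule rationalité ne dirait rien. Une fois ce point acquis, la semi-algébricité du graphe sur~$U$ et la stabilité des semi-algébriques par adhérence donnent le résultat sans calcul supplémentaire.
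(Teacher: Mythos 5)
Votre démonstration est correcte et suit exactement la même stratégie que celle du papier : semi-algébricité du graphe au-dessus du domaine $U$, identification du graphe complet avec l'adhérence euclidienne de ce graphe partiel grâce à la continuité de $f$ et à la densité de $U$, puis stabilité des ensembles semi-algébriques par adhérence. Vous explicitez seulement davantage l'égalité $\Gamma=\overline{\Gamma_U}$, que le papier énonce en une phrase.
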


\begin{proof} 
Soit $f$ une fonction $k$-régulue sur~$\R^n$. Notons~$U$ le domaine
de~$f$ comme fonction rationnelle, et soient $p$~et $q$ deux fonctions
polynomiales sur~$\R^n$ telles que~$f(x)=p(x)/q(x)$ pour tout~$x\in
U$. Clairement, le graphe de la restriction de $f$ à $U$ est un
sous-ensemble semi-algébrique de $U\times\RR$. Comme~$f$ est continue,
le graphe de la fonction $f$ sur~$\R^n$ est l'adhérence de l'ensemble
précédent pour la topologie euclidienne dans $\RR^{n+1}$. Or,
l'adhérence d'un ensemble semi-algébrique est
semi-algébrique~\cite[Prop. 2.2.2]{BCR}. Il s'ensuit que~$f$ est une
fonction semi-algébrique.
\end{proof}

\begin{cor}
\label{zregulsa}
Soient~$n$ un entier naturel et $k$ un entier surnaturel. Un
sous-ensemble fermé $k$-régulu de~$\R^n$ est semi-algébriquement
fermé.\qed
\end{cor}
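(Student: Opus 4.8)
The plan is to reduce the statement to the zero set of a single $k$-regulous function and then read off both closedness and semi-algebraicity from Proposition~\ref{regulsa}. By definition of the $k$-regulous topology, a $k$-regulous closed subset $F$ of $\R^n$ is an intersection $F=\bigcap_{\alpha}\Z(f_\alpha)$ of basic closed sets, where each $f_\alpha$ is $k$-regulous on $\R^n$. For a \emph{finite} such intersection, Proposition~\ref{prop.princip} applied to the ring $A=\SR^k(\R^n)$ lets me replace $\Z(f_1,\dots,f_m)$ by $\Z(g)$ with $g=f_1^2+\cdots+f_m^2$ again $k$-regulous; so the core case to understand is a single $\Z(g)$.

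First I would check that $\Z(g)$ is closed for the euclidean topology: since a $k$-regulous function is by definition continuous (it is of class $\SC^k$, hence at least $\SC^0$), $\Z(g)=g^{-1}(0)$ is the preimage of a closed set, hence euclidean-closed. In particular any intersection $\bigcap_\alpha\Z(f_\alpha)$ is euclidean-closed, so the ``closed'' half of \emph{semi-algébriquement fermé} is immediate and costs nothing.

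Next, for semi-algebraicity, I would invoke Proposition~\ref{regulsa}: $g$ is semi-algebraic, i.e. its graph $\Gamma_g\subseteq\R^n\times\R$ is a semi-algebraic set. Then $\Z(g)$ is the image of $\Gamma_g\cap(\R^n\times\{0\})$ under the projection $\R^n\times\R\to\R^n$; since semi-algebraic sets are stable under intersection and under linear projection (Tarski--Seidenberg), $\Z(g)$ is semi-algebraic. Combined with the previous paragraph, $\Z(g)$ is a closed semi-algebraic set, which settles every basic (equivalently, by \ref{prop.princip}, finitely generated) closed set.

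The one genuine gap, and the main obstacle, is the passage from a finite intersection to an arbitrary one: a priori $F$ is an intersection of a possibly infinite family of sets $\Z(f_\alpha)$, and an arbitrary intersection of closed semi-algebraic sets need not be semi-algebraic (e.g. a strictly decreasing chain such as $\{x\ge m\}$ in $\R$). The clean way to close this gap is to use that the $k$-regulous topology on $\R^n$ is noetherian: the family of finite intersections $\Z(f_{\alpha_1})\cap\cdots\cap\Z(f_{\alpha_r})$ is stable under finite intersection, so by the descending chain condition it has a minimal element $G$, and minimality forces $G\subseteq\Z(f_\alpha)$ for every $\alpha$, whence $F=G$ is a finite intersection and the first three paragraphs apply. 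Thus the substantive content is really Proposition~\ref{regulsa} together with the noetherianity of the topology (which for $k=\infty$ is the classical noetherianity of the Zariski topology, and for finite $k$ is established separately); once that reduction to finitely many generators is granted, the corollary follows exactly as above.
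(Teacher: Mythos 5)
Your argument is correct, and its core is exactly what the paper's bare \og\qedsymbol\fg{} relies on: by Proposition~\ref{regulsa} a $k$-régulue function $g$ is semi-algebraic, so $\Z(g)=g^{-1}(0)$ is semi-algebraic (your projection of $\Gamma_g\cap(\R^n\times\{0\})$ is fine, though one can also just quote stability of semi-algebraic sets under preimages of semi-algebraic maps), and it is euclidean-closed by continuity. Where you go beyond the paper is in confronting the passage from one $\Z(g)$ to an arbitrary intersection $\bigcap_\alpha\Z(f_\alpha)$: the paper treats the corollary as immediate and leaves this point silent, resolving it only later (the noetherianity of the $k$-régulue topology, Corollaire~\ref{topregnoeth}, and its consequence Corollaire~\ref{cofkresthyp} that every $k$-régulu closed set is a single $\Z(f)$). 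Your way of closing the gap — take a minimal element among finite intersections — is the standard one and, importantly, is \emph{not} circular: the chain Proposition~\ref{regulsa} $\Rightarrow$ Corollaire~\ref{reguluestarcanalytique} $\Rightarrow$ Corollaire~\ref{topregnoeth} nowhere uses the present corollary, so importing noetherianity here only creates a forward reference within the paper, not a logical loop. Two small quibbles: your counterexample $\bigcap_m\{x\ge m\}=\emptyset$ is still semi-algebraic, so it does not actually witness the failure you invoke (something like $\bigcap_m\bigl([0,1]\setminus\,]1/(m{+}1),1/m[\bigr)=\{0\}\cup\{1/m\}$ does); and for $k=\infty$ identifying the $\infty$-régulue closed sets with the Zariski closed ones uses Théorème~\ref{thinfregestreg}, which in the paper's ordering comes just after this corollary — harmless, but worth flagging if one cares about the order of exposition.
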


\begin{thm}
\label{thinfregestreg}
Soit~$n$ un entier naturel. 
On a
$$
\SR^\infty(\R^n)=\SQ(\R^n),
$$ i.e., une fonction réelle sur~$\R^n$ est $\infty$-régulue si et
seulement si elle est régulière.
\end{thm}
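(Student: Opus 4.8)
The plan is to combine the semi-algebraicity of $\infty$-régulue functions with a complexification argument; the inclusion $\SQ(\R^n)\subseteq\SR^\infty(\R^n)$ being already known, only the reverse requires work. Let $f$ be $\infty$-régulue. By Proposition~\ref{regulsa}, $f$ is Nash, hence real-analytic on all of $\R^n$. Writing $f=p/q$ with $p,q\in\R[x_1,\ldots,x_n]$ coprime, I would first reduce the statement to the claim that $q$ has no zero on $\R^n$. Indeed, regularity of $f$ on $\R^n$ means, by Proposition~\ref{prq(U)}, that $f=p'/q'$ for some polynomials with $q'$ nowhere vanishing; then $pq'=p'q$ in the domain $\R[x_1,\ldots,x_n]$, and coprimality of $p,q$ forces $q\mid q'$, so that $\Z(q)\subseteq\Z(q')=\emptyset$. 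Conversely, if $\Z(q)=\emptyset$ then $f=p/q$ is manifestly regular. Thus everything reduces to proving $\Z(q)=\emptyset$.

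Suppose, for contradiction, that $q(x_0)=0$ for some $x_0\in\R^n$. The identity $fq=p$ holds on the dense open set $\D(q)$; since $f$ is real-analytic near $x_0$ and $p,q$ are polynomials, both sides of $fq=p$ are real-analytic and agree on a dense set, hence agree identically near $x_0$. Real-analyticity now lets me extend $f$ to a holomorphic function $\tilde f$ on a connected neighborhood $W$ of $x_0$ in $\C^n$, and by the identity theorem the relation $\tilde f\cdot q=p$ persists throughout $W$.

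The final step is to exhibit a complex point close to $x_0$ at which this relation is absurd. Here I would use that $p,q$ remain coprime in $\C[x_1,\ldots,x_n]$ (a real common factor is recovered from a complex one by passing to the conjugation-invariant least common multiple of the factor and its conjugate), so that the complex variety $V_\C(p)\cap V_\C(q)$ has codimension at least $2$, while the hypersurface $V_\C(q)$ passing through $x_0$ has pure codimension $1$. Hence $V_\C(q)\setminus V_\C(p)$ is dense in $V_\C(q)$ near $x_0$, and I can choose $z_1\in W\cap V_\C(q)$ with $p(z_1)\neq0$. Evaluating $\tilde f\cdot q=p$ at $z_1$ gives $\tilde f(z_1)\cdot 0=p(z_1)$, i.e.\ $0=p(z_1)\neq0$, a contradiction; so $\Z(q)=\emptyset$ and $f$ is regular. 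The main obstacle is exactly this codimension bound: it is what distinguishes the smooth case from the continuous one, ruling out Cartan's umbrella $x^3/(x^2+y^2)$, whose complexified denominator vanishes at the points $(it,t)$ accumulating at the origin while the numerator there does not. The degenerate case $n\le1$ needs no complexification, since coprimality of $p,q$ already forbids a common real zero and continuity forbids a point of $\Z(q)\setminus\Z(p)$, whence $\Z(q)=\emptyset$ directly.
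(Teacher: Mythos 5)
Your argument is correct, and it shares its first half with the paper's proof: both start from Proposition~\ref{regulsa} to get that $f$ is Nash, hence real-analytic, hence extends to a holomorphic function near any given point of $\R^n$, with the identity $\tilde f\cdot q=p$ propagating by the identity theorem. The endgames, however, are genuinely different. The paper works locally at the origin without assuming $p,q$ coprime and argues purely algebraically: $q$ divides $p$ in the ring $\SH_0$ of germs of holomorphic functions, hence in the completion $\C[[x_1,\ldots,x_n]]$, hence (by Zariski--Samuel, using noetherianity of the local ring $\SO_0$ of complex rational functions defined at $0$) in $\SO_0$ itself, and finally $p/q\in\SO_0\cap\R(x_1,\ldots,x_n)=\SQ_0$. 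You instead normalize $p,q$ to be coprime and derive a contradiction geometrically: coprimality persists over $\C$, so $V_\C(p)\cap V_\C(q)$ has codimension $\ge2$ while $V_\C(q)$ has pure codimension $1$, and evaluating $\tilde f\cdot q=p$ at a complex point $z_1$ of $V_\C(q)\setminus V_\C(p)$ accumulating at the putative real zero $x_0$ of $q$ yields $0=p(z_1)\neq0$. Your route avoids the passage through completions and faithful flatness at the price of the (standard but not entirely free) facts that coprimality is preserved under $\R\subseteq\C$, that the common zero locus of coprime polynomials has codimension $\ge2$, and that a nonempty Zariski-open subset of an irreducible complex hypersurface is Euclidean-dense; it also gives directly the global statement $\Z(q)=\emptyset$ for the coprime representation, whereas the paper's version concludes regularity point by point. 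One small quibble of exposition: what separates the $\SC^\infty$ case from the merely continuous one is not the codimension bound itself (which holds for Cartan's umbrella too) but the holomorphic extendability supplied by the Nash property; your example makes this clear even if the sentence introducing it says otherwise.
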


Cet énoncé se généralise certainement au cas où $X$ est une variété réelle algébrique affine lisse mais la preuve en devient plus technique et ne nous semble pas apporter d'idée nouvelle.

\begin{proof}
Comme une fonction régulière est trivialement $\infty$-régulue, il
suffit de démontrer la réciproque.

Soit $U$ le domaine
de la fonction rationnelle~$f$. Ecrivons $f=p/q$, où $p$~et $q$ sont
des fonctions polynomiales sur~$\R^n$, $q$ ne s'annulant pas sur~$U$.
On montre que la fraction rationnelle~$p/q$ est définie en tout point
de~$\R^n$.  Autrement dit, on montre que, pour tout~$x\in\R^n$, la
fraction rationnelle~$p/q$ appartient à l'anneau local~$\SQ_x$ des
germes des fonctions régulières en~$x$. Il suffit de le montrer pour
$x$ l'origine de~$\R^n$.

D'après la proposition précédente, la fonction~$f$ sur~$\R^n$ est
semi-algébrique. Comme $f$ est de classe $\SC^\infty$, elle est
Nash et donc analytique
réelle~\cite[Proposition~8.1.6]{BCR}. 
Ainsi $f$ définit un germe d'une fonction
analytique réelle en~$0$ ayant la propriété que~$qf=p$. La fonction
polynomiale~$q$ divise donc~$p$ dans l'anneau local des germes
des fonctions analytiques réelles en~$0$. Cela implique que~$q$
divise~$p$ dans l'anneau des séries
formelles ~$\R[[x_1,\ldots,x_n]]$. Ce dernier est aussi la
complétion de l'anneau local des fonctions rationnelles sur~$\R^n$ définies en~$0$. Comme ce dernier anneau est
noethérien, $q$ divise~$p$ dans~$\SQ_0$~\cite[Proposition~8.2.11]{BCR}. 

On a donc démontré que $f$ est une section globale du faisceau~$\SQ$,
i.e., $f$ est une fonction régulière sur~$\R^n$.
\end{proof}

\subsection*{La topologie $k$-régulue de $\R^n$.}

Soient $n$ un entier naturel et $k$ un entier surnaturel. Un
sous-ensemble~$F$ de~$\R^n$ est un \emph{fermé $k$-régulu} s'il
existe un sous-ensemble~$E$ de~$\SR^k(\RR^n)$ tel que
$$
\Z(E)=F.
$$ Remarquons que~$F$ n'est pas a priori le lieu des zéros communs d'un
nombre fini de fonctions $k$-régulues.  Un sous-ensemble~$U$ de~$\R^n$
est un \emph{ouvert $k$-régulu} si son complémentaire est un fermé
$k$-régulu. Les ouverts $k$-régulus de~$\R^n$ constituent une
topologie sur~$\R^n$, la \emph{topologie $k$-régulue}.

Soit $X$ un espace topologique. On rappelle que $X$ est
\emph{noethérien} si et seulement si toute suite décroissante
dénombrable $F_i\supseteq F_{i+1}$, $i\in\N$, de sous-ensembles fermés
de~$X$ est stationnaire. De manière équivalente, tout recouvrement
ouvert d'un ouvert de~$X$ contient un sous-recouvrement fini.

Compte tenu du Théorème~\ref{thinfregestreg}, un ouvert
$\infty$-régulu de~$\R^n$ est un ouvert de Zariski, et réciproquement.
Comme l'anneau des fonctions polynomiales sur~$\R^n$ est noethérien,
la topologie de Zariski sur~$\R^n$ est noethérienne. Il s'ensuit que
la topologie $\infty$-régulue sur~$\R^n$ est noethérienne.  On verra
ci-dessous qu'il en est de même pour la topologie $k$-régulue lorsque
$k$ est fini (voir Théorème~\ref{topregnoeth}).

Si $k\leq k'$, la topologie $k$-régulue est plus fine que la topologie
$k'$-régulue. En particulier, La topologie $k$-r\'egulue est plus fine
que la topologie de Zariski sur~$\RR^n$. L'exemple suivant montre que
la topologie $k$-régulue est strictement plus fine que la topologie de
Zariski, lorsque $k$ est un entier naturel et $n\geq2$.

\begin{ex}
\label{cubique}
Soit~$C$ la courbe cubique d'\'equation $y^2=x^2(x-1)$
dans~$\RR^2$. Comme le polyn\^ome d\'efinissant la courbe~$C$ est
irr\'eductible dans~$\RR[x,y]$, le sous-ensemble~$C$ de~$\RR^2$ est un
ferm\'e de Zariski irr\'eductible. En particulier, la clôture de
Zariski de~$C\setminus\{O\}$ est \'egale \`a~$C$.
\begin{figure}[ht]
\centering
\includegraphics[height =4cm]{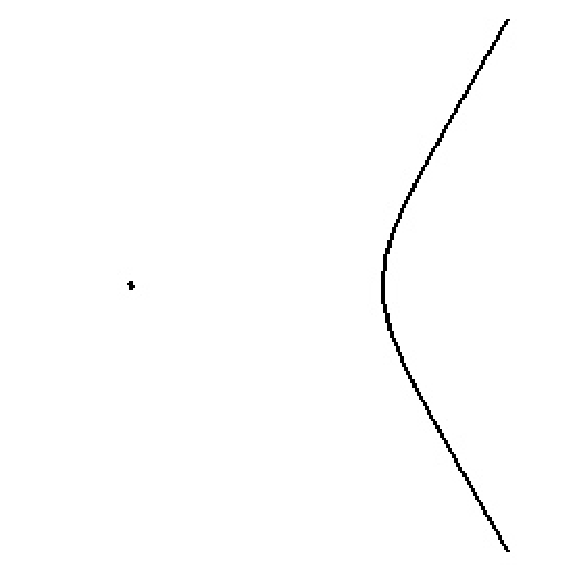}
\caption{Courbe cubique avec un point isolé.}
        \label{fig.cubic}
\end{figure}

Soit $f$ la fonction rationnelle sur~$\RR^2$ d\'efinie par
$$
f(x,y)=\frac{y^2-x^2(x-1)}{y^2+x^2}=1-\frac{x^3}{x^2+y^2}.
$$ Il est clair que $f$ s'\'etend par continuit\'e en une fonction
r\'egulue sur~$\RR^2$. On a $f(O)=1$ et
$\Z(f)=C\setminus\{O\}$. Remarquons que l'origine~$O$ est un point
isol\'e de la courbe~$C$. Par cons\'equent, $C\setminus\{O\}$ est
r\'egulument ferm\'e dans~$\RR^2$, mais non ferm\'e au sens de
Zariski. En particulier, le fermé $\infty$-régulu irréductible~$C$
n'est pas irréductible pour la topologie régulue sur~$\R^2$ et s'écrit
comme réunion de deux sous-fermés régulus stricts.
$$
C=(C\setminus\{O\})\union \{O\}.
$$

Pour montrer que $C\setminus\{O\}$ est un fermé $k$-régulu, pour tout
entier naturel~$k$, on peut supposer que $k+1$ est impair.  Posons
$$
f_k=1-(1-f)^{k+1}.
$$ La fonction~$f_k$ est bien $k$-régulue sur~$\R^2$, et, comme $k+1$
est impair, on a $\Z(f_k)=\Z(f)=C\setminus\{O\}$. Cela montre bien que
ce dernier ensemble est un fermé $k$-régulu, pour tout entier
naturel~$k$. 
\end{ex}

Cet exemple repr\'esente en fait le cas g\'en\'eral. En effet, on verra dans le Corollaire \ref{cor-const} que les topologies $k$-r\'egulue et $k'$-r\'egulue coincident pour $k$ et $k'$ des entiers naturels quelconques.

\subsection*[Fonctions $k$-r\'egulues sur~$\RR^n$]{Propriétés élémentaires
des fonctions $k$-régulues sur~$\R^n$}

Soit $n$ un entier naturel et $k$ un entier surnaturel.  Dans ce
paragraphe nous allons \'etudier les fonctions $k$-r\'egulues sur~$\RR^n$,
et les comparer avec d'autres classes de fonctions sur~$\RR^n$.

Dans l'énoncé suivant, et dans le reste de l'article d'ailleurs, on
utilise librement la notion de dimension, ou plutôt de codimension,
d'un fermé de Zariski de~$\R^n$, et plus généralement d'un ensemble
semi-algébrique de~$\R^n$~\cite[\S2.8]{BCR}.

\begin{prop}
\label{codim}
Soient $n$ un entier naturel et~$k$ un entier surnaturel.  Soit
$f\in\SR^k(\R^n)$. Soient $p$~et $q$ des fonctions polynomiales
sur~$\R^n$ telles que $f(x)=p(x)/q(x)$ pour chaque $x\in\dom(f)$.  Si
$p,q$ sont premiers entre eux, alors $\Z(q)\subseteq \Z(p)$ et
$\codim_{\RR^n}\, \Z(q)\geq 2$.
\end{prop}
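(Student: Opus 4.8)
Le plan est d'établir d'abord l'inclusion $\Z(q)\subseteq\Z(p)$, qui ne requiert que la continuité de~$f$, puis d'en déduire la minoration de la codimension par un argument de complexification. Pour l'inclusion, soit $x_0\in\Z(q)$. Le polynôme~$q$ n'étant pas identiquement nul, son lieu d'annulation est d'intérieur vide pour la topologie euclidienne, donc $\D(q)$ est dense dans~$\R^n$; on fixe alors une suite $(x_j)$ de points de~$\D(q)$ tendant vers~$x_0$. Sur~$\D(q)$ on a $p=f\,q$, d'où $p(x_j)=f(x_j)\,q(x_j)$. Comme $f$ est continue en~$x_0$, la suite $f(x_j)$ converge vers la valeur finie $f(x_0)$, tandis que $q(x_j)\to q(x_0)=0$; on obtient $p(x_j)\to 0$, et la continuité de~$p$ donne $p(x_0)=0$, c'est-à-dire $x_0\in\Z(p)$.

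Pour la codimension, comme $q$ n'est pas identiquement nul on a déjà $\codim\Z(q)\geq1$, et je raisonnerais par l'absurde en supposant $\codim\Z(q)=1$. En décomposant $q$ en facteurs irréductibles de~$\R[x_1,\ldots,x_n]$, on a $\Z(q)=\bigcup_i\Z(q_i)$, donc l'hypothèse fournit un facteur irréductible~$q_i$ avec $\dim\Z(q_i)=n-1$. L'idée-clé est que $\Z(q_i)$ est alors Zariski-dense dans l'hypersurface complexe $\{q_i=0\}\subseteq\C^n$. D'une part, $q_i$ reste irréductible sur~$\C$: sinon $q_i=g\bar g$ avec $g$ et~$\bar g$ conjugués et non proportionnels, et $\Z(q_i)=\{x\in\R^n\suchthat g(x)=0\}$ serait contenu dans les points réels de $\{g=0\}\inter\{\bar g=0\}$, de dimension réelle au plus $n-2$, ce qui contredirait $\dim\Z(q_i)=n-1$. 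D'autre part, la clôture de Zariski complexe de~$\Z(q_i)$ est une sous-variété de l'hypersurface irréductible $\{q_i=0\}$ dont la dimension complexe est au moins $\dim_\R\Z(q_i)=n-1$, donc lui est égale. Comme $p$ s'annule sur $\Z(q_i)\subseteq\Z(q)\subseteq\Z(p)$ par la première partie, $p$ s'annule sur tout $\{q_i=0\}\subseteq\C^n$, d'où $q_i$ divise~$p$ dans~$\C[x_1,\ldots,x_n]$, puis dans~$\R[x_1,\ldots,x_n]$. Ceci contredit le fait que $p$ et~$q$ sont premiers entre eux, et achève de montrer $\codim\Z(q)\geq2$.

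Le point délicat sera l'argument de densité de Zariski de la seconde partie, à savoir le passage du lieu réel $\Z(q_i)$, de dimension $n-1$, à l'hypersurface complexe tout entière. Il repose sur deux faits standards de géométrie algébrique réelle: l'égalité entre la dimension d'un ensemble semi-algébrique et celle de sa clôture de Zariski (cf.~\cite[\S2.8]{BCR}), et le fait qu'un polynôme réel irréductible dont le lieu réel est de dimension $n-1$ reste irréductible sur~$\C$. Une fois ces points acquis, la contradiction provient uniquement du Nullstellensatz complexe et de l'irréductibilité de~$q_i$. On notera que tout l'argument n'utilise de~$f$ que sa continuité ($\SC^0$), de sorte que l'énoncé vaut uniformément pour tout entier surnaturel~$k$.
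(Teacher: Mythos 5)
Votre démonstration est correcte et suit, dans sa structure, le même chemin que celle de l'article : l'inclusion $\Z(q)\subseteq\Z(p)$ est obtenue par le même argument (densité de $\D(q)$, suite de points où $q$ ne s'annule pas, continuité de $f$), puis la codimension est traitée par l'absurde en extrayant un facteur irréductible $q'$ de $q$ avec $\codim\,\Z(q')=1$ et en montrant que $q'$ divise $p$, ce qui contredit l'hypothèse que $p$ et $q$ sont premiers entre eux. La seule divergence réelle porte sur la justification de ce pas de divisibilité : l'article invoque directement \cite[Th.~4.5.1]{BCR} (un polynôme irréductible dont le lieu réel des zéros est de codimension $1$ engendre un idéal réel, de sorte que tout polynôme s'annulant sur $\Z(q')$ est divisible par $q'$), tandis que vous redémontrez ce fait par complexification --- irréductibilité de $q'$ sur $\C$ obtenue par un argument de dimension, densité de Zariski de $\Z(q')$ dans l'hypersurface complexe $\{q'=0\}$, Nullstellensatz complexe, puis descente de la divisibilité de $\C[x_1,\ldots,x_n]$ à $\R[x_1,\ldots,x_n]$. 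Votre variante est plus longue mais autonome vis-à-vis de la théorie des idéaux réels, au prix des deux faits standards que vous signalez vous-même (dimension de la clôture de Zariski d'un semi-algébrique, comportement de l'irréductibilité par extension à $\C$) ; la version de l'article est plus courte grâce à la référence externe. Les deux arguments sont valides.
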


\begin{proof}
Il suffit de montrer l'énoncé lorsque~$k=0$.  Montrons d'abord
l'inclusion~$\Z(q)\subseteq\Z(p)$. Soit $x\in \Z(q)$. Comme $q$ n'est
pas identiquement nulle, l'ensemble de ses z\'eros~$\Z(q)$ est nulle
part dense dans~$\RR^n$.  Il existe donc une suite $(x_m)$
dans~$\RR^n$ convergeant vers~$x$ telle que $q(x_m)\neq0$, pour
tout~$k$. On a alors
$$
p(x)=\lim p(x_m)=\lim q(x_m)f(x_m)=q(x)f(x)=0,
$$ i.e., $x\in \Z(p)$.

Montrons ensuite que $\codim\Z(q)\geq2$.  Par l'absurde, supposons que
$\codim \Z(q)\leq1$. Comme $q$ n'est pas identiquement nulle, on a
$\codim \Z(q)=1$. Il existe donc un diviseur irr\'eductible~$q'$ de~$q$
dans~$\RR[x_1,\ldots,x_n]$ avec $\codim \Z(q')=1$. D'apr\`es ce qui
pr\'ec\`ede, $\Z(q')\subseteq \Z(p)$. Comme $\codim\Z(q')=1$, on en
d\'eduit que~$q'$ divise~$p$~\cite[Th. 4.5.1, p.85]{BCR}.  Cela
contredit l'hypoth\`ese que $p$~et $q$ sont premiers entre eux.
\end{proof}

Comme on a vu, l'énoncé ci-dessus peut \^etre considérablement
renforcé lorsque~$k=\infty$. En effet, dans ce cas la fonction~$f$ est
régulière et est de la forme~$p/q$ où $q$ ne s'annule pas sur~$\R^n$,
i.e., $\Z(q)=\emptyset$.

\begin{cor}
Soient $n\leq1$ et~$k$ un entier surnaturel. Toute fonction $k$-r\'egulue
sur~$\RR^n$ est r\'eguli\`ere.\qed
\end{cor}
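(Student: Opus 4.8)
The plan is to read this off directly from Proposition~\ref{codim}: the codimension estimate it supplies cannot be met in an ambient space of dimension at most $1$, which forces the denominator of $f$ to vanish nowhere. I would first set aside the degenerate case $n=0$, where $\RR^0$ is a single point and every real function is trivially regular, and concentrate on $n=1$. Given $f\in\SR^k(\RR^1)$, I would write $f=p/q$ with $p,q\in\RR[x]$ and, using that $\RR[x]$ is a unique factorization domain, cancel common factors so that $p$ and $q$ become coprime, the identity $f=p/q$ persisting on $\dom(f)$. Proposition~\ref{codim} then applies verbatim and yields $\codim_{\RR^1}\Z(q)\geq 2$.

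The decisive observation is that a nonempty subset of $\RR^1$ has codimension at most $1$, since $\RR^1$ has dimension $1$; a subset of codimension $\geq 2$ would have dimension $\leq -1$ and must therefore be empty. Hence $\Z(q)=\emptyset$, so $q$ vanishes nowhere on $\RR$ and $p/q$ is a regular function defined at every point. As $f$ coincides with $p/q$ on the dense set $\dom(f)$, I conclude that $\dom(f)=\RR$, that is, $f$ is regular.

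I do not anticipate any genuine obstacle here: the whole argument is essentially a one-line consequence of the codimension bound, the only routine point being the harmless reduction to coprime $p$ and $q$. Note moreover that the case $k=\infty$ is already covered by Theorem~\ref{thinfregestreg}---and, more sharply, by the remark following Proposition~\ref{codim}, where $\Z(q)=\emptyset$ is observed directly---so the real content of the statement lies in finite $k$; the argument above nonetheless dispatches all supernatural $k$ in one stroke.
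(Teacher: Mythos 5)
Your argument is exactly the intended one: the paper states this corollary with no proof precisely because it is the immediate consequence of Proposition~\ref{codim} that you spell out (a nonempty subset of $\RR^n$ with $n\leq 1$ cannot have codimension $\geq 2$, so $\Z(q)=\emptyset$ and $f=p/q$ is regular everywhere). The proposal is correct and follows the paper's approach.
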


\begin{cor}
Soit~$k$ un entier surnaturel.  Une fonction $k$-r\'egulue sur~$\RR^2$
est r\'eguli\`ere en dehors d'un ensemble fini.\qed
\end{cor}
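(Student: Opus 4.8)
Le plan est d'exploiter directement la Proposition~\ref{codim}, qui contient déjà l'essentiel du travail. Soit $f$ une fonction $k$-régulue sur~$\RR^2$. D'abord j'écrirais $f=p/q$ avec $p$~et $q$ des fonctions polynomiales premières entre elles sur~$\RR^2$, ce qui est possible puisque $\RR[x_1,x_2]$ est factoriel. La Proposition~\ref{codim} fournit alors immédiatement $\codim_{\RR^2}\Z(q)\geq 2$.

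Ensuite je traduirais cette condition de codimension en une condition de finitude. Dans le plan~$\RR^2$, l'inégalité $\codim_{\RR^2}\Z(q)\geq 2$ signifie $\dim\Z(q)\leq 0$. Or $\Z(q)$ est un ensemble algébrique réel, et un ensemble algébrique réel de dimension~$0$ est fini (fait élémentaire sur la dimension des ensembles semi-algébriques, cf.~\cite[\S2.8]{BCR}). Par conséquent $\Z(q)$ est un ensemble fini.

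Il resterait alors à identifier le lieu où $f$ n'est pas régulière. Sur l'ouvert $\{q\neq 0\}=\RR^2\setminus\Z(q)$, la fonction rationnelle $p/q$ est régulière au sens de la Proposition~\ref{prq(U)}, son dénominateur ne s'y annulant pas. Comme $f$ et $p/q$ définissent la même fonction rationnelle sur~$\RR^2$, le domaine $\dom(f)$ contient $\{q\neq 0\}$, d'où $\pol(f)\subseteq\Z(q)$. Ainsi $\pol(f)$ est contenu dans un ensemble fini et $f$ est régulière sur $\dom(f)$, complémentaire d'un ensemble fini, ce qui conclut.

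L'obstacle principal est en réalité absent~: tout le contenu non trivial réside dans la Proposition~\ref{codim} déjà démontrée, et la seule chose à ajouter est le passage de la codimension~$2$ à la finitude, qui repose sur le fait qu'un ensemble algébrique réel de dimension nulle du plan est fini. Signalons enfin que le cas $k=\infty$ est de toute façon trivial via le Théorème~\ref{thinfregestreg}, qui force $\Z(q)=\emptyset$, donc $f$ régulière partout.
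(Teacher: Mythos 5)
Votre démonstration est correcte et suit exactement la voie que le papier a en vue : le corollaire y est donné avec un \og\(\qed\)\fg{} immédiat car il découle directement de la Proposition~\ref{codim}, la codimension~$\geq2$ de~$\Z(q)$ dans~$\RR^2$ forçant la finitude de ce lieu, en dehors duquel $f=p/q$ est régulière. Rien à redire, y compris sur la remarque finale concernant le cas $k=\infty$.
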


\begin{cor}
Soient $n$ un entier naturel et~$k$ un entier surnaturel.  Soit
$f\in\SR^k(\R^n)$. Le lieu~$\pol(f)$ où~$f$ n'est pas une fonction régulière est un
fermé de Zariski de~$\R^n$ de codimension~$\geq2$.\qed
\end{cor}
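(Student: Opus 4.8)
The plan is to deduce the corollary directly from Proposition~\ref{codim}; the only thing to add is a description of $\pol(f)$ in terms of a denominator of $f$ written in lowest terms.

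First I would write $f=p/q$ with $p,q\in\RR[x_1,\ldots,x_n]$ coprime. This is possible because $\RR[x_1,\ldots,x_n]$ is a unique factorization domain and $f$ lies in its fraction field, so any fractional representation can be reduced to lowest terms. Reducing to lowest terms is precisely what makes Proposition~\ref{codim} applicable.

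Next I would observe that $\pol(f)$ is Zariski closed: by construction $\dom(f)$ is the largest Zariski open subset of $\RR^n$ on which $f$ is regular, so its complement $\pol(f)$ is Zariski closed. Moreover $f=p/q$ is regular at every point where $q$ does not vanish, so $\D(q)\subseteq\dom(f)$ and hence $\pol(f)\subseteq\Z(q)$. In fact equality holds: if $f$ were regular at some $x\in\Z(q)$, Proposition~\ref{prqF(U)} would give polynomials $p',q'$ with $q'(x)\neq0$ and $f=p'/q'$ near $x$, whence $pq'=p'q$; coprimality of $p,q$ in the unique factorization domain $\RR[x_1,\ldots,x_n]$ would then force $q\mid q'$, contradicting $q(x)=0\neq q'(x)$. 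Thus $\pol(f)=\Z(q)$.

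Finally, Proposition~\ref{codim} applied to the coprime pair $p,q$ gives $\codim_{\RR^n}\Z(q)\geq2$, whence $\codim_{\RR^n}\pol(f)\geq2$ (with the convention that the empty set, which occurs when $q$ is non-vanishing, has codimension $\geq2$). The argument is essentially immediate from~\ref{codim}; the only genuine point is the identification of $\pol(f)$ with $\Z(q)$ for a lowest-terms representation, and even this can be shortened to the containment $\pol(f)\subseteq\Z(q)$ together with the fact that a Zariski-closed subset of a set of codimension $\geq2$ again has codimension $\geq2$.
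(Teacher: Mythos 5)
Votre démonstration est correcte et suit exactement l'argument implicite du papier (le corollaire y est laissé au lecteur) : écrire $f=p/q$ sous forme réduite, constater que $\pol(f)\subseteq\Z(q)$, et appliquer la Proposition~\ref{codim}. La vérification supplémentaire de l'égalité $\pol(f)=\Z(q)$ via la factorialité de $\RR[x_1,\ldots,x_n]$ est juste mais, comme vous le notez vous-même, superflue pour la conclusion.
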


Une application de $\R^n$ dans $\R^m$ est \emph{$k$-régulue} lorsque
toutes ses fonctions coordonnées le sont. Il
n'est pas clair que la composition de deux applications $k$-régulues
soit encore $k$-régulue. Un cas où c'est évident est le suivant:

\begin{cor}
Soient $\ell,m,n$ des entiers naturels et~$k$ un entier surnaturel. 
Soient
$$
f\colon \R^n\ra \R^m
\et
g\colon \R^m\ra\R^\ell
$$ deux applications $k$-régulues. Si l'image de~$f$ est de
codimension~$\leq1$, alors la composition~$g\circ f\colon \R^n\ra
\R^\ell$ est $k$-r\'egulue.\qed
\end{cor}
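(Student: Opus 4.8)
Le plan est de vérifier séparément les deux conditions de la Définition~\ref{dfn.regulue} pour $g\circ f$ : que $g\circ f$ est de classe~$\SC^k$ sur~$\R^n$ tout entier, et qu'elle est régulière sur un ouvert de Zariski dense de~$\R^n$. La première condition est immédiate et n'utilise aucune hypothèse sur l'image : chaque fonction coordonnée de~$f$ étant de classe~$\SC^k$ sur~$\R^n$ et chaque fonction coordonnée de~$g$ étant de classe~$\SC^k$ sur~$\R^m$, la règle de dérivation des fonctions composées montre que chaque coordonnée de~$g\circ f$ est de classe~$\SC^k$ sur~$\R^n$. C'est donc la régularité sur un ouvert dense qui constitue l'essentiel de la preuve, et c'est précisément là qu'intervient l'hypothèse de codimension.

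Pour la seconde condition, je commence par localiser le lieu où~$g$ est régulière. En écrivant $g=(g_1,\ldots,g_\ell)$, chaque $g_j\in\SR^k(\R^m)$, de sorte que, d'après le corollaire ci-dessus, son lieu d'indétermination~$\pol(g_j)$ est un fermé de Zariski de~$\R^m$ de codimension~$\geq2$. En posant $P=\pol(g_1)\union\cdots\union\pol(g_\ell)$, on obtient une réunion finie, donc encore un fermé de Zariski de codimension~$\geq2$, et $g$ est régulière sur l'ouvert de Zariski dense $V=\R^m\setminus P$. De même, $f$ étant $k$-régulue, elle est régulière sur un ouvert de Zariski dense~$U\subseteq\R^n$, sur lequel elle définit une véritable application régulière.

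Le point crucial consiste à montrer que $W=U\inter f^{-1}(V)$ est dense dans~$\R^n$ : la restriction de $g\circ f$ à~$W$ sera alors une composée d'applications régulières $W\ra V\ra\R^\ell$, donc régulière sur l'ouvert de Zariski dense~$W$, ce qui conclura. C'est ici qu'intervient l'hypothèse sur l'image. Comme~$U$ est irréductible et que $f_{|U}$ est régulière, donc continue pour la topologie de Zariski, l'image $f(U)$ est irréductible et sa clôture de Zariski $Y=\overline{f(U)}$ l'est aussi ; de plus, puisque~$U$ est dense dans~$\R^n$ pour la topologie euclidienne et que~$f$ est continue, $Y$ coïncide avec la clôture de Zariski de $f(\R^n)$, si bien que $\codim_{\R^m} Y\leq1$, i.e. $\dim Y\geq m-1$. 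Si $f(U)$ était contenue dans~$P$, on aurait $Y\subseteq P$ (car $P$ est fermé de Zariski), d'où $\dim Y\leq\dim P\leq m-2$, une contradiction. Ainsi $f(U)\not\subseteq P$, donc $f^{-1}(P)\inter U$ est un fermé de Zariski propre de l'irréductible~$U$, et son complémentaire $W=U\setminus f^{-1}(P)$ est dense et ouvert de Zariski dans~$U$, donc ouvert de Zariski dense dans~$\R^n$.

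L'obstacle principal est précisément ce dernier argument de densité : sans minoration de $\dim Y$, rien n'empêche l'image entière de~$f$ de tomber dans le lieu d'indétermination~$P$ de~$g$, auquel cas $g\circ f$ ne serait régulière sur aucun ouvert dense. L'hypothèse $\codim\,\im f\leq1$ est exactement assez forte pour l'emporter sur la minoration $\codim P\geq2$ fournie par le corollaire, ce qui fait fonctionner la comparaison de dimensions $\dim Y\geq m-1>m-2\geq\dim P$.
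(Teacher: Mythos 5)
Votre démonstration est correcte et suit exactement l'argument que le papier laisse implicite (le corollaire y est donné avec \qed comme «évident»)~: la comparaison de dimensions entre l'image de~$f$, de codimension~$\leq1$, et le lieu d'indétermination de~$g$, de codimension~$\geq2$ d'après le corollaire précédent, garantit que $g\circ f$ est régulière sur un ouvert de Zariski dense, la classe~$\SC^k$ étant immédiate par composition. Rien à redire.
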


On verra ci-dessous que la conclusion est valable même si la
codimension de l'image de~$f$ est plus grande que~$1$ (voir
Corollaire~\ref{cor.compositionregulues}).

\subsection*{Fonctions régulues et fonctions régulières après éclatement}

Soit $X$ une vari\'et\'e r\'eelle alg\'ebrique affine lisse.  Soit~$Y$
une sous-vari\'et\'e r\'eelle alg\'ebrique lisse de~$X$. Soient
$f_0,\ldots,f_n$ des fonctions r\'eguli\`eres sur~$X$ engendrant
l'id\'eal de~$Y$ dans~$X$. Notons~$f$ l'application r\'eguli\`ere de
$X\setminus Y$ dans~$\P^n(\R)$ de coordonn\'ees
homog\`enes~$f_0,\ldots,f_n$.  L'\emph{\'eclat\'e} $E_Y(X)$ de~$X$ le
long de~$Y$ est la cl\^oture de Zariski du graphe de~$f$ dans le
produit~$X\times\P^n(\R)$. L'\'eclat\'e~$E_Y(X)$ est encore une
vari\'et\'e r\'eelle alg\'ebrique affine lisse.  L'application de
projection
$$
\pi\colon E_Y(X)\lra X
$$
est une application r\'eguli\`ere, et bir\'eguli\`ere au-dessus
de~$X\setminus Y$. L'image r\'eciproque de~$Y$ par~$\pi$ peut
s'identifier avec le fibr\'e normal projectif de~$Y$ dans~$X$.

On aura besoin du résultat suivant:

\begin{thm}[Hironaka]
\label{thindeterminations}
Soit $X$ une variété réelle algébrique affine lisse et $f$ une
fonction rationnelle sur~$X$. Alors il existe une composition finie
d'éclatements de centres lisses
$$
\phi\colon\widetilde X\lra X
$$ telle que l'application rationnelle
$$
f\circ\phi\colon \widetilde X\dasharrow\P^1(\R)
$$
est régulière sur~$\widetilde X$ toute entière.\qed
\end{thm}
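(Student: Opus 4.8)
Le plan est de reformuler l'extension de~$f$ comme un problème de principalisation d'idéal, puis d'invoquer la résolution des singularités de Hironaka en caractéristique nulle. D'après la Proposition~\ref{prop.pq}, on peut écrire $f=p/q$ avec $p$~et $q$ réguliers sur~$X$ et $q\neq0$ sur~$\dom(f)$. Le lieu d'indétermination de l'application rationnelle $[p:q]\colon X\dashrightarrow\P^1(\R)$ est contenu dans $\Z(p)\inter\Z(q)$, et l'obstruction à ce que $[p:q]$ soit partout définie est précisément la non-principalité du faisceau d'idéaux $\I=(p,q)\subseteq\SO_X$. L'idée est donc de rendre cet idéal localement principal par une suite d'éclatements de centres lisses.

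Premièrement, j'appliquerais le théorème de principalisation de Hironaka. Comme $X$ est lisse et que le corps de base~$\R$ est de caractéristique nulle, il existe une composition finie d'éclatements de centres lisses $\phi\colon\widetilde X\to X$ telle que l'image inverse $\I\cdot\SO_{\widetilde X}$ soit un faisceau d'idéaux localement principal, c'est-à-dire un diviseur de Cartier. Les centres étant définis sur~$\R$ et lisses, l'éclaté~$\widetilde X$ reste une variété réelle algébrique affine lisse (comme rappelé plus haut), et $\phi$ est birégulière au-dessus de~$\dom(f)$.

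Deuxièmement, je montrerais qu'après principalisation l'application $f\circ\phi$ s'étend. Localement au voisinage de tout point de~$\widetilde X$, l'idéal $\I\cdot\SO_{\widetilde X}$ est engendré par un unique élément~$g$, qui est un non-diviseur de zéro puisqu'il engendre un idéal inversible ; on peut donc écrire $p\circ\phi=a\,g$ et $q\circ\phi=b\,g$ avec $a,b$ réguliers, et la relation $(a,b)\cdot(g)=(g)$ force $(a,b)$ à être l'idéal unité localement. Ainsi $a$~et $b$ ne s'annulent jamais simultanément, l'application $[a:b]\colon\widetilde X\to\P^1(\R)$ est régulière, et sur l'ouvert dense $\phi^{-1}(\dom(f))$ on a $[a:b]=[p\circ\phi:q\circ\phi]=f\circ\phi$. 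Ces descriptions locales se recollent sans peine, car deux applications régulières à valeurs dans le séparé $\P^1(\R)$ qui coïncident sur un ouvert dense coïncident partout ; elles fournissent donc le prolongement régulier global cherché.

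L'obstacle principal est le premier point~: disposer d'une version de la principalisation de Hironaka adaptée aux variétés réelles algébriques, c'est-à-dire s'assurer que les centres d'éclatement peuvent être choisis définis sur~$\R$ et lisses en tant que variétés réelles, et que le passage aux points réels préserve les propriétés d'invertibilité de l'idéal image inverse. La résolution en caractéristique nulle s'applique sur tout corps de caractéristique~$0$ et fournit des centres définis sur le corps de base~; le travail de traduction au niveau des points réels et du faisceau $\SO$ des fonctions régulières réelles est la partie délicate, mais ne requiert aucune idée nouvelle une fois la principalisation acquise.
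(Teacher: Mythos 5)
The paper gives no proof of its own here---it simply cites Kollár's \emph{Lectures on resolution of singularities} (Corollaire~3.18 renforcé par le Théorème~3.21, applied to a smooth complexification)---and your argument is precisely the standard principalization-of-the-base-ideal proof that underlies that citation, so it is correct and takes essentially the same route. The one point you rightly flag as delicate, namely arranging the centers to be defined over~$\R$ and smooth so that the blowups stay within the paper's category of smooth real algebraic affine varieties, is exactly what the paper's appeal to a complexification~$X_\C$ is meant to address.
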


Pour une démonstration, on renvoi le lecteur vers le livre de
Kollár~\cite{Kollar-LRS} (notamment le Corollaire~3.18 amélioré en
utilisant Théorème~3.21 au lieu de Théorème~3.17, ceci appliqu\'e à
une complexification lisse~$X_\C$ de~$X$).

Soit $X$ une vari\'et\'e r\'eelle alg\'ebrique affine lisse.  Une
fonction réelle~$f$ sur~$X$ est \emph{r\'eguli\`ere apr\`es
  \'eclatements} (\emph{blow-regular}, en anglais) s'il existe une
composition d'\'eclatements \`a centres lisses $\phi\colon
\tilde{X}\ra X$ telle que la fonction réelle~$f\circ\phi$ est
r\'eguli\`ere.  Cette notion est l'analogue algébrique des fonctions
qui sont Nash apr\`es \'eclatements ou encore des fonctions analytiques
r\'eelles apr\`es \'eclatements \cite{Kuo}.

\begin{thm}
\label{threguliereaeclt}
Soit $X$ une vari\'et\'e r\'eelle alg\'ebrique affine lisse.  Soit $f$
une fonction r\'eelle sur~$X$. La fonction~$f$ est régulue sur~$X$ si
et seulement si $f$ est régulière apr\`es \'eclatements.  Plus
précisément, $f$ est régulue si et seulement s'il existe une
composition finie d'\'eclatements \`a centres lisses
$$
\phi\colon \widetilde{X}\lra X
$$ 
telle que la composition
$$
f\circ\phi\colon \widetilde{X}\lra \R
$$ 
est r\'eguli\`ere.
\end{thm}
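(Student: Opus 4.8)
The plan is to prove the two implications separately: the reverse implication (\emph{régulière après éclatements} $\Rightarrow$ \emph{régulue}) is elementary and topological, while the direct implication rests on le Théorème~\ref{thindeterminations} de Hironaka complété par un argument de continuité.

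Pour l'implication en retour, supposons donnée une composition finie d'éclatements à centres lisses $\phi\colon\widetilde X\to X$ telle que $f\circ\phi$ soit régulière. Une telle composition est propre et birégulière au-dessus d'un ouvert de Zariski dense $U$ de~$X$~; notant $\sigma\colon U\to\widetilde X$ l'inverse de~$\phi$ sur~$U$, la restriction $f_{|U}=(f\circ\phi)\circ\sigma$ est régulière, ce qui montre que $f$ est rationnelle sur~$X$. Pour la continuité, j'utiliserais que $\phi$ induit sur les points réels une application propre, continue et surjective pour la topologie euclidienne (les fibres au-dessus des centres sont des espaces projectifs réels, donc non vides). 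Or une application propre, continue et surjective entre espaces localement compacts séparés est fermée, donc une application quotient. Par conséquent $f$ est continue dès que $f\circ\phi$ l'est, ce qui est le cas puisque $f\circ\phi$ est régulière. Étant à la fois rationnelle et continue, $f$ est régulue.

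Pour la réciproque, supposons $f$ régulue. En particulier $f$ est une fonction rationnelle sur~$X$, et le Théorème~\ref{thindeterminations} fournit une composition d'éclatements à centres lisses $\phi\colon\widetilde X\to X$ telle que l'application rationnelle $\widetilde f:=f\circ\phi\colon\widetilde X\to\P^1(\R)$ soit régulière. Le point crucial, et le principal obstacle, sera de montrer que $\widetilde f$ ne prend jamais la valeur~$\infty$, de sorte qu'elle définit une fonction régulière à valeurs dans~$\R$. Je procéderais par l'absurde en supposant $\widetilde f(\tilde x)=\infty$ pour un certain $\tilde x\in\widetilde X$, et poserais $x=\phi(\tilde x)$. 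Comme $\widetilde X$ est lisse, le lieu où $\phi$ est un isomorphisme est le complémentaire d'un fermé de Zariski strict, donc d'intérieur euclidien vide (comme dans l'argument de~\ref{codim}), et partant dense pour la topologie euclidienne. Je choisirais alors une suite $\tilde x_m\to\tilde x$ dans ce lieu. D'une part, la continuité de $\widetilde f$ comme application à valeurs dans $\P^1(\R)$ donne $\widetilde f(\tilde x_m)\to\infty$. D'autre part $\phi(\tilde x_m)\to x$, et comme $f$ est continue et à valeurs réelles finies, $\widetilde f(\tilde x_m)=f(\phi(\tilde x_m))\to f(x)\in\R$, ce qui est absurde dans $\P^1(\R)$ où une suite ne peut converger à la fois vers $\infty$ et vers un point fini. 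Donc $\widetilde f$ est à valeurs dans $\R$, la composition $f\circ\phi$ est régulière, et $f$ est régulière après éclatements.
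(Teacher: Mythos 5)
Votre preuve est correcte et suit essentiellement la même démarche que celle de l'article : pour le sens direct, l'argument de topologie quotient (via la propreté et la surjectivité de $\phi$ sur les points réels), et pour la réciproque, le Théorème~\ref{thindeterminations} suivi d'un argument de densité euclidienne montrant que l'extension régulière à valeurs dans $\P^1(\R)$ coïncide avec la fonction continue $f\circ\phi$ et n'atteint donc jamais~$\infty$ (l'article le formule comme l'unicité d'un prolongement continu sur un dense, vous par l'absurde avec des suites — c'est le même contenu). Seule précision à apporter : la suite $\tilde x_m$ doit être prise dans $\phi^{-1}(\dom(f))$ (intersecté avec le lieu où $\phi$ est un isomorphisme), qui reste euclidiennement dense, pour garantir l'égalité $\widetilde f(\tilde x_m)=f(\phi(\tilde x_m))$.
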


\begin{proof} 
Supposons que~$f$ est régulière après éclatements, et montrons que~$f$
est régulue. Il existe une suite d'éclatements de centres lisses
$$
\phi\colon\widetilde X\lra X
$$ telle que~$f\circ\phi$ est régulière. Comme~$\phi$ est
birationnelle, il existe un ouvert de Zariski, qui est dense dans~$X$, sur
lequel~$f$ est régulière. Il nous reste à montrer que $f$ est
continue.  Or, la topologie euclidienne sur~$X$ est la topologie
quotient induite par la topologie euclidienne sur~$\tilde X$. Le fait
que~$f\circ\phi$ soit continue implique donc que~$f$ est continue.
Cela montre bien que~$f$ est régulue lorsque~$f$ est régulière après
éclatements.

Montrons la réciproque. Soit~$f$ une fonction régulue
sur~$X$. Il existe un ouvert de Zariski~$U$, dense dans~$X$, tel
que~$f_{|U}$ est régulière. En appliquant
Théorème~\ref{thindeterminations} à $f$, vue comme fonction
rationnelle sur~$X$, il existe une suite d'éclatements de centres
lisses
$$
\phi\colon\widetilde X\lra X
$$
telle que l'application rationnelle
$$
f\circ\phi\colon\widetilde X\dasharrow \P^1(\R)
$$ est régulière sur $\tilde X$ toute entière. Par définition de
composition d'applications rationnelles, cela veut dire que
l'application régulière~$f_{|U}\circ\phi_{|\phi^{-1}(U)}$ de
$\phi^{-1}(U)$~dans $\P^1(\R)$ s'étend à~$\tilde{X}$ toute entière
comme application régulière. Cette extension est nécessairement
continue. Or, la composition d'applications continues~$f\circ\phi$ est
également une extension continue
de~$f_{|U}\circ\phi_{|\phi^{-1}(U)}$. Comme~$\phi^{-1}(U)$ est dense
dans~$\tilde{X}$ pour la topologie euclidienne, ces deux extensions sont la
même ce qui veut dire que~$f\circ\phi$ est régulière, et $f$ est
régulière après éclatements.
\end{proof}

\section{La topologie $k$-régulue est noethérienne}

\subsection*{Fonctions régulues et fonctions régulières par strates}

Rappelons qu'un sous-ensemble localement ferm\'e de~$\R^n$, au sens de
Zariski, est un sous-ensemble de la forme~$U\inter F$, o\`u
$F\subseteq\R^n$ est un ferm\'e de Zariski et $U\subseteq \R^n$ un
ouvert de Zariski.

\begin{thm}\label{thm.stratif}
Soient $n$ un entier naturel et $k$ un entier surnaturel.  Soit $f$ une
fonction $k$-r\'egulue sur~$\R^n$. Alors, il existe une stratification
finie de~$\R^n$
$$
\R^n=\coprod_{i=1}^m S_i
$$
en sous-ensembles localement ferm\'es de~$\R^n$ au sens de Zariski
telle que la restriction~$f_{|S_i}$ est r\'eguli\`ere, pour tout~$i$.
\end{thm}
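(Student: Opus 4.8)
Le plan est de démontrer, par récurrence sur la dimension, l'énoncé plus général suivant, valable pour toute variété réelle algébrique affine lisse~$X$~: toute fonction régulue~$f$ sur~$X$ admet une stratification finie de~$X$ en sous-ensembles localement fermés au sens de Zariski sur chacun desquels~$f$ est régulière. Le théorème s'en déduit en prenant $X=\R^n$, puisqu'une fonction $k$-régulue est en particulier régulue (au sens de la Définition~\ref{dfn.regulue}) et puisque l'énoncé à démontrer ne fait pas intervenir~$k$. Le cas $\dim X=0$ est trivial (un ensemble fini de points, $f$ y étant régulière). Supposons donc $\dim X=d\geq1$ et l'énoncé acquis en toute dimension~$<d$. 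La fonction régulue~$f$ est régulière sur son domaine~$\dom(f)$, qui est un ouvert de Zariski dense, et son lieu d'indétermination $Z:=\pol(f)$ est un fermé de Zariski propre, donc de dimension $<d$ (sur $\R^n$ on sait même, par la Proposition~\ref{codim}, que $\codim Z\geq2$). On pose $S_0:=X\setminus Z=\dom(f)$~: c'est une strate sur laquelle~$f$ est régulière, et il reste à stratifier~$Z$.

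La difficulté centrale est de montrer que, sur chaque morceau lisse de~$Z$, la restriction de~$f$ redevient régulue, afin de pouvoir appliquer l'hypothèse de récurrence. Pour cela, j'invoquerais le Théorème~\ref{threguliereaeclt}~: il existe une composition d'éclatements à centres lisses $\phi\colon\widetilde X\to X$ telle que $g:=f\circ\phi$ soit régulière sur~$\widetilde X$. Le point clef est que, par construction, $g$ est constante égale à~$f(x)$ sur chaque fibre $\phi^{-1}(x)$. Je stratifierais alors $Z$ en un nombre fini de sous-variétés lisses irréductibles localement fermées $Z=\coprod_\alpha Z_\alpha$ (stratification lisse standard en caractéristique nulle). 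Pour chaque~$\alpha$, comme $\phi$ est propre et surjective et $Z_\alpha$ irréductible, il existe une composante irréductible $D_\alpha$ de $\overline{\phi^{-1}(Z_\alpha)}$ dominant~$Z_\alpha$ par $\psi:=\phi_{|D_\alpha}$, et $g_{|D_\alpha}$ est régulière et constante sur les fibres de~$\psi$.

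Il resterait à faire descendre $g_{|D_\alpha}$ le long de~$\psi$. Je considérerais le morphisme $(\psi,g_{|D_\alpha})\colon D_\alpha\to Z_\alpha\times\R$ et noterais $\Gamma_\alpha$ l'adhérence de Zariski de son image, qui est irréductible. La première projection $\Gamma_\alpha\to Z_\alpha$ est dominante (car $\psi$ l'est) et bijective au-dessus d'un ouvert de Zariski dense de~$Z_\alpha$~: pour $z$ générique, la fibre se réduit à l'unique point $(z,f(z))$, puisque $g$ ne prend que la valeur~$f(z)$ sur~$\psi^{-1}(z)$. Cette projection est donc birationnelle, et son inverse rationnel $z\mapsto(z,f(z))$ montre que $\Gamma_\alpha$ est l'adhérence du graphe de~$f_{|Z_\alpha}$~; autrement dit $f_{|Z_\alpha}$ est une fonction rationnelle sur~$Z_\alpha$, donc régulière sur un ouvert de Zariski dense. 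Comme $f_{|Z_\alpha}$ est de plus continue, c'est une fonction régulue sur la variété lisse~$Z_\alpha$.

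Enfin, je conclurais par l'hypothèse de récurrence. Comme $\dim Z_\alpha\leq\dim Z<d$, chaque~$Z_\alpha$ admet une stratification finie en sous-ensembles localement fermés au sens de Zariski (dans~$Z_\alpha$, donc aussi dans~$X$, une partie localement fermée d'une partie localement fermée étant localement fermée) sur chacun desquels~$f$ est régulière. La réunion de~$S_0$ et de toutes ces strates fournit une stratification finie de~$X$ répondant à la question. L'obstacle principal que j'anticipe est précisément l'argument de descente du paragraphe précédent~: il faut s'assurer que la valeur de~$g$ sur toute la fibre de~$\phi$ est bien l'unique réel~$f(x)$, ce qui garantit l'injectivité générique et donc la rationalité de~$f_{|Z_\alpha}$~; tout le reste est une récurrence dimensionnelle routinière.
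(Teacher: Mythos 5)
Votre architecture générale est proche de celle de l'article~: résolution des indéterminations de~$f$ par une composition d'éclatements (Théorème~\ref{threguliereaeclt}), descente de la fonction régulière $g=f\circ\phi$ sur les strates du lieu d'indétermination, puis raffinement en une stratification~; l'article ne procède pas par récurrence sur la dimension mais produit directement, pour chaque idéal premier réel~$p$, un localement fermé dense $R_p\subseteq\Z(p)$ sur lequel~$f$ est régulière, ce qui revient au même. En revanche, votre étape de descente comporte une lacune véritable, et elle n'est pas là où vous l'anticipez~: le fait que $g$ vaille $f(x)$ sur toute la fibre réelle $\phi^{-1}(x)$ est immédiat (puisque $g=f\circ\phi$ comme fonctions sur les points réels), mais l'implication «la projection $\Gamma_\alpha\to\overline{Z_\alpha}$ est bijective sur les points réels au-dessus d'un ouvert de Zariski dense, donc birationnelle» est fausse en géométrie réelle. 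Prenez $D=\AA^1_u$, $\psi(u)=u^3$, $g(u)=u$~: la fonction $g$ est régulière et trivialement constante sur les fibres réelles de~$\psi$, l'adhérence de Zariski $\Gamma$ de l'image de $(\psi,g)$ est la courbe irréductible $t^3=z$, dont la projection sur l'axe des~$z$ a toutes ses fibres réelles réduites à un point, mais qui est de degré~$3$ et non birationnelle~; «l'inverse» $z\mapsto(z,z^{1/3})$ n'est pas rationnel. Votre argument, appliqué tel quel, «démontrerait» que $z^{1/3}$ est une fonction rationnelle de~$z$. La constance de~$g$ sur les fibres réelles et l'injectivité générique sur les points réels ne suffisent donc pas à établir la rationalité de $f_{|Z_\alpha}$, qui est précisément le c{\oe}ur du problème (c'est la propriété «héréditairement rationnelle», cf.\ Remarque~\ref{rem.here}).

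Pour combler cette lacune il faut exploiter la structure schématique des fibres de~$\phi$, comme le fait l'article~: au-dessus du point générique de $\overline{Z_\alpha}^{\mathrm{Zar}}$ (un premier réel de corps résiduel~$\kappa$), la fibre de~$\phi$ est une réunion finie d'espaces projectifs sur~$\kappa$~; un tel espace, $\kappa$ étant infini, possède un point $\kappa$-rationnel~$q$ en dehors du lieu d'indétermination de $f\circ\phi$, et l'adhérence de~$q$ fournit une sous-variété~$G$ de~$\widetilde X$ telle que $\phi_{|G}$ soit birationnelle sur $\overline{Z_\alpha}^{\mathrm{Zar}}$, c'est-à-dire une section rationnelle~$\sigma$ de~$\phi$. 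On écrit alors $f=g\circ\sigma$ sur un ouvert dense de points réels de~$Z_\alpha$, ce qui exhibe~$f$ comme régulière sur cet ouvert~: c'est ce mécanisme de section qui force le «degré un», et que votre adhérence de graphe ne capture pas. Il faut en outre, comme dans l'article, choisir les centres d'éclatement à points réels denses pour que la section induise un isomorphisme birégulier sur les points réels d'ouverts convenables. Une fois ce point acquis, votre récurrence dimensionnelle et l'assemblage final des strates sont corrects.
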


\begin{proof}
Il suffit de traiter le cas~$k=0$.  On peut, bien-s\^ur, supposer
que~$f$ est non identiquement nulle sur~$\R^n$.

Notons~$\AA^n$ l'espace affine~$\Spec\R[x_1,\ldots,x_n]$ sur~$\R$, de
sorte que~$\AA^n(\R)=\R^n$ et $\AA^n(\C)=\C^n$.  La conjugaison
complexe~$\gamma$ agit sur~$\AA^n(\C)$.  On identifie~$\AA^n(\R)$ avec
l'ensemble des points fixes de~$\gamma$ sur~$\AA^n(\C)$. 

Ecrivons $f=r/s$ avec $r$~et $s$ des polyn\^omes r\'eels
en~$x_1,\ldots,x_n$. Soit $Y$ le sous-sch\'ema r\'eduit de~$\AA^n$
d\'efini par le polyn\^ome~$rs$. Comme~$f$ est non nulle, $Y$ est un
sous-sch\'ema de~$\AA^n$ de codimension~$1$.

Appliquons le th\'eor\`eme de Hironaka\footnote{Nous utilisons ici la version \cite[Théorème~3.27]{Kollar-LRS}} au sous-sch\'ema~$Y$ de~$\AA^n$
en \'eclatant~$\AA^n$, de mani\`ere successive, mais seulement en des
centres dont l'ensemble des points r\'eels est dense. Plus
pr\'ecis\'ement, il existe une suite finie de morphismes
$$ X_\ell\lra X_{\ell-1}\lra\cdots\lra X_0=\AA^n
$$
o\`u chaque morphisme~$\pi_i\colon X_i\ra X_{i-1}$ est un \'eclatement
de centre lisse~$C_{i-1}\subseteq X_{i-1}$ de codimension~$\geq2$
ayant les propri\'et\'es suivantes. Pour tout~$i$, l'ensemble des
points r\'eels~$C_i(\R)$ est dense dans~$C_i$, et le transform\'e
strict~$Y_\ell$ de~$Y$ dans~$X_\ell$ n'a que des points r\'eels
lisses. Notons~$E_i\subseteq X_i$ le diviseur exceptionnel
de~$\pi_i$. On peut supposer, de plus, que $Y_\ell(\R)$
intersecte~$E_\ell(\R)$ transversalement en des points r\'eels lisses
de~$E_\ell$. On note encore $\phi$ la
composition~$\pi_1\circ\cdots\circ\pi_\ell$. Par construction,
$f\circ\phi$ est une fonction rationnelle sur~$X_\ell$ dont le domaine
de d\'efinition contient tous les points r\'eels de~$X_\ell$.

Soit $p$ un point du sch\'ema~$\AA^n$ dont le corps
r\'esiduel~$\kappa(p)$ est r\'eel. La fibre~$\phi^{-1}(p)$ est une
r\'eunion finie connexe d'espaces projectifs sur~$\kappa(p)$.
Soit~$P$ l'un de ces espaces projectifs. Comme le domaine de
d\'efinition de~$f\circ\phi$ contient tous les points r\'eels
de~$X_\ell$, la fonction rationnelle~$f\circ\phi$ se restreint \`a une
fonction rationnelle sur~$P$. Il existe un point
$\kappa(p)$-rationnel~$q$ de~$P$ en lequel~$f\circ\phi$ est
d\'efinie. Soit $F$ le sous-sch\'ema de~$X_0$ dont le point
g\'en\'erique est~$p$, et $G$ le sous-sch\'ema de~$X_\ell$ dont le
point g\'en\'erique est~$q$. La restriction de~$\phi$ \`a~$G$ est un
morphisme birationnel dans~$F$. Il existe donc des ouverts non
vides~$U$ de $F$ et $V$ de~$G$ tels que~$\phi_{|V}$ soit un
isomorphisme de~$V$ sur~$U$. Quitte \`a remplacer $U$~et $V$ par des
ouverts plus petits, on peut supposer que~$f\circ\phi$ est d\'efinie
sur~$V$.  Or $\phi$ induit, sur les points r\'eels, un isomorphisme
bir\'egulier de~$V(\R)$ sur~$U(\R)$. Il s'ensuit que la restriction
de~$f$ \`a~$U(\R)$ est r\'eguli\`ere.

La construction pr\'ec\'edente montre que pour tout id\'eal premier
r\'eel~$p$ de~$\R[x_1,\ldots,x_n]$, il existe un sous-ensemble
localement ferm\'e irr\'eductible~$R_p$ dense de~$\Z(p)$ tel
que~$f_{|R_p}$ est r\'eguli\`ere. Il existe alors un nombre fini
d'id\'eaux premiers r\'eels distincts~$p_1,\ldots,p_m$ tels que
$$
\R^n=\bigcup_{i=1}^m R_{p_i}.
$$
Pour simplifier la notation, on notera~$R_i$ au lieu de~$R_{p_i}$.

On construit ensuite une stratification selon le proc\'ed\'e
habituel~:  en agrandissant la famille~$\{R_i\}$ si n\'ecessaire, on
peut supposer que toutes les composantes irr\'eductibles des intersections
finies des sous-ensembles de la forme~$R_i$ appartiennent
encore \`a la famille~$\{R_i\}$. Introduisons un ordre partiel sur
la collection $\{R_i\}$. On pose $R_i\leq R_j$
si~$R_i$ est contenu dans la cl\^oture de
Zariski~$\overline{R}_j$ de~$R_j$. Soit
$$
S_i=R_i\setminus \bigcup_{R_j<R_i}\overline{R}_j.
$$
Le sous-ensemble~$S_i$ de~$\R^n$ est localement ferm\'e, et contenu
dans~$R_i$. La restriction de la fonction~$f$ \`a~$S_i$ est donc bien
r\'eguli\`ere. 

Montrons que~$S_i\inter S_j=\emptyset$ lorsque~$i\neq
j$. C'est clair lorsque~$R_j< R_i$ ou l'inverse. Supposons donc que
$R_i$~et $R_j$ ne sont pas comparables. Ecrivons l'intersection
$R_i\inter R_j$ comme r\'eunion des $R_k$ o\`u $k$ parcourt un
ensemble d'indice~$I$. Comme on a~$R_k\subseteq\overline{R}_i$ et
$R_k\subseteq \overline{R}_j$, pour tout~$k\in I$, on a
bien~$S_i\inter S_j=\emptyset$, lorsque $i\neq j$. 

Il nous reste \`a montrer que $\R^n=\bigcup S_i$. Soit $x\in\R^n$ et
soit $R_i$ le plus petit \'el\'ement de la collection~$\{R_i\}$ tel
que~$x\in R_i$. On a bien $x\in S_i$. 
\end{proof}

\begin{rem}
Il est possible de prouver le Théorème~\ref{thm.stratif}, et donc aussi le Théorème~\ref{topregnoeth} ci-dessous, sans utiliser le Théorème d'Hironaka ; et ce grâce à la Proposition~7 de \cite{KN}. Pour cela, on a besoin de la notion de restriction d'une fonction régulue à une sous-variété, cf. Proposition~\ref{prop.restric} et Remarque~\ref{rem.here}. 
\end{rem}

L'anneau $\SR^\infty(\R^n)$ est noethérien, la topologie induite est
donc noethérienne. En fait c'est encore le cas pour la topologie
$k$-r\'egulue, comme on peut le voir en utilisant la noeth\'erianit\'e
de la topologie alg\'ebriquement constructible
\cite{KurPar,Paru-const} combin\'ee avec le Théorème~\ref{thm.stratif}
(qui implique que les ferm\'es $k$-r\'egulus sont alg\'ebriquement
constructibles, cf. Corollaire \ref{coregfermeestconstructible}). Nous
allons en donner une d\'emonstration directe, en nous inspirant de
l'idée donnée dans \cite{N08} pour \'etablir la noethérianité de la
topologie quasi-analytique. Soit $F=\Z(f)$ avec
$f\in\SR^0(\R^n)$. D'après le Théorème~\ref{thm.stratif}, il existe
une stratification finie $\R^n=\coprod_{i=1}^m S_i$ en sous-ensembles
localement fermés telle que la restriction de $f$ à $S_i$ est
régulière. On peut même exiger que les strates soient lisses et
irr\'eductibles (au sens Zariski, ou encore alg\'ebriquement
constructible, cf. \cite{Paru-const}), et que $F$ soit une réunion de
strates. À une telle stratification de $F$, on associe le multi-indice $\mu=(\mu_l,\mu_{l-1},\dots,\mu_0)\in\N^{l+1}$ où $\mu_j$ est le nombre de strates de dimension $j$. On note $\mu_{\Z(f)}$ le plus petit (pour l'ordre lexicographique) des multi-indices associés aux stratifications de $\Z(f)$.

\begin{thm}
\label{topregnoeth}
Soient~$n$ un entier naturel et $k$ un entier surnaturel.  La topologie
$k$-régulue sur~$\R^n$ est noethérienne.
\end{thm}

\begin{proof}
L'anneau $\SR^\infty(\R^n)$ est noethérien, on peut donc supposer que
$k$ est un entier naturel. 
Il suffit de prouver que la topologie régulue (c'est-à-dire pour
$k=0$) sur~$\R^n$ est noethérienne. 
Tout fermé régulu est une intersection de fermés principaux (un fermé
est principal s'il est de la forme $\Z(f)$ pour $f\in\SR^0(\R^n)$). 
Il suffit donc de montrer que toute suite décroissante de fermés principaux est stationnaire. 

On considère une suite décroissante de fermés régulus principaux $\Z(f_\alpha)$, $f_\alpha\in\SR^0(\R^n)$. Il suffit alors de remarquer que $\mu_{\Z(g)}<\mu_{\Z(f)}$ si $f,g\in\SR^0(\R^n)$ sont telles que $\Z(g)$ est inclus strictement dans $\Z(f)$. 

Pour le montrer, prenons une strate $S$ de dimension $s$ pour $\Z(f)$, et supposons que $g$ s'annule sur un sous-ensemble semi-alg\'ebrique de dimension $s$ de $S$. Pour une stratification en irr\'eductibles et lisses associée \`a $\Z(g)$ par le Th\'eor\`eme~\ref{thm.stratif}, il existe une strate dont l'intersection $U$ avec $S$ est dense dans $S$ au sens de la topologie de Zariski puisque $S$ est irr\'eductible. En particulier $g_{|U}$ est r\'eguli\`ere et s'annule sur un sous-ensemble semi-alg\'ebrique de $U$ de dimension maximale, donc $g_{|U}$ est identiquement nulle. Par continuité, $g$ s'annule sur l'adh\'erence de $U$ pour la topologie euclidienne, donc $g$ est nulle sur $S$ par lissit\'e de $S$.
\end{proof}

\begin{cor}
\label{cofkresthyp}
Soient~$n$ un entier naturel et $k$ un entier surnaturel.  Tout fermé
$k$-régulu de~$\R^n$ est de la forme~$\Z(f)$, pour une certaine
fonction $k$-régulue~$f$ sur~$\R^n$.
\end{cor}

\begin{proof}
Ce corollaire se déduit de \ref{topregnoeth} en utilisant la Proposition~\ref{prop.princip}.
\end{proof}

\begin{cor}\label{cor-eclt}
Soient~$n$ un entier naturel et $k$ un entier surnaturel.  Tout fermé
$k$-régulu~$F$ de~$\R^n$ est un fermé de Zariski après éclatements,
i.e., il existe une composition d'éclatements \`a centres lisses
$$
\phi\colon \widetilde{\R}^n\lra \R^n
$$ 
telle que l'image réciproque~$\phi^{-1}(F)$ est Zariski fermée
dans~$\widetilde{\R}^n$.
\end{cor}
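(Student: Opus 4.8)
Le plan est de déduire cet énoncé directement des deux résultats déjà établis : le Corollaire~\ref{cofkresthyp}, qui ramène un fermé $k$-régulu à une seule équation, et le Théorème~\ref{threguliereaeclt}, qui rend une fonction régulue régulière après éclatements. D'abord j'invoquerais le Corollaire~\ref{cofkresthyp} pour écrire $F=\Z(f)$ où $f$ est \emph{une seule} fonction $k$-régulue sur~$\R^n$. C'est le point qui rend l'argument commode : en repartant de la définition brute $F=\Z(E)$, la famille~$E$ pourrait être infinie et l'on ne contrôlerait pas l'intersection d'une infinité de fermés par un seul éclatement ; la réduction à une équation unique contourne cette difficulté.

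Ensuite, comme une fonction $k$-régulue est en particulier régulue (on a l'inclusion $\SR^k(\R^n)\subseteq\SRC(\R^n)$, puisqu'une fonction de classe~$\SC^k$ régulière sur un ouvert de Zariski dense est a fortiori continue et régulière sur un tel ouvert), le Théorème~\ref{threguliereaeclt} appliqué à~$f$ fournit une composition finie d'éclatements à centres lisses
$$
\phi\colon\widetilde{\R}^n\lra\R^n
$$
telle que $f\circ\phi$ soit régulière sur~$\widetilde{\R}^n$ tout entière.

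Enfin je conclurais par l'identité ensembliste $\phi^{-1}(F)=\phi^{-1}(\Z(f))=\Z(f\circ\phi)$, valable car $x\in\phi^{-1}(\Z(f))$ équivaut à $f(\phi(x))=0$, c'est-à-dire à $(f\circ\phi)(x)=0$. Or $\widetilde{\R}^n$ est une variété réelle algébrique affine lisse et $f\circ\phi$ y est régulière ; son lieu de zéros est donc un fermé de Zariski. Pour le justifier, deux voies équivalentes : soit l'on remarque qu'une fonction régulière est continue pour la topologie de Zariski et que $\{0\}$ est fermé dans~$\R$, de sorte que $\Z(f\circ\phi)=(f\circ\phi)^{-1}(0)$ est Zariski fermé ; soit l'on écrit $f\circ\phi=p/q$ avec $q$ partout non nulle (Proposition~\ref{prop.pq}, le domaine étant ici~$\widetilde{\R}^n$ entier), d'où $\Z(f\circ\phi)=\Z(p)$. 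Dans les deux cas $\phi^{-1}(F)$ est Zariski fermé dans~$\widetilde{\R}^n$, ce qui achève la preuve. Il n'y a pas ici d'obstacle réel : l'énoncé est un corollaire formel des deux résultats précédents, le seul point à soigner étant ce dernier fait que le lieu de zéros d'une fonction régulière sur une variété réelle algébrique affine est Zariski fermé.
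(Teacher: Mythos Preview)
Votre démonstration est correcte et suit exactement la même approche que celle de l'article : invoquer le Corollaire~\ref{cofkresthyp} pour écrire $F=\Z(f)$ avec $f$ $k$-régulue, appliquer le Théorème~\ref{threguliereaeclt} pour obtenir~$\phi$ telle que $f\circ\phi$ soit régulière, puis conclure via $\phi^{-1}(F)=\Z(f\circ\phi)$. Vous explicitez simplement un peu plus certains points (l'inclusion $\SR^k\subseteq\SRC$ et la fermeture de Zariski du lieu de zéros d'une fonction régulière) que l'article laisse implicites.
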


\begin{proof}
D'après Corollaire~\ref{cofkresthyp}, il existe une fonction
$k$-régulue~$f$ sur~$\R^n$ telle que~$\Z(f)=F$. D'après
Théorème~\ref{threguliereaeclt}, il existe une composition
d'\'eclatements de~$\R^n$ à centres lisses~$\phi$ telle que~$f\circ\phi$
est régulière. En particulier,
$$
\Z(f\circ\phi)=\phi^{-1}(\Z(f))=\phi^{-1}(F)
$$
est un fermé de Zariski.
\end{proof}

\begin{rem}
La réciproque du
corollaire ci-dessus est fausse comme le prouve l'exemple suivant. Soit $F$ la clôture euclidienne de la partie de dimension $2$ du parapluie de Whitney (cf. Exemple~\ref{exem.umbrellas}). Après résolution des singularités du parapluie, la préimage de $F$ est la résolution du parapluie complet, qui est un fermé de Zariski.
\end{rem}

Même si la topologie $k$-régulue sur~$\R^n$ est plus fine, $\R^n$
reste irréductible pour cette topologie:

\begin{prop}
Soient $n$ un entier naturel et $k$ un entier surnaturel.  Alors $\R^n$
muni de la topologie $k$-régulue est irréductible.
\end{prop}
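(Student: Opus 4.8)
L'approche consiste à montrer directement que $\R^n$, qui est évidemment non vide, ne peut s'écrire comme réunion de deux fermés $k$-régulus propres. On raisonne par l'absurde : supposons $\R^n = F_1 \cup F_2$ avec $F_1, F_2$ des fermés $k$-régulus tous deux distincts de $\R^n$. La première étape clé est d'invoquer le Corollaire~\ref{cofkresthyp}, selon lequel tout fermé $k$-régulu est le lieu des zéros d'une \emph{seule} fonction $k$-régulue ; on écrit donc $F_i = \Z(f_i)$ avec $f_i \in \SR^k(\R^n)$. Cette réduction est essentielle car elle permet de ramener une réunion de fermés à un produit fini de fonctions, là où la définition brute d'un fermé $k$-régulu autorise a priori l'annulation simultanée d'une famille infinie.

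La deuxième étape utilise la formule multiplicative $\Z(f_1 f_2) = \Z(f_1) \cup \Z(f_2)$, cas particulier de la proposition élémentaire sur les ensembles de zéros. On obtient alors $\Z(f_1 f_2) = F_1 \cup F_2 = \R^n$, c'est-à-dire que la fonction $k$-régulue $f_1 f_2$ est identiquement nulle sur $\R^n$. La troisième étape est de nature purement algébrique : comme $\SR^k(\R^n)$ est un sous-anneau du corps $\R(\R^n) = \R(x_1,\ldots,x_n)$, c'est un anneau intègre, si bien que $f_1 f_2 = 0$ entraîne $f_1 = 0$ ou $f_2 = 0$. Si par exemple $f_1 = 0$, alors $F_1 = \Z(f_1) = \R^n$, ce qui contredit le caractère propre de $F_1$ ; de même pour $f_2$. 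L'irréductibilité s'ensuit.

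Il convient de préciser où se situe le contenu réel de l'argument, car il n'y a pas de difficulté calculatoire à proprement parler. Le fait que la topologie $k$-régulue soit strictement plus fine que la topologie de Zariski ne permet pas de transférer directement l'irréductibilité de $\R^n$ pour Zariski : une topologie plus fine peut parfaitement devenir réductible, et c'est d'ailleurs précisément ce qui se produit pour certains fermés comme on l'a vu à l'Exemple~\ref{cubique}. L'ingrédient décisif est donc l'intégrité de $\SR^k(\R^n)$, issue de son plongement dans le corps des fractions rationnelles, combinée à la réduction à une unique équation fournie par le Corollaire~\ref{cofkresthyp}. Le seul point sur lequel il faut veiller est de bien disposer de cette dernière réduction avant d'appliquer la formule multiplicative, faute de quoi l'on ne saurait pas conclure à partir d'une famille infinie de générateurs.
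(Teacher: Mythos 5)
Votre démonstration est correcte et suit exactement la même stratégie que celle du papier : écrire chaque fermé comme $\Z(f)$ grâce au Corollaire~\ref{cofkresthyp}, passer au produit $f_1f_2$ via la formule $\Z(f_1f_2)=\Z(f_1)\cup\Z(f_2)$, puis conclure par l'intégrité de $\SR^k(\R^n)$ vu comme sous-anneau du corps $\R(\R^n)$. Rien à redire.
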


\begin{proof}
Supposons que $\R^n=F\union G$, où $F$~et $G$ sont des fermés
$k$-régulus de~$\R^n$.  Il existe des fonctions $k$-régulues $f$~et
$g$ sur~$\R^n$ telles que $\Z(f)=F$ et $\Z(g)=G$. On a
$$
\Z(fg)=\Z(f)\union\Z(g)=F\union G=\R^n.
$$ Par conséquent, $fg=0$. L'anneau~$\SR^k(\R^n)$ étant un sous-anneau
du corps~$\R(\R^n)$, il est intègre. On a donc $f=0$~ou $g=0$, i.e.,
$F=\R^n$~ou $G=\R^n$.
\end{proof}

\subsection*{Ensembles symétriques par arcs}
Dans ce paragraphe, nous faisons le lien avec la théorie des ensembles symétriques par arcs introduite par K.~Kurdyka.
Une fonction semi-alg\'ebrique $f\colon \RR^n\rightarrow \RR$ est
appelée \emph{analytique par arcs}~\cite{Kur} si $f\circ\gamma$ est
analytique pour tout arc analytique $\gamma\colon I\rightarrow \R^n$,
où $I$ est un intervalle ouvert de $\R$. 
Il est immédiat que si $f\colon \RR^n\rightarrow \RR$ devient analytique après une suite finie d'éclatements à centres algébriques lisses bien choisis, alors $f$ est analytique par arcs
(Dans \cite[Thm. 1.1]{BM},
Bierstone et Milman montrent en fait l'équivalence). Comme une fonction régulière est analytique, on obtient
la conséquence suivante du Théorème~\ref{threguliereaeclt}, gr\^ace à
la Proposition~\ref{regulsa}:

\begin{cor}
\label{reguluestarcanalytique}
Soient $n$ un entier naturel et $k$ un entier surnaturel.  Une fonction
$k$-régulue sur $\R^n$ est analytique par arcs.\qed
\end{cor}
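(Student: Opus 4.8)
The plan is to assemble three ingredients that are already at our disposal: the semi-algebraicity of $k$-regulous functions (Proposition~\ref{regulsa}), the characterization of regulous functions as those becoming regular after blow-ups (Theorem~\ref{threguliereaeclt}), and the Bierstone--Milman criterion for arc-analyticity recalled just above (\cite[Thm.~1.1]{BM}). First I would observe that a $k$-regulous function $f$ on $\R^n$ is, in particular, regulous: being of class $\SC^k$ with $k\geq 0$ it is continuous, and it is regular on a dense Zariski-open subset, which is exactly the condition defining a $0$-regulous function. Moreover, by Proposition~\ref{regulsa}, $f$ is semi-algebraic; this is needed at the outset because arc-analyticity is only a meaningful notion for semi-algebraic functions.

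Next I would apply Theorem~\ref{threguliereaeclt} with $X=\R^n$, which is a smooth real affine algebraic variety so that the theorem applies. Since $f$ is regulous, there is a finite composition of blow-ups with smooth centers $\phi\colon \widetilde{X}\to X$ such that $f\circ\phi$ is regular on $\widetilde{X}$. A regular function is analytic (indeed Nash), so $f$ becomes analytic after this finite sequence of blow-ups with smooth algebraic centers.

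Finally I would invoke the Bierstone--Milman theorem in the direction asserting that a semi-algebraic function which becomes analytic after a finite sequence of blow-ups with smooth algebraic centers is arc-analytic. Combined with the two previous steps, this gives that $f$ is arc-analytic, as claimed. The whole content lies in the cited results, so there is no genuine obstacle; the only point requiring care is the compatibility of hypotheses. One must check that the blow-ups furnished by Theorem~\ref{threguliereaeclt} are of the precise type required by \cite[Thm.~1.1]{BM} --- finitely many, with smooth algebraic centers --- which holds by construction, and that the semi-algebraicity supplied by Proposition~\ref{regulsa} is in place so that the notion of arc-analyticity even applies. This is exactly why the corollary is phrased as a consequence of Theorem~\ref{threguliereaeclt} \emph{together with} Proposition~\ref{regulsa}.
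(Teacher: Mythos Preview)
Your proof is correct and follows exactly the route the paper takes: combine Proposition~\ref{regulsa} (semi-algebraicity), Theorem~\ref{threguliereaeclt} (regulous $\Rightarrow$ regular after blow-ups with smooth centers), the fact that regular functions are analytic, and the Bierstone--Milman characterization \cite[Thm.~1.1]{BM}. The paper merely states this chain of implications in the sentence preceding the corollary; your write-up spells it out with the same ingredients in the same order.
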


\begin{rema} Il existe bien évidemment des fonctions analytiques par arcs
  qui ne sont pas régulues. Par exemple $f(x,y)=\sqrt{x^4+y^4}$ est
  analytique par arcs \cite{BM} et n'est clairement pas régulue sur $\R^2$.
\end{rema}

On rappelle qu'un ensemble $E$ semi-algébrique dans $\R^n$ est dit
\emph{symétrique par arcs} si et seulement si pour tout arc analytique
$\gamma\colon ]-1,1[\rightarrow \R^n$, si $\gamma(]-1,0[)\subseteq E$
    alors $\gamma(]-1,1[)\subseteq E$, voir~\cite{Kur}.  Le lien avec
    les fonctions analytiques par arcs est le suivant. Si $f$ est une
    fonction analytique par arcs sur~$\R^n$, alors son lieu de
    zéros~$\Z(f)$ est symétrique par arcs.

Suivant les notations de \cite{Kur}, on appelle $\AR$ la topologie
dont les ensembles fermés sont les ensembles symétriques par arcs.
On verra ci-dessous que la
topologie $\AR$ est plus fine que la topologie régulue. 

\begin{prop} 
Soient~$n$ un entier naturel et $k$ un entier surnaturel. Soit $F$ un
fermé $k$-régulu de $\R^n$. Alors $F$ est un ensemble symétrique par
arcs.
\end{prop}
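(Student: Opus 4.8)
Le plan est de ramener entièrement l'énoncé à ce qui a déjà été établi sur les fonctions analytiques par arcs et sur la topologie $\AR$. Par définition d'un fermé $k$-régulu, il existe une partie $E\subseteq\SR^k(\R^n)$ telle que
$F=\Z(E)=\bigcap_{f\in E}\Z(f)$.
On commencerait par traiter chaque facteur séparément: pour $f\in E$, la fonction $f$ est $k$-régulue, donc analytique par arcs d'après le Corollaire~\ref{reguluestarcanalytique}; le lien rappelé plus haut entre fonctions analytiques par arcs et ensembles symétriques par arcs assure alors que son lieu de zéros $\Z(f)$ est symétrique par arcs. Ainsi chaque $\Z(f)$ est un fermé pour la topologie $\AR$, et il ne reste plus qu'à contrôler leur intersection.

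La principale difficulté tient à ce que $E$ peut être infini: \emph{a priori} $F$ n'est qu'une intersection quelconque d'ensembles semi-algébriques, et rien ne garantit immédiatement que $F$ soit semi-algébrique, alors que la semi-algébricité fait partie intégrante de la définition d'un ensemble symétrique par arcs. La condition sur les arcs, elle, passe trivialement aux intersections arbitraires; c'est donc uniquement la semi-algébricité qui pose problème. C'est ici qu'interviendrait la noethérianité de la topologie $\AR$ démontrée par Kurdyka~\cite[Th.~1.4]{Kur}. Puisque les $\Z(f)$, $f\in E$, sont des fermés de $\AR$ et que cette topologie est noethérienne, la famille des sous-intersections finies des $\Z(f)$ admet un élément minimal $G=\Z(f_1)\inter\cdots\inter\Z(f_m)$; par minimalité, $G\inter\Z(f)=G$ pour tout $f\in E$, donc $G\subseteq\Z(f)$, d'où $G\subseteq\bigcap_{f\in E}\Z(f)=F$, l'inclusion réciproque étant évidente. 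On obtient ainsi $F=\Z(f_1)\inter\cdots\inter\Z(f_m)$, qui est une intersection \emph{finie} d'ensembles semi-algébriques symétriques par arcs, donc semi-algébrique et symétrique par arcs. Cela conclut.

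Je signalerais enfin un renforcement gratuit obtenu en cours de route: en combinant cette réduction à un nombre fini de fonctions avec la Proposition~\ref{prop.princip}, la fonction $g=f_1^2+\cdots+f_m^2$ appartient à $\SR^k(\R^n)$ et vérifie $\Z(g)=F$. Autrement dit, tout fermé $k$-régulu est en fait le lieu des zéros d'une \emph{seule} fonction $k$-régulue, ce qui tempère la remarque faite après la définition des fermés $k$-régulus. Le point véritablement non trivial de toute la preuve reste l'appel au résultat de Kurdyka, qui seul assure la semi-algébricité de l'intersection infinie; une fois cette noethérianité admise, le reste de l'argument est purement formel.
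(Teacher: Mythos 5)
Votre démonstration est correcte et suit pour l'essentiel la même voie que celle du texte : chaque $\Z(f)$, $f\in E$, est symétrique par arcs via le Corollaire~\ref{reguluestarcanalytique}, puis l'intersection quelconque est traitée grâce au fait que les ensembles symétriques par arcs sont les fermés de la topologie noethérienne $\AR$ de Kurdyka. Votre réduction explicite à une sous-intersection finie ne fait que détailler ce que le texte invoque en une ligne (les ensembles symétriques par arcs sont les fermés d'une topologie), et la remarque finale sur l'existence d'un générateur unique $g$ avec $\Z(g)=F$ est précisément le Corollaire~\ref{cofkresthyp}.
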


\begin{proof}
Soit $F=\Z(E)$ un fermé $k$-régulu de~$\R^n$.  D'après le
Corollaire~\ref{reguluestarcanalytique}, $\Z(f)$ est symétrique par
arcs, pour tout~$f\in E$. Comme les sous-ensembles symétriques par
arcs sont les fermés d'une topologie,
$$
\Z(E)=\bigcap_{f\in E}\Z(f)
$$ 
est encore symétrique par arcs.
\end{proof}

\begin{rem}
Le théorème de noethérianité~\ref{topregnoeth} est alors un corollaire immédiat du théorème de noethérianité de Kurdyka~\cite[Th. 1.4.]{Kur} grâce à la proposition ci-dessus.
\end{rem}

\subsection*{Ensembles algébriquement constructibles}
Rappelons qu'un sous-ensemble $E$ de~$\R^n$ est \emph{Zariski
  constructible} ou \emph{algébriquement
  constructible} s'il est réunion finie de sous-ensembles localement
fermés au sens de Zariski.

\begin{cor}
\label{coregfermeestconstructible}
Soient~$n$ un entier naturel et $k$ un entier surnaturel. Un ensemble
fermé $k$-r\'egulu de~$\R^n$ possède une stratification finie en
sous-ensembles localement ferm\'es de~$\R^n$. En particulier, tout
ensemble $k$-régulument fermé de~$\R^n$ est Zariski
constructible.\qed
\end{cor}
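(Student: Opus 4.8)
The final statement is Corollary~\ref{coregfermeestconstructible}, which asserts that any $k$-regulous closed subset of $\R^n$ admits a finite stratification into Zariski locally closed subsets, and is therefore Zariski constructible. Let me sketch how to derive this from the preceding results.

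The plan is to reduce the statement directly to the two preceding results, Corollaire~\ref{cofkresthyp} and Théorème~\ref{thm.stratif}, since the substantive work has already been carried out there. Let $F$ be a fermé $k$-régulu of $\R^n$. First I would invoke Corollaire~\ref{cofkresthyp} to write $F=\Z(f)$ for a \emph{single} $k$-regulous function $f$ on $\R^n$; this replaces the a priori infinite defining family $E$ with one function and places us squarely in the hypotheses of Théorème~\ref{thm.stratif}.

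Next I would apply Théorème~\ref{thm.stratif} to this $f$, obtaining a finite partition $\R^n=\coprod_{i=1}^m S_i$ into Zariski locally closed subsets such that $f_{|S_i}$ is regular for every $i$. Intersecting with $F$ gives $F=\Z(f)=\coprod_{i=1}^m\left(\Z(f)\cap S_i\right)$, and on each piece $\Z(f)\cap S_i=\Z(f_{|S_i})$ is the zero locus of a regular function on the variety $S_i$. Since $\{S_i\}$ partitions $\R^n$, the traces $\{F\cap S_i\}$ automatically partition $F$.

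The one point that needs a short argument is that each $\Z(f_{|S_i})$ is itself Zariski locally closed in $\R^n$. As $f_{|S_i}$ is regular, its zero set is Zariski closed for the topology induced on $S_i$; writing $S_i=U_i\cap C_i$ with $U_i$ open and $C_i$ closed, any relatively closed subset of $S_i$ has the form $U_i\cap C_i'$ for some Zariski closed $C_i'\subseteq\R^n$, hence is locally closed in $\R^n$. Discarding the empty pieces, the nonempty $\Z(f_{|S_i})$ form a finite stratification of $F$ into locally closed subsets, which is the first assertion. The second assertion is then immediate, since a finite disjoint union of locally closed sets is in particular a finite union of such, so $F$ is Zariski constructible.

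I do not expect any genuine obstacle: the corollary is a clean bookkeeping consequence once Théorème~\ref{thm.stratif} and Corollaire~\ref{cofkresthyp} are in hand. The only care required is the routine topological verification that the trace of a regular-function zero set on a locally closed stratum remains locally closed, and that the resulting pieces indeed stratify $F$, both of which are inherited directly from the fact that the $S_i$ already stratify $\R^n$.
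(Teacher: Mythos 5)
Votre démonstration est correcte et suit exactement la voie implicite du papier (le corollaire y est donné avec \qed comme conséquence immédiate) : réduction à une seule fonction $f$ via le Corollaire~\ref{cofkresthyp}, application du Théorème~\ref{thm.stratif}, puis observation que la trace de $\Z(f)$ sur chaque strate est localement fermée. Rien à redire.
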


Notons qu'un sous-ensemble $k$-régulument fermé de~$\R^n$ n'est pas
forcément localement fermé, comme le montre l'exemple suivant.

\begin{ex}
\label{exkregfermeconstructible}
Soit $C\subseteq\R^2$ la courbe cubique ayant la singularité isolée
en~$O$ de l'Exemple~\ref{cubique}. Soit~$C'=C\setminus\{O\}$.
Le sous-ensemble
$$
F=(C'\times\R)\union \{O\}
$$ est un sous-ensemble $k$-régulument fermé de~$\R^3$ qui n'est pas
localement fermé au sens de Zariski. En effet, la clôture de Zariski
de~$F$ est égale à~$C\times\R$. Le sous-ensemble~$F$ de~$C\times\R$
n'est pas Zariski ouvert pour la topologie induite 
car son intersection avec~$\{O\}\times\R$ est
réduite à l'origine de~$\R^3$ et n'est pas ouvert
dans~$\{O\}\times\R$.
\end{ex}

Dans le cas~$k=\infty$, l'énoncé du
Corollaire~\ref{coregfermeestconstructible} peut être considérablement
renforcé car les fermés sont alors les fermés de
Zariski.

Le Th\'eor\`eme~\ref{thm.stratif} a encore l'\'enoncé suivant comme
conséquence~:

\begin{cor}\label{cor.compositionregulues}
Soient~$\ell,m,n$ des entiers naturels et $k$ un entier surnaturel. Soient
$$
f\colon \R^n\ra \R^m
\et
g\colon \R^m\ra\R^\ell
$$ deux applications $k$-régulues. Alors la composition~$g\circ f\colon
\R^n\ra \R^\ell$ est $k$-r\'egulue.
\end{cor}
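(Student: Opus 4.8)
The plan is to deduce the $k$-regulousness of the composition $g\circ f$ from the stratification theorem~\ref{thm.stratif}. The essential difficulty is that a composition of two $k$-regulous maps is rational on a dense open set but need not be rational in any obvious global way, and regulousness requires both the $\SC^k$-regularity \emph{and} rationality on a dense Zariski-open set. The $\SC^k$-smoothness of $g\circ f$ is immediate, since $g$ and $f$ are each of class $\SC^k$ on all of the affine space and composition of $\SC^k$-maps is again $\SC^k$. So the entire burden is to show that $g\circ f$ is regular on some dense Zariski-open subset of~$\R^n$; equivalently (reducing to coordinates of $g$) that each coordinate function $g_j\circ f$ is rational on~$\R^n$.

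First I would reduce to the case $\ell=1$, so that $g\colon\R^m\to\R$ is a single $k$-regulous function, and I must show $g\circ f$ is rational on a dense Zariski-open set. Apply Theorem~\ref{thm.stratif} to $g$: there is a finite stratification
$$
\R^m=\coprod_{i=1}^{s} S_i
$$
into Zariski-locally-closed sets with $g_{|S_i}$ regular for each~$i$. Pulling back, the sets $f^{-1}(S_i)$ cover~$\R^n$. The point is to find one stratum, say $S_{i_0}$, such that $f^{-1}(S_{i_0})$ contains a dense Zariski-open subset of~$\R^n$, and on which the restriction of $g\circ f$ is regular.

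Next I would locate such a stratum. Since $f$ is regular on a dense Zariski-open set $U\subseteq\R^n$, shrinking $U$ I may assume $f_{|U}$ is regular with values given by a genuine rational map. The images $f(U\cap f^{-1}(S_i))$ stratify $f(U)$, and since $U$ is irreducible for the Zariski topology (as $\R^n$ is irreducible and $U$ is dense open), its image lands generically in a single stratum: precisely, there is a unique $S_{i_0}$ whose preimage $f^{-1}(S_{i_0})$ meets $U$ in a dense Zariski-open subset $U'$ of~$U$, hence of~$\R^n$. On $U'$ the map $f$ is regular with image in $S_{i_0}$, and $g_{|S_{i_0}}$ is regular; therefore $(g\circ f)_{|U'}$ is a composition of regular maps, hence regular on the dense Zariski-open set~$U'$.

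The main obstacle I anticipate is the irreducibility argument guaranteeing that a \emph{single} stratum is hit generically. The clean way is to work with the irreducible $\R^n$: its unique generic point (in the scheme sense, or working with the dense open $U$) forces the regular map $f_{|U}$ to send the generic point into exactly one stratum, since the strata are disjoint and locally closed. Once that stratum $S_{i_0}$ is fixed and $U'$ is the corresponding dense open locus where $f$ is regular into $S_{i_0}$, the conclusion follows: $g\circ f$ is $\SC^k$ on all of~$\R^n$ and regular on the dense Zariski-open $U'$, so by Definition~\ref{dfn.regulue} it is $k$-regulous. Finally, reassembling the $\ell$ coordinate functions shows $g\circ f\colon\R^n\to\R^\ell$ is $k$-regulous, completing the proof.
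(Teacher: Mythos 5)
Your proof is correct and follows essentially the same route as the paper: apply Theorem~\ref{thm.stratif} to $g$, use the irreducibility of $\R^n$ for the Zariski topology to find a single stratum $S_{i_0}$ whose preimage under $f$ is generically dense, and conclude that $g\circ f$ is regular on a dense Zariski-open set while being $\SC^k$ everywhere. Your explicit justification of why one stratum is hit generically merely spells out what the paper leaves implicit.
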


\begin{proof}
L'application $g\circ f$ est bien-sûr de classe~$\SC^k$.  Soit
$U$ l'intersection des domaines des fonctions coordonnées de~$f$.  La
restriction de~$f$ à~$U$ est donc une application régulière
dans~$\R^m$.  D'après le Théorème~\ref{thm.stratif}, il existe une
stratification
$$
\R^m=\coprod_{i=1}^p S_i
$$ en sous-ensembles localement ferm\'es de~$\R^m$ au sens de Zariski
telle que la restriction~$g_{|S_i}$ est r\'eguli\`ere, pour tout~$i$.
Soit $i$ tel que $U\inter f^{-1}(S_i)$ est Zariski dense
dans~$U$. Soit~$U'$ un ouvert Zariski dense dans~$U$
avec~$f(U')\subseteq S_i$. La restriction à~$U'$ de~$g\circ f$ est
alors régulière.
\end{proof}

\begin{cor}
\label{cokregcont}
Soient~$m,n$ des entiers naturels et $k$ un entier surnaturel.  Une
application $k$-régulue de~$\R^n$ dans~$\R^m$ est continue pour la
topologie $k$-régulue.
\end{cor}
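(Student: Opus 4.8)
Pour établir la continuité de l'application $f\colon\R^n\ra\R^m$ pour les topologies $k$-régulues, le plan est de revenir à la définition même de ces topologies. La topologie $k$-régulue sur~$\R^m$ admet pour base de fermés la collection des ensembles de la forme~$\Z(g)$, où $g$ parcourt l'anneau~$\SR^k(\R^m)$ des fonctions $k$-régulues sur~$\R^m$. Or, pour vérifier qu'une application est continue, il suffit de montrer que l'image réciproque de tout élément d'une base de fermés est fermée. Je me ramènerais donc à démontrer que, pour toute fonction $k$-régulue~$g$ sur~$\R^m$, l'ensemble~$f^{-1}(\Z(g))$ est un fermé $k$-régulu de~$\R^n$.

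L'étape décisive consiste à réinterpréter cette image réciproque comme un lieu d'annulation. Par définition du lieu des zéros, on a l'égalité ensembliste
$$
f^{-1}(\Z(g))=\{x\in\R^n\suchthat g(f(x))=0\}=\Z(g\circ f).
$$
Il reste alors à contrôler la nature de la fonction composée~$g\circ f$. Comme $g\colon\R^m\ra\R$ et $f\colon\R^n\ra\R^m$ sont toutes deux $k$-régulues, le Corollaire~\ref{cor.compositionregulues} s'applique directement et fournit que $g\circ f$ est une fonction $k$-régulue sur~$\R^n$. Par conséquent, $\Z(g\circ f)$ est le lieu d'annulation d'une fonction $k$-régulue, donc un fermé $k$-régulu de~$\R^n$ par définition.

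En assemblant ces observations, on obtient que~$f^{-1}(\Z(g))$ est fermé pour la topologie $k$-régulue sur~$\R^n$, pour tout~$g\in\SR^k(\R^m)$, ce qui établit la continuité cherchée. Il convient de souligner qu'il n'y a ici aucun véritable obstacle technique~: tout le contenu difficile a déjà été absorbé par le Corollaire~\ref{cor.compositionregulues} sur la stabilité par composition des applications $k$-régulues, lequel repose lui-même sur le théorème de stratification~\ref{thm.stratif}. Le présent énoncé n'en est qu'une traduction topologique immédiate, la seule subtilité étant de bien invoquer le fait que les~$\Z(g)$ forment une base de fermés (et non nécessairement tous les fermés), ce qui suffit amplement pour tester la continuité.
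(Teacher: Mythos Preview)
Your proof is correct and follows essentially the same approach as the paper: both reduce to the identity $f^{-1}(\Z(g))=\Z(g\circ f)$ and invoke Corollaire~\ref{cor.compositionregulues} to conclude that $g\circ f$ is $k$-régulue. The only cosmetic difference is that the paper takes an arbitrary fermé $k$-régulu~$F$ and uses Corollaire~\ref{cofkresthyp} (noethérianité) to write $F=\Z(g)$, whereas you work directly with the base of closed sets~$\{\Z(g)\}$, which is slightly more elementary but amounts to the same argument.
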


\begin{proof}
Soit $f\colon \R^n\ra\R^m$ une application $k$-régulue et $F$ un fermé
$k$-régulu de~$\R^m$. Il existe une fonction $k$-régulue~$g$
sur~$\R^m$ dont l'ensemble des zéros est égal à~$F$. D'après le
Corollaire~\ref{cor.compositionregulues}, $g\circ f$ est
$k$-régulue. L'image réciproque
$$
f^{-1}(F)=\Z(g\circ f)
$$ est donc un fermé $k$-régulu de~$\R^n$.
\end{proof}

Grâce au Corollaire~\ref{cor.compositionregulues}, on est en mesure de
montrer que l'anneau~$\SR^k(\RR^n)$ n'est pas noethérien, lorsque~$k$
est fini, même si la topologie $k$-régulue est noethérienne:

\begin{prop}\label{prop.nonoeth}
Soient $n$~et $k$ des entiers naturels. L'anneau~$\SR^k(\R^n)$
n'est pas noethérien lorsque~$n\geq2$.
\end{prop}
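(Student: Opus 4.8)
The plan is to reduce to the case $n=2$ by means of Corollaire~\ref{cor.compositionregulues}, and then to exhibit an explicit infinite strictly increasing chain of ideals. For the reduction, let $\iota\colon\R^2\hookrightarrow\R^n$ be the inclusion $(x,y)\mapsto(x,y,0,\dots,0)$ and let $\pi\colon\R^n\to\R^2$ be the projection onto the first two coordinates; both are polynomial, hence $k$-régulues, and $\pi\circ\iota=\mathrm{id}_{\R^2}$. By Corollaire~\ref{cor.compositionregulues}, precomposition yields ring homomorphisms $\pi^*\colon\SR^k(\R^2)\to\SR^k(\R^n)$ and $\iota^*\colon\SR^k(\R^n)\to\SR^k(\R^2)$ with $\iota^*\circ\pi^*=\mathrm{id}$. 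Consequently, if $\pi^*g_{m+1}=\sum_i h_i\,\pi^*g_i$ in $\SR^k(\R^n)$, applying $\iota^*$ gives $g_{m+1}=\sum_i(\iota^*h_i)g_i$ in $\SR^k(\R^2)$; so a strict chain in $\SR^k(\R^2)$ pushes forward to a strict chain in $\SR^k(\R^n)$, and it suffices to treat $n=2$. This is precisely where the composition corollary enters.

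In $\R^2$ I would set, for $m\geq1$,
\[
u_m=\frac{x^{2m+1+k}}{(x^2+y^2)^m}.
\]
Each $u_m$ is rational, regular away from the origin, and homogeneous of degree $k+1$; being smooth off the origin it is therefore of class $\SC^k$, so $u_m\in\SR^k(\R^2)$, with $\Z(u_m)=\{x=0\}$. I claim the chain $(u_1)\subseteq(u_1,u_2)\subseteq\cdots$ is strictly increasing. Suppose, for contradiction, that $u_{m+1}=\sum_{i=1}^m a_i u_i$ with $a_i\in\SR^k(\R^2)$. Multiplying by $(x^2+y^2)^{m+1}$ and dividing by $x^{k+1}$ on the dense open set $\{x\neq0\}$, then extending the resulting identity of continuous functions by density, one obtains
\[
x^{2m+2}=\sum_{i=1}^m a_i\,x^{2i}(x^2+y^2)^{m+1-i}\qquad\text{sur }\R^2.
\]

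The crux is to contradict this last identity. Writing $x=r\cos\theta$, $y=r\sin\theta$, every term is homogeneous of degree $2m+2$, so dividing by $r^{2m+2}$ gives $\cos^{2m+2}\theta=\sum_{i=1}^m a_i(r\cos\theta,r\sin\theta)\cos^{2i}\theta$ for all $r>0$ and all $\theta$. Letting $r\to0$ and using the continuity of each $a_i$ at the origin, with $c_i:=a_i(O)$, yields $\cos^{2m+2}\theta=\sum_{i=1}^m c_i\cos^{2i}\theta$ for every $\theta$; setting $t=\cos^2\theta\in[0,1]$ this becomes $t^{m+1}=\sum_{i=1}^m c_i t^i$ on $[0,1]$, which is impossible by comparison of coefficients. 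Hence $u_{m+1}\notin(u_1,\dots,u_m)$, the chain is strictly increasing, and $\SR^k(\R^2)$ — and therefore $\SR^k(\R^n)$ for $n\geq2$ — is non noethérien. The main obstacle is the regularity bookkeeping: one must verify that the $u_m$ are genuinely of class $\SC^k$ (which is exactly why the exponent is taken to be $2m+1+k$, making $u_m$ homogeneous of degree $k+1$), and justify the passage to the limit $r\to0$ inside the relation, namely that the hypothetical coefficients $a_i$, being continuous, possess well-defined values $a_i(O)$ governing the leading homogeneous behaviour.
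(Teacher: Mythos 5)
Votre démonstration est correcte, mais elle suit une route réellement différente de celle de l'article. L'article exhibe lui aussi une chaîne explicite non stationnaire, mais avec les fonctions $f_m=x_2^{3+k}/(x_2^2+(x_1-m)^2)$, dont les points d'indétermination se déplacent le long de l'axe des $x_1$; la contradiction s'obtient en restreignant une relation hypothétique $f_{m+1}=\sum g_i f_i$ à la droite $\{x_1=m+1,\ x_3=\cdots=x_n=0\}$, où le Corollaire~\ref{cor.compositionregulues} garantit que toutes les restrictions sont $k$-régulues en une variable, donc régulières, et où l'on compare les multiplicités d'annulation en $x_2=0$ ($1+k<3+k$). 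Vous empilez au contraire toutes les singularités à l'origine, distinguez les $u_m$ par l'ordre du pôle du dénominateur, et concluez par le comportement homogène dominant à l'origine (coordonnées polaires, passage à la limite $r\to0$). Votre variante a deux avantages: la contradiction finale n'utilise que la continuité des coefficients $a_i$ en l'origine — elle montre donc même que $u_{m+1}$ n'appartient pas à l'idéal engendré par $u_1,\dots,u_m$ dans l'anneau des fonctions continues — et le Corollaire~\ref{cor.compositionregulues} ne sert qu'à la réduction formelle au cas $n=2$, alors que l'article l'utilise de façon essentielle pour se ramener à la dimension~$1$. En contrepartie, l'argument de l'article évite le calcul d'homogénéité et le passage à la limite le long des rayons. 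Notez enfin que les deux preuves reposent sur le même fait de régularité, admis de part et d'autre: une fonction positivement homogène de degré $k+1$ et lisse hors de l'origine est de classe $\SC^k$ — c'est exactement ce qui fait de $x^{3+k}/(x^2+y^2)$, comme de vos $u_m$, une fonction $k$-régulue.
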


\begin{proof}
Pour $m\in \N$, soit $f_m$ la fonction $k$-régulue sur~$\R^n$ définie par
$$
f_m=\frac{x_2^{3+k}}{x_2^2+(x_1-m)^2}.
$$ Soit $I_m$ l'idéal de~$\SR^k(\R^n)$ engendré par les
fonctions~$f_0,\ldots,f_m$. On montre comme dans~\cite[Ex. 6.11]{Kur}
que la suite d'idéaux croissante~$I_0,I_1,\ldots$ n'est pas
stationnaire. En fait, on montre par l'absurde que~$I_{m+1}\neq
I_m$. En effet, supposons que $f_{m+1}$ appartient à~$I_m$. On peut
donc écrire
$$
f_{m+1}=\sum_{i=1}^m g_if_i,
$$ où $g_1,\ldots,g_m$ sont des fonctions $k$-régulues sur~$\R^n$.  En
restreignant à la courbe réelle
algébrique~$C=\{m+1\}\times\R\times\{0\}^{n-2}$, qu'on identifie
  avec~$\R$, on obtient
$$ x_2^{1+k}=\sum_{i=1}^m (g_i)_{|C}(f_i)_{|C}
$$ sur~$C$. D'après Corollaire~\ref{cor.compositionregulues}, les
  fonctions $(g_i)_{|C}$ et $(f_i)_{|C}$ sont $k$-régulues et donc
  régulières. 
De plus, la fonction
$$
(f_i)_{|C}=\frac{x_2^{3+k}}{x_2^2+(m+1-i)^2}
$$ s'annule au point $x_2=0$ avec multiplicité~$3+k$,
  pour~$i=1,\ldots,m$. Il s'ensuit que $x_2^{1+k}$ s'annule en~$x_2=0$ avec
  multiplicité au moins~$3+k$. Contradiction.
\end{proof}

\subsection*{Jets de fonctions $k$-régulues}

Soient $k,\ell$ et $n$ des entiers naturels, avec $\ell\leq k$.
Soit~$f$ une fonction de classe~$\SC^k$ sur un ouvert~$U$ de~$\R^n$.
Habituellement, on écrit le $\ell$-ième polynôme de Taylor de~$f$ en
un point~$a$ sous la forme
$$
\sum_{|I|\leq \ell} \frac{\partial^{|I|}f}{\partial x^I}(a)\frac{(x-a)^I}{I!}.
$$ Lorsqu'on s'intéresse à celui-ci comme fonction de~$a$, on est
amené à le noter
$$
\sum_{|I|\leq k} \frac{\partial^{|I|}f}{\partial x^I}(x)\frac{(y-x)^I}{I!},
$$ où $y$ désigne un système de coordonnées affine arbitraire
sur~$\R^n$.  On appelle ce dernier le \emph{$\ell$-jet} de~$f$, et on
le note~$j_\ell(f)$. 

Le fibré vectoriel trivial~$J_\ell$ sur~$\R^n$ de base
$$
\frac{(y-x)^I}{I!}, \quad |I|\leq \ell
$$ est le \emph{fibré des $\ell$-jets} sur~$\R^n$.  Le $\ell$-jet
$j_\ell(f)$ de~$f$ est une section de~$J_\ell$ de
classe~$\SC^{k-\ell}$ au-dessus~$U$.

Le jonglage notationnel ci-dessus a l'avantage que le $0$-jet~$j_0(f)$
de~$f$ coïncide avec~$f$. Plus généralement, le coefficient de
$(y-x)^I/I!$ du $\ell$-jet de $f$ est égal à la dérivée partielle
d'ordre~$I$ de~$f$.

Le fibré vectoriel des $\ell$-jets $J_\ell$ possède une structure de
fibré en algèbres si on définit
$$
\frac{(y-x)^I}{I!}\cdot\frac{(y-x)^I}{J!}=
\begin{cases}
\binom{I+J}{I}
\frac{(y-x)^{I+J}}{(I+J)!}&\text{si $|I+J|\leq\ell$, et}\\
0&\text{sinon.}
\end{cases}
$$ Si $f$~et $g$ sont des fonctions de classe~$\SC^k$ sur un
ouvert~$U$ de~$\R^n$, on a
$$
j_\ell(fg)=j_\ell(f) j_\ell(g)
$$
sur~$U$.

Comme le fibré vectoriel~$J_\ell$ est libre d'une base explicitée, on
peut identifier ses sections au-dessus d'un ouvert~$U$ de~$\R^n$ avec
des applications de~$U$ dans~$\R^N$, où $N=N_{n,k,\ell}$ est le rang
de~$J_\ell$. Du coup, cela a un sens de parler de \emph{section
  rationnelle} ou encore de \emph{section $m$-régulue} de~$J_\ell$, où
$m$ est un entier surnaturel quelconque.

\begin{lem}\label{lejellrat}
Soit~$\ell$ un entier naturel.  Soit $f$ une fonction rationnelle
sur~$\R^n$, et soit~$U$ son domaine. Le $\ell$-jet $j_\ell(f)$ est une
section rationnelle de~$J_\ell$ de domaine~$U$.
\end{lem}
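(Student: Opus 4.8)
The plan is to identify the section $j_\ell(f)$ with the tuple of its coefficients in the explicit basis $(y-x)^I/I!$, $|I|\leq\ell$, of the trivial bundle $J_\ell$. By the normalization recorded just above the statement, this coefficient tuple is precisely the family of partial derivatives $\partial^{|I|}f/\partial x^I$, $|I|\leq\ell$. A rational section of $J_\ell$ is then nothing but such a tuple of rational functions, and its domain is the largest Zariski open set on which all the coefficients are simultaneously regular, i.e.\ the intersection of their domains. So I must check two things: that each coefficient $\partial^{|I|}f/\partial x^I$ is a rational function regular on $U$, and that this common domain of regularity is exactly $U$ — neither smaller nor larger.

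First I would show that differentiation does not shrink the domain of regularity. Fix a point $x\in U$. By Proposition~\ref{prq(U)} one may write $f=p/q$ near $x$ with $p,q$ polynomial and $q(x)\neq0$. The quotient rule then gives $\partial f/\partial x_i=(q\,\partial_i p-p\,\partial_i q)/q^2$, which is regular at $x$ since $q^2(x)\neq0$. Iterating this computation, every $\partial^{|I|}f/\partial x^I$ is regular at each point of $U$, hence regular on all of $U$; being regular on a Zariski-dense open set, each is in particular a rational function on $\R^n$. This already establishes that $j_\ell(f)$ is a rational section of $J_\ell$ and that its domain contains $U$. I would phrase this step through the \emph{local} representation $p/q$ with nonvanishing denominator, rather than through a single global fraction, so that the denominators produced by differentiation are guaranteed not to vanish anywhere on $U$.

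For the reverse inclusion I would exploit the same normalization: the coefficient of $(y-x)^0/0!=1$ in $j_\ell(f)$ is the zeroth derivative, namely $f$ itself. Thus $f$ occurs literally as one of the coordinate functions of the section $j_\ell(f)$, so any point at which $j_\ell(f)$ is regular is a point at which $f$ is regular, that is, a point of $\dom(f)=U$. Hence the domain of $j_\ell(f)$ is contained in $U$. Combining the two inclusions yields that the domain of $j_\ell(f)$ equals $U$, as claimed.

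I expect no serious obstacle here, the statement being essentially a matter of bookkeeping once the jet notation is unwound. The only point deserving genuine care is the first step — verifying that passing to derivatives preserves regularity over the \emph{whole} of $U$ — and this is precisely why the argument is carried out pointwise via nonvanishing local denominators; the domain is then pinned down from above for free by the observation that $f=\partial^0 f$ is one of the components of the jet.
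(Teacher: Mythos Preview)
Your proposal is correct and follows essentially the same route as the paper: show each partial derivative of $f$ is regular on $U$ so that $\dom j_\ell(f)\supseteq U$, then use that the zeroth coefficient of $j_\ell(f)$ is $f$ itself to get the reverse inclusion. The only cosmetic difference is that the paper invokes Proposition~\ref{prq(U)} to write $f=p/q$ \emph{globally} on $U$ with $q$ nonvanishing there, so that $\partial^{|I|}f/\partial x^I=r/q^{|I|+1}$ is visibly regular on all of $U$ at once; your pointwise-local detour is unnecessary, but harmless.
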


\begin{proof}
Ecrivons $f=p/q$ sur~$U$, où $p$~et $q$ sont des fonctions
polynomiales, $q$ ne s'annulant pas sur~$U$. On sait que la dérivée
partielle d'ordre~$I$ de $f$ est de la forme~$r/q^{|I|+1}$, où $r$ est
une fonction polynomiale sur~$\R^n$. Il s'ensuit que~$j_\ell(f)$ est
une section rationnelle de~$J_\ell$ de domaine~$V\supseteq U$. Comme
le coefficient de $(y-x)^0/0!$ est $p/q=f$, le domaine~$V$ est égal
à~$U$.
\end{proof}

\begin{prop}\label{prj1c0}
Soit $f$ une fonction réelle sur~$\R^n$. La fonction~$f$ est
$1$-régulue si et seulement si~$f$ est rationnelle et la section
rationnelle $j_1(f)$ de~$J_1$ au-dessus de~$\dom(f)$ s'étend de
manière continue à~$\R^n$.
\end{prop}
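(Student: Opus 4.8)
The plan is to prove both implications, the converse being the substantial one. First recall that in the given trivialization of $J_1$ the section $j_1(f)$ has components $f,\partial f/\partial x_1,\ldots,\partial f/\partial x_n$, since the coefficient of $(y-x)^I/I!$ equals the partial derivative $\partial^{|I|}f/\partial x^I$. By Lemma~\ref{lejellrat}, when $f$ is rational these components are rational functions sharing the domain $\dom(f)$, and a continuous extension of the section to $\R^n$ amounts to a simultaneous continuous extension of all $n+1$ components. The direct implication is then immediate: if $f$ is $1$-régulue it is regular on $U=\dom(f)$, hence rational, and of class $\SC^1$ on $\R^n$; on $U$ the formal (rational) partials coincide with the analytic partials of $f$, which are continuous on all of $\R^n$, so the tuple built from $f$ and its $\SC^1$-derivatives is the sought continuous extension of $j_1(f)$.

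For the converse, assume $f$ rational with $j_1(f)$ extending continuously. The $0$-component is $f$ itself, so $f$ extends continuously, i.e.\ $f$ is régulue; writing $f=p/q$ with $p,q$ coprime one has $\pol(f)=\Z(q)$, and Proposition~\ref{codim} (with $k=0$) gives $\codim_{\R^n}\pol(f)\geq 2$. Let $g_1,\ldots,g_n$ be the continuous extensions to $\R^n$ of the rational functions $\partial f/\partial x_i$; on $U=\dom(f)$ they agree with the genuine partial derivatives of the regular function $f$. The goal is to upgrade this to $f\in\SC^1(\R^n)$ with $\partial_i f=g_i$ everywhere, after which $1$-régulosité follows since $f$ is regular on the dense open $U$.

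The heart is the integral identity, for all $a,h\in\R^n$,
\[
f(a+h)-f(a)=\int_0^1\sum_i g_i(a+th)\,h_i\,dt.
\]
To prove it, I would perturb the segment by a small vector $v$: the segment $[a+v,a+v+h]$ meets $\pol(f)$ only if $v$ lies in $\pol(f)-[a,a+h]$, a set of dimension $\leq\dim\pol(f)+1\leq n-1$ (this is exactly where $\codim\geq 2$ is used), hence negligible. For a generic small $v$ the closed segment avoids $\pol(f)$, $f$ is regular along it, and the fundamental theorem of calculus gives $f(a+v+h)-f(a+v)=\int_0^1\sum_i g_i(a+v+th)h_i\,dt$. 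Letting $v\to 0$ through good vectors, the left side converges by continuity of $f$ and the right side by uniform continuity of the $g_i$ on the relevant compact set, yielding the identity.

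Finally, fixing $a$ and subtracting the candidate differential gives $f(a+h)-f(a)-\sum_i g_i(a)h_i=\int_0^1\sum_i\bigl(g_i(a+th)-g_i(a)\bigr)h_i\,dt=o(|h|)$ as $h\to 0$, by continuity of the $g_i$ at $a$; thus $f$ is differentiable at every point with $\partial_i f(a)=g_i(a)$, and since the $g_i$ are continuous, $f$ is of class $\SC^1$. The main obstacle is this segment-approximation step: the content is precisely that the codimension-$\geq 2$ bound on $\pol(f)$ provides enough regular segments to transport the fundamental theorem of calculus from $U$ to all of $\R^n$, whereas a mere codimension-$1$ indeterminacy locus would obstruct the limiting argument.
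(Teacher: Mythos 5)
Your proof is correct, but it takes a genuinely different route from the paper's. The paper reduces the converse to one-variable calculus along coordinate lines: it chooses an affine coordinate system so that every fibre of each coordinate projection meets $Z=\pol(f)$ in a finite set (for which $\codim Z\geq 1$ already suffices), applies a classical one-variable lemma (a continuous function whose derivative exists off a finite set and extends continuously across it is $\SC^1$ on the line) to obtain partial differentiability of $f$ everywhere with continuous partials, and concludes by the standard theorem that continuous partial derivatives imply $\SC^1$. You instead establish the global identity $f(a+h)-f(a)=\int_0^1\sum_i g_i(a+th)\,h_i\,dt$ by sliding the segment off the indeterminacy locus; this is precisely where you use the stronger bound $\codim\pol(f)\geq 2$ from Proposition~\ref{codim}, since the translate set $\pol(f)-[a,a+h]$ then has dimension at most $n-1$ and good perturbations $v$ exist arbitrarily close to $0$, after which continuity of $f$ and uniform continuity of the $g_i$ let you pass to the limit and read off differentiability directly. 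Your version buys a single formula yielding the differential at every point at once, with no choice of coordinates and no appeal to the two cited calculus theorems; the paper's version buys a weaker requirement on the indeterminacy locus and stays within elementary one-variable analysis. Both arguments rely on the same implicit convention that the values of $f$ on $\pol(f)$ are those of the continuous extension of $f_{|\dom(f)}$, which is how the statement is meant to be read.
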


\begin{proof}
L'implication directe est évidente compte tenu du
Lemme~\ref{lejellrat}. Montrons donc l'implication réciproque, et
soit~$f$ une fonction rationnelle telle que~$j_1(f)$ s'\'etend de
manière continue à~$\R^n$. Notons~$U$ le domaine de~$f$. D'après
l'hypothèse, $f$ est de classe~$\SC^1$ sur~$U$.  Soit $Z$ le
complémentaire de~$U$, i.e., $Z=\pol(f)$. Comme $Z$ est un
sous-ensemble Zariski fermé strictement contenu dans~$\R^n$, il existe
un système de coordonnées affine~$x'$ sur~$\R^n$ avec la propriété
suivante. Aucune des fibres des projections associées
$$
p_i\colon\R^n\lra\R^{n-1}
$$ omettant la $i$-ième coordonnée~$x_i'$ n'est contenue entièrement
dans~$Z$. Quitte à remplacer $x$~par $x'$, on peut supposer que la
propriété est satisfaite par le système des coordonnées~$x$, i.e.,
pour tout~$i=1,\ldots,n$ et pour tout~$a\in\R^{n-1}$,
l'intersection~$Z\inter p_i^{-1}(a)$ est un ensemble fini.

Comme~$f$ est de classe~$\SC^1$ sur~$U$, les dérivées
partielles~$f_i=\partial f/\partial x_i$, $i=1,\ldots,n$, de $f$
existent et sont continues sur~$U$. De plus, d'après l'hypothèse,
elles s'étendent par continuité sur~$\R^n$ tout entier. En
particulier, $f_i$ s'étend par continuité sur~$p_i^{-1}(a)$ pour
tout~$a\in\R^{n-1}$ et pour tout~$i$. D'après un résultat classique en
analyse~\cite[Theorem~11.7]{Spivak}, cela implique que la restriction
de~$f_i$ à~$p_i^{-1}(a)$ est de classe~$\SC^1$ sur~$p_i^{-1}(a)$, pour
tout $a$~et $i$. (Nous utilisons ici la finitude de~$Z\inter
p_i^{-1}(a)$.)  Autrement dit, $f$
est partiellement différentiable sur~$\R^n$ tout entier, et ses
dérivées partielles d'ordre~$1$ sont continues sur~$\R^n$. D'après un
autre résultat classique d'analyse~\cite[Thm~9.21]{Rudin-PMA}, $f$ est
alors de classe~$\SC^1$ sur~$\R^n$.
\end{proof}

\begin{thm}\label{thjkc0}
Soit $k$ un entier naturel.  Soit $f$ une fonction réelle
sur~$\R^n$. La fonction~$f$ est $k$-régulue si et seulement si~$f$ est
rationnelle et la section rationnelle $j_k(f)$ de~$J_k$ au-dessus
de~$\dom(f)$ s'étend de manière continue à~$\R^n$.
\end{thm}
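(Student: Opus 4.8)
Le théorème \ref{thjkc0} généralise la Proposition \ref{prj1c0} du cas $k=1$ au cas d'un entier naturel $k$ quelconque. L'implication directe est immédiate : si $f$ est $k$-régulue, elle est rationnelle par définition, et le Lemme \ref{lejellrat} assure que $j_k(f)$ est une section rationnelle de domaine $\dom(f)$ ; comme $f$ est de classe $\SC^k$ sur $\R^n$, toutes ses dérivées partielles d'ordre $\leq k$ s'étendent par continuité à $\R^n$, donc $j_k(f)$ aussi. Toute la difficulté réside dans la réciproque.

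\textbf{Stratégie pour la réciproque.} Le plan est de démontrer la réciproque par récurrence sur $k$, en s'appuyant sur la Proposition \ref{prj1c0} comme cas de base ($k=1$ ; le cas $k=0$ étant trivial puisque $j_0(f)=f$). Supposons donc le résultat acquis au rang $k-1$, et soit $f$ une fonction rationnelle telle que $j_k(f)$ s'étende continûment à $\R^n$. La première étape est d'observer que l'extension continue de $j_k(f)$ contient \emph{a fortiori} celle de $j_{k-1}(f)$ : le $(k-1)$-jet n'est qu'une troncature du $k$-jet, obtenue en oubliant les coefficients des monômes $(y-x)^I/I!$ avec $|I|=k$. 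Par l'hypothèse de récurrence, $f$ est donc déjà $(k-1)$-régulue, c'est-à-dire de classe $\SC^{k-1}$ sur $\R^n$ et régulière sur un ouvert de Zariski dense. Il ne reste plus qu'à gagner un ordre de différentiabilité supplémentaire.

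\textbf{Le gain d'un ordre de régularité.} Pour passer de $\SC^{k-1}$ à $\SC^k$, l'idée est d'appliquer l'argument de la Proposition \ref{prj1c0}, non pas à $f$ elle-même, mais à ses dérivées partielles d'ordre $k-1$. En effet, pour tout multi-indice $I$ avec $|I|=k-1$, la dérivée $g=\partial^{|I|}f/\partial x^I$ est une fonction rationnelle sur $\R^n$ de domaine contenant $\dom(f)$ (voir le calcul du Lemme \ref{lejellrat}). Les coefficients du $k$-jet de $f$ d'ordre $|J|\leq k$ avec $J\geq I$ fournissent précisément, après réindexation, les coefficients du $1$-jet de $g$ : autrement dit, la continuité de $j_k(f)$ sur $\R^n$ entraîne celle de $j_1(g)$ sur $\R^n$. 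On peut alors appliquer la Proposition \ref{prj1c0} à $g$ pour conclure que $g$ est de classe $\SC^1$ sur $\R^n$ tout entier. Comme cela vaut pour toute dérivée partielle d'ordre $k-1$ de $f$, toutes les dérivées partielles d'ordre $k$ de $f$ existent et sont continues sur $\R^n$. Joint au fait que $f$ est déjà $\SC^{k-1}$, cela montre que $f$ est de classe $\SC^k$ sur $\R^n$, et donc $k$-régulue puisque $f$ est rationnelle et régulière sur un ouvert dense.

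\textbf{Point délicat.} L'obstacle principal est de vérifier rigoureusement la compatibilité algébrique entre le $k$-jet de $f$ et le $1$-jet de ses dérivées d'ordre $k-1$, c'est-à-dire que l'extension continue de $j_k(f)$ fournit bien l'extension continue de chaque $j_1(\partial^{|I|}f/\partial x^I)$ : il faut s'assurer que tous les coefficients pertinents du $k$-jet — à savoir les composantes d'ordre $k-1$ et $k$ — se réinterprètent correctement comme composantes d'un $1$-jet, ce qui repose sur le choix de normalisation $(y-x)^I/I!$ rendant le coefficient de $(y-x)^I/I!$ égal à la dérivée partielle d'ordre $I$. Une fois cette identification posée, l'application de la Proposition \ref{prj1c0} est directe. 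Il convient enfin de noter que la finitude de $\pol(f)\inter p_i^{-1}(a)$ requise dans la Proposition \ref{prj1c0} reste disponible ici, puisque $\pol(g)\subseteq\pol(f)$ est encore un fermé de Zariski propre de $\R^n$ auquel s'applique le même choix de coordonnées affines.
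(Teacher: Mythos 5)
Votre démonstration est correcte et suit essentiellement la même voie que celle de l'article : la réciproque s'obtient en appliquant la Proposition~\ref{prj1c0} aux dérivées partielles d'ordre $k-1$ de~$f$, dont les $1$-jets sont des troncatures réindexées de~$j_k(f)$. Vous explicitez simplement, par une récurrence sur~$k$, le passage de «~toutes les dérivées d'ordre $k-1$ sont $1$-régulues~» à «~$f$ est $k$-régulue~», que l'article laisse implicite.
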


\begin{proof}
L'implication directe étant encore évidente, on démontre l'implication
réciproque. Le cas~$k=0$ est trivial. Supposons que $k>0$ et soit~$f$
une fonction rationnelle de domaine~$U$ telle que la section~$j_k(f)$
au-dessus de~$U$ s'étend de manière continue à~$\R^n$. Les dérivées
partielles~$f_I$ d'ordre~$k-1$ de~$f$ ont des $1$-jets qui s'étendent
de manière continue à~$\R^n$. D'après la Proposition~\ref{prj1c0}, ces
dérivées partielles sont toutes $1$-régulues. Il s'ensuit que~$f$ est
$k$-régulue.
\end{proof}

\begin{thm}\label{thjlck-l}
Soient $k$ un entier surnaturel et $\ell$ un entier naturel~$\leq k$.
Soit $f$ une fonction réelle sur~$\R^n$. La fonction~$f$ est
$k$-régulue si et seulement si~$f$ est rationnelle et la section
rationnelle $j_\ell(f)$ de~$J_\ell$ au-dessus de~$\dom(f)$ s'étend de
manière $\SC^{k-\ell}$ à~$\R^n$.
\end{thm}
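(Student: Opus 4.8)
The plan is to prove both implications by reducing to Theorem~\ref{thjkc0}, which is precisely the case $\ell=k$ (continuous extension of $j_k(f)$), together with the elementary bookkeeping relating the $\ell$-jet to the full $k$-jet. For the \emph{direct} implication, suppose $f$ is $k$-régulue. Then $f$ is rational and of class $\SC^k$ on $\R^n$, and by Lemma~\ref{lejellrat} the section $j_\ell(f)$ is a rational section of $J_\ell$ of domain $U=\dom(f)$. Its components are the partial derivatives $\partial^{|I|}f/\partial x^I$ with $|I|\leq\ell$; since $f\in\SC^k(\R^n)$ and $|I|\leq\ell$, each such derivative is of class $\SC^{k-\ell}$ on $\R^n$. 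As a map into $\R^N$ is $\SC^{k-\ell}$ if and only if each of its components is, the section $j_\ell(f)$ extends in the $\SC^{k-\ell}$ manner to $\R^n$.

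For the \emph{reverse} implication, suppose $f$ is rational with domain $U$ and that $j_\ell(f)$ extends $\SC^{k-\ell}$ to $\R^n$. Write $f_I=\partial^{|I|}f/\partial x^I$ for the rational partial derivatives. By the same component criterion, the hypothesis means exactly that each $f_I$ with $|I|\leq\ell$ extends to a function of class $\SC^{k-\ell}$ on $\R^n$. I claim that then every $f_J$ with $|J|\leq k$ extends continuously to $\R^n$, i.e.\ that $j_k(f)$ extends continuously; applying Theorem~\ref{thjkc0} (for $k$ finite) then yields that $f$ is $k$-régulue, which is what we want.

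To prove the claim, fix $J$ with $|J|\leq k$. If $|J|\leq\ell$ this is immediate, since $f_J$ is one of the components of $j_\ell(f)$, which extends $\SC^{k-\ell}$ and hence continuously. If $\ell<|J|\leq k$, write $J=I+H$ with $|I|=\ell$, so that $|H|=|J|-\ell\leq k-\ell$. Let $\tilde f_I$ denote the $\SC^{k-\ell}$ extension of $f_I$ to $\R^n$. Since $U$ is a dense Euclidean-open set on which $\tilde f_I$ coincides with the regular (hence $\SC^\infty$) function $f_I$, the derivative $\partial^H\tilde f_I$ coincides on $U$ with $\partial^H f_I=f_J$; moreover $\partial^H\tilde f_I$ is continuous on all of $\R^n$ because $\tilde f_I\in\SC^{k-\ell}$ and $|H|\leq k-\ell$. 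Thus $\partial^H\tilde f_I$ is a continuous extension of $f_J$, and the claim follows. The case $k=\infty$ is then handled by running this argument for every finite exponent: if $j_\ell(f)$ extends $\SC^\infty$, each $f_I$ with $|I|=\ell$ extends $\SC^\infty$, so for every finite $k'\geq\ell$ the jet $j_{k'}(f)$ extends continuously and Theorem~\ref{thjkc0} makes $f$ a $k'$-régulue function; since $\SR^\infty(\R^n)=\bigcap_{k'}\SR^{k'}(\R^n)$, the function $f$ is $\infty$-régulue.

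The only delicate point, and the step I expect to require the most care, is the identification on the dense open set $U$ of the honest partial derivatives $\partial^H\tilde f_I$ of the extensions with the a priori distinct rational functions $f_J=\partial^{|J|}f/\partial x^J$. This rests on the density of $U$ together with the fact that all the functions involved agree and are smooth on $U$, so that classical differentiation commutes with the extension there; this is exactly the type of density-plus-regularity argument already used in the proof of Proposition~\ref{prj1c0}. Everything else is formal manipulation of multi-indices and the component criterion for $\SC^{k-\ell}$ regularity of vector-valued sections.
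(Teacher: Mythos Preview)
Your proposal is correct and follows essentially the same route as the paper: both reduce the reverse implication to Theorem~\ref{thjkc0} by showing that the continuous $\SC^{k-\ell}$ extension of each $\partial^I f$ with $|I|\leq\ell$ yields continuous extensions of all $\partial^J f$ with $|J|\leq k$, so that $j_k(f)$ extends continuously. The paper compresses this by noting that each such $\partial^I f$ is $(k-\ell)$-régulue and invoking the direct implication of Theorem~\ref{thjkc0} for those, whereas you unpack the identification $\partial^H\tilde f_I=\partial^{I+H}f$ on $U$ explicitly; your treatment of $k=\infty$ via $\SR^\infty(\R^n)=\bigcap_{k'}\SR^{k'}(\R^n)$ is a valid alternative to the paper's observation that one may simply assume $k$ finite.
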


\begin{proof}
On peut supposer que $k$ est fini. Comme précédemment, il suffit de
démontrer l'implication indirecte.  Soit $f$ une fonction rationnelle dont
toutes les dérivées d'ordre~$\leq\ell$ sont $(k-\ell)$-régulues.
D'après l'implication directe du Théorème~\ref{thjkc0}, le $k$-jet de~$f$
s'étend de manière continue à~$\R^n$.  D'apr\`es l'implication
indirecte du même théorème, $f$ est $k$-régulue.
\end{proof}

\section{Nullstellenzatz et Théorèmes A et B}

\subsection*{L'inégalité de {\Lbarre}ojasiewicz}

Un rôle clé dans la suite est joué par une version $k$-régulue d'un corollaire de
l'inégalité de {\Lbarre}ojasiewicz, cf.~\cite[Prop. 2.6.4]{BCR}.

\begin{lem} 
\label{LojaregulRn}
Soient $n$~et $k$ des entiers naturels.  Soit $f$ une fonction
$k$-régulue sur~$\R^n$.  Si
$$
g\colon \D(f)\rightarrow \RR
$$ est $k$-régulue, alors il existe un entier naturel~$N$ tel que
l'extension à~$\R^n$ par~$0$ de~$f^Ng$ est $k$-régulue.
\end{lem}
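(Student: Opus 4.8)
The plan is to reduce everything to the jet criterion of Theorem~\ref{thjkc0} and then to feed that criterion with a Łojasiewicz‐type growth estimate. First I would dispose of the trivial case $f\equiv 0$, in which $\D(f)=\emptyset$ and the extension by zero of $f^Ng$ is the zero function, hence régulue. So assume $f\not\equiv 0$, whence $\Z(f)$ is nowhere dense in $\R^n$. Write $h$ for the extension by zero to $\R^n$ of $f^Ng$. Since $f$ and $g$ are rational and regular on dense Zariski‐open sets, $f^Ng$ is regular on a dense Zariski‐open $V$, and there $h=f^Ng$ (at the points of $V\inter\Z(f)$ both sides vanish); thus $h$ is rational and satisfies condition~(2) of Définition~\ref{dfn.regulue}. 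By Theorem~\ref{thjkc0} it therefore suffices to find an $N$ for which the rational section $j_k(h)$ extends continuously to $\R^n$. As $f^Ng$ is already of class~$\SC^k$ on the open set $\D(f)$ (a product of the $\SC^k$ function $f^N$ and the $\SC^k$ function $g$), the only remaining point is to show that, for $N$ large, every partial derivative $\partial^I(f^Ng)$ with $|I|\le k$ tends to $0$ as $x\to\Z(f)$ inside $\D(f)$; the continuous extension of $j_k(h)$ is then the one vanishing on $\Z(f)$.

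For this limit I would expand by Leibniz,
$$
\partial^I(f^Ng)=\sum_{J\le I}\binom{I}{J}\,\partial^{I-J}(f^N)\,\partial^{J}g,
$$
and control the two factors separately. On a neighbourhood of a point $a\in\Z(f)$ where $|f|\le 1$, the multinomial Leibniz expansion of $\partial^{\alpha}(f^N)$ is a sum of terms each carrying a factor $f^{N-r}$ with $r\le|\alpha|\le k$, times a product of $r$ derivatives of $f$ of order~$\le k$; since $f$ is of class~$\SC^k$ these derivatives are locally bounded, whence $|\partial^{\alpha}(f^N)|\le C\,|f|^{N-k}$ there. For the second factor I claim a bound $|\partial^{J}g|\le C'\,|f|^{-M}$ with $M$ independent of $J$; granting this, the displayed sum is $\le C''\,|f|^{N-k-M}$, which tends to $0$ at $a$ as soon as $N>k+M$. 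Choosing $N=k+M+1$ then finishes the proof via Theorem~\ref{thjkc0}.

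The heart of the matter, and the step I expect to be the main obstacle, is the estimate $|\partial^{J}g|\le C'|f|^{-M}$. Here I would use that $g$, being $k$-régulue, is semi‑algébrique (Proposition~\ref{regulsa}) and rational, so each $\partial^{J}g$ is again a rational function (Lemme~\ref{lejellrat}), hence semi‑algébrique, and continuous on $\D(f)$; its blow‑up locus is therefore contained in $\Z(f)$. The fact that a semi‑algebraic function blows up at most like a negative power of a function defining the set along which it blows up is exactly the content of the Łojasiewicz inequality \cite[Prop.~2.6.4]{BCR}, applied to $f$ and $\partial^{J}g$. The delicate point is that $\Z(f)$ is not compact, so I must secure a single exponent $M$ (and hence a single $N$) valid along all of $\Z(f)$: this is where I rely on the global, semi‑algebraic nature of the exponent, the inequality holding on the closed semi‑algebraic set $\R^n$ with one exponent and a constant of at most polynomial growth in $|x|$. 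Since being of class~$\SC^k$ is a condition at finite points only, polynomial growth of the constant at infinity is harmless, and taking the maximum of the finitely many exponents obtained for $|J|\le k$ yields the uniform $M$ needed above.
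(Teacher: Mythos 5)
Your proof is correct and follows essentially the same route as the paper's: reduce to the jet criterion of Théorème~\ref{thjkc0}, apply the global semi-algebraic inégalité de Łojasiewicz to get a single exponent $M$ controlling all the $\partial^J g$ with $|J|\le k$, and observe via Leibniz that the $k$-jet of $f^N$ retains a factor $f^{N-k}$. The paper packages the last two steps as the identity $j_k(f^Ng)=f^M s\, j_k(g)$ in the jet algebra, with $s$ a régulue section, and takes $N=M+k$, whereas you work with pointwise bounds and one extra power of $f$; this is only a presentational difference.
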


\begin{proof}
Si $k=0$, le résultat est classique, cf.~\cite[Prop. 2.6.4]{BCR}. Par conséquent, pour tout entier naturel $k$, il existe un entier
naturel~$M$ tel que pour tout~$I$ avec $|I|\leq k$, l'extension de
$$
f^M\cdot\frac{\partial^{|I|} g}{\partial x^I}
$$ par~$0$ sur~$\R^n$ est continue. Soit
$N=M+k$. La fonction $f^Ng$ étendue par~$0$ sur~$\R^n$ est de classe~$\SC^k$. En effet, toutes les dérivées partielles d'ordre inférieur ou égal à $k$ se prolongent par continuité par~$0$ sur~$\R^n$. La preuve pour tout entier naturel $k$ se déduit donc du cas $k=0$. 

On peut préciser cela en termes de $k$-jets.
Comme~$f$ est $k$-régulue sur~$\R^n$, tout coefficient du
$k$-jet~$j_k(f^N)$ de~$f^N$ est de la forme~$f^M\cdot h$, où $h$ est
une fonction régulue sur~$\R^n$. Du coup, $j_k(f^N)$ lui-même est de
la forme~$f^M\cdot s$, où $s$ est une section régulue de~$J_k$ au
dessus de $\R^n$. Il
s'ensuit que l'extension par~$0$ de
$$
j_k(f^Ng)=j_k(f^N)j_k(g)=f^Msj_k(g)
$$ est une section régulue de~$J_k$ au-dessus~$\R^n$ tout
entier. D'après le Théorème~\ref{thjkc0}, la fonction~$f^Ng$ est
$k$-régulue sur~$\R^n$.
\end{proof}

Le lemme ci-dessus nous sera utile sous la forme suivante:

\begin{lem}
\label{LojaregulU}
Soient $n$~et $k$ des entiers naturels. Soit~$U$ un ouvert $k$-régulu
de~$\R^n$, et soit $f$ une fonction $k$-régulue sur~$U$. Si
$$
g\colon \D(f)\rightarrow \RR
$$ est une fonction $k$-régulue, il existe un entier
naturel~$N$ tel que l'extension à~$U$ par~$0$ de~$f^Ng$ est
$k$-régulue.
\end{lem}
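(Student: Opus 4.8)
Le plan est de se ramener à la version globale déjà établie, le Lemme~\ref{LojaregulRn}, en remplaçant l'ouvert $U$ par $\R^n$ tout entier à l'aide d'une fonction $k$-régulue qui s'annule exactement sur le bord de $U$. Comme $U$ est un ouvert $k$-régulu, son complémentaire $\R^n\setminus U$ est un fermé $k$-régulu, et le Corollaire~\ref{cofkresthyp} fournit une fonction $k$-régulue $h$ sur $\R^n$ avec $\Z(h)=\R^n\setminus U$, c'est-à-dire $U=\D(h)$.

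On prolongerait d'abord $f$ à l'espace entier. La fonction $f$ étant $k$-régulue sur $U=\D(h)$, une première application du Lemme~\ref{LojaregulRn} à $h$ et $f$ donne un entier $M$ tel que l'extension par $0$ à $\R^n$ de $F:=h^M f$ soit $k$-régulue sur $\R^n$ ; comme $h$ ne s'annule pas sur $U$ et que $\D(f)\subseteq U$, on a $\D(F)=\D(f)$. La fonction $g$ étant $k$-régulue sur $\D(f)=\D(F)$, une seconde application du même lemme, cette fois à $F$ et $g$, fournit un entier $N$ tel que l'extension par $0$ à $\R^n$ de $G:=F^N g=h^{MN}f^N g$ soit $k$-régulue sur $\R^n$.

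Il resterait alors à éliminer le facteur parasite $h^{MN}$. En restreignant $G$ à l'ouvert $k$-régulu $U=\D(h)$, on obtient une fonction $k$-régulue sur $U$ vérifiant $G_{|U}=h^{MN}\psi$, où $\psi$ désigne l'extension par $0$ à $U$ de $f^N g$. Puisque $h$ ne s'annule pas sur $U$, la fonction $1/h^{MN}$ y est rationnelle et de classe $\SC^k$, donc $k$-régulue sur $U$ ; le produit $\psi=G_{|U}\cdot h^{-MN}$ est alors $k$-régulue sur $U$, ce qui établit l'énoncé avec cet entier $N$. La principale difficulté est précisément ce premier prolongement : l'hypothèse ne fournit $f$ que sur $U$, et il faut la transformer en une fonction définie et $k$-régulue sur $\R^n$ avant de pouvoir invoquer la forme globale de l'inégalité de Łojasiewicz ; c'est le facteur $h^M$ qui absorbe le mauvais comportement de $f$ au bord de $U$, facteur que l'on peut ensuite retrancher sans dommage puisqu'il est inversible sur $U$.
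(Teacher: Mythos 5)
Votre preuve est correcte et suit essentiellement la même démarche que celle du texte : choix d'une fonction $k$-régulue $h$ avec $\D(h)=U$, double application du Lemme~\ref{LojaregulRn} (d'abord à $h$ et $f$, puis à $h^Mf$ et $g$), et retour à $U$ en divisant par le facteur $h^{MN}$ qui y est inversible. Vous explicitez simplement la dernière étape que le texte laisse implicite.
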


\begin{proof}
Soit $h$ une fonction $k$-régulue sur~$\R^n$ telle
que~$\D(h)=U$. D'après le Lemme~\ref{LojaregulRn}, il existe un entier
naturel~$M$ tel que l'extension par~$0$ de~$h^Mf$ est $k$-régulue
sur~$\R^n$. Comme~$g$ est $k$-régulue sur~$\D(h^Mf)$, il existe encore
d'après le Lemme~\ref{LojaregulRn}, un entier naturel~$N$ tel que
l'extension par~$0$ de~$(h^Mf)^Ng$ est $k$-régulue sur~$\R^n$. Il
s'ensuit que l'extension par~$0$ de~$f^Ng$ à~$U=\D(h)$ est
$k$-régulue.
\end{proof}

Remarquons que les deux énoncés ci-dessus sont faux lorsque~$k=\infty$. Un
contre-exemple est le suivant. Soit $f$ la fonction sur~$\R^2$ définie
par~$f(x)=x^2+y^2$, et soit $g$ la fonction sur $\R^2\setminus\{0\}$
définie par~$g(x,y)=1/(x^2+2y^2)$. Les fonctions $f$~et $g$ sont bien de
classe~$\SC^\infty$, mais l'extension à~$\R^2$ par~$0$ de $f^Ng$ n'est
de classe~$\SC^\infty$ pour aucun entier naturel~$N$.

\subsection*{Le faisceau des fonctions $k$-régulues sur~$\R^n$}

Soient $n$ un entier naturel et $k$ un entier surnaturel.  Soit $U$ un
ouvert $k$-régulu de~$\R^n$. La topologie de Zariski sur~$U$ est la
topologie induite par la topologie de Zariski sur~$\R^n$.  Une
fonction réelle~$f$ définie sur~$U$ est $k$-régulue sur~$U$ si $f$ est
de classe~$\SC^k$ sur~$U$, et il existe des fonctions polynomiales $p$
et~$q$ sur~$\R^n$ telles que~$f=p/q$ sur un ouvert de Zariski dense
de~$U$. On note~$\SR^k(U)$ l'ensemble des fonctions $k$-régulues
sur~$U$. C'est un anneau sous les opérations habituelles. Si $V$ est
un ouvert $k$-régulu de~$\R^n$ contenu dans~$U$, l'application de
restriction de $\SR^k(U)$~dans $\SR^k(V)$ est un morphisme
d'anneaux. Il est clair que~$\SR^k$ est donc un préfaisceau.

\begin{prop}
Soient $n$ un entier naturel et $k$ un entier surnaturel.  Le
préfaisceau~$\SR^k$ est un faisceau sur~$\R^n$ pour la topologie
$k$-régulue.
\end{prop}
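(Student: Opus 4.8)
The plan is to verify the two sheaf axioms for the presheaf $\SR^k$ with respect to the $k$-régulue topology on $\R^n$. Since $\SR^k$ is a subpresheaf of the presheaf of all real-valued functions, the separation axiom is automatic: two $k$-régulue functions on a $k$-régulue open $U$ that agree on every member of a $k$-régulue open cover $\{U_\alpha\}$ of $U$ agree pointwise on $U=\bigcup_\alpha U_\alpha$, hence are equal. So the entire content lies in the gluing axiom. Given $f_\alpha\in\SR^k(U_\alpha)$ with $f_\alpha=f_\beta$ on $U_\alpha\inter U_\beta$ for all $\alpha,\beta$, I would define $f\colon U\to\R$ by $f(x)=f_\alpha(x)$ whenever $x\in U_\alpha$; the compatibility condition makes this well defined, and by construction $f$ restricts to $f_\alpha$ on each $U_\alpha$. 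It then remains to check that $f\in\SR^k(U)$, i.e. that $f$ is of class $\SC^k$ and rational on $U$.

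For the differentiability, I would first record that the $k$-régulue topology is coarser than the euclidean one: a $k$-régulue function is continuous, so its zero locus is euclidean closed, whence every $k$-régulue closed set is euclidean closed and every $k$-régulue open set is euclidean open. Thus $\{U_\alpha\}$ is in particular a euclidean open cover of $U$. Since being of class $\SC^k$ is a local property for the euclidean topology and $f_{|U_\alpha}=f_\alpha$ is $\SC^k$ for each $\alpha$, the glued function $f$ is $\SC^k$ on $U$.

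The rationality is the heart of the matter, and the key point I would isolate is the following density statement: every nonempty $k$-régulue open subset $W$ of $\R^n$ is Zariski dense, and in fact contains a dense Zariski open set. To prove it, write the $k$-régulue closed complement as a single zero locus $\R^n\setminus W=\Z(g)$ (Corollaire~\ref{cofkresthyp}), with $g\not\equiv0$ since $W\neq\emptyset$. Writing $g=p/q$ with $p,q$ coprime polynomials, Proposition~\ref{codim} gives $\Z(q)\subseteq\Z(p)$ with $\codim_{\R^n}\Z(q)\geq2$; since on $\dom(g)=\{q\neq0\}$ one has $g=0$ exactly where $p=0$, while $\pol(g)=\Z(q)\subseteq\Z(p)$, one concludes $\Z(g)\subseteq\Z(p)$. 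As $\Z(p)$ is a proper Zariski closed set, $W\supseteq\R^n\setminus\Z(p)$ contains a dense Zariski open set, which proves the claim.

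Granting this, I would conclude as follows. Assuming $U\neq\emptyset$ (the empty case being trivial), choose $\alpha_0$ with $U_{\alpha_0}\neq\emptyset$. By rationality of $f_{\alpha_0}$ on $U_{\alpha_0}$, there are polynomials $p,q$ and a Zariski open $W'$ of $\R^n$ such that $f_{\alpha_0}=p/q$ on the dense Zariski open $U_{\alpha_0}\inter W'$ of $U_{\alpha_0}$. By the density statement, $U_{\alpha_0}$ contains a dense Zariski open $V_0$ of $\R^n$; then $V=V_0\inter W'$ is a dense Zariski open of $\R^n$ contained in $U_{\alpha_0}\subseteq U$, hence Zariski dense in $U$, and $f=f_{\alpha_0}=p/q$ on $V$. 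Thus $f$ is rational on $U$, and combined with the previous paragraph this yields $f\in\SR^k(U)$, completing the gluing axiom. The main obstacle is precisely this rationality step: one must exhibit a single rational expression valid on a Zariski-dense open of $U$, which is where the geometric inputs—the representation of $k$-régulue closed sets as hypersurfaces (Corollaire~\ref{cofkresthyp}) and the codimension bound of Proposition~\ref{codim}—are genuinely used.
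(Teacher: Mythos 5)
Votre démonstration est correcte et suit pour l'essentiel la même stratégie que celle de l'article : réduction à l'axiome de recollement, caractère $\SC^k$ du recollé obtenu par localité pour la topologie euclidienne (les ouverts $k$-régulus étant euclidiennement ouverts), puis rationalité obtenue en exhibant un ouvert de Zariski dense de $\R^n$ contenu dans un $U_{\alpha_0}$ non vide sur lequel $f$ coïncide avec une fraction $p/q$. La seule divergence réelle est la justification de l'étape de densité : l'article invoque l'irréductibilité de $\R^n$ pour la topologie $k$-régulue (proposition démontrée auparavant via l'intégrité de $\SR^k(\R^n)$), tandis que vous redémontrez directement qu'un ouvert $k$-régulu non vide contient un ouvert de Zariski dense, en combinant le Corollaire~\ref{cofkresthyp} (tout fermé $k$-régulu est un $\Z(g)$) et la Proposition~\ref{codim} ($\Z(q)\subseteq\Z(p)$ et $\codim\Z(q)\geq2$ pour $p,q$ premiers entre eux), d'où $\Z(g)\subseteq\Z(p)$. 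Les deux ingrédients figurent dans l'article avant l'énoncé, donc votre variante est légitime ; elle est un peu plus explicite (elle donne un ouvert de Zariski dense, pas seulement la densité $k$-régulue) au prix d'un recours à la noethérianité via le Corollaire~\ref{cofkresthyp}, là où l'argument d'irréductibilité de l'article s'en passe.
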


\begin{proof}
Il suffit de montrer l'énoncé suivant. Soit $\{U_i\}_{i\in I}$ un
recouvrement ouvert d'un ouvert $k$-régulu~$U$ de~$\R^n$. Supposons
que $f_i$ est une fonction $k$-régulue sur
$U_i$, pour tout~$i$, telles que
$$
(f_i)_{|U_i\inter U_j}=(f_j)_{|U_i\inter U_j}
$$ pour tout $i,j$. Alors il existe une fonction $k$-régulue $f$
sur~$U$ dont la restriction à~$U_i$ est égale à~$f_i$. En effet, il
existe une fonction~$f$ sur~$U$ de classe~$\SC^k$ dont la restriction
à~$U_i$ est égale à~$f_i$ pour tout~$i$. Pour montrer que $f$ est
$k$-régulue, on peut supposer que~$U$ est non vide. Du coup, il
existe~$i$ tel que~$U_i$ est non vide. Comme~$\R^n$ est irréductible
pour la topologie $k$-régulue, tout ouvert $k$-régulu non vide
de~$\R^n$ est dense. En particulier, un ouvert de Zariski dense
dans~$U_i$ est aussi dense dans~$U$. Comme~$f_i$ est régulière sur un
ouvert de Zariski dense dans~$U_i$, la fonction~$f$ est régulière sur un
ouvert de Zariski dense dans~$U$.
\end{proof}

\begin{cor}
Soient $n$ un entier naturel et $k$ un entier surnaturel. Le
faisceau~$\SR^k$ sur~$\R^n$ est un faisceau en $\R$-algèbres
locales. La paire~$(\R^n,\SR^k)$ est un espace localement annelé en
$\R$-algèbres.\qed
\end{cor}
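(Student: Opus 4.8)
The plan is to leverage the preceding proposition, which already establishes that $\SR^k$ is a sheaf for the $k$-régulue topology, together with the obvious fact that each restriction map is a morphism of $\R$-algebras; this shows at once that $\SR^k$ is a sheaf of $\R$-algebras. The only substantive content remaining is to prove that it is a sheaf of \emph{local} $\R$-algebras, i.e.\ that for every point $x\in\R^n$ the stalk $\SR^k_x$ is a local ring.

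First I would describe the stalk concretely: $\SR^k_x$ is the direct limit of $\SR^k(U)$ over the $k$-régulue open neighbourhoods $U$ of $x$, so its elements are germs at $x$ of $k$-régulue functions. Evaluation at $x$, sending a germ $[f]$ to $f(x)$, is well defined, since two representatives of a germ coincide on a neighbourhood of $x$ and hence at $x$; it is an $\R$-algebra homomorphism $\SR^k_x\to\R$, visibly surjective because constants evaluate to themselves. Let $\pm_x$ denote its kernel, the ideal of germs vanishing at $x$.

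The key step is to show that every germ $[f]$ with $f(x)\neq0$ is a unit in $\SR^k_x$; this forces $\pm_x$ to be the unique maximal ideal and identifies the residue field with $\R$. Given such an $f$, defined and $k$-régulue on a $k$-régulue open neighbourhood $U$ of $x$, set $V=U\inter\D(f)$. Since $\Z(f)$ is $k$-régulument fermé, $V$ is a $k$-régulue open neighbourhood of $x$, and it contains $x$ because $f(x)\neq0$. On $V$ the function $1/f$ is of class $\SC^k$, being the reciprocal of a nonvanishing $\SC^k$ function, and it is rational with a representative that is régulière on a dense Zariski open subset of $V$, namely where $f$ itself is régulière and nonzero. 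Hence $1/f\in\SR^k(V)$, and its germ at $x$ inverts $[f]$.

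The main obstacle, though a mild one, is exactly this verification that $1/f$ is again $k$-régulue near $x$: it rests on the two clauses of the definition of a $k$-régulue function (smoothness of class $\SC^k$ and rationality with régularité on a dense Zariski open) together with the fact that $V$ is a genuine $k$-régulue open set. Once the stalk is known to be local with residue field $\R$, the pair $(\R^n,\SR^k)$ is by definition an espace localement annelé en $\R$-algèbres, which completes the proof.
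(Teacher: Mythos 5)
Votre démonstration est correcte et constitue exactement la vérification de routine que le papier omet (le corollaire n'y porte qu'un \qed) : la localité des fibres résulte de ce qu'un germe ne s'annulant pas en $x$ est inversible, l'idéal maximal étant le noyau de l'évaluation en $x$, de corps résiduel $\R$. Le seul point qui mérite d'être explicité est que $V=U\inter\D(f)$ est bien un ouvert \emph{$k$-régulu} de $\R^n$ alors que $f$ n'est définie que sur $U$ ; cela se déduit d'énoncés déjà disponibles à ce stade (pour $k$ fini, écrire $U=\D(h)$ avec $h\in\SR^k(\R^n)$ grâce au Corollaire~\ref{cofkresthyp}, prolonger $h^Nf$ par $0$ en une fonction $k$-régulue globale $F$ via le Lemme~\ref{LojaregulRn}, et constater que $\D(F)=U\inter\D(f)$ ; pour $k=\infty$ c'est le cas de Zariski).
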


\begin{cor}
Soient $m,n$ des entiers naturels et $k$ un entier surnaturel. Une
application $k$-régulue de~$\R^n$ dans $\R^m$ induit naturellement un
morphisme d'espaces localement annelés en $\R$-algèbres
de~$(\R^n,\SR^k)$ dans~$(\R^m,\SR^k)$. Cela établit une bijection
entre l'ensemble des applications $k$-régulues de~$\R^n$ dans~$\R^m$
et l'ensemble des morphismes d'espaces localement annelés en $\R$-algèbres
de~$(\R^n,\SR^k)$ dans~$(\R^m,\SR^k)$.
\end{cor}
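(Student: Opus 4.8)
Le plan est d'exhiber deux constructions mutuellement inverses entre les applications $k$-régulues $f\colon\R^n\to\R^m$ et les morphismes d'espaces localement annelés en $\R$-algèbres $(\varphi,\varphi^\#)\colon(\R^n,\SR^k)\to(\R^m,\SR^k)$.

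Pour le sens direct, partant d'une application $k$-régulue $f$, on prendrait $\varphi=f$ comme application continue sous-jacente, ce qui est licite car $f$ est continue pour la topologie $k$-régulue d'après le Corollaire~\ref{cokregcont}. Pour le morphisme de faisceaux, à un ouvert $k$-régulu $V\subseteq\R^m$ et à $g\in\SR^k(V)$ on associerait $\varphi^\#_V(g)=g\circ f_{|\varphi^{-1}(V)}$. Cette fonction appartient bien à $\SR^k(\varphi^{-1}(V))$: elle est de classe~$\SC^k$ par la règle de dérivation des fonctions composées, et régulière sur un ouvert de Zariski dense grâce à l'argument de stratification du Corollaire~\ref{cor.compositionregulues} (qui est de nature locale, et s'applique donc à la section $g$ au-dessus de $V$). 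Le morphisme $\varphi^\#$ ainsi défini est manifestement un morphisme de faisceaux en $\R$-algèbres, et il est local: si un germe $g$ en $f(x)$ est dans $\pm_{f(x)}$, c'est-à-dire $g(f(x))=0$, alors $(g\circ f)(x)=0$, de sorte que $\varphi^\#_x(g)\in\pm_x$.

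Pour le sens réciproque, partant de $(\varphi,\varphi^\#)$, les fonctions coordonnées $y_1,\dots,y_m$ sur~$\R^m$ sont polynomiales, donc des sections globales $k$-régulues; on poserait alors $f_j=\varphi^\#_{\R^m}(y_j)\in\SR^k(\R^n)$ et $f=(f_1,\dots,f_m)$, qui est $k$-régulue par définition. L'étape cruciale est de retrouver $\varphi$ à partir de $f$. Comme $\SR^k$ est un faisceau en $\R$-algèbres locales de corps résiduel~$\R$ en chaque point (l'évaluation étant le morphisme résiduel), et comme $\varphi^\#_x\colon\SR^k_{\varphi(x)}\to\SR^k_x$ est un morphisme local de $\R$-algèbres, le morphisme induit sur les corps résiduels est l'unique endomorphisme de $\R$-algèbre de~$\R$, à savoir l'identité. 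En évaluant en~$x$, via le morphisme résiduel, l'égalité $\varphi^\#(y_j)=f_j$, on obtient $y_j(\varphi(x))=f_j(x)$ pour tout~$j$, d'où $\varphi(x)=f(x)$: les applications sous-jacentes coïncident.

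La difficulté principale sera de vérifier que $\varphi^\#=f^\#$ sur chaque section, et non seulement sur les coordonnées — l'anneau $\SR^k(\R^m)$ étant loin d'être engendré par les~$y_j$. On exploiterait ici le fait que $\SR^k$ est un faisceau concret de fonctions à valeurs réelles, de sorte qu'une section est déterminée par ses valeurs ponctuelles. Pour $g\in\SR^k(V)$ et $x\in\varphi^{-1}(V)=f^{-1}(V)$, la valeur de $\varphi^\#_V(g)$ en~$x$ est le résidu de $\varphi^\#_x(g_{\varphi(x)})$, qui, par la propriété d'identité sur les résidus, vaut $g(\varphi(x))=g(f(x))$, c'est-à-dire exactement la valeur de $f^\#_V(g)=g\circ f$ en~$x$. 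L'égalité de toutes les valeurs force $\varphi^\#_V(g)=f^\#_V(g)$. Les deux constructions sont alors inverses l'une de l'autre par construction, et la fonctorialité est routinière, ce qui fournit la bijection annoncée. Le point décisif dans tout l'argument est l'hypothèse d'espace localement annelé jointe au fait que le corps résiduel est~$\R$: c'est elle qui détermine $\varphi$ sur les points et qui propage l'égalité des tirés en arrière des coordonnées à l'égalité de tous les tirés en arrière.
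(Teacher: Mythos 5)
Votre démonstration est correcte et suit essentiellement la même voie que celle de l'article : continuité pour la topologie $k$-régulue via le Corollaire~\ref{cokregcont} et stabilité par composition via le Corollaire~\ref{cor.compositionregulues} pour le sens direct, puis reconstruction de l'application à partir des tirés en arrière des coordonnées pour la réciproque. Vous explicitez en outre l'argument de corps résiduel (tout morphisme local de $\R$-algèbres locales de corps résiduel~$\R$ induit l'identité sur les résidus), qui garantit que l'application sous-jacente \emph{et} le morphisme de faisceaux tout entier sont déterminés par les $f^\#(y_j)$ — point que la preuve de l'article laisse implicite en posant directement $f_i=f^\#(x_i)$.
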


\begin{proof}
Soit~$f$ une application $k$-régulue de~$\R^n$ dans~$\R^m$.  D'après le
Corollaire~\ref{cokregcont}, $f$ est une application continue de~$\R^n$
dans~$\R^m$, par rapport aux topologies $k$-régulues. De plus, si $g$
est une fonction $k$-régulue sur un ouvert $k$-régulu $U$  de~$\R^m$, la
composition~$f^\#(g)=g\circ f$ est une fonction $k$-régulue sur l'ouvert
$k$-régulu~$f^{-1}(U)$ de~$\R^n$, d'après le
Corollaire~\ref{cor.compositionregulues}. On en déduit donc un morphisme d'espaces localement annelés en $\R$-algèbres
$$
(f,f^\#)\colon (\R^n,\SR^k)\lra(\R^m,\SR^k).
$$

Réciproquement, étant donné un tel morphisme~$(f,f^\#)$,
l'application~$f$ est une application $k$-régulue de~$\R^n$
dans~$\R^m$ car la $i$-ième fonction coordonnée~$f_i$ de~$f$ est
$f^\#(x_i)$ et est une fonction $k$-régulue sur~$\R^n$,
pour~$i=1,\ldots,m$.
\end{proof}

Grâce à la version $k$-régulue de l'inégalité de {\Lbarre}ojasiewicz, on a une
description de l'anneau des sections du faisceau~$\SR^k$ au-dessus
d'un ouvert comme une localisation de~$\SR^k(\R^n)$, lorsque~$k$ est
fini:

\begin{prop}
\label{restriction}
Soient $n$~et $k$ des entiers naturels.  Soit $U$ un ouvert $k$-régulu
de~$\R^n$, et soit~$f$ une fonction $k$-régulue sur~$\R^n$ telle
que~$\D(f)=U$. Alors, le morphisme de restriction de~$\SR^k(\R^n)$
dans~$\SR^k(U)$ induit un isomorphisme
$$
\SR^k(\R^n)_f\iso\SR^k(U).
$$
\end{prop}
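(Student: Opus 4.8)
Le plan est de vérifier d'abord que le morphisme de restriction se factorise à travers la localisation, puis d'établir séparément l'injectivité (facile) et la surjectivité (le cœur de l'énoncé) du morphisme induit.

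D'abord, je remarquerais que $f$ ne s'annule pas sur $U=\D(f)$; étant $k$-régulue et sans zéro sur $U$, sa fonction inverse $1/f$ est encore de classe~$\SC^k$ et rationnelle sur~$U$, donc $1/f\in\SR^k(U)$. Ainsi l'image de~$f$ par le morphisme de restriction $\SR^k(\R^n)\to\SR^k(U)$ est inversible, et la propriété universelle de la localisation fournit un morphisme $\SR^k(\R^n)_f\to\SR^k(U)$.

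Pour l'injectivité, j'utiliserais que $\SR^k(\R^n)$ est intègre (en tant que sous-anneau du corps~$\R(\R^n)$), de sorte que la localisation $\SR^k(\R^n)_f$ s'injecte dans~$\R(\R^n)$. Comme le composé $\SR^k(\R^n)_f\to\SR^k(U)\hookrightarrow\R(\R^n)$ n'est autre que cette injection (restreindre à l'ouvert dense~$U$ ne change pas la fonction rationnelle sous-jacente), le morphisme $\SR^k(\R^n)_f\to\SR^k(U)$ est injectif.

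Le point principal est la surjectivité, et l'obstacle essentiel y réside. Étant donnée $h\in\SR^k(U)$, j'appliquerais le Lemme~\ref{LojaregulRn} à la fonction $k$-régulue~$f$ et à~$g=h$: il existe un entier naturel~$N$ tel que l'extension par~$0$ de~$f^Nh$ à~$\R^n$ soit $k$-régulue. En notant~$g_0\in\SR^k(\R^n)$ cette extension, on a $g_0=f^Nh$ sur~$U$, donc $h=g_0/f^N$ y coïncide avec l'image de~$g_0/f^N\in\SR^k(\R^n)_f$, ce qui montre la surjectivité et achève la preuve de l'isomorphisme. Toute la difficulté se concentre ainsi dans cette étape de chasse aux dénominateurs, rendue possible par la version $k$-régulue de l'inégalité de Lojasiewicz; c'est précisément l'ingrédient non trivial, et il explique pourquoi l'hypothèse «$k$ fini» intervient (l'énoncé du Lemme~\ref{LojaregulRn} étant faux pour $k=\infty$).
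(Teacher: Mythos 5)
Votre démonstration est correcte et suit essentiellement la même démarche que celle de l'article : factorisation par la propriété universelle de la localisation, injectivité par densité de $U$ (via le plongement dans $\R(\R^n)$, ce qui revient au même que l'argument de densité euclidienne du texte), et surjectivité par l'inégalité de Lojasiewicz régulue pour chasser les dénominateurs. Seule omission, tout à fait bénigne : le cas $U=\emptyset$ (où $f=0$ et les deux anneaux sont nuls), que l'article traite explicitement dans l'argument d'injectivité.
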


\begin{proof}
Soit $\varphi\colon \SR^k(\R^n)\rightarrow
\SR^k(U)$ le morphisme de restriction. Comme la restriction de~$f$
à~$U$ ne s'annule pas, $\varphi$ induit un morphisme
$$
\varphi_f\colon \SR^k(\R^n)_f\lra \SR^k(U).
$$ On montre que~$\varphi_f$ est un isomorphisme.

Soit $g\in \SR^k(U)$. D'après le Lemme \ref{LojaregulU}, il existe un entier
naturel $N$ tel que $f^Ng$ se prolonge en une fonction $k$-régulue
sur~$\R^n$. Par conséquent $\varphi_f$ est surjectif.

Pour montrer que~$\varphi_f$ est injectif, il suffit de montrer
l'énoncé suivant. Pour toute fonction $k$-régulue~$h$ sur~$\R^n$ dont
la restriction à~$U$ est identiquement nulle, la fonction~$fh$ est
identiquement~$0$ sur~$\R^n$. Cet énoncé est clair
lorsque~$U=\emptyset$, car~$f=0$ dans ce cas. Si~$U\neq\emptyset$,
alors~$U$ est dense dans~$\R^n$ pour la topologie euclidienne. Il
s'ensuit que~$h=0$ sur~$\R^n$, et en particulier, $fh=0$.
\end{proof}

\subsection*{Idéaux radicaux de fonctions régulues}

On rappelle qu'un idéal $I$ d'un anneau commutatif $A$ est \emph{réel}
s'il satisfait la propriété suivante~: Si $f_1^2+\ldots +f_m^2\in I$,
avec $f_1,\ldots, f_m\in A$, alors $f_i\in I$ pour chaque
$i=1,\ldots,m$.

\begin{prop}
\label{radreel} 
Soient $n$ et $k$ des entiers naturels.  Soit $I\subseteq \SR^k(\R^n)$
un idéal radical. Alors $I$ est un idéal réel.
\end{prop}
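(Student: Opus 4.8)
The plan is to show that if $\sum_{i=1}^m f_i^2\in I$, then each $f_j\in I$, by producing an honest divisibility relation $f_j^N=h\cdot s$ inside the ring, where $s=\sum_i f_i^2$ and $h\in\SR^k(\R^n)$. Since $s\in I$ this forces $f_j^N\in I$, and radicality of $I$ then yields $f_j\in I$, which is exactly the reality condition.

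First I would set $s=f_1^2+\cdots+f_m^2$ and fix an index $j$, assuming $f_j\not\equiv0$ (the case $f_j\equiv0$ being trivial since then $f_j=0\in I$). The pointwise inequality $s\geq f_j^2\geq0$ shows that $s$ does not vanish on $\D(f_j)$, so $1/s$ is a well-defined $k$-régulue function on the open set $\D(f_j)$: it is $\SC^k$ there since $s$ is $\SC^k$ and nonvanishing, and it is regular on a dense Zariski open subset.

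Next I would invoke the $k$-régulue Łojasiewicz inequality (Lemme~\ref{LojaregulRn}) with the $k$-régulue function $f_j$ on $\R^n$ and the $k$-régulue function $1/s$ on $\D(f_j)$. This produces an integer $N$ such that the extension by $0$ to $\R^n$ of $f_j^N/s$ is $k$-régulue; call this extension $h$. On the dense open set $\D(f_j)$ one has $h\cdot s=f_j^N$, and since both $h\cdot s$ and $f_j^N$ lie in $\SR^k(\R^n)\subseteq\R(\R^n)$ and agree on a dense open set, they coincide as elements of the ring. Thus $f_j^N=h\cdot s\in I$, because $s\in I$, and radicality of $I$ gives $f_j\in I$. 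Running this argument over $j=1,\ldots,m$ shows that $I$ is real.

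The crux is the application of the Łojasiewicz inequality. The rational function $f_j^N/s$ is bounded on $\D(f_j)$, but for small $N$ it need not extend $\SC^k$-smoothly across $\Z(s)$ (the Cartan-umbrella phenomenon already shows this for $n=2$), and it is precisely Lemme~\ref{LojaregulRn} that guarantees that a sufficiently high power $f_j^N$ absorbs the singular behaviour and produces a genuine element $h$ of $\SR^k(\R^n)$. I expect no difficulty beyond correctly checking the hypotheses of Lemme~\ref{LojaregulRn}; everything else is formal, namely the reduction to a single $j$, the passage from equality on a dense open set to equality in the ring via the embedding $\SR^k(\R^n)\subseteq\R(\R^n)$, and the final use of radicality.
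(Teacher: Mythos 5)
Your proof is correct, but it takes a genuinely different route from the paper's. The paper shows that $f_j^{3+k}/\bigl(\sum_{i=1}^m f_i^2\bigr)$ is $k$-régulue by writing it as the composition of the $k$-régulue map $x\mapsto(f_1(x),\ldots,f_m(x))$ with the model $k$-régulue function $(x_1,\ldots,x_m)\mapsto x_j^{3+k}/\sum_{i=1}^m x_i^2$ on $\R^m$ and invoking Corollaire~\ref{cor.compositionregulues}; this yields the explicit relation $f_j^{3+k}=\bigl(f_j^{3+k}/s\bigr)\cdot s\in I$ with $s=\sum_{i=1}^m f_i^2$, and radicality finishes. You instead obtain $f_j^{N}=h\cdot s$ from the régulue Łojasiewicz inequality (Lemme~\ref{LojaregulRn}) applied to $f_j$ and $g=1/s$ on $\D(f_j)$; the hypotheses are indeed satisfied, since $s\geq f_j^2>0$ on $\D(f_j)$ makes $1/s$ of class $\SC^k$ there and rational on a Zariski-dense open subset, and your passage from the identity on $\D(f_j)$ to an identity in the ring is legitimate (both sides also vanish on $\Z(f_j)$, so they even agree pointwise). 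Your argument is exactly the technique the paper deploys a little later in the proofs of Théorème~\ref{radideal} and Proposition~\ref{prgenrad}, so it is fully consistent with the paper's toolbox and involves no circularity, Lemme~\ref{LojaregulRn} being established beforehand. What the paper's composition argument buys in addition is the explicit exponent $3+k$, uniform in the $f_i$, whereas the Łojasiewicz route only produces an unspecified $N$; what your route buys is independence from the composition theorem for $k$-régulue maps.
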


\begin{proof} On suppose $f_1^2+\ldots +f_m^2\in I$ avec
$f_1,\ldots,f_m\in \SR^k(\R^n)$. 
Pour $i=1,\ldots,k$ on a
  $\frac{f_i^{3+k}}{\sum_{i=1}^m f_i^2}\in \SR^k(\R^n)$.  En effet c'est
  la composée des applications $k$-régulues $\R^n\rightarrow
  \RR^m,\,\,x\mapsto \,(f_1(x),\ldots,f_m(x))$ et de $\R^m\rightarrow
  \RR,\,\, (x_1,\ldots,x_m)\mapsto\, \frac{x_i^{3+k}}{\sum_{i=1}^m
    x_i^2}$, qui est bien $k$-régulue d'après le
  Corollaire~\ref{cor.compositionregulues}.  Donc
  $f_i^{3+k}=\frac{f_i^{3+k}}{\sum_{i=1}^m f_i^2}(\sum_{i=1}^m f_i^2)\in I$.
  L'idéal $I$ étant radical on obtient $f_i\in I$.
\end{proof}

\begin{rema} La proposition précédente montre que l'idéal engendré par 
$x^2+y^2$ dans $\SR^k(\RR^2 )$ n'est pas radical. En effet, s'il est
  radical alors il est réel et par conséquent $x\in
  \SR^k(\R^2).(x^2+y^2)$ 
  et donc $x=f.(x^2+y^2)$
  avec $f\in \SR^k(\RR^2)$.  Mais $f=\frac{x}{x^2+y^2}$ n'est
  clairement pas continue en l'origine.
\end{rema}

Soit $I$ un idéal radical dans $\SR^k(\R^n)$. On sait que $I$ est un
idéal réel d'après la Proposition \ref{radreel}.  On montre que cette
propriété est conservée par intersection avec les polynômes.

\begin{lem} 
\label{reelintersection}
Soit $I$ un idéal radical dans $\SR^k(\R^n)$. Alors $J=I\cap
\RR[x_1,\ldots,x_n]$ est un idéal réel et on a
$\I(\Z(J))=J$.
\end{lem}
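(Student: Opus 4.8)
Le plan est de séparer l'énoncé en deux affirmations indépendantes : d'une part la réalité de l'idéal $J=I\cap\RR[x_1,\ldots,x_n]$, d'autre part l'égalité $\I(\Z(J))=J$, cette seconde partie se ramenant au Nullstellensatz réel pour les idéaux de l'anneau de polynômes $\RR[x_1,\ldots,x_n]$.

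Pour la réalité de $J$, je partirais de la Proposition~\ref{radreel}, qui assure que l'idéal radical $I$ de $\SR^k(\R^n)$ est réel. Supposons $f_1^2+\cdots+f_m^2\in J$ avec $f_1,\ldots,f_m\in\RR[x_1,\ldots,x_n]$. Comme $J\subseteq I$, cette somme de carrés appartient à $I$ ; les fonctions polynomiales étant en particulier $k$-régulues, chaque $f_i$ est un élément de $\SR^k(\R^n)$, et la réalité de $I$ force $f_i\in I$ pour tout $i$. Mais les $f_i$ étant polynomiaux, ils appartiennent alors à $I\cap\RR[x_1,\ldots,x_n]=J$. Ceci montrerait que $J$ est un idéal réel de $\RR[x_1,\ldots,x_n]$.

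Pour l'égalité $\I(\Z(J))=J$, l'inclusion $J\subseteq\I(\Z(J))$ est tautologique. Pour l'inclusion réciproque, j'invoquerais le Nullstellensatz réel classique (cf.~\cite{BCR}), qui affirme que $\I(\Z(J))=\sqrt[\RR]{J}$, le radical réel de $J$. Comme le premier pas établit que $J$ est réel, et qu'un idéal réel coïncide toujours avec son radical réel (un idéal réel étant en particulier radical, puisque $f^2\in J$ entraîne déjà $f\in J$), on obtiendrait $\I(\Z(J))=\sqrt[\RR]{J}=J$.

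La seule véritable difficulté réside dans le premier pas, c'est-à-dire le transfert de la réalité depuis l'anneau non n\oe thérien $\SR^k(\R^n)$ vers le sous-anneau des polynômes ; tout repose ici sur le fait que les polynômes sont des fonctions $k$-régulues, de sorte que la caractérisation de la réalité de $I$ s'applique bien à eux. Une fois ce point acquis, l'égalité $\I(\Z(J))=J$ n'est qu'une lecture directe du Nullstellensatz réel dans le cas des idéaux réels de $\RR[x_1,\ldots,x_n]$.
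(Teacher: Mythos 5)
Votre démonstration est correcte et suit exactement la même voie que celle de l'article, qui se contente d'invoquer la Proposition~\ref{radreel} pour la réalité de $J$ puis le Nullstellensatz réel \cite[Thm. 4.1.4]{BCR} pour l'égalité $\I(\Z(J))=J$. Vous ne faites qu'expliciter les détails que l'article laisse implicites (le transfert de la réalité de $I$ à $J$ via l'inclusion des polynômes dans $\SR^k(\R^n)$, et le fait qu'un idéal réel coïncide avec son radical réel), ce qui est tout à fait juste.
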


\begin{proof}
L'idéal $J$ est réel car $I$ est réel (Proposition \ref{radreel}).
On obtient $\I(\Z(J))=J$ par le Nullstellensatz réel
\cite[Thm. 4.1.4]{BCR}
\end{proof}

Soit $J$ un idéal de $\RR[x_1,\ldots,x_n]$.
On regarde le comportement de $J$ quand on l'étend dans les fonctions
régulues. 

On fixe d'abord quelques notations.
\begin{node}
Soit $x\in\RR^n$. Dans la suite, on note 
$$
\pm_x=\{p\in \RR [x_1,\ldots,x_n]|\,\,p(x)=0\}
$$
$$
\M_x=\{f\in\SR^k(\R^n)|\,\,f(x)=0\}
$$
les idéaux maximaux de $\RR [x_1,\ldots,x_n]$ et $\SR^k(\R^n)$ respectivement, 
des polynômes et des fonctions régulues qui s'annulent en $x$.
Plus généralement, si $A\subseteq \R^n$, on note dans la suite
$$\I (A)=\{p\in \RR [x_1,\ldots,x_n]|\,\,p(x)=0\,\,\forall x\in A\}$$
et 
$$\I_{\SR^k}(A)=\{f\in\SR^k(\R^n)|\,\,f(x)=0\,\,\forall x\in A\}.$$
\end{node}

Examinons l'exemple suivant.
Soit $(x,y)=\pm_O=\{p\in \RR [x,y]|\,\,p(O)=0\}$ l'idéal maximal de
$\RR[x,y]$ où $O$ est l'origine de
$\RR^2$. On montre que $\SR^0 (\RR^2).\pm_O$ n'est pas maximal dans $\SR^0
(\RR^2)$.\\
On a clairement $\SR^0(\RR^2).\pm_O\subseteq \M_O$.
On a aussi
$f=\frac{xy^2}{x^2+y^2}\in\M_O$. On va montrer par contre que
$f\not\in\SR^0(\RR^2).\pm_O$. 
Supposons que $f$ s'\'ecrive $f=x.g+y.h$ avec $g,h\in \SR^0(\R^2)$. Alors on peut écrire
$$f=g(O).x+h(O).y+(g-g(O)).x+(h-h(O)).y=a.x+b.y+o (\sqrt{x^2+y^2}).$$
Mais alors $f$ serait différentiable à l'origine et on obtient une
contradiction. Cet exemple montre que l'on peut s'attendre à quelques
surprises. 

\begin{thm}
\label{radideal}
Soit $J$ un idéal réel de $\RR[x_1,\ldots,x_n]$. Alors 
$$\Rad(\SR^k(\R^n).J)=
\I_{\SR^k} (\Z (J)).$$
\end{thm}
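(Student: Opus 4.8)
The plan is to prove the two inclusions separately. The inclusion $\Rad(\SR^k(\R^n)\cdot J)\subseteq\I_{\SR^k}(\Z(J))$ is elementary: every $p\in J$ vanishes on $\Z(J)$, so each element $\sum_i\varphi_ip_i$ of $\SR^k(\R^n)\cdot J$ (with $\varphi_i\in\SR^k(\R^n)$ and $p_i\in J$) vanishes on $\Z(J)$, whence $\SR^k(\R^n)\cdot J\subseteq\I_{\SR^k}(\Z(J))$; and since $\I_{\SR^k}(\Z(J))$ is the vanishing ideal of a subset of $\R^n$ it is radical (if $F^m$ vanishes on $\Z(J)$ then so does $F$), so passing to radicals gives the inclusion. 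All the content lies in the reverse inclusion $\I_{\SR^k}(\Z(J))\subseteq\Rad(\SR^k(\R^n)\cdot J)$.

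For that, the first step is to replace $J$ by a single polynomial with the same zero set. As $\R[x_1,\ldots,x_n]$ is noetherian, I would write $J=(p_1,\ldots,p_m)$ and set $P=p_1^2+\cdots+p_m^2$, so that $P\in J$ (each $p_i^2\in J$); by Proposition~\ref{prop.princip} one has $\Z(P)=\Z(J)$, and I write $V=\Z(J)$. Fixing $F\in\I_{\SR^k}(\Z(J))$, it then suffices to produce an integer $N$ and a function $\Psi\in\SR^k(\R^n)$ with $F^N=\Psi\,P$, since this gives $F^N\in\SR^k(\R^n)\cdot P\subseteq\SR^k(\R^n)\cdot J$ and hence $F\in\Rad(\SR^k(\R^n)\cdot J)$.

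The construction of $\Psi$ rests on the regulous Lojasiewicz inequality. Because $F$ vanishes on $V=\Z(P)$, every point of $\D(F)$ lies off $V$, so $\D(F)\subseteq\D(P)$ and the rational function $1/P$ is $\SC^\infty$, hence $k$-regulous, on $\D(F)$. Applying Lemma~\ref{LojaregulRn} to the $k$-regulous function $F$ on $\R^n$ and the $k$-regulous auxiliary function $g=1/P$ on $\D(F)$ furnishes an $N$ for which the extension by $0$ to $\R^n$ of $F^N\cdot(1/P)=F^N/P$ is a $k$-regulous function, which I call $\Psi$. One then checks the identity $\Psi\,P=F^N$ pointwise: on $\D(F)$ it holds by construction (as $P$ does not vanish there), and on $\Z(F)$ both sides are $0$. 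This is exactly the identity sought.

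The crux is concentrated in this last step—extending $F^N/P$ across $V$ as a genuine $\SC^k$ function rather than a merely continuous one—and it is precisely what Lemma~\ref{LojaregulRn} handles, since it controls simultaneously the order of vanishing of $F^N$ along $V$ and all partial derivatives up to order $k$; the only point requiring care is the bookkeeping of domains, namely the inclusion $\D(F)\subseteq\D(P)$, so that $1/P$ is legitimately $k$-regulous where the lemma is invoked. I note finally that the reality of $J$—equivalent, via the real Nullstellensatz, to $J=\I(\Z(J))$—is the natural hypothesis in this context, although the argument above uses only that $\Z(J)$ is cut out by a single element of $J$ and therefore applies verbatim to an arbitrary ideal.
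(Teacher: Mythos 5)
Your proof is correct and follows essentially the same route as the paper's: reduce to the single generator $s=p_1^2+\cdots+p_l^2$ with $\Z(s)=\Z(J)$, note that $1/s$ is $k$-régulue on $\D(f)$ because $\Z(s)\subseteq\Z(f)$, invoke the regulous Lojasiewicz inequality (Lemme~\ref{LojaregulRn}) to extend $f^N/s$ by $0$, and conclude $f^N=hs\in\SR^k(\R^n)\cdot J$. Your closing observation is also accurate: the paper cites $J=\I(\Z(J))$ (hence the reality of $J$) for the easy inclusion, but that inclusion only requires that elements of $J$ vanish on $\Z(J)$, so the argument indeed works for an arbitrary ideal.
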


\begin{proof}
On note $I=\SR^k(\R^n).J$. Soit $f\in \Rad(I)$, il existe $r\in \NN^*$ tel que 
$f^r\in I$. Comme $J=\I (\Z (J))$ \cite[Thm. 4.1.4]{BCR} dans $\RR
[x_1,\ldots,x_n]$ car $J$ est réel, on en déduit que $f^r$ s'annule
identiquement sur $\Z
(J)$. Par conséquent $f$ s'annule aussi identiquement sur $\Z(J)$
i.e. $f\in\I_{\SR^k} (\Z (J))$. On a montré une inclusion.

Pour l'inclusion réciproque, on suppose que $J$ est l'id\'eal $J=(p_1,\ldots,p_l)$ avec $p_i\in\RR
[x_1,\ldots,x_n]$. Alors on a  $V=\Z (J)=\Z(s)$ avec $s=p_1^2+\ldots +p_l^2$.
Soit $f\in\SR^k(\R^n)$ tel que $f\in\I_{\SR^k(\R^n)} (V)$. 
Soit $g=\frac{1}{s}$, c'est une
fonction $k$-régulue sur $\RR^n\setminus V$ donc sur $\D
(f)$. 
Par le Lemme
\ref{LojaregulRn}, il existe un entier positif $N$
tel que $h=f^N.g$ étendue par $0$ sur $\Z(f)$ est $k$-régulue sur
$\RR^n$. On a clairement  $h\in \I_{\SR^k}
(\Z(f))\subseteq \I_{\SR^k}
(V)$ et de plus $f^N=h.s$: en effet, sur $\D (f)$ c'est évident
et si $x\in Z(f)$
on a aussi $f^N(x)=h(x)s(x)=0$
(on utilisera plusieurs fois dans la suite ce même argument).
Par conséquent
$$f^N=h.s\in \SR^k(\R^n).J=I$$ 
car $s\in J$.
\end{proof}

Dans le cas où $J$ est maximal, on obtient
\begin{cor}
\label{radmax}
On a 
$$
\Rad(\SR^k(\R^n).\pm_x)=\M_x\;.
$$
\end{cor}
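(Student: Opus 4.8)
Le plan est d'invoquer directement le Théorème~\ref{radideal} appliqué à l'idéal $J=\pm_x$ de $\R[x_1,\ldots,x_n]$, le corollaire n'en étant qu'une spécialisation. La seule chose à établir avant d'appliquer le théorème est que $\pm_x$ en satisfait l'hypothèse, c'est-à-dire qu'il est réel.

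Première étape : vérifier que $\pm_x$ est réel. Si $p_1^2+\cdots+p_m^2\in\pm_x$ avec les $p_i\in\R[x_1,\ldots,x_n]$, alors en évaluant en $x$ on obtient $p_1(x)^2+\cdots+p_m(x)^2=0$ ; comme il s'agit d'une somme de carrés de nombres réels, chaque $p_i(x)=0$, donc $p_i\in\pm_x$. L'idéal maximal $\pm_x$ est donc bien réel, et l'hypothèse du Théorème~\ref{radideal} est vérifiée.

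Deuxième étape : identifier le lieu d'annulation. En écrivant $x=(a_1,\ldots,a_n)$, l'idéal $\pm_x$ contient les polynômes $x_i-a_i$, dont l'unique zéro commun est le point $x$ ; on a donc $\Z(\pm_x)=\{x\}$. Le Théorème~\ref{radideal} fournit alors
$$
\Rad(\SR^k(\R^n).\pm_x)=\I_{\SR^k}(\Z(\pm_x))=\I_{\SR^k}(\{x\}).
$$

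Il ne reste qu'à reconnaître le membre de droite : par définition de la notation, $\I_{\SR^k}(\{x\})=\{f\in\SR^k(\R^n)\suchthat f(x)=0\}=\M_x$, ce qui achève la démonstration. Le seul point demandant une vérification est la réalité de $\pm_x$ — automatique puisque l'on travaille sur $\R$ et que $\pm_x$ est le noyau de l'évaluation en un point réel — le reste étant une substitution immédiate dans le théorème ; je ne prévois donc aucun obstacle sérieux.
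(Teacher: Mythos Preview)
Your argument is correct and follows exactly the approach intended by the paper, which simply introduces the corollary with ``Dans le cas où $J$ est maximal, on obtient'' and leaves the details implicit. You have merely made explicit the two easy verifications (réalité de $\pm_x$ et calcul de $\Z(\pm_x)$) needed to invoke le Théorème~\ref{radideal}.
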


\begin{cor}
\label{diff} On suppose $k=0$ et 
soit $f\in \M_x$. Il existe un entier strictement positif $N$ tel que $f^N$
soit différentiable en $x$ avec $D(f)_x=0$.
\end{cor}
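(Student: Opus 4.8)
Le plan est de se ramener, via le Corollaire~\ref{radmax}, à une simple estimation de la croissance de~$f$ au voisinage de~$x$, l'observation clé étant qu'une élévation au carré suffit à annuler le terme linéaire.

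D'abord, puisque $f\in\M_x$ et que $\M_x=\Rad(\SR^0(\R^n)\cdot\pm_x)$ d'après le Corollaire~\ref{radmax}, il existe un entier $r\geq 1$ tel que $f^r\in\SR^0(\R^n)\cdot\pm_x$. En notant $x=(a_1,\ldots,a_n)$, l'idéal $\pm_x$ est engendré par les polynômes $x_1-a_1,\ldots,x_n-a_n$, de sorte que l'on peut écrire
$$
f^r=\sum_{i=1}^n g_i\,(x_i-a_i)
$$
avec $g_1,\ldots,g_n\in\SR^0(\R^n)$.

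Ensuite, je remarquerais que chaque $g_i$, étant régulue donc continue, est bornée au voisinage de~$x$~: il existe une constante $C>0$ et un voisinage de~$x$ sur lequel $|g_i|\leq C$ pour tout~$i$. Sur ce voisinage on obtient la majoration
$$
|f^r(y)|\leq\sum_{i=1}^n|g_i(y)|\,|y_i-a_i|\leq nC\,\|y-x\|,
$$
c'est-à-dire $f^r(y)=O(\|y-x\|)$ lorsque $y\to x$.

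Enfin, je poserais $N=2r$. Comme $f(x)=0$, on a $f^N=(f^r)^2$, et la majoration précédente donne
$$
|f^N(y)|=|f^r(y)|^2\leq n^2C^2\,\|y-x\|^2=o(\|y-x\|).
$$
Puisque $f^N(x)=0$, ceci signifie exactement que $f^N$ est différentiable en~$x$ avec $D(f^N)_x=0$, ce qui est l'énoncé voulu. Il n'y a pas ici d'obstacle sérieux~: toute la force est contenue dans le Corollaire~\ref{radmax}, le reste n'étant qu'une estimation élémentaire. Le seul point à ne pas manquer est que $f^r$ seul n'a aucune raison d'avoir une différentielle nulle (sa différentielle vaut $\sum_i g_i(x)\,dx_i$)~; c'est précisément l'annulation de~$f$ en~$x$, jointe à l'élévation au carré, qui force la différentielle de~$f^N$ à s'annuler.
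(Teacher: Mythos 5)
Votre démonstration est correcte, et elle suit une route légèrement différente de celle du texte. La preuve du texte retourne à l'intérieur de la démonstration du Théorème~\ref{radideal} (appliquée à $J=\pm_O$) pour en extraire le fait plus précis que $f^N=(x_1^2+\cdots+x_n^2)\,h$ avec $h$ régulue~; en écrivant alors $f^N=\sum_i x_i\,(x_i h)$, les coefficients $x_i h$ s'annulent en l'origine, ce qui donne directement $f^N=o(\|y\|)$ avec ce même exposant~$N$. Vous, en revanche, n'utilisez que l'\emph{énoncé} du Corollaire~\ref{radmax}, qui ne fournit qu'une écriture $f^r=\sum_i g_i\,(x_i-a_i)$ où les $g_i$ n'ont aucune raison de s'annuler en~$x$~; cela ne donne que $f^r=O(\|y-x\|)$, et vous compensez par l'élévation au carré en posant $N=2r$, ce qui fait passer de $O(\|y-x\|)$ à $O(\|y-x\|^2)=o(\|y-x\|)$. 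Les deux arguments sont valables~: le vôtre a l'avantage d'être autonome modulo l'énoncé du corollaire (sans rouvrir la démonstration du théorème), au prix d'un doublement de l'exposant~; celui du texte obtient l'annulation de la différentielle sans ce doublement, parce que la divisibilité par $x_1^2+\cdots+x_n^2$ encode déjà l'ordre d'annulation requis.
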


\begin{proof}
On suppose $x=O$. En regardant la preuve du théorème précédent, il
existe un entier strictement positif $N$ tel que
$f^N=(x_1^2+\ldots+x_n^2).h$ avec $h\in\SR^0(\R^n)$. On a donc
$f^N=x_1.h_1+\ldots +x_n.h_n$
avec $h_i\in \SR^0(\R^n)$ vérifiant $h_i(O)=0$. On peut donc écrire
$f^N=o(\sqrt{x_1^2+\ldots
+x_n^2})$ ce qui termine la preuve.
\end{proof}

\subsection*{L'anneau~$\SR^k(\R)$ est radicalement principal}

Soit $A$ un anneau.  On dira qu'un id\'eal $I$ de~$A$ est
\emph{radicalement principal} s'il existe $f\in I$ tel que
$$
\Rad(I)=\Rad(f).
$$ Dans ce cas, on dira aussi que $f$ est un \emph{générateur radical}
de~$I$.  On dit que~$A$ est \emph{radicalement principal} si tous ses
id\'eaux le sont.

\begin{prop}
\label{prgenrad}
Soient $n$ et $k$ des entiers naturels.  Soit~$I$ un idéal
de~$\SR^k(\R^n)$, et supposons que $f\in I$ soit telle
que~$\Z(f)=\Z(I)$. Alors, $f$ est un générateur radical de~$I$, i.e.,
$$ \Rad(f)=\Rad(I).
$$
\end{prop}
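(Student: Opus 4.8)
The plan is to prove the equality $\Rad(f)=\Rad(I)$ by a double inclusion, with all the work concentrated in a single containment. The inclusion $\Rad(f)\subseteq\Rad(I)$ is immediate: since $f\in I$ we have $(f)\subseteq I$, whence $\Rad(f)=\Rad((f))\subseteq\Rad(I)$. For the reverse inclusion I will exploit the fact that $\Rad$ is monotone and idempotent: it suffices to establish the single containment $I\subseteq\Rad(f)$, for this already yields $\Rad(I)\subseteq\Rad(\Rad(f))=\Rad(f)$.

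So fix $g\in I$; the goal is to produce an integer $N$ and an element $h\in\SR^k(\R^n)$ with $g^N=hf$, which is exactly the assertion $g\in\Rad(f)$. Because $g\in I$, the function $g$ vanishes on $\Z(I)$, and by hypothesis $\Z(I)=\Z(f)$; therefore $\Z(f)\subseteq\Z(g)$, equivalently $\D(g)\subseteq\D(f)$. On $\D(f)$ the function $1/f$ is rational and of class $\SC^k$, since $f$ is $k$-régulue and does not vanish there; hence $1/f$ is $k$-régulue on $\D(f)$, and a fortiori on the smaller open set $\D(g)$.

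The decisive step is then an application of the Lojasiewicz-type Lemma~\ref{LojaregulRn}, with the $k$-régulue function $g$ in the role of its ``$f$'' and the $k$-régulue function $1/f$ on $\D(g)$ in the role of its ``$g$''. This produces an integer $N$ such that the extension $h$ to $\R^n$ by $0$ of $g^N\cdot(1/f)$ is $k$-régulue. It then remains to verify that $g^N=hf$ on all of $\R^n$: on $\D(g)$ one has $h=g^N/f$, hence $hf=g^N$; on $\Z(g)$ both $g^N$ and $h$ vanish, so $hf=0=g^N$. Thus $g^N=hf\in(f)$, so $g\in\Rad(f)$, which finishes the argument.

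The only point requiring care is the bookkeeping in the application of Lemma~\ref{LojaregulRn}: one must raise $g$ (the function vanishing on $\Z(f)$) to powers and let $1/f$ supply the pole that these powers are meant to annihilate. Assigning the roles this way is legitimate precisely because $\D(g)\subseteq\D(f)$, which is where the hypothesis $\Z(f)=\Z(I)$ enters; with the two functions interchanged the lemma would yield nothing useful. Once the setup is correct, the computation is a direct transcription of the one already carried out in the proof of Theorem~\ref{radideal}.
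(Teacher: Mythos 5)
Your argument is correct and is essentially the paper's own proof: same reduction to $I\subseteq\Rad(f)$, same application of Lemme~\ref{LojaregulRn} to $g$ and $1/f$ on $\D(g)$, same verification that $g^N=fh$ on all of $\R^n$. You merely make explicit two points the paper leaves implicit (why $1/f$ is defined on $\D(g)$, and the case split in checking $g^N=fh$), which is fine.
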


\begin{proof}
Comme $f\in I$, on a bien-sûr
l'inclusion~$\Rad(f)\subseteq\Rad(I)$. Pour montrer l'inclusion
réciproque, il suffit de montrer que~$\Rad(f)\supseteq I$. Soit $g\in
I$. La fonction~$1/f$ est bien définie et $k$-régulue
sur~$\D(g)$. D'après le Lemme~\ref{LojaregulRn}, il existe un entier
naturel~$N$ tel que l'extension $h$ par~$0$ de~$g^N/f$ sur $\Z(g)$ est encore
$k$-régulue. On a $g^N=fh$ sur $\R^n$. Cela implique que~$g\in\Rad(f)$.
\end{proof}

\begin{prop}
\label{radprinc}
Soient $n$ et $k$ des entiers naturels.  L'anneau~$\SR^k(\R^n)$ est
radicalement principal. Plus précisément, soit~$I$ un idéal
de~$\SR^k(\R^n)$. Alors il existe une fonction $k$-régulue~$f\in I$
telle que
$$ \Rad(f)=\Rad(I).
$$
\end{prop}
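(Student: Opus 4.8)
Le plan est de ramener l'énoncé à la Proposition~\ref{prgenrad}: il suffit d'exhiber une fonction $f\in I$ dont le lieu d'annulation coïncide avec celui de l'idéal tout entier, c'est-à-dire telle que $\Z(f)=\Z(I)$, car la Proposition~\ref{prgenrad} fournit alors immédiatement $\Rad(f)=\Rad(I)$. La difficulté principale est que l'anneau $\SR^k(\R^n)$ n'est pas noethérien, de sorte que $I$ n'est en général pas engendré par un nombre fini d'éléments; on ne peut donc pas construire naïvement un tel $f$ comme somme de carrés d'un système fini de générateurs de $I$. C'est la noethérianité \emph{topologique} de $\R^n$, et non une propriété algébrique de l'anneau, qui permettra de contourner cet obstacle.

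Je commencerais par écrire $\Z(I)=\bigcap_{g\in I}\Z(g)$ comme une intersection \emph{finie} de fermés $k$-régulus. Comme la topologie $k$-régulue sur $\R^n$ est noethérienne (Corollaire~\ref{topregnoeth}), toute famille non vide de fermés y admet un élément minimal; en particulier la famille des intersections finies $\Z(g_1)\inter\cdots\inter\Z(g_r)$, où $(g_1,\ldots,g_r)$ parcourt les parties finies de $I$, admet un élément minimal, que l'on peut noter $\Z(g_1)\inter\cdots\inter\Z(g_m)$. Cet élément minimal est nécessairement égal à $\Z(I)$: l'inclusion $\Z(I)\subseteq\Z(g_1)\inter\cdots\inter\Z(g_m)$ est immédiate, et si un point $x$ de cette intersection n'appartenait pas à $\Z(I)$, il existerait $g\in I$ avec $g(x)\neq0$, de sorte que $\Z(g_1)\inter\cdots\inter\Z(g_m)\inter\Z(g)$ serait strictement plus petit, en contradiction avec la minimalité.

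Il ne resterait plus qu'à poser $f=g_1^2+\cdots+g_m^2$. Cette fonction appartient à $I$ puisque chaque $g_i\in I$ et que $\SR^k(\R^n)$ est un anneau, et la Proposition~\ref{prop.princip} assure que $\Z(f)=\Z(g_1,\ldots,g_m)=\Z(I)$. La Proposition~\ref{prgenrad} appliquée à ce $f$ donnerait alors $\Rad(f)=\Rad(I)$, ce qui établirait à la fois que $\SR^k(\R^n)$ est radicalement principal et que l'on peut toujours choisir le générateur radical à l'intérieur de $I$, conformément à la formulation précise de l'énoncé.
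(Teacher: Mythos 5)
Votre démonstration est correcte et suit essentiellement la même voie que celle du texte : on utilise la noethérianité de la topologie $k$-régulue (Corollaire~\ref{topregnoeth}) pour extraire $f_1,\ldots,f_m\in I$ avec $\Z(f_1,\ldots,f_m)=\Z(I)$, on pose $f=f_1^2+\cdots+f_m^2\in I$, puis on conclut par la Proposition~\ref{prgenrad}. Votre rédaction explicite simplement, via l'existence d'un élément minimal parmi les intersections finies, le passage que le texte laisse implicite.
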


\begin{proof}
D'après le Théorème~\ref{topregnoeth}, la topologie $k$-régulue
sur~$\R^n$ est noethérienne. Il existe donc un nombre fini de fonctions
$k$-régulues $f_1,\ldots,f_m$ dans~$I$ telles que
$$
\Z(f_1,\ldots,f_m)=\Z(I).
$$ Soit 
$$
f=f_1^2+\cdots+f_m^2.
$$ Comme $f\in I$ et $\Z(f)=\Z(I)$, la fonction~$f$ est un générateur
radical de~$I$ d'après la Proposition~\ref{prgenrad}.
\end{proof}

\begin{prop}
\label{prfinradI}
Soient $n$ et $k$ des entiers naturels. Soit $f$ une fonction
$k$-régulue sur~$\R^n$, et soit~$I$ un idéal de~$\SR^k(\R^n)$. Alors,
$f\in\Rad(I)$ si et seulement si~$\Z(f)\supseteq\Z(I)$.
\end{prop}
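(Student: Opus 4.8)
The plan is to prove the two implications separately. The direct implication is immediate, while the reverse one rests on the fact that $\SR^k(\R^n)$ is radically principal (Proposition~\ref{radprinc}) together with the $k$-régulue version of the Lojasiewicz inequality (Lemma~\ref{LojaregulRn}). For the direct implication, suppose $f\in\Rad(I)$, so that $f^r\in I$ for some $r\in\N$. If $x\in\Z(I)$, every element of $I$ vanishes at $x$; in particular $f^r(x)=0$, whence $f(x)=0$. Thus $\Z(I)\subseteq\Z(f)$, which is the asserted inclusion.

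For the reverse implication, assume $\Z(f)\supseteq\Z(I)$. By Proposition~\ref{radprinc} there is a function $g\in I$ with $\Rad(g)=\Rad(I)$; moreover, inspecting its proof, $g$ may be taken of the form $g=f_1^2+\cdots+f_m^2$ with $\Z(f_1,\ldots,f_m)=\Z(I)$, so that in fact $\Z(g)=\Z(I)$. Consequently $\Z(f)\supseteq\Z(g)$, equivalently $\D(f)\subseteq\D(g)$. Since $g$ does not vanish on $\D(g)$, the function $1/g$ is well-defined and $k$-régulue on $\D(g)$, hence a fortiori on $\D(f)$.

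I now apply Lemma~\ref{LojaregulRn} with $f$ playing the role of the lemma's first function and $1/g$ that of its second: there exists an integer $N$ such that the extension $h$ of $f^N/g$ by $0$ across $\Z(f)$ is $k$-régulue on $\R^n$. On $\D(f)$ one has $gh=f^N$ by construction, while on $\Z(f)$ both sides vanish, since $h=0$ there; hence the identity $f^N=gh$ holds on all of $\R^n$. As $g\in I$, this gives $f^N\in(g)\subseteq I$, and therefore $f\in\Rad(I)$, as desired. The only point requiring care is the bookkeeping ensuring that the radical generator supplied by Proposition~\ref{radprinc} can be chosen with $\Z(g)=\Z(I)$; once this is secured, the whole inclusion of zero-loci is converted into a single Lojasiewicz division, and the rest is formal.
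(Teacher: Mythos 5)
Votre démonstration est correcte et suit essentiellement la même voie que celle de l'article : l'implication directe est formelle, et la réciproque combine la Proposition~\ref{radprinc} (existence d'un générateur radical $g\in I$ avec $\Z(g)=\Z(I)$) avec le Lemme~\ref{LojaregulRn} appliqué à $f$ et $1/g$ pour obtenir $f^N=gh\in I$. La seule différence est cosmétique : l'article passe par l'énoncé intermédiaire~\ref{prfinradg} et obtient $\Z(g)=\Z(I)$ directement de $\Rad(g)=\Rad(I)$ via $\Z(g)=\Z(\Rad(g))$, là où vous inspectez la preuve de~\ref{radprinc}; les deux justifications sont valables.
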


\begin{proof}
D'après la Proposition~\ref{radprinc}, il existe~$g\in I$ telle
que~$\Rad(g)=\Rad(I)$. En particulier, on a
$$
\Z(g)=\Z(\Rad(g))=\Z(\Rad(I))=\Z(I).
$$
L'énoncé est donc une conséquence de la proposition suivante.
\end{proof}

\begin{prop}
\label{prfinradg}
Soient $n$ et $k$ des entiers naturels. Soient $f$~et $g$ des fonctions
$k$-régulues sur~$\R^n$. Alors, $f\in\Rad(g)$ si et seulement
si~$\Z(f)\supseteq\Z(g)$.
\end{prop}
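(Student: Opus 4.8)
The plan is to establish the two implications separately; the forward one is immediate, and the converse is obtained by a direct application of the $k$-régulous Łojasiewicz inequality of Lemma~\ref{LojaregulRn}, exactly as in the proof of Proposition~\ref{prgenrad}.

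For the direct implication, suppose $f\in\Rad(g)$. Then there is an integer $r\geq 1$ and a function $h\in\SR^k(\R^n)$ with $f^r=gh$. If $x\in\Z(g)$ then $f(x)^r=g(x)h(x)=0$, hence $f(x)=0$; this shows $\Z(g)\subseteq\Z(f)$, i.e.\ $\Z(f)\supseteq\Z(g)$.

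For the converse, assume $\Z(f)\supseteq\Z(g)$, equivalently $\D(f)\subseteq\D(g)$. I would aim to produce an integer $N$ and a function $h\in\SR^k(\R^n)$ with $f^N=gh$, which gives $f^N\in(g)$ and hence $f\in\Rad(g)$. Since $g$ does not vanish on $\D(g)$, the function $1/g$ is well-defined and $k$-régulous on $\D(g)$, and therefore on the smaller open set $\D(f)$ by restriction. Applying Lemma~\ref{LojaregulRn} to $f$ (as a $k$-régulous function on $\R^n$) and to $1/g$ (as a $k$-régulous function on $\D(f)$) furnishes an integer $N$ such that the extension by $0$ to $\R^n$ of $f^N\cdot(1/g)=f^N/g$ is $k$-régulous; call it $h$.

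It then remains to verify that $f^N=gh$ holds on all of $\R^n$. On $\D(f)$ we have $h=f^N/g$, so $gh=f^N$; on $\Z(f)$ we have $h=0$ by construction and $f^N=0$, so again $gh=0=f^N$. Hence $f^N=gh$ everywhere and $f\in\Rad(g)$. The only subtle point is this passage from $\D(f)$ to the whole of $\R^n$---namely that after multiplying $1/g$ by a sufficiently high power of $f$ the product extends régulously across $\Z(f)$---and this is precisely the content of Lemma~\ref{LojaregulRn}; everything else is bookkeeping on zero loci.
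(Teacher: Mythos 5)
Your proof is correct and follows essentially the same route as the paper's: the forward direction by evaluating $f^N=gh$ on $\Z(g)$, and the converse by applying the $k$-régulous Łojasiewicz inequality (Lemme~\ref{LojaregulRn}) to $f$ and $1/g$ on $\D(f)$, then checking the identity $f^N=gh$ separately on $\D(f)$ and on $\Z(f)$. No gaps.
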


\begin{proof}
Supposons que~$f\in\Rad(g)$, et soit~$x\in\Z(g)$. Il existe un entier
naturel~$N$ tel que~$f^N\in (g)$, i.e., $f^N=gh$ pour une certaine
fonction $k$-régulue~$h$ sur~$\R^n$. Comme~$g(x)=0$, on a~$f^N(x)=0$
et donc~$f(x)=0$, i.e., $x\in\Z(f)$.

Réciproquement, supposons que~$\Z(f)\supseteq\Z(g)$. Cela veut dire
que~$g$ ne s'annule pas sur~$\D(f)$. En particulier, la fonction~$1/g$
existe sur~$\D(f)$ et y est $k$-régulue. D'après la version
$k$-régulue de l'inégalité de {\Lbarre}ojasiewicz, il existe un entier
naturel~$N$ tel que l'extension par~$0$ de~$f^N/g$ sur $\Z(f)$ est $k$-régulue
sur~$\R^n$. Notons cette extension par~$h$. On a donc~$f^N=gh$, 
i.e.,
$f\in\Rad(g)$.
\end{proof}

\begin{cor}
Soient $n$ et $k$ des entiers naturels. Soient $f$~et $g$ des fonctions
$k$-régulues sur~$\R^n$. Alors, $\Rad(f)=\Rad(g)$ si et seulement
si~$\Z(f)=\Z(g)$.\qed
\end{cor}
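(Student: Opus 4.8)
The plan is to read this corollary off directly from the preceding Proposition~\ref{prfinradg}, which already supplies the dictionary between radical membership and inclusion of zero-loci. The only extra ingredient is the elementary remark that, for principal ideals, equality of radicals decomposes into two membership conditions.

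First I would observe that, since $\Rad(g)$ is a radical ideal containing $g$, one has $\Rad(f)\subseteq\Rad(g)$ if and only if $f\in\Rad(g)$; symmetrically, $\Rad(g)\subseteq\Rad(f)$ if and only if $g\in\Rad(f)$. Indeed $\Rad(f)=\Rad((f))$ is the smallest radical ideal containing $f$, so it is contained in $\Rad((g))$ as soon as the generator $f$ lies in $\Rad((g))$. Applying Proposition~\ref{prfinradg} to each of these two memberships yields
$$
\Rad(f)\subseteq\Rad(g)\iff\Z(f)\supseteq\Z(g),\qquad
\Rad(g)\subseteq\Rad(f)\iff\Z(g)\supseteq\Z(f).
$$
Combining the two equivalences, $\Rad(f)=\Rad(g)$ holds exactly when both inclusions $\Z(f)\supseteq\Z(g)$ and $\Z(g)\supseteq\Z(f)$ hold, that is, precisely when $\Z(f)=\Z(g)$, which is the desired statement.

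I do not expect any genuine obstacle here: all the substantive analysis---in particular the use of the $k$-régulue Lojasiewicz inequality (Lemme~\ref{LojaregulRn}) to extend $f^N/g$ by zero across $\Z(f)$---has already been carried out inside the proof of Proposition~\ref{prfinradg}. The present statement is a purely formal double-inclusion consequence, which is why it carries no separate argument. The only point deserving a moment's attention is the first remark, that membership of the generator controls inclusion of radicals of principal ideals; once this is granted, the corollary is immediate.
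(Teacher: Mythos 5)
Votre démonstration est correcte et suit exactement la voie prévue par le texte : le corollaire est énoncé avec \qed immédiatement après la Proposition~\ref{prfinradg} précisément parce qu'il s'en déduit par le double argument d'inclusion que vous explicitez (l'appartenance du générateur contrôle l'inclusion des radicaux d'idéaux principaux). Rien à redire.
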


En utilisant Proposition~\ref{radprinc} encore, on obtient la caractérisation
géométrique suivante des générateurs radicaux d'un idéal de fonctions
$k$-régulues:

\begin{cor}
Soient $n$ et $k$ des entiers naturels.  Soit $f$ une fonction
$k$-régulue sur~$\R^n$, et soit~$I$ un idéal de~$\SR^k(\R^n)$. Alors,
$\Rad(f)=\Rad(I)$ si et seulement si~$\Z(f)=\Z(I)$.\qed
\end{cor}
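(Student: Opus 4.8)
The plan is to reduce the assertion about an arbitrary ideal $I$ to the already-treated case of two principal ideals, exploiting the fact that $\SR^k(\R^n)$ is radically principal. First I would apply Proposition~\ref{radprinc} to obtain a $k$-régulue function $g\in I$ with $\Rad(g)=\Rad(I)$. The whole statement will then follow by comparing the given $f$ with this single function $g$.

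The next ingredient is the elementary bookkeeping identity $\Z(\Rad(J))=\Z(J)$, valid for every ideal $J$: the inclusion $J\subseteq\Rad(J)$ gives $\Z(\Rad(J))\subseteq\Z(J)$, and conversely if $x\in\Z(J)$ and $h\in\Rad(J)$, then $h^N\in J$ for some $N$, whence $h^N(x)=0$ and therefore $h(x)=0$. Applying this to $J=I$ and to the principal ideal $J=(g)$, and using $\Rad(g)=\Rad(I)$, I would deduce
$$
\Z(I)=\Z(\Rad(I))=\Z(\Rad(g))=\Z(g).
$$
Consequently both conditions in the statement can be rephrased in terms of $g$: the equality $\Rad(f)=\Rad(I)$ is the same as $\Rad(f)=\Rad(g)$, and the equality $\Z(f)=\Z(I)$ is the same as $\Z(f)=\Z(g)$.

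Finally I would invoke the preceding corollary, which asserts precisely that for two $k$-régulue functions one has $\Rad(f)=\Rad(g)$ if and only if $\Z(f)=\Z(g)$ (itself a direct consequence of Proposition~\ref{prfinradg}). Chaining the two rephrasings through this equivalence yields the desired conclusion. I do not anticipate any genuine obstacle: all the analytic substance, namely the Lojasiewicz-type extension of Lemma~\ref{LojaregulRn} and its use in Proposition~\ref{prfinradg} and Proposition~\ref{radprinc}, has already been carried out. The only point requiring a moment's care is the identity $\Z(\Rad(J))=\Z(J)$, which guarantees that replacing an ideal by its radical leaves the zero locus unchanged and thereby permits the passage from the principal case to the general one.
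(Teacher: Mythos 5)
Your argument is correct and is exactly the route the paper intends: the corollary is stated immediately after the sentence «~En utilisant Proposition~\ref{radprinc} encore...~», and the expected proof is precisely to produce a radical generator $g\in I$ via Proposition~\ref{radprinc}, observe that $\Z(I)=\Z(\Rad(I))=\Z(\Rad(g))=\Z(g)$, and then conclude by the preceding two-function corollary (i.e.\ Proposition~\ref{prfinradg}). Your careful justification of $\Z(\Rad(J))=\Z(J)$ is the one small point the paper leaves implicit, and you handle it correctly.
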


Ou encore plus généralement:

\begin{cor}
Soient $n$ et $k$ des entiers naturels.  Soient $I$~et $J$ des idéaux
de~$\SR^k(\R^n)$. Alors, $\Rad(I)=\Rad(J)$ si et seulement
si~$\Z(I)=\Z(J)$. En particulier, deux idéaux radicaux
de~$\SR^k(\R^n)$ sont égaux si et seulement s'ils ont le même ensemble
des zéros.\qed
\end{cor}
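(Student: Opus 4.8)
The plan is to deduce the statement directly from the characterization of radicals already obtained, namely Proposition~\ref{prfinradI}, which asserts that for any ideal $I$ of $\SR^k(\R^n)$ one has $f\in\Rad(I)$ if and only if $\Z(f)\supseteq\Z(I)$. The key observation I would extract from this is that $\Rad(I)$ is \emph{completely determined} by the zero-set $\Z(I)$: explicitly,
$$
\Rad(I)=\{f\in\SR^k(\R^n)\mid \Z(f)\supseteq\Z(I)\}=\I_{\SR^k}(\Z(I)).
$$
Once this identity is in hand, both implications become essentially formal.

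For the implication $\Z(I)=\Z(J)\Rightarrow\Rad(I)=\Rad(J)$, I would simply apply the displayed formula to each ideal: since $\Rad(I)$ and $\Rad(J)$ are read off from $\Z(I)$ and $\Z(J)$ respectively, the hypothesis $\Z(I)=\Z(J)$ forces $\Rad(I)=\Rad(J)$, with no further work required.

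For the converse $\Rad(I)=\Rad(J)\Rightarrow\Z(I)=\Z(J)$, I would first record the elementary identity $\Z(\Rad(I))=\Z(I)$, valid for every ideal. One inclusion is immediate from $I\subseteq\Rad(I)$; for the other, if $f\in\Rad(I)$ then $f^N\in I$ for some $N$, whence $\Z(I)\subseteq\Z(f^N)=\Z(f)$, and intersecting over all $f\in\Rad(I)$ gives $\Z(I)\subseteq\Z(\Rad(I))$. Applying $\Z$ to the hypothesis then yields $\Z(I)=\Z(\Rad(I))=\Z(\Rad(J))=\Z(J)$.

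The closing assertion of the corollary is the special case in which $I$ and $J$ are themselves radical, so that $\Rad(I)=I$ and $\Rad(J)=J$; the equivalence just proved reads $I=J\Leftrightarrow\Z(I)=\Z(J)$ in that case. I do not anticipate any genuine obstacle here: all of the substance has been front-loaded into Proposition~\ref{prfinradI}, which in turn rests on radical principality (Proposition~\ref{radprinc}) and the $k$-regulous Lojasiewicz lemma (Lemme~\ref{LojaregulRn}). The only point demanding the slightest care is the identity $\Z(\Rad(I))=\Z(I)$, and even that is standard.
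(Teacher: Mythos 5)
Votre démonstration est correcte et suit essentiellement le chemin que l'article sous-entend en laissant ce corollaire sans preuve (\og\qed\fg) : tout repose sur la Proposition~\ref{prfinradI}, qui montre que $\Rad(I)$ est entièrement déterminé par $\Z(I)$, combinée avec l'identité élémentaire $\Z(\Rad(I))=\Z(I)$. Le seul écart, purement cosmétique, est que l'article enchaîne plutôt les deux corollaires précédents via des générateurs radicaux $f\in I$ et $g\in J$, ce qui revient au même puisque ces corollaires découlent eux-mêmes de~\ref{prfinradI} et de la principalité radicale~\ref{radprinc}.
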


En résumé:
\begin{thm}
\label{thmradprinc}
Soient $n$ et $k$ des entiers naturels.  L'anneau~$\SR^k(\R^n)$ est
radicalement principal. Plus précisément, soit~$I$ un idéal
de~$\SR^k(\R^n)$. Alors il existe une fonction $k$-régulue~$f\in I$
telle que
$$ \Rad(f)=\Rad(I).
$$
i.e. telle que $$\Z(f)=\Z (I).$$
\end{thm}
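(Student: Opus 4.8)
The plan is to read this theorem as a synthesis: the assertion that $\SR^k(\R^n)$ is radicalement principal, together with the production of a generator $f\in I$ satisfying $\Rad(f)=\Rad(I)$, is literally Proposition~\ref{radprinc}, while the appended reformulation $\Z(f)=\Z(I)$ is the geometric translation of $\Rad(f)=\Rad(I)$ already recorded in the corollaries immediately preceding. So the proof should merely assemble facts already established, not introduce new arguments.

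First I would invoke Proposition~\ref{radprinc} verbatim. Since $n$ and $k$ are natural numbers, the $k$-régulue topology on $\R^n$ is noetherian (Corollaire~\ref{topregnoeth}), so $\Z(I)$ is the common zero set of finitely many $f_1,\ldots,f_m\in I$; setting $f=f_1^2+\cdots+f_m^2\in I$, Proposition~\ref{prop.princip} gives $\Z(f)=\Z(f_1,\ldots,f_m)=\Z(I)$. Proposition~\ref{prgenrad} then delivers $\Rad(f)=\Rad(I)$ from the facts that $f\in I$ and $\Z(f)=\Z(I)$. This establishes both the radicalement principal statement and the existence of the generator $f$.

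It then remains to observe that the two displayed equalities are equivalent. The implication $\Rad(f)=\Rad(I)\Rightarrow\Z(f)=\Z(I)$ is immediate from the identity $\Z(\Rad(J))=\Z(J)$, valid for any ideal $J$; the converse is exactly Proposition~\ref{prgenrad} applied as above, once $f\in I$ is known. Hence the generator $f$ produced by Proposition~\ref{radprinc} may equivalently be characterized by $\Z(f)=\Z(I)$, which is the final ``i.e.'' of the statement.

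I do not anticipate any genuine obstacle, since all the substantive work is already in place: the noetherianity of the $k$-régulue topology (Corollaire~\ref{topregnoeth}, itself resting on Kurdyka's theorem via the arc-symmetric topology) and the $k$-régulue Łojasiewicz inequality (Lemme~\ref{LojaregulRn}) feeding Proposition~\ref{prgenrad}. The only care needed is notational: one must read $\Rad(f)$ as the radical of the principal ideal generated by $f$, so that $\Rad(f)=\Rad(I)$ and $\Z(f)=\Z(I)$ genuinely express the same fact.
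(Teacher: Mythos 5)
Your proposal is correct and follows exactly the paper's route: the theorem is introduced with \og En résumé\fg{} and carries no new proof, being precisely Proposition~\ref{radprinc} (noetherianity of the $k$-régulue topology, $f=f_1^2+\cdots+f_m^2$, then Proposition~\ref{prgenrad}) combined with the corollary that, for $f\in I$, $\Rad(f)=\Rad(I)$ is equivalent to $\Z(f)=\Z(I)$. Nothing is missing.
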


\subsection*{Le Nullstellensatz $k$-régulu}

\begin{ton}
Soient $n$ un entier naturel et $k$ un entier surnaturel.  Si $V$ est un
sous-ensemble de $\RR^n$, on note $\I_{\SR_k}(V)$, ou maintenant 
simplement~$\I(V)$
s'il n'y a pas de confusion possible, l'ensemble des fonctions
$k$-régulues sur~$\R^n$ s'annulant sur~$V$, i.e.,
$$
\I(V)=\{f\in \SR^k(\R^n)\suchthat f(x)=0\,\,\forall x\in V\}.
$$
\end{ton}

Il est clair que~$\I(V)$ est un idéal radical dans l'anneau~$\SR^k(\R^n)$.

\begin{prop}[Nullstellensatz faible]
Soient $n$ un entier naturel et $k$ un entier surnaturel.  Soit $I$ un
id\'eal de l'anneau~$\SR^k(\R^n)$ des fonctions
$k$-r\'egulues sur~$\RR^n$.  Si $\Z(I)=\emptyset$ dans~$\RR^n$, alors
$I=\SR^k(\R^n)$.
\end{prop}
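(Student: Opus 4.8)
The plan is to collapse the finitely generated ideal to a single nowhere-vanishing generator and then invert it. First I would write $I=(f_1,\ldots,f_m)$ and set $f=f_1^2+\cdots+f_m^2$. Since each $f_i\in I$ we have $f\in I$, and by Proposition~\ref{prop.princip} its zero locus is $\Z(f)=\Z(f_1,\ldots,f_m)=\Z(I)=\emptyset$. Thus $f$ is une fonction $k$-régulue sur~$\R^n$ with empty zero set; being moreover a sum of squares, $f(x)>0$ for every~$x\in\R^n$.

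The heart of the argument is then to check that $1/f$ is again $k$-régulue, for once this is known we are done: since $f\in I$ and $1/f\in\SR^k(\R^n)$, we get $1=f\cdot(1/f)\in I$, hence $I=\SR^k(\R^n)$. That $1/f$ is rational is immediate, since $f=p/q$ rational forces $1/f=q/p$. For the smoothness, I would use that $f$ is de classe~$\SC^k$ on~$\R^n$ and takes its values in $\R\setminus\{0\}$; as $t\mapsto 1/t$ is de classe~$\SC^\infty$ on $\R\setminus\{0\}$, the composite $1/f$ is de classe~$\SC^k$ on all of~$\R^n$. For the regularity on a dense Zariski open set required by Définition~\ref{dfn.regulue}, let $U=\dom(f)$ and write $f=p/q$ with $p,q$ polynomials and $q$ nowhere zero on~$U$; since $f$ never vanishes, $p$ does not vanish on~$U$ either, so $1/f=q/p$ is régulière on the dense Zariski open set~$U$. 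Hence $1/f$ satisfies both conditions of Définition~\ref{dfn.regulue}, i.e. $1/f\in\SR^k(\R^n)$.

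I do not expect a serious obstacle here: the only point that needs care is the verification that passing to the reciprocal preserves $k$-regulousness, and this splits into the two elementary checks above, namely smoothness (guaranteed by the strict positivity of~$f$) and regularity on a dense Zariski open set (guaranteed by the non-vanishing of the numerator of~$f$ on its domain). It is worth noting that this argument uses neither the inégalité de Lojasiewicz nor the noetherianity of the topology, so it applies uniformly to every entier surnaturel~$k$, including $k=\infty$; the hypothesis of finite generation is exactly what lets us produce the single generator~$f$ directly, bypassing Corollaire~\ref{topregnoeth}.
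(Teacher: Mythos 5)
Votre démonstration suit exactement la même stratégie que celle de l'article : on pose $f=f_1^2+\cdots+f_m^2\in I$, on constate que $\Z(f)=\emptyset$, puis on inverse $f$ dans $\SR^k(\R^n)$ pour conclure $1=f\cdot(1/f)\in I$. Vous explicitez simplement l'étape d'inversibilité (classe $\SC^k$ de $1/f$ par composition avec $t\mapsto 1/t$, régularité de $q/p$ sur $\dom(f)$) que l'article se contente d'affirmer, et cette vérification est correcte.
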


\begin{proof}
Il existe $f\in I$ tel que $\Z(f)=\Z(I)$, en effet, si $k$ est un entier naturel c'est le Théorème~\ref{thmradprinc}, et si $k=\infty$, c'est la noethérianité de l'anneau des fonctions régulières. Supposons que~$\Z(I)=\emptyset$.  
Cela veut dire que la fonction $k$-r\'egulue $f$ n'a pas de z\'eros
dans~$\RR^n$. Elle est donc inversible dans~$\SR^k(\R^n)$. Par
conséquent $1=f.\frac{1}{f}\in I$ i.e. $I=\SR^k(\R^n)$.
\end{proof}

Nous venons de voir que le Nullstellensatz faible est valable sur $\SR^k(\R^n)$ pour tout entier surnaturel $k$. Par contre, le Nullstellensatz fort ($\I(\Z(I))=\Rad(I)$ cf. e.g. \cite[Theorem~5.1, p. 49]{Har}) n'est pas valable pour les fonctions
$\infty$-régulues. Un contre-exemple est le suivant. Soit~$I$ l'idéal
de~$\SR^\infty(\R^2)$ engendré par~$x^2+y^2$. L'ensemble des
zéros~$\Z(I)$ de~$I$ est égal à l'origine de~$\R^2$. La fonction~$x$
s'annule bien sur~$\Z(I)$, mais n'appartient pas à l'idéal
radical~$\Rad(I)$ de~$I$ dans~$\SR^\infty(\R^2)$. En effet, 
l'extension à~$\R^2$ de la fonction~$x^N/(x^2+y^2)$ n'est de
classe~$\SC^\infty$ pour aucun entier naturel~$N$.

\begin{thm}[Nullstellensatz]
\label{Nullstellensatz}
Soient $n$ et $k$ des entiers naturels. Soit $I$ un id\'eal (non-nécessairement de type fini) 
de~$\SR^k(\R^n)$. Alors, $$\I(\Z(I))=\Rad(I).$$
\end{thm}

\begin{proof}
Soit~$I$ un idéal quelconque
de~$\SR^k(\R^n)$. L'inclusion $\Rad(I)\subset \I(\Z(I))$ est
évidente. Pour l'inclusion inverse on considère $g\in \I(\Z(I))$. 
D'après le Théorème~\ref{thmradprinc}, il existe~$f\in
I$ telle que~$\Rad(f)=\Rad(I)$ et $\Z(f)=\Z(I)$.
Par conséquent $g\in \I(\Z(f))$ i.e. $\Z(f)\subseteq
\Z(g)$. D'apr\`es le Lemme~\ref{LojaregulRn}, il existe un entier
naturel~$N$ tel que l'extension par~$0$ de $\frac{g^N}{f}$ sur $\Z(g)$
soit $k$-r\'egulue sur~$\RR^n$. Notons $h$ cette extension. 
On obtient finalement  $g^N=fh\in I$ i.e. $g\in\Rad (I)$.
\end{proof}

\begin{rem} Soient $n$ et $k$ des entiers naturels.   Il est facile de voir que si $F$ est un ferm\'e
  $k$-r\'egulu de~$\R^n$ alors $\Z(\I(F))=F$. En effet, $F\subset \Z(\I(F))$ pour tout sous-ensemble de $\R^n$ et en écrivant~$F=\Z(I)$ pour un
idéal~$I$ de~$\SR^k(\R^n)$, on a $I \subset \I(\Z(I))$ et 
$$
\Z(\I(F))=\Z(\I(\Z(I)))\subset\Z(I)=F.
$$
Le Théorème \ref{Nullstellensatz} établit donc une correspondance entre les ferm\'es $k$-r\'egulus
de~$\R^n$ et les id\'eaux radicaux de~$\SR^k(\R^n)$.
\end{rem}

Remarquons que l'inégalité de {\Lbarre}ojasiewicz \ref{LojaregulRn} pour les
fonctions $k$-régulues est l'outil principal pour la démonstration du
Nullstellensatz \ref{Nullstellensatz}. Ce même type d'idée est utilisé dans \cite{CarralCoste}
pour montrer que l'anneau des fonctions semi-algébriques continues est
un anneau de Gelfand.

\subsection*{Le spectre de l'anneau des fonctions $k$-r\'egulues sur $\R^n$}
\label{sespec}

Soit $n$ un entier naturel et $k$ un entier surnaturel.  Soit
$f\in\SR^k(\R^n)$. L'ensemble des zéros de~$f$ dans le spectre
$\Spec\SR^k(\R^n)$ est noté~$\V(f)$, i.e.,
$$
\V(f)=\{p\in\Spec\SR^k(\R^n)\suchthat f\in p\}\;.
$$ 
Son complémentaire est noté~$\U(f)$. Plus gén\'eralement, si $E$ est
un sous-ensemble de~$\SR^k(\R^n)$, l'ensemble des zéros communs
dans~$\Spec\SR^k(\R^n)$ des éléments de~$E$ est noté~$\V(E)$, et son
complémentaire est~$\U(E)$. Plus précisément,
$$
\V(E)=\{p\in\Spec\SR^k(\R^n)\suchthat E\subseteq p\}
$$
et
$$
\U(E)=\{p\in\Spec\SR^k(\R^n)\suchthat E\not\subseteq p\}.
$$ Il est bien connu que les sous-ensembles de la forme~$\U(E)$
constituent une topologie
sur~$\Spec\SR^k(\R^n)$~\cite[Chapitre~1]{EGA1}. L'espace
topologique~$\Spec\SR^k(\R^n)$ est quasi-compact. Plus généralement,
les ouverts de la forme~$\U(f_1,\ldots,f_m)$ sont quasi-compacts.

Soient $n$ un entier naturel et $k$ un entier surnaturel. Soit
$x\in\R^n$. On note encore~$\M_x$ l'ensemble des fonctions $k$-régulues
sur~$\R^n$ s'annulant en~$x$, i.e.,
$$
\M_x=\{f\in\SR^k(\R^n)\suchthat f(x)=0\}.
$$ Il est clair que~$\M_x$ est un idéal maximal de~$\SR^k(\R^n)$. Une
conséquence du Nullstellensatz (\ref{Nullstellensatz}) est que tout idéal maximal
de~$\SR^k(\R^n)$ est de cette forme.

\begin{prop}
\label{maximal}
Soient $n$ un entier naturel et $k$ un entier surnaturel.  Soit $\M$
un idéal maximal de $\SR^k(\R^n)$. Il existe un et un seul~$x\in\R^n$
tel que~$\M_x=\M$.
\end{prop}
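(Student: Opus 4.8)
The plan is to show that the only maximal ideals of $\SR^k(\R^n)$ are the evaluation ideals $\M_x$, with the existence half resting on the weak Nullstellensatz and the uniqueness half on a trivial separation argument. Recall first that each $\M_x$ is genuinely maximal, being the kernel of the surjective evaluation morphism $\SR^k(\R^n)\to\R$, $f\mapsto f(x)$, onto the field $\R$; in particular it is a proper ideal. Thus, to prove existence it suffices to produce a point $x\in\R^n$ with $\M\subseteq\M_x$: by maximality of $\M$ together with the properness of $\M_x$ this inclusion is then forced to be an equality. Now $\M\subseteq\M_x$ says precisely that every $f\in\M$ vanishes at $x$, i.e. that $x\in\Z(\M)$. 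The existence statement therefore reduces entirely to the single assertion that $\Z(\M)\neq\emptyset$.

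I would establish $\Z(\M)\neq\emptyset$ by contradiction, and this is where the main difficulty lies. The obstacle is that $\SR^k(\R^n)$ is \emph{not} noetherian (for $n\geq2$), so $\M$ need not be finitely generated, whereas the weak Nullstellensatz for ideals of finite type is the only tool at hand that converts an empty zero-locus into the unit ideal. The bridge is the noetherianity of the $k$-régulu \emph{topology} (Corollaire~\ref{topregnoeth}), equivalently the quasi-compactness of $\R^n$ for that topology. Assume $\Z(\M)=\emptyset$. Then the $k$-régulu open sets $\D(f)$, for $f$ ranging over $\M$, cover $\R^n$, since their union is the complement of $\bigcap_{f\in\M}\Z(f)=\Z(\M)=\emptyset$. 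By quasi-compactness finitely many of them suffice, say $\R^n=\D(f_1)\cup\cdots\cup\D(f_m)$ with $f_i\in\M$, which means $\Z(f_1,\ldots,f_m)=\emptyset$. The ideal $J=(f_1,\ldots,f_m)$ is of finite type, is contained in $\M$, and has $\Z(J)=\emptyset$; the weak Nullstellensatz then forces $J=\SR^k(\R^n)$, whence $1\in\M$, contradicting the properness of $\M$. Hence $\Z(\M)\neq\emptyset$, and choosing any $x\in\Z(\M)$ finishes the existence part. (When $k$ is finite one could instead invoke Proposition~\ref{radprinc}: pick $f\in\M$ with $\Z(f)=\Z(\M)$; were $\Z(\M)$ empty, $f$ would be a nowhere-vanishing, hence invertible, element of $\M$. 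The quasi-compactness argument has the advantage of also covering $k=\infty$.)

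For uniqueness, suppose $\M=\M_a=\M_b$ with $a\neq b$ in $\R^n$, and write $a=(a_1,\ldots,a_n)$, $b=(b_1,\ldots,b_n)$. There is an index $i$ with $a_i\neq b_i$, and the coordinate polynomial $x_i-a_i$ is regular, hence $k$-régulu for every $k$; it vanishes at $a$ but takes the nonzero value $b_i-a_i$ at $b$. Thus $x_i-a_i\in\M_a\setminus\M_b$, so $\M_a\neq\M_b$, a contradiction. Therefore the point $x$ attached to $\M$ is unique, which completes the argument.
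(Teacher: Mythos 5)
Your proof is correct, and for the existence part it takes a genuinely lighter route than the paper. The paper deduces $\Z(\M)\neq\emptyset$ from the \emph{strong} Nullstellensatz $\I(\Z(\M))=\M$ — invoking Corollaire~\ref{Nullstellensatz} when $k$ is finite, and, when $k=\infty$, a separate argument identifying $\SR^\infty(\R^n)$ with a localization of the polynomial ring whose maximal ideals are real, so that the real Nullstellensatz applies. You instead reduce everything to the \emph{weak} Nullstellensatz for ideals of finite type via quasi-compactness of $\R^n$ for the $k$-régulue topology (Corollaire~\ref{topregnoeth}): if $\Z(\M)=\emptyset$ the sets $\D(f)$, $f\in\M$, cover $\R^n$, a finite subcover yields a finitely generated subideal with empty zero set, hence the unit ideal inside $\M$. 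Both arguments ultimately rest on the noetherianity of the $k$-régulue topology (which underlies Proposition~\ref{radprinc} and hence the strong Nullstellensatz in the paper), but yours uses strictly less machinery, treats finite and infinite $k$ uniformly, and makes the compactness mechanism explicit; the paper's version gets the result as an immediate corollary of a theorem it has already established. Your uniqueness argument via a separating coordinate function $x_i-a_i$ is the same observation the paper dismisses as ``claire''. No gaps.
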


\begin{proof} 
Regardons d'abord le cas~$k=\infty$.  L'anneau des fonctions
$\infty$-régulues est égal à l'anneau des fonctions régulières
sur~$\R^n$. Ce dernier est la localisation de l'anneau des
polynômes~$\R[x_1,\ldots,x_n]$ par rapport à la partie multiplicative
des polynômes ne s'annulant pas sur~$\R^n$. 
Si $\M$ est un idéal maximal de $\SR^\infty(\R^n)$, c'est par
conséquent un ideal réel et le Nullstellensatz classique fonctionnne
i.e. $\I (\Z (\M))=\M$ \cite[Thm. 4.1.4]{BCR}.

Soit $k$ un entier surnaturel. D'après le
Théorème~\ref{Nullstellensatz} si $k$ est fini et ce qui précède si
$k$ est infini, $\I (\Z (\M))=\M$. En particulier,
$\Z (\M)$ est non-vide. Soit~$x\in \Z (\M)$. On a
donc~$\M\subseteq\M_x$.  Par maximalité, on obtient $\M=\M_x$. Cela
montre l'existence de~$x$. L'unicité est claire car~$\M_x\neq\M_y$
lorsque $x$~et $y$ sont des éléments distincts de~$\R^n$.
\end{proof}
 
Soit 
$$
\iota\colon \RR^n\lra\Spec\SR^k(\R^n)
$$ l'application définie par~$\iota(x)=\M_x$, où~$\M_x$ est l'idéal
maximal de~$\SR^k(\R^n)$ des fonctions $k$-régulues s'annulant en~$x$.
L'application~$\iota$ est bien continue lorsqu'on considère~$\R^n$
avec la topologie $k$-régulue. D'après la version faible du
Nullstellensatz régulu, $\iota$ est une bijection sur l'ensemble des idéaux
maximaux de~$\SR^k(\R^n)$. La version forte du Nullstellensatz
(Théorème~\ref{Nullstellensatz}) peut se reformuler ainsi~:

\begin{thm}
\label{thbijferirrprem}
Soient $n$~et $k$ des entiers naturels.  L'application~$\iota$ induit
une bijection entre~$\Spec\SR^k(\R^n)$ et l'ensemble des
sous-ensembles $k$-régulument fermés et irréductibles
de~$\RR^n$. Plus précisément, pour tout sous-ensemble fermé
$k$-régulument irréductible~$X$ de~$\RR^n$ il existe un et un seul
idéal premier~$p$ de~$\SR^k(\R^n)$ tel
que~$X=\iota^{-1}(\V(p))$.\qed
\end{thm}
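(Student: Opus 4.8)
The plan is to reduce the statement, by a direct computation, to the strong Nullstellensatz (Corollaire~\ref{Nullstellensatz}) together with the classical dictionary between prime ideals and irreducible closed sets. First I would unwind $\iota^{-1}(\V(p))$: for $x\in\R^n$ one has $x\in\iota^{-1}(\V(p))$ if and only if $\M_x\in\V(p)$, that is $p\subseteq\M_x$, that is $f(x)=0$ for every $f\in p$. Hence
$$
\iota^{-1}(\V(p))=\Z(p),
$$
and the theorem becomes the assertion that $p\mapsto\Z(p)$ is a bijection from $\Spec\SR^k(\R^n)$ onto the set of $k$-regulously closed irreducible subsets of $\R^n$, with inverse $X\mapsto\I(X)$.

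By Corollaire~\ref{Nullstellensatz}, the operators $\I$ and $\Z$ are mutually inverse, inclusion-reversing bijections between the radical ideals of $\SR^k(\R^n)$ and the $k$-regulously closed subsets of $\R^n$: indeed $\Z(\I(F))=F$ for every closed $F$, while $\I(\Z(I))=\Rad(I)$, which equals $I$ precisely when $I$ is radical. Since every prime ideal is radical, it remains only to verify that, under this correspondence, prime ideals match exactly the irreducible closed sets.

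For one implication, let $p$ be prime and suppose $\Z(p)=F_1\cup F_2$ with $F_1,F_2$ closed and each properly contained in $\Z(p)$. Injectivity of $\I$ then gives $\I(F_i)\supsetneq\I(\Z(p))=\Rad(p)=p$, so I may pick $f_i\in\I(F_i)\setminus p$ for $i=1,2$. The product $f_1f_2$ vanishes on $F_1\cup F_2=\Z(p)$, hence lies in $\I(\Z(p))=p$, and primality forces $f_1\in p$ or $f_2\in p$, a contradiction; moreover $\Z(p)\neq\emptyset$, for otherwise $p=\I(\emptyset)=\SR^k(\R^n)$. Thus $\Z(p)$ is irreducible. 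Conversely, let $X$ be a nonempty irreducible closed set and set $p=\I(X)$; if $fg\in p$ then $X\subseteq\Z(fg)=\Z(f)\cup\Z(g)$, so $X=(X\cap\Z(f))\cup(X\cap\Z(g))$ is a union of two closed subsets, and irreducibility yields $X\subseteq\Z(f)$ or $X\subseteq\Z(g)$, i.e. $f\in p$ or $g\in p$. Since $X\neq\emptyset$ we have $p\neq\SR^k(\R^n)$, so $p$ is prime.

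Combining the two implications with the Nullstellensatz bijection shows that $p\mapsto\Z(p)$ and $X\mapsto\I(X)$ restrict to inverse bijections between $\Spec\SR^k(\R^n)$ and the irreducible closed sets, which is exactly the claim; uniqueness of $p$ is then just injectivity, since $\Z(p)=\Z(q)$ forces $p=\I(\Z(p))=\I(\Z(q))=q$ for radical $p,q$. The whole argument is formal once the Nullstellensatz is in hand, so the only genuine content—and hence the only real difficulty—lies in that earlier result; there is no additional obstacle here.
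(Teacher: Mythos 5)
Votre démonstration est correcte et suit essentiellement la même voie que l'article, qui énonce ce théorème comme une simple reformulation du Nullstellensatz (Corollaire~\ref{Nullstellensatz}) et le marque d'un \emph{qed} sans détailler l'argument. Vous ne faites qu'expliciter le dictionnaire standard $\iota^{-1}(\V(p))=\Z(p)$ et la correspondance entre idéaux premiers et fermés irréductibles, ce qui est exactement le contenu implicite voulu par les auteurs.
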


Comme précédemment, cet énoncé est faux si~$k=\infty$. Par exemple,
l'idéal de~$\R[x,y]$ engendré par le polynôme irréductible~$x^2+y^2$
est un idéal premier~$p$. Or,
$$
\iota^{-1}(\V (p))=O=\iota^{-1}(\V(\M_O)).
$$

Soit $F\subseteq\RR^n$ un fermé $k$-régulu. Notons par~$\tilde F$ le plus
petit sous-ensemble ferm\'e de~$\Spec\SR^k(\R^n)$ tel que
$\iota(F)\subseteq\tilde F$, i.e., $\tilde F$ est l'adh\'erence
de~$\iota(F)$ dans $\Spec\SR^k(\R^n)$ i.e. $\tilde F=\V (\iota(F))$. 

\begin{lem}
\label{letF=VIF}
Soient $n$ un entier naturel et $k$ un entier surnaturel.
\begin{enumerate}
\item Soit $F_\alpha$, $\alpha\in A$ une collection de
  sous-ensembles $k$-régulument fermés de~$\RR^n$. Alors
$$
\widetilde{\bigcap_{\alpha\in A}F_\alpha}=
\bigcap_{\alpha\in A} \widetilde{F}_\alpha.
$$
\item Soient $F_1,\ldots,F_m$ des sous-ensembles $k$-régulument fermés
  de~$\RR^n$. Alors 
$$
\widetilde{(F_1\union\cdots\union F_m)}=
\widetilde{F}_1\union\cdots\union \widetilde{F}_m.
$$
\end{enumerate}
\end{lem}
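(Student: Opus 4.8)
The plan is to translate everything through the identity $\tilde F=\V(\I(F))$, where $\I(F)$ denotes the (radical) ideal of $k$-régulues functions vanishing on $F$. This identity is immediate from the definition of $\tilde F$ as the closure $\overline{\iota(F)}$: for any subset $S$ of a spectrum one has $\overline S=\V(\bigcap_{p\in S}p)$, and here $\bigcap_{x\in F}\M_x=\I(F)$. Throughout I would also use the standard dictionary $\bigcap_\alpha\V(E_\alpha)=\V(\bigcup_\alpha E_\alpha)$ together with the fact that $\V(J_1)\subseteq\V(J_2)$ holds precisely when $\Rad(J_2)\subseteq\Rad(J_1)$.

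I would dispose of part (2) first, since it is purely topological and valid for every surnatural $k$: as $\iota$ sends unions to unions and closure commutes with \emph{finite} unions in any topological space, one gets $\widetilde{F_1\cup\cdots\cup F_m}=\overline{\iota(F_1)}\cup\cdots\cup\overline{\iota(F_m)}=\tilde F_1\cup\cdots\cup\tilde F_m$.

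For part (1) the inclusion $\widetilde{\bigcap_\alpha F_\alpha}\subseteq\bigcap_\alpha\tilde F_\alpha$ is again formal, by monotonicity of closure applied to $\bigcap_\alpha F_\alpha\subseteq F_\beta$. The reverse inclusion is the heart of the matter. Rewriting $\bigcap_\alpha\tilde F_\alpha=\V(\sum_\alpha\I(F_\alpha))$ and $\widetilde{\bigcap_\alpha F_\alpha}=\V(\I(\bigcap_\alpha F_\alpha))$, and observing that $\I(\bigcap_\alpha F_\alpha)$ is already radical, it suffices to establish $\I(\bigcap_\alpha F_\alpha)\subseteq\Rad(\sum_\alpha\I(F_\alpha))$. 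I would take $g\in\I(\bigcap_\alpha F_\alpha)$ and first compute the zero-locus $\bigcap_\alpha F_\alpha=\bigcap_\alpha\Z(\I(F_\alpha))=\Z(\sum_\alpha\I(F_\alpha))$, using the closed-set half of the Nullstellensatz (Corollaire~\ref{Nullstellensatz}) that $\Z(\I(F_\alpha))=F_\alpha$ for a $k$-régulu closed set. Then $g$ vanishes on $\Z(\sum_\alpha\I(F_\alpha))$, so the ideal half of the same Nullstellensatz yields $g\in\I(\Z(\sum_\alpha\I(F_\alpha)))=\Rad(\sum_\alpha\I(F_\alpha))$, as required.

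The main obstacle is exactly this last inclusion of (1): it is false at the level of abstract topology, because closure need not commute with infinite intersections, and it rests genuinely on the strong Nullstellensatz, hence through Lemme~\ref{LojaregulRn} on the Lojasiewicz inequality and on the finiteness of $k$. I would therefore prove (1) for $k$ finite, where Corollaire~\ref{Nullstellensatz} is available, and I would flag that finiteness is essential: for $k=\infty$, where $\SR^\infty=\SQ$ and the Nullstellensatz fails, part (1) already breaks down — taking the plane $F_1=\Z(z)$ and the paraboloid $F_2=\Z(z-x^2-y^2)$ in $\R^3$, the prime $(z,x^2+y^2)$ lies in $\tilde F_1\cap\tilde F_2$ but not in $\widetilde{F_1\cap F_2}=\{\M_O\}$, since it does not contain $x$.
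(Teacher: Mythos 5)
Your proof is correct, and it is considerably more explicit than the paper's, which disposes of the lemma in two sentences: part (2) is declared évidente and part (1) is asserted to hold \emph{car $\iota$ est injectif}. On part (2) you and the paper agree: it is pure topology (closure commutes with finite unions), valid for every $k$. On part (1), injectivity of $\iota$ alone cannot produce the inclusion $\bigcap_\alpha\tilde F_\alpha\subseteq\widetilde{\bigcap_\alpha F_\alpha}$, since closures do not commute with intersections in an arbitrary space; the substantive ingredient is exactly the one you supply, namely the chain $\bigcap_\alpha\V(\I(F_\alpha))=\V(\sum_\alpha\I(F_\alpha))=\V\bigl(\I(\Z(\sum_\alpha\I(F_\alpha)))\bigr)=\V(\I(\bigcap_\alpha F_\alpha))$, which rests on both halves du Nullstellensatz (Corollaire~\ref{Nullstellensatz}) and hence, via le Lemme~\ref{LojaregulRn}, on the finiteness of $k$. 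Your further observation is also correct and worth recording: the lemma is stated for $k$ surnaturel, but part (1) genuinely requires $k$ fini, and your counterexample in $\SR^\infty(\R^3)$ is valid --- the prime $(z,x^2+y^2)$ lies in $\tilde F_1\inter\tilde F_2=\V(z,x^2+y^2)$ for $F_1=\Z(z)$ and $F_2=\Z(z-x^2-y^2)$, but not in $\widetilde{F_1\inter F_2}=\{\M_O\}$ since it does not contain $x$. This causes no damage downstream, because the paper only ever invokes part (2), applied to the finitely many irreducible components in the theorem that follows, and that theorem is correctly restricted to $k$ fini; still, your version is the one that actually proves the statement.
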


\begin{proof}
La deuxième propriété est évidente. La première est valable
car~$\iota$ est injectif.
\end{proof}

\begin{thm}
Soient $n$~et $k$ des entiers naturels.  L'application~$\iota$ induit
une bijection entre l'ensemble des sous-ensembles fermés
de~$\Spec\SR^k(\R^n)$ et l'ensemble des sous-ensembles $k$-régulument
fermés de~$\RR^n$. Plus précisément, pour tout sous-ensemble
$k$-régulument fermé~$X$ de~$\RR^n$ il existe un et un seul ensemble
fermé~$Y$ de~$\Spec\SR^k(\R^n)$ tel que~$X=\iota^{-1}(Y)$, à
savoir~$Y=\tilde{X}$.
\end{thm}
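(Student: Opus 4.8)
Le plan est de réaliser $\iota^{-1}$, restreint aux fermés de $\Spec\SR^k(\R^n)$, comme la composée de deux bijections déjà acquises. D'une part, la correspondance classique entre les sous-ensembles fermés de $\Spec A$ et les idéaux radicaux de $A=\SR^k(\R^n)$, donnée par $Y\mapsto\bigcap_{p\in Y}p$ et $I\mapsto\V(I)$ ; d'autre part, la bijection fournie par le Nullstellensatz (Corollaire~\ref{Nullstellensatz}) entre les idéaux radicaux de $\SR^k(\R^n)$ et les fermés $k$-régulus de $\R^n$, donnée par $I\mapsto\Z(I)$ et $F\mapsto\I(F)$. Le point charnière sera l'identité $\iota^{-1}(\V(I))=\Z(I)$, qui montrera que la composée de ces deux bijections n'est autre que $\iota^{-1}$.

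Je vérifierais d'abord cette identité, purement formellement : pour $x\in\R^n$, on a $\iota(x)=\M_x\in\V(I)$ si et seulement si $I\subseteq\M_x$, c'est-à-dire si et seulement si $f(x)=0$ pour tout $f\in I$, soit encore $x\in\Z(I)$. En particulier, $\iota^{-1}(Y)$ est bien $k$-régulument fermé pour tout fermé $Y=\V(I)$. Pour l'existence du fermé $Y$ associé à un fermé $k$-régulu $X$, je poserais $Y=\tilde X$. Puisque l'adhérence de $\iota(X)$ s'écrit $\V(\bigcap_{x\in X}\M_x)=\V(\I(X))$, on a $\tilde X=\V(\I(X))$, d'où $\iota^{-1}(\tilde X)=\Z(\I(X))=X$ d'après le point (1) du Corollaire~\ref{Nullstellensatz}. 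Ceci exhiberait un fermé $Y=\tilde X$ tel que $X=\iota^{-1}(Y)$.

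Il resterait l'unicité, qui fournira simultanément la formule annoncée. Soit $Y$ un fermé de $\Spec\SR^k(\R^n)$ vérifiant $\iota^{-1}(Y)=X$ ; j'écrirais $Y=\V(I)$ avec $I=\bigcap_{p\in Y}p$ radical, de sorte que $\V(I)=Y$. L'identité charnière donne alors $X=\iota^{-1}(\V(I))=\Z(I)$, puis le point (2) du Corollaire~\ref{Nullstellensatz} joint à la radicalité de $I$ entraîne $\I(X)=\I(\Z(I))=\Rad(I)=I$, d'où $Y=\V(I)=\V(\I(X))=\tilde X$. Toute la substance étant déjà contenue dans le Nullstellensatz et dans la description standard de la topologie de Zariski d'un spectre, l'argument restant est formel ; la seule vérification méritant attention est que tout fermé de $\Spec\SR^k(\R^n)$ s'écrit bien $\V(I)$ pour un idéal radical $I$, ce qui autorise l'emploi des deux bijections ci-dessus.
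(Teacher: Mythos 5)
Votre démonstration est correcte, mais elle suit une route réellement différente de celle du texte. Vous composez deux bijections globales : la correspondance standard entre fermés de $\Spec\SR^k(\R^n)$ et idéaux radicaux (via $Y\mapsto\bigcap_{p\in Y}p$ et $I\mapsto\V(I)$), et la correspondance du Nullstellensatz (Corollaire~\ref{Nullstellensatz}) entre idéaux radicaux et fermés $k$-régulus; l'identité charnière $\iota^{-1}(\V(I))=\Z(I)$, la formule $\tilde X=\V(\I(X))$ et le calcul $\I(\Z(I))=\Rad(I)=I$ font tout le travail, existence et unicité comprises. Le texte procède autrement : pour l'existence il invoque la noethérianité de la topologie $k$-régulue pour décomposer $X$ en composantes irréductibles $F_1,\ldots,F_m$, applique le Lemme~\ref{letF=VIF} et le Théorème~\ref{thbijferirrprem} (bijection entre $\Spec\SR^k(\R^n)$ et les fermés irréductibles) composante par composante; pour l'unicité il mène un argument ponctuel, montrant que tout $p\in Z$ appartient à $\tilde F$ pour $F=\iota^{-1}(\V(p))\subseteq X$. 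Votre version est plus directe et évite à la fois la décomposition en irréductibles et le recours au Théorème~\ref{thbijferirrprem}, au prix d'expliciter la correspondance fermés/idéaux radicaux du spectre — fait standard que vous identifiez et traitez correctement; les deux arguments reposent en dernière analyse sur le même ingrédient, à savoir le Nullstellensatz. Une seule vétille : l'injectivité de $\iota$ (Proposition~\ref{maximal}) est utilisée implicitement lorsque vous identifiez $\bigcap_{x\in X}\M_x$ à $\I(X)$ et $\iota^{-1}(\V(I))$ à $\Z(I)$, mais elle est acquise dans le texte.
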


\begin{proof}
Soit $X$ un fermé $k$-régulu dans~$\RR^n$. Posons~$Y=\tilde{X}$ et montrons
que~$\iota^{-1}(Y)=X$. Comme~$\RR^n$ est noethérien, $X$ possède un
nombre fini de composantes irréductibles $k$-régulues~$F_1,\ldots,F_m$.
D'après le lemme précédent, $Y=\tilde{F}_1\union\cdots\union
\tilde{F}_m$.  D'après le Théorème~\ref{thbijferirrprem}, on a alors
$$
\iota^{-1}(Y)=\iota^{-1}(\widetilde{F}_1)\union\cdots\union\iota^{-1}(\widetilde{F}_m)=
F_1\union\cdots\union F_m=X \;.
$$

Montrons l'unicité de~$Y$. Soit $Z$ un sous-ensemble fermé
de~$\Spec\SR^k(\R^n)$ tel que $\iota^{-1}(Z)=X$. Comme $Y$ est le plus
petit fermé de~$\Spec\SR^k(\R^n)$ contenant~$\iota(X)$, on
a~$Y\subseteq Z$. Montrons l'inclusion inverse. Soit $p\in Z$, et soit
$F=\iota^{-1}(\V (p))$. Montrons d'abord
que~$\iota(F)\subseteq Y$.  Soit~$x\in F$. On
a~$\iota(x)\in \V(p)$. Comme $\V (p)$ est contenu
dans~$Z$, et comme~$\iota^{-1}(Z)=X$, on a $x\in X$. On a
également~$\iota^{-1}(Y)=X$, donc $\iota(x)\in Y$. Cela montre bien
que~$\iota(F)\subseteq Y$. Or, d'après le Théorème~\ref{thbijferirrprem},
$F$ est un fermé $k$-régulument irréductible dans~$\RR^n$, et $p\in
\tilde{F}$. Il s'ensuit que~$p\in Y$. Cela montre
l'inclusion~$Z\subseteq Y$.
\end{proof}

\begin{cor}
Soient $n$ un entier naturel et $k$ un entier surnaturel.
L'espace topologique~$\Spec\SR^k(\R^n)$ est noethérien.
\end{cor}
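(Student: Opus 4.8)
Le plan est de ramener la noethérianité de $\Spec\SR^k(\R^n)$ à celle de la topologie $k$-régulue sur~$\R^n$, déjà acquise au Corollaire~\ref{topregnoeth}, via l'isomorphisme de treillis fourni par le théorème précédent. Comme ce dernier est énoncé pour $k$ fini, je traiterais d'abord à part le cas $k=\infty$. Dans ce cas, $\SR^\infty(\R^n)=\SQ(\R^n)$ est la localisation de l'anneau noethérien $\R[x_1,\ldots,x_n]$ par la partie multiplicative des polynômes ne s'annulant pas sur~$\R^n$, comme rappelé dans la démonstration de la Proposition~\ref{maximal}; c'est donc un anneau noethérien, et le spectre d'un anneau noethérien est un espace topologique noethérien. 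Ce cas est ainsi immédiatement réglé.

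Pour $k$ fini, j'exploiterais la bijection du théorème précédent entre les fermés de~$\Spec\SR^k(\R^n)$ et les fermés $k$-régulus de~$\R^n$, de réciproque $X\mapsto\tilde X$. L'étape centrale est de constater que ces deux applications préservent les inclusions: si $X_1\subseteq X_2$ sont $k$-régulument fermés, alors $\iota(X_1)\subseteq\iota(X_2)$, d'où $\tilde X_1=\V(\iota(X_1))\subseteq\V(\iota(X_2))=\tilde X_2$; et $\iota^{-1}$ préserve trivialement les inclusions. La correspondance est donc un isomorphisme d'ensembles ordonnés entre le treillis des fermés de~$\Spec\SR^k(\R^n)$ et celui des fermés $k$-régulus de~$\R^n$.

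Il ne resterait plus qu'à transférer la condition de chaîne descendante. Partant d'une suite décroissante $Y_0\supseteq Y_1\supseteq\cdots$ de fermés de~$\Spec\SR^k(\R^n)$, la suite $\iota^{-1}(Y_0)\supseteq\iota^{-1}(Y_1)\supseteq\cdots$ décroît pour la topologie $k$-régulue, donc stationne d'après le Corollaire~\ref{topregnoeth}; l'injectivité de la correspondance force alors $(Y_i)$ à stationner. La difficulté ici n'est pas profonde: l'essentiel est de ne pas négliger le traitement isolé de $k=\infty$ et de bien vérifier la monotonie dans les deux sens, car c'est précisément cette monotonie qui assure le transfert de la noethérianité d'un treillis à l'autre.
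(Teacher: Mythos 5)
Votre démonstration est correcte et suit essentiellement la même voie que celle du papier, qui se contente d'invoquer le théorème précédent pour $k$ fini et la noethérianité de l'anneau $\SR^\infty(\R^n)$ pour $k=\infty$. Vous ne faites qu'expliciter les détails (monotonie de la correspondance, transfert de la condition de chaîne descendante via le Corollaire~\ref{topregnoeth}) que le papier laisse implicites.
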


\begin{proof}
Si $k$ est fini, c'est une conséquence du théorème précédent. Pour le
cas $k=\infty$, il suffit de rappeler que l'anneau $\SR^\infty(\R^n)$
est noethérien.
\end{proof}

Pour $U$ un ouvert r\'egulu de~$\RR^n$, soit $\tilde U$ le
compl\'ementaire de $\tilde{F}$ dans $\Spec\SR^k(\R^n)$, o\`u $F$ est
le compl\'ementaire de~$U$ dans~$\RR^n$.

Dans l'anneau $\SR^\infty$ des fonctions régulières sur $\R^2$, on
peut remarquer que
$O=\iota^{-1}(\V (x,y))=\iota^{-1}(\V (x^2+y^2))$ où
$(x,y)$ et $(x^2+y^2)$ sont deux idéaux premiers distincts de
$\SR^\infty$.  Voici un corollaire qui montre bien la différence entre
l'anneau des fonctions régulues et l'anneau des fonctions régulières
sur~$\RR^n$~:

\begin{cor}
\label{cobijouv}
Soient $n$~et $k$ des entiers naturels.  Soit $U\subseteq \RR^n$ un
ouvert $k$-r\'egulu. Il existe un et un seul ouvert~$V$
de $\Spec\SR^k(\R^n)$ tel que $\iota^{-1}(V)=U$, à
savoir~$V=\widetilde{U}$.\qed
\end{cor}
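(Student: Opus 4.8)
The plan is to deduce this corollary directly from the preceding theorem (the bijection $X \mapsto \tilde X$ between $k$-regulously closed subsets of $\RR^n$ and closed subsets of $\Spec\SR^k(\R^n)$) by passing to complements.

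First I would set $F = \RR^n \setminus U$. Since $U$ is by definition a $k$-regulously open subset of $\RR^n$, the set $F$ is $k$-regulously closed, so the preceding theorem applies to it and furnishes one and only one closed subset $Y$ of $\Spec\SR^k(\R^n)$ with $\iota^{-1}(Y) = F$, namely $Y = \tilde F$.

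Next I would translate this into the language of open sets. Every open subset $V$ of $\Spec\SR^k(\R^n)$ is the complement of a unique closed subset $Z$, and conversely. Because $\iota$ is an ordinary map of sets, the operation $\iota^{-1}$ commutes with complementation, so that
$$
\iota^{-1}(V) = \RR^n \setminus \iota^{-1}(Z).
$$
Hence the condition $\iota^{-1}(V) = U$ is equivalent to $\iota^{-1}(Z) = \RR^n \setminus U = F$. By the uniqueness clause of the theorem this forces $Z = \tilde F$, whence $V = \Spec\SR^k(\R^n) \setminus \tilde F$, which is precisely the set denoted $\tilde U$ in the definition that precedes the statement. This yields at once the existence and the uniqueness of $V$ and identifies it with $\tilde U$.

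I expect no serious obstacle: the entire content is carried by the bijection of the preceding theorem, and all that remains is the purely formal fact that preimages under the set map $\iota$ send complements to complements, combined with the complementation bijection between open and closed subsets of a topological space. The only point worth making explicit is that the uniqueness of $V$ is inherited verbatim from the uniqueness of $Z = \tilde F$ in the theorem, so it requires no independent argument.
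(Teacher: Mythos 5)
Votre argument est correct et correspond exactement à celui que le papier sous-entend (le corollaire y est laissé sans démonstration, comme conséquence immédiate du théorème précédent et de la définition de $\widetilde{U}$ comme complémentaire de $\widetilde{F}$) : passage aux complémentaires, commutation de $\iota^{-1}$ avec la complémentation, et unicité héritée de celle de $\widetilde{F}$.
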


Là encore, cet énoncé est faux lorsque~$k=\infty$, et ce pour
presque tous les ouverts lorsque~$n\geq2$~!

Comme conséquence du corollaire précédent, on donne l'énoncé suivant:

\begin{cor}
Soient $n$~et $k$ des entiers naturels. Tout ouvert
de~$\Spec\SR^k(\R^n)$ est de la forme~$\U(f)$.
\end{cor}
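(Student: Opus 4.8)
The plan is to reduce the statement to the \emph{radicalement principal} property of the ring $\SR^k(\R^n)$. Let $W$ be an open subset of $\Spec\SR^k(\R^n)$. By the very definition of the Zariski topology on the spectrum, its complement $C$ is a closed set and hence of the form $C=\V(I)$ for some ideal $I$ of $\SR^k(\R^n)$ (one may take $I$ to be the ideal generated by any subset $E$ with $C=\V(E)$). Thus it suffices to exhibit a single function $f\in\SR^k(\R^n)$ with $\V(f)=\V(I)$, for then passing to complements gives $W=\U(f)$.

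To produce such an $f$ I would invoke Théorème~\ref{thmradprinc}: since $\SR^k(\R^n)$ is radicalement principal, there exists $f\in I$ with $\Rad(f)=\Rad(I)$. The remaining ingredient is the elementary spectral fact that $\V(J)=\V(\Rad(J))$ for every ideal $J$, which holds because a prime ideal is radical and therefore contains $J$ if and only if it contains $\Rad(J)$; applied to the principal ideal this reads $\V(f)=\V(\Rad(f))$. Combining these,
$$
\V(f)=\V(\Rad(f))=\V(\Rad(I))=\V(I)=C,
$$
so that $W=\U(f)$, as wanted.

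In this formulation there is no genuine obstacle left at the level of the corollary itself: the whole difficulty has been pushed into Théorème~\ref{thmradprinc}, whose proof rests in turn on the $k$-régulu Lojasiewicz inequality~\ref{LojaregulRn} and on the noetherianity of the $k$-régulu topology~\ref{topregnoeth}. Since those are already established, the corollary is pure spectral bookkeeping. For completeness I note an alternative, more geometric route: Corollaire~\ref{cobijouv} matches open subsets of the spectrum with $k$-régulu open subsets $U\subseteq\R^n$, Corollaire~\ref{cofkresthyp} writes the closed complement $F=\R^n\setminus U$ as $\Z(f)$ for a single $k$-régulu $f$, and the Nullstellensatz~\ref{Nullstellensatz} gives $\I(F)=\Rad(f)$, whence $\tilde F=\V(\I(F))=\V(f)$. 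I expect the first route to be shorter, as it avoids re-deriving that the closure of $\iota(\Z(f))$ equals $\V(f)$.
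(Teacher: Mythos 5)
Votre démonstration est correcte, mais elle emprunte un chemin différent de celui du texte. Vous ramenez tout à la propriété \emph{radicalement principal} de l'anneau (Théorème~\ref{thmradprinc}) : tout fermé du spectre s'écrit $\V(I)$, on choisit $f\in I$ avec $\Rad(f)=\Rad(I)$, et l'identité élémentaire $\V(J)=\V(\Rad(J))$ donne $\V(f)=\V(I)$. C'est un argument purement algébrique, interne au spectre, qui ne fait jamais intervenir les points de $\R^n$ ni l'application $\iota$ ; il vaudrait tel quel pour n'importe quel anneau radicalement principal. Le texte, lui, déduit l'énoncé du Corollaire~\ref{cobijouv} : un ouvert $V$ du spectre correspond, via $\iota$, à un unique ouvert $k$-régulu $U$ de $\R^n$, dont le complémentaire est $\Z(f)$ pour une seule fonction $f$ (Corollaire~\ref{cofkresthyp}), et l'unicité dans~\ref{cobijouv} force $V=\U(f)$ — c'est précisément la « route géométrique » que vous mentionnez en fin de proposition. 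Les deux approches reposent en dernière analyse sur la noethérianité de la topologie $k$-régulue (Corollaire~\ref{topregnoeth}) et sur l'inégalité de Lojasiewicz~\ref{LojaregulRn} ; la vôtre a l'avantage d'être plus courte et de ne pas redémontrer que l'adhérence de $\iota(\Z(f))$ est $\V(f)$, celle du texte a l'avantage de s'insérer naturellement dans le dictionnaire ouverts du spectre / ouverts régulus qui vient d'être établi.
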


Dans la suite de ce paragraphe, on identifiera~$\RR^n$ avec son image
dans $\Spec\SR^k(\R^n)$ via l'application~$\iota$, pour simplifier.

Notons~$\widetilde\SR^k$ le faisceau structural sur le schéma
affine~$\Spec\SR^k(\R^n)$. Il induit par restriction un faisceau en
anneaux locaux sur~$\RR^n$ que l'on notera
encore~$\widetilde\SR^k$. Soit~$U$ un ouvert $k$-régulu de~$\R^n$,
et~$f$ une fonction $k$-régulue sur~$\R^n$ telle que~$\D(f)=U$.
D'après le Corollaire~\ref{cobijouv}, on a
$$
\widetilde\SR^k(U)=\SR^k(\R^n)_f.
$$ 
D'après la Proposition~\ref{restriction},
$$
\SR^k(\R^n)_f=\SR^k(U).
$$
Il s'ensuit l'énoncé suivant:

\begin{cor}
Soient $n$ un entier naturel et $k$ un entier surnaturel.  La
restriction du faisceau structural sur~$\Spec\SR^k(\R^n)$ à~$\R^n$ est
le faisceau~$\SR^k$ des fonctions $k$-régulues sur~$\R^n$.\qed
\end{cor}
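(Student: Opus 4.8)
The plan is to prove the two sheaves agree by testing them on a basis of the $k$-régulue topology and invoking the principle that a sheaf is determined by its sections over a basis of opens. First I would observe that the sets $\D(f)$, for $f\in\SR^k(\R^n)$, form a basis of open sets for the $k$-régulue topology: by definition the sets $\Z(f)$ form a basis of closed sets, and $\D(f)=\R^n\setminus\Z(f)$. It therefore suffices to identify $\widetilde\SR^k$ (the structure sheaf restricted to $\R^n$) with $\SR^k$ on every such basic open $U=\D(f)$, compatibly with restriction maps.

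On such a $U$, Corollaire~\ref{cobijouv} yields the open $\widetilde U=\U(f)$ of $\Spec\SR^k(\R^n)$ with $\iota^{-1}(\widetilde U)=U$, so the restricted structure sheaf assigns to $U$ the ring $\widetilde\SR^k(\U(f))=\SR^k(\R^n)_f$, namely the sections of an affine structure sheaf over a distinguished open. Proposition~\ref{restriction} then furnishes the canonical isomorphism $\SR^k(\R^n)_f\iso\SR^k(U)$. Composing gives $\widetilde\SR^k(U)\iso\SR^k(U)$ for every basic open $U$, which is exactly the content of the two displayed identities preceding the statement.

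It remains to check that these isomorphisms are natural, i.e.\ that they commute with the restriction maps attached to an inclusion $V=\D(g)\subseteq U=\D(f)$. On the scheme side the restriction is the localization morphism $\SR^k(\R^n)_f\to\SR^k(\R^n)_g$, whereas on the régulue side it is the geometric restriction $\SR^k(U)\to\SR^k(V)$; both are induced by the single restriction morphism $\SR^k(\R^n)\to\SR^k(V)$, so the relevant square commutes. The gluing axiom for the two sheaves then upgrades this agreement on a basis into a global isomorphism of sheaves on $\R^n$.

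I expect the only genuine point to verify to be this compatibility with restrictions, which is essentially the functoriality already packaged into Proposition~\ref{restriction}; everything else is formal sheaf theory. For the remaining case $k=\infty$, where $\SR^\infty=\SQ$ is noetherian, the same scheme applies verbatim, the two basic identities reducing to the classical description of the structure sheaf of $\Spec\SQ(\R^n)$ over distinguished opens, so no separate treatment is needed.
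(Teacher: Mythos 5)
For $k$ finite your argument is exactly the paper's: every $k$-régulue open of $\R^n$ is of the form $\D(f)$, Corollaire~\ref{cobijouv} identifies it with the distinguished open $\U(f)$ of the spectrum, and Proposition~\ref{restriction} identifies $\SR^k(\R^n)_f$ with $\SR^k(\D(f))$; the compatibility with restriction maps that you single out is indeed the only remaining point, and it holds for the reason you give (both restrictions are induced by the single morphism $\SR^k(\R^n)\to\SR^k(V)$).

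The gap is the case $k=\infty$, which the statement covers ($k$ is a \emph{surnaturel}) and which you dismiss as applying ``verbatim''. Neither of your two ingredients survives there. Corollaire~\ref{cobijouv} is stated only for $k$ finite, and the paper remarks immediately after it that it is false for $k=\infty$, for almost all opens as soon as $n\geq 2$: the opens of $\Spec\SR^\infty(\R^n)$ are not in bijection with the Zariski opens of $\R^n$. Proposition~\ref{restriction} fails as well: it rests on the Lojasiewicz Lemme~\ref{LojaregulRn}, for which the paper exhibits the counterexample $f=x^2+y^2$, $g=1/(x^2+2y^2)$ when $k=\infty$. Concretely, $1/(x^2+2y^2)$ is regular on $\D(x^2+y^2)=\R^2\setminus\{0\}$ but is not of the form $h/(x^2+y^2)^N$ with $h$ regular on $\R^2$, so the localization map $\SR^\infty(\R^n)_f\to\SR^\infty(\D(f))$ is not surjective. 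The paper's treatment of $k=\infty$ is genuinely different: the sections of the restricted structure sheaf over a Zariski open $U$ are identified with the inductive limit of the localizations $\SR^\infty(\R^n)_f$ taken over \emph{all} regular $f$ nowhere zero on $U$ (the principal opens $\U(f)$ containing $\iota(U)$ being cofinal among such opens), and this limit equals $\SQ(U)$ by the global description of regular functions of Proposition~\ref{prq(U)}. You need to substitute an argument of this kind for the case $k=\infty$.
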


Le cas~$k=\infty$ vient du fait suivant. Soit~$U$ un ouvert de Zariski
de~$\R^n$. L'anneau des fonctions régulières~$\SR^\infty(U)$ sur~$U$
peut s'identifier avec la limite inductive des
localisations~$\SR^\infty(\R^n)_f$, où~$f$ parcourt l'ensemble des
fonctions régulières sur~$\R^n$ qui ne s'annule pas sur~$U$.

\subsection*{Faisceaux $k$-régulus quasi-cohérents sur~$\R^n$}

Soient $n$ un entier naturel et $k$ un entier surnaturel.  Soit $M$ un
$\SR^k(\R^n)$-module. Le faisceau~$\tilde{M}$ sur le 
schéma~$\Spec\SR^k(\R^n)$ induit encore par restriction un
faisceau~$\tilde{M}$ sur~$\RR^n$. Là aussi, pour un ouvert
$k$-régulu~$U$ de~$\R^n$, on a
$$
\tilde{M}(U)=M_f
$$ lorsque~$k$ est fini, où~$f$ est une fonction $k$-régulue
sur~$\RR^n$ avec~$\D(f)=U$.  Dans le cas~$k=\infty$, le
module~$\tilde{M}(U)$ peut s'identifier avec la limite inductive des
localisations~$M_f$, où~$f$ parcourt l'ensemble des fonctions
régulières sur~$\R^n$ qui ne s'annulent pas sur~$U$.

Plus généralement, soit~$V$ un ouvert $k$-régulu de~$\R^n$, et $g$ une
fonction $k$-régulue sur~$\R^n$ telle que~$\D(g)=V$.  Soit $M$ un
$\SR^k(V)$-module. La restriction à~$V$ du faisceau~$\tilde{M}$
sur~$\Spec\SR^k(V)$ est un faisceau en $\SR_{|V}^k$-modules, encore
noté~$\tilde{M}$, déterminé lorsque $k$ est fini par
$$
\tilde{M}(U)=M_f
$$ pour tout ouvert $k$-régulu~$U$ de~$V$, et pour toute fonction
$k$-régulue~$f$ sur~$V$ avec~$\D(f)=U$. Dans le cas~$k=\infty$, le
module~$\tilde{M}(U)$ peut s'identifier avec la limite inductive des
localisations~$M_f$, où~$f$ parcourt l'ensemble des fonctions
régulières sur~$V$ qui ne s'annulent pas sur~$U$.

Un faisceau \emph{$k$-régulu} sur~$\R^n$ est un faisceau en
$\SR^k$-modules sur $\R^n$.  Soit~$\SF$ un faisceau $k$-régulu
sur~$\R^n$.  On dira que le faisceau~$\SF$ est \emph{quasi-cohérent}
s'il existe un recouvrement de~$\RR^n$ par des ouverts
$k$-régulus~$U_i$, $i\in I$, tel que, pour tout~$i$, il existe un
$\SR^k(U_i)$-module $M_i$ avec~$\tilde M_i$ isomorphe à~$\SF_{|U_i}$.

Nous obtenons alors une version $k$-régulue du
Théorème~A de Cartan, cf.~\cite[Chap. II, Corollary 5.5]{Ha77}.

\begin{thm}
\label{thA}
Soient $n$~et $k$ des entiers naturels.  Le foncteur qui associe à un
$\SR^k(\R^n)$-module $M$ le faisceau~$\tilde M$ est une équivalence de
catégories sur la catégorie des faisceaux $k$-régulus quasi-cohérents
sur~$\RR^n$. Le foncteur réciproque est le foncteur ``sections
globales'' $H^0$. En particulier, le faisceau~$\tilde{M}$ est engendré
par ses sections globales.\qed
\end{thm}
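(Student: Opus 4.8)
The plan is to deduce the statement from the classical equivalence of categories for affine schemes (Serre, \cite[2.4]{FAC}; see also \cite{EGA1}) by transporting it along the map $\iota\colon\R^n\to\Spec\SR^k(\R^n)$, $x\mapsto\M_x$. All the preliminary results above have been arranged precisely so that $\iota$ identifies the $k$-regulous sheaf theory on $\R^n$ with the sheaf theory of the honest affine scheme $\Spec\SR^k(\R^n)$, and once this identification is in place the theorem becomes the classical one. This is why the statement carries a \qed immediately.

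First I would record that $\iota$ induces an isomorphism of the lattices of open sets. By Corollaire~\ref{cobijouv} the assignment $U\mapsto\widetilde U$ is a bijection between the $k$-regulous open subsets of $\R^n$ and the open subsets of $\Spec\SR^k(\R^n)$, with inverse $V\mapsto\iota^{-1}(V)$; by Lemme~\ref{letF=VIF}, applied to complements, this bijection preserves inclusions, finite intersections and arbitrary unions. Consequently the direct-image functor $\iota_*$ and the inverse-image functor $\iota^{-1}$ are mutually inverse equivalences between the category of sheaves on $\R^n$ for the $k$-regulous topology and the category of sheaves on $\Spec\SR^k(\R^n)$; and since the restriction to $\R^n$ of the structure sheaf of $\Spec\SR^k(\R^n)$ is $\SR^k$ (the corollary just preceding this section), these equivalences match $\SR^k$-modules with modules over the structure sheaf. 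Under $\iota_*$ one has, for $U=\D(f)$ and $V=\widetilde U=\U(f)$, the equality $(\iota_*\widetilde M)(V)=\widetilde M(U)=M_f$, which is exactly the value on $V$ of the quasi-coherent sheaf attached to $M$ on the affine scheme. Hence $\iota_*$ carries the $\R^n$-functor $M\mapsto\widetilde M$ to the classical functor $M\mapsto\widetilde M$ on $\Spec\SR^k(\R^n)$, and it carries $H^0$ to the global-sections functor $\Gamma$.

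It then remains to check that $\iota_*$ matches the two notions of quasi-coherence, which is immediate from the definitions: $\SF$ is quasi-coherent iff there is a $k$-regulous open cover $\{U_i\}$ with $\SF_{|U_i}\iso\widetilde{M_i}$, and under the frame isomorphism the $U_i$ correspond to an open cover $\{\widetilde{U_i}\}$ of $\Spec\SR^k(\R^n)$ on which $\iota_*\SF$ is isomorphic to $\widetilde{M_i}$, each $\widetilde{U_i}\iso\Spec\SR^k(U_i)$ being affine (here Proposition~\ref{restriction} identifies $\SR^k(U_i)$ with the relevant localization $\SR^k(\R^n)_{g_i}$). Thus $\iota_*$ restricts to an equivalence between $k$-regulous quasi-coherent sheaves on $\R^n$ and quasi-coherent modules on $\Spec\SR^k(\R^n)$; applying the classical affine equivalence there and transporting back by $\iota^{-1}$ yields the asserted equivalence with quasi-inverse $H^0$, the final assertion that $\widetilde M$ is generated by its global sections being the corresponding classical fact. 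I expect the only genuinely delicate point to be that $\SR^k(\R^n)$ is \emph{not} noetherian, so one must cite the form of the classical theorem valid for an arbitrary ring; and underneath everything lies Proposition~\ref{restriction}, hence the $k$-regulous Lojasiewicz inequality of Lemme~\ref{LojaregulRn}, which is the single non-formal ingredient making the non-noetherian ring $\SR^k(\R^n)$ behave like a coordinate ring.
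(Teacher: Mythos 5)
Votre démonstration est correcte et suit exactement la voie que l'article sous-entend (d'où le \og\qed\fg{} immédiat) : transporter l'équivalence classique des schémas affines le long de $\iota$ grâce au Corollaire~\ref{cobijouv}, à l'identification du faisceau structural avec $\SR^k$ et à la Proposition~\ref{restriction}. Vous explicitez simplement ce que les auteurs considèrent comme acquis après les préliminaires, y compris le point, pertinent, que la version classique utilisée doit être celle valable pour un anneau non noethérien.
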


Comme signalé dans l'introduction, ce résultat est faux
pour~$k=\infty$. En effet, il existe des faisceaux réguliers
quasi-cohérents sur~$\R^n$ qui ne sont pas engendrés par leurs
sections globales:

\begin{ex}[cf.\protect{\cite[Example~12.1.5]{BCR}}]
Soit $p\in\RR[x,y]$ le polynôme défini par
$$
p=x^2(x-1)^2+ y^2.
$$ Remarquons que $p$ possède exactement deux racines réelles à savoir
$c_0=(0,0)$ et $c_1=(1,0)$. Soit $U_i=\RR^2\setminus\{c_i\}$ pour
$i=0,1$. Les ouverts réguliers $U_0$~et $U_1$ constituent un
recouvrement de~$\RR^2$. On définit un faisceau régulier
quasi-cohérent~$\SF$ par rapport à ce recouvrement. En fait, $\SF$ va
être localement libre de rang~$1$. Explicitement, on construit~$\SF$ à
partir des faisceaux $\SR_{|U_0}^\infty$~et $\SR_{|U_1}^\infty$ en les
recollant au-dessus de~$U_0\inter U_1$ à l'aide de la fonction de
transition $g_{01}=p$ sur $U_0\inter U_1$. Plus précisément, deux
sections $s_0$~et $s_1$ de $\SR_{|U_0}^\infty$~et $\SR_{|U_1}^\infty$
au-dessus d'ouverts de Zariski $V_0$~et $V_1$, respectivement, se
recollent si~$g_{01}s_1=s_0$ sur~$V_0\inter V_1$.

Montrons que toute section régulière globale~$s$ de~$\SF$ s'annule
en~$c_1$. La restriction~$s_i$ de $s$ à~$U_i$ est une fonction
régulière sur~$U_i$, pour~$i=0,1$. La condition de recollement
est~$g_{01}s_1=s_0$ sur~$U_0\inter U_1$. Ecrivons $s_i=p_i/q_i$, où
$p_i,q_i\in\RR[x,y]$, avec $q_i\neq0$ sur~$U_i$ et $p_i,q_i$ premiers
entre eux, pour $i=0,1$.  La condition de recollement implique que
$pq_0p_1=p_0q_1$ sur~$\RR^2$. Comme~$p$ est irréductible, et comme
$q_1(c_0)\neq0$, le polynôme~$p$ divise~$p_0$. En particulier,
$p_0(c_1)=0$ et donc aussi~$s(c_1)=0$. 

Il s'ensuit que le faisceau régulier quasi-cohérent~$\SF$ sur~$\R^2$
n'est pas engendré par ses sections globales.

Pour compléter, montrons que le faisceau $k$-régulu
quasi-cohérent~$\SF^k=\SR^k\tensor\SF$ induit est bien engendré par
ses sections globales, pour~$k$ un entier naturel. On montre même
plus: le faisceau~$\SF^k$ est isomorphe à~$\SR^k$ en exhibant une section
globale de~$\SF^k$ qui ne s'annule jamais. Soit $s$ la section régulue
globale de~$\SF^k$ définie par
$$
s_0=\frac{1}{(x^2+y^2)^\ell}
\quad\text{et}\quad
s_1=\frac{(x^2+y^2)^\ell}{p},
$$ où~$\ell$ est le plus petit entier $\geq(k+1)/2$.  La fonction
$s_i$ est bien $k$-régulue sur~$U_i$ et non nulle, pour $i=0,1$. Le
faisceau $k$-régulu $\SF^k$ est donc isomorphe à~$\SR^k$. En particulier,
il est engendré par ses sections globales.
\end{ex}

Il suit du Théorème~A de Cartan version $k$-régulue, que le foncteur
``sections globales'' est exact sur les faisceaux $k$-régulus quasi-cohérents
sur~$\RR^n$. D'où la version $k$-régulue du Théorème~B de Cartan, cf. \cite[Théorème 1.3.1]{EGA3}. (On se réfère ici à EGAIII car le schéma $\Spec\SR^k(\R^n)$ n'est pas noethérien d'après la Proposition~\ref{prop.nonoeth} or la preuve donnée de \cite[Chap. III, Theorem 3.5]{Ha77} n'est valable que pour les schémas noethériens).

\begin{thm}
\label{thB}
Soient $n$~et $k$ des entiers naturels.  Soit~$\SF$ un faisceau
$k$-régulu quasi-cohérent sur~$\RR^n$. Alors,
$$
H^i(\RR^n,\SF)=0
$$
pour tout~$i\neq0$.\qed
\end{thm}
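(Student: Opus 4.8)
The plan is to deduce Theorem~\ref{thB} from the regulous Theorem~A (\ref{thA}), in the same way Serre's Theorem~B is deduced from Theorem~A on an affine scheme. I first record the consequence of \ref{thA} announced in the preceding remark: since $M\mapsto\tilde M$ is an equivalence between $\SR^k(\R^n)$-modules and quasi-coherent $k$-régulus sheaves with quasi-inverse $H^0$, every short exact sequence of quasi-coherent sheaves is the image of a short exact sequence of modules on which $H^0$ simply returns the modules; hence $H^0$ is \emph{exact} on the category of quasi-coherent sheaves, and that category is an exact abelian subcategory of all $\SR^k$-modules.

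Next I would transport injectives across the equivalence: $\SR^k(\R^n)$-modules have enough injectives, so by \ref{thA} so do the quasi-coherent sheaves, the injective objects being the $\tilde I$ with $I$ an injective module. Given quasi-coherent $\SF$, embed it in such a $\tilde I$ with quotient $\SG$ (again quasi-coherent). In the long exact cohomology sequence of $0\to\SF\to\tilde I\to\SG\to 0$, exactness of $H^0$ makes $H^0(\tilde I)\to H^0(\SG)$ surjective, so the connecting map vanishes and one gets $H^1(\R^n,\SF)\hookrightarrow H^1(\R^n,\tilde I)$ together with $H^i(\R^n,\SF)\cong H^{i-1}(\R^n,\SG)$ for $i\ge 2$. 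Provided each $\tilde I$ is acyclic for sheaf cohomology, dimension-shifting and induction on $i$ then give $H^i(\R^n,\SF)=0$ for all $i>0$.

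The main obstacle is precisely this acyclicity, $H^i(\R^n,\tilde I)=0$ for $i>0$. In the classical affine case one proves $\tilde I$ flasque, but that argument quotes the noetherianity of the ring, which fails here since $\SR^k(\R^n)$ is non-noetherian. I would instead exploit the noetherianity of the \emph{space} together with the radically principal structure of $\SR^k(\R^n)$: by Corollary~\ref{cofkresthyp} every $k$-régulu open set is a basic open $\D(f)$, and by Proposition~\ref{restriction} one has $\SR^k(\D(f))=\SR^k(\R^n)_f$. Passing to Čech cohomology is then clean: basic opens are stable under finite intersection ($\D(f)\inter\D(g)=\D(fg)$), every open is quasi-compact, and for any finite cover $\D(f)=\bigcup_j\D(g_j)$ the common zero set of the $g_j$ is exactly $\Z(f)$, so $f\in\Rad(g_1^2+\cdots+g_m^2)$ by Proposition~\ref{prfinradg} and the $g_j$ generate the unit ideal of $\SR^k(\D(f))$. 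By the usual partition-of-unity computation the augmented Čech complex of localizations of any module is then exact in positive degrees, so the Čech cohomology of every quasi-coherent sheaf vanishes on basic covers.

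Finally I would invoke a Cartan--Leray comparison on the noetherian space $\R^n$: the basic opens form a basis closed under finite intersection on which the higher Čech cohomology of any quasi-coherent $\SF$ vanishes, so Čech and derived cohomology agree and $H^i(\R^n,\SF)=0$ for $i>0$ follows directly, in particular the acyclicity of the $\tilde I$ invoked above. Thus the Čech computation proves Theorem~\ref{thB} on its own, the role of Theorem~A and of the exactness of $H^0$ being to organise the reduction and to guarantee that every quasi-coherent sheaf is reached.
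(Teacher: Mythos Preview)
Your proposal is correct, and once unpacked it coincides with what the paper has in mind. The paper's entire argument is the single sentence preceding the statement: from Théorème~A the functor $H^0$ is exact on quasi-coherent sheaves, ``d'où'' Théorème~B. What is left implicit is the identification of sites established just before (Corollaire~\ref{cobijouv}): the map $U\mapsto\tilde U$ is a bijection between $k$-régulu opens of $\R^n$ and Zariski opens of $\Spec\SR^k(\R^n)$, so sheaf cohomology on $(\R^n,\SR^k)$ agrees with sheaf cohomology on the affine scheme, and one is reduced to Serre's classical Théorème~B for affine schemes (valid over any commutative ring, noetherian or not). Your Čech computation is precisely a direct transcription of the standard proof of that classical result: every open is a basic open $\D(f)$ (Corollaire~\ref{cofkresthyp}), sections over it are the localisation $M_f$ (Proposition~\ref{restriction}), and a finite cover $\D(f)=\bigcup_j\D(g_j)$ forces the $g_j$ to generate the unit ideal of $\SR^k(\R^n)_f$ (Proposition~\ref{prfinradg}), so the augmented Čech complex of localisations is exact in positive degrees. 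Since the space is noetherian and every open is basic, Cartan's criterion then gives $H^i(\R^n,\SF)=0$ for all $i>0$ directly.

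One structural remark: your first two paragraphs (transporting injectives across the equivalence, dimension-shifting to reduce to the acyclicity of $\tilde I$) are superfluous. As you yourself observe at the end, the Čech argument already proves $H^i(\R^n,\SF)=0$ for \emph{every} quasi-coherent $\SF$, so there is no need to first reduce to the case of an injective $\tilde I$; the exactness of $H^0$ is likewise recovered as the degree-$1$ statement rather than needed as a separate input.
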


\subsection*{Variétés $k$-régulues affines}\label{p.dfn.regul.sing}

Soit $F$ un sous-ensemble $k$-régulument fermé de~$\R^n$.  La
\emph{topologie $k$-régulue} sur~$F$ est la topologie induite par la
topologie $k$-régulue de~$\R^n$.  Soit~$\SI$ le faisceau d'idéaux
de~$\SR^k$ des fonctions s'annulant sur~$F$. Le faisceau
quotient~$\SR^k/\SI$ possède un support égal à~$F$, et est donc un
faisceau sur~$F$, le \emph{faisceau des fonctions $k$-régulues}
sur~$F$, noté~$\SR_F^k$. Soit~$U$ un ouvert $k$-régulu de~$F$. Une
\emph{fonction $k$-régulue} sur~$F$ est une section du
faisceau~$\SR_F^k$. En particulier, une fonction $k$-régulue sur~$F$
est une section globale du faisceau~$\SR_F^k$ sur~$F$.

Lorsque $F\subset \RR^n$ est une variété réelle algébrique affine lisse, cette définition est a priori plus forte que la Définition~\ref{dfn.regulue}. En effet, tout élément de $\SR_F^k(F)$ est une fonction $k$-régulue au sens de \ref{dfn.regulue}. Pour $k=0$, la réciproque est vraie d'après \cite{Ko}. Pour un entier naturel $k\ne 0$, la question est ouverte.

\begin{prop}\label{prop.restric}
Soient $n$ un entier naturel et $k$ un entier surnaturel.  Soit~$F$ un
fermé $k$-régulu de~$\R^n$, et $U$ un ouvert $k$-régulu de~$F$. Soit
$V$ un ouvert $k$-régulu de~$\R^n$ tel que~$V\inter F=U$.  Si $f$ est
une fonction $k$-régulue sur~$U$, alors il existe une fonction
$k$-régulue~$g$ sur~$V$ dont la restriction à~$U$ est égal à~$f$.
\end{prop}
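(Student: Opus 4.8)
Le plan est de tout ramener à une question de modules sur l'anneau $A=\SR^k(\R^n)$ et de conclure par localisation. Comme $\R^n\setminus V$ est un fermé $k$-régulu, le Corollaire~\ref{cofkresthyp} fournit une fonction $v\in A$ telle que $\R^n\setminus V=\Z(v)$, c'est-à-dire $V=\D(v)$. La Proposition~\ref{restriction} identifie alors les sections $\SR^k(V)$ au localisé $A_v$. Notons $J=\I(F)$ l'idéal de $A$ formé des fonctions $k$-régulues s'annulant sur~$F$. L'étape décisive sera de montrer que le faisceau $\SR_F^k$ n'est autre que $\widetilde{A/J}$~; après quoi la surjectivité du morphisme de restriction deviendra une pure formalité de localisation.

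Je vais donc d'abord vérifier que le faisceau d'idéaux $\SI$ de~$F$ est quasi-cohérent, à savoir $\SI=\tilde J$. Comme les ouverts $\D(u)$, $u\in A$, forment une base de la topologie $k$-régulue (tout fermé est de la forme $\Z(f)$ par le Corollaire~\ref{cofkresthyp}), il suffit d'établir $\SI(\D(u))=J_u$ pour tout~$u$. L'inclusion $J_u\subseteq\SI(\D(u))$ est immédiate. Réciproquement, soit $h\in\SI(\D(u))$~; par la Proposition~\ref{restriction} on peut écrire $h=a/u^m$ avec $a\in A$, et $h$ s'annulant sur $F\inter\D(u)$, il en va de même de~$a$. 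Le point-clé est de considérer le produit $au$~: il s'annule sur $F\inter\D(u)$ (car $a$ le fait) et sur $F\inter\Z(u)$ (car $u$ le fait), donc sur~$F$ tout entier, si bien que $au\in J$ et $h=au/u^{m+1}\in J_u$. Ainsi $\SI=\tilde J$. Comme $\SR^k=\tilde A$ (restriction à~$\R^n$ du faisceau structural de~$\Spec A$) et que le foncteur $M\mapsto\tilde M$ est exact (Théorème~\ref{thA}), le quotient $\SR_F^k=\SR^k/\SI$ s'identifie à $\widetilde{A/J}$.

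Il reste à conclure. Puisque $\SR_F^k$ est supporté par~$F$, ses sections sur $U=V\inter F$ coïncident avec ses sections sur $V=\D(v)$, et la description des sections d'un faisceau quasi-cohérent au-dessus d'un ouvert de base (Théorème~\ref{thA}) donne
$$
\SR_F^k(U)=\widetilde{A/J}(\D(v))=(A/J)_v.
$$
Le morphisme de restriction $\SR^k(V)\to\SR_F^k(U)$ s'identifie alors au morphisme naturel $A_v\to(A/J)_v$, qui est manifestement surjectif~: une fonction $k$-régulue $f$ sur~$U$ s'écrit $f=\bar a/v^m$ avec $a\in A$, et $g=a/v^m\in A_v=\SR^k(V)$ en fournit un relèvement. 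Le principal obstacle technique est précisément la quasi-cohérence de~$\SI$, c'est-à-dire l'astuce de multiplication par~$u$ ci-dessus (tout le reste étant formel)~; on peut d'ailleurs y voir la surjectivité des sections globales induite par $H^1(\R^n,\SI)=0$ (Théorème~\ref{thB}), lue après restriction à~$V$. Enfin le cas $k=\infty$ se traite à l'identique, en remplaçant les localisations par les limites inductives de localisations évoquées plus haut pour les fonctions régulières.
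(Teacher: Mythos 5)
Votre démonstration du cas où $k$ est un entier naturel est correcte et suit pour l'essentiel la même route que celle de l'article~: celui-ci écrit la suite exacte de faisceaux quasi-cohérents $0\ra\SI_{|V}\ra\SR^k_{|V}\ra(\SR^k_F)_{|V}\ra0$ et conclut par le Théorème~B (\ref{thB}), ce qui revient exactement à votre passage par les modules localisés sur $A=\SR^k(\R^n)$. Vous apportez même un complément utile que l'article passe sous silence~: la vérification explicite de la quasi-cohérence de $\SI$, c'est-à-dire l'identité $\SI(\D(u))=J_u$, obtenue par l'astuce de multiplication par~$u$ (le produit $au$ s'annule sur $F\inter\D(u)$ parce que $a$ s'y annule, et sur $F\inter\Z(u)$ parce que $u$ s'y annule). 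Ce point est juste et bien mené.

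En revanche, votre dernière phrase --- «~le cas $k=\infty$ se traite à l'identique~» --- constitue une vraie lacune. Tout l'appareil que vous employez n'est disponible que pour $k$ fini~: la Proposition~\ref{restriction} repose sur l'inégalité de Lojasiewicz régulue (Lemme~\ref{LojaregulRn}), dont l'article donne un contre-exemple explicite pour $k=\infty$~; la correspondance entre ouverts de $\R^n$ et ouverts de $\Spec\SR^k(\R^n)$ (Corollaire~\ref{cobijouv}) est explicitement signalée comme fausse pour $k=\infty$~; et les Théorèmes~\ref{thA} et~\ref{thB} eux-mêmes sont en défaut pour les fonctions régulières, ce qui est précisément la motivation de l'article. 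En particulier, rien ne garantit que $(\SR^\infty/\SI)(V)$ coïncide avec $\SR^\infty(V)/\SI(V)$~: l'obstruction vit dans un $H^1$ qui ne s'annule pas en général pour les faisceaux réguliers quasi-cohérents (contre-exemple de Coste--Diop cité dans l'introduction). L'article traite ce cas par une construction directe~: on écrit $f=p/q$ sur~$U$ avec $p$ et $q$ polynomiales (Proposition~\ref{prqF(U)}), on choisit une fonction polynomiale $s$ telle que $\Z(s)=F$, et la fonction $g=pq/(q^2+s^2)$ est régulière en dehors de $F\setminus U$ --- donc sur $V$ --- et prolonge~$f$. Il vous faut soit reproduire un argument direct de ce type, soit restreindre votre énoncé au cas $k$ fini.
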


\begin{proof}
Montrons l'énoncé d'abord dans le cas où~$k$ est fini.  On a une suite
exacte de faisceaux quasi-cohérents
$$
0\ra \SI_{|V} \ra \SR_{|V}^k\ra (\SR_F^k)_{|V}\ra 0
$$
sur~$\R^n$. D'après la version $k$-régulue du Théorème B de Cartan,
on en déduit une suite exacte
$$
0\lra \SI(V)\ra \SR^k(V)\ra \SR_F^k(V)\ra0.
$$ Comme~$\SR_F^k(V)=\SR_F^k(U)$, il existe bien une fonction
$k$-régulue~$g$ sur~$V$ dont la restriction à~$U$ est égale à~$f$, pour
toute fonction $f$ $k$-régulue sur~$U$, lorsque $k$ est fini.

Il nous reste à montrer l'énoncé lorsque~$k=\infty$. Soit~$f$ une
fonction régulière sur~$U$. Il existe des fonctions polynomiales
$p$~et $q$ sur~$\R^n$ telles que~$f(x)=p(x)/q(x)$ pour tout~$x\in
U$. Soit~$s$ une fonction polynomiale sur~$\R^n$ dont l'ensemble des
zéros est égal à~$F$. L'ensemble des zéros de la fonction polynomiale
$q^2+s^2$ est contenu dans le fermé de Zariski~$G=F\setminus U$.  La
fonction définie sur~$\R^n\setminus G$ par
$$
g(x)=\frac{p(x)q(x)}{q^2(x)+s^2(x)}
$$ est bien régulière, et sa restriction à~$U$ est égale à~$f$.
\end{proof}

\begin{cor}
Soient $n$ un entier naturel et $k$ un entier surnaturel.  Soit~$F$ un
fermé $k$-régulu de~$\R^n$.  Si $f$ est une fonction $k$-régulue
sur~$F$, alors il existe une fonction $k$-régulue~$g$ sur~$\R^n$ dont
la restriction à~$F$ est égale à~$f$. Plus précisément, l'application
de restriction de~$\SR^k(\R^n)$ dans~$\SR^k(F)$ induit un isomorphisme
$$
\SR^k(\R^n)/\I(F)\iso \SR^k(F).\qed
$$
\end{cor}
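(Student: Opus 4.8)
The plan is to obtain this corollary as the special case $U=F$, $V=\R^n$ of the preceding Proposition, supplemented by a computation of the kernel of the restriction map. First I would observe that $F$, being the total space of its own induced topology, is a $k$-régulu open subset of $F$, and that $V=\R^n$ is trivially a $k$-régulu open of $\R^n$ satisfying $V\inter F=\R^n\inter F=F$. Applying the Proposition with $U=F$ and $V=\R^n$ then produces, for every $k$-régulue function $f$ on $F$, a $k$-régulue function $g$ on $\R^n$ whose restriction to $F$ equals $f$. This already gives the first assertion and shows that the restriction homomorphism $\rho\colon \SR^k(\R^n)\to\SR^k(F)$, $g\mapsto g_{|F}$, is surjective.

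Next I would identify the kernel of $\rho$. Since $\SR_F^k=\SR^k/\SI$ is realized as a subsheaf of real-valued functions on $F$, the image of $g\in\SR^k(\R^n)$ in $\SR^k(F)=\SR_F^k(F)$ is exactly the function $g_{|F}$; hence $g\in\ker\rho$ if and only if $g(x)=0$ for every $x\in F$, i.e. $g\in\I(F)$. Equivalently, taking global sections of the short exact sequence of sheaves $0\to\SI\to\SR^k\to\SR_F^k\to0$ already used in the Proposition, and invoking left-exactness of $H^0$, yields $\ker\rho=\SI(\R^n)=\I(F)$. The first isomorphism theorem applied to the $\R$-algebra homomorphism $\rho$ then gives the announced isomorphism $\SR^k(\R^n)/\I(F)\iso\SR^k(F)$.

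I do not anticipate a genuine obstacle, since the substantive content has already been carried in the Proposition, which itself handles the case $k$ finite through the $k$-régulu Theorem~B of Cartan (\ref{thB}) and the case $k=\infty$ through an explicit construction. The only points that require a little care are the verification that the quotient-sheaf sections $\SR^k(F)$ really are identified with honest $k$-régulue functions on $F$—so that ``$g_{|F}=0$'' means pointwise vanishing on $F$ and therefore $\ker\rho=\I(F)$—and the remark that the global sections $\SI(\R^n)$ of the ideal sheaf coincide with $\I(F)$, which holds because vanishing on the closed set $F$ is a condition that can be checked locally at each point.
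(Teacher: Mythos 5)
Votre démonstration est correcte et suit exactement la voie que le papier sous-entend par son \og\qed\fg{} : on applique la Proposition précédente avec $U=F$ et $V=\R^n$ pour obtenir la surjectivité de la restriction, puis on identifie le noyau à $\I(F)=\SI(\R^n)$ par exactitude à gauche de $H^0$ (ou par simple annulation ponctuelle sur $F$), et on conclut par le premier théorème d'isomorphisme. Rien à ajouter.
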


\begin{rema}\label{rem.here}
D'après le corollaire précédent, les fonctions $k$-régulues telles que
nous venons de les définir sur des sous-ensembles fermés $k$-régulus
coïncident avec les fonctions «hereditarily rational» de
Koll\'ar~\cite{Ko}, lorsque~$k=0$.  L'exemple \cite[Ex.~2]{Ko} d'une
fonction rationnelle continue sur une surface singulière dont la
restriction à l'axe des $z$ n'est pas rationnelle n'est pas une fonction
régulue.
\end{rema}

Soit $F$ un sous-ensemble $k$-régulument fermé de~$\R^n$.  La
paire~$(F,\SR_F^k)$ est un espace localement annelé en
$\R$-algèbres. Une \emph{variété $k$-régulue affine} est un espace
localement annelé en $\R$-algèbres isomorphe à~$(F,\SR_F^k)$ pour un
certain fermé $k$-régulu $F$ de~$\R^n$, où~$n$ est un entier naturel.  Un
\emph{morphisme} entre variétés $k$-régulues affines est un morphisme
d'espaces localement annelés en $\R$-algèbres.

\begin{cor}
Soient $m$~et $n$ des entiers naturels et $k$ un entier surnaturel.
Soient $F\subseteq \R^n$ et $G\subseteq \R^m$ des fermés
$k$-régulus. Une application~$f\colon G\ra F$ est un morphisme de
variétés affines $k$-régulues si et seulement s'il existe des
fonctions $k$-régulues $f_1,\ldots,f_n$ sur~$\R^m$ telles que
$$
f(x)=(f_1(x),\ldots,f_n(x))
$$ pour tout~$x\in G$.\qed
\end{cor}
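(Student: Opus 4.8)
Le plan est de se ramener entièrement aux résultats déjà établis sur la composition (Corollaire~\ref{cor.compositionregulues}) et l'extension des fonctions $k$-régulues, le point conceptuel étant qu'un morphisme d'espaces localement annelés en $\R$-algèbres à corps résiduel~$\R$ est déterminé par l'action de son comorphisme sur les fonctions coordonnées.

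Je commencerais par l'implication réciproque. Supposons donnée une famille $f_1,\ldots,f_n$ de fonctions $k$-régulues sur~$\R^m$ telle que $f(x)=(f_1(x),\ldots,f_n(x))$ pour tout~$x\in G$, et considérons l'application $k$-régulue $\tilde f=(f_1,\ldots,f_n)\colon\R^m\to\R^n$ dont~$f$ est la restriction à~$G$. La continuité de~$f$ pour les topologies $k$-régulues résulte du Corollaire~\ref{cokregcont}. Pour définir le comorphisme~$f^\#$, j'utiliserais la proposition d'extension qui précède : une fonction $k$-régulue~$g$ sur un ouvert $k$-régulu~$U$ de~$F$ s'étend en une fonction $k$-régulue~$\tilde g$ sur un ouvert~$V$ de~$\R^n$ vérifiant~$V\inter F=U$ ; la composée~$\tilde g\circ\tilde f$ est $k$-régulue sur~$\tilde f^{-1}(V)$ d'après le Corollaire~\ref{cor.compositionregulues}, et sa restriction à~$f^{-1}(U)$ ne dépend que de~$g$ et fournit~$f^\#(g)=g\circ f$. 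La propriété de localité est alors immédiate : si un germe s'annule en~$f(x)$, son image par~$f^\#$, qui n'est autre que sa composée avec~$f$, s'annule en~$x$.

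Je traiterais ensuite l'implication directe. Partons d'un morphisme $(f,f^\#)$ d'espaces localement annelés en $\R$-algèbres de~$(G,\SR_G^k)$ dans~$(F,\SR_F^k)$, et notons $\xi_1,\ldots,\xi_n$ les restrictions à~$F$ des fonctions coordonnées de~$\R^n$ : ce sont des sections globales de~$\SR_F^k$, donc les~$f_i'=f^\#(\xi_i)$ sont des fonctions $k$-régulues sur~$G$. D'après le corollaire précédent, le morphisme de restriction $\SR^k(\R^m)\to\SR^k(G)$ est surjectif de noyau~$\I(G)$ ; je relèverais donc chaque~$f_i'$ en une fonction $k$-régulue~$f_i$ sur~$\R^m$. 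Il resterait à établir que $f(x)=(f_1(x),\ldots,f_n(x))$ pour tout~$x\in G$, et c'est ici qu'intervient la localité. Pour~$x\in G$, l'application induite sur les fibres $f^\#_x\colon(\SR_F^k)_{f(x)}\to(\SR_G^k)_x$ est un homomorphisme local de $\R$-algèbres locales de corps résiduel~$\R$ ; elle induit l'identité sur les corps résiduels, d'où~$(f^\#_x\xi_i)(x)=\xi_i(f(x))$. Le membre de gauche vaut~$f_i'(x)=f_i(x)$ et celui de droite la $i$-ième coordonnée de~$f(x)$, ce qui donne la description voulue.

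La difficulté principale sera l'implication directe, précisément l'argument de localité reconstituant l'application ensembliste~$f$ à partir de l'action de~$f^\#$ sur les~$\xi_i$ : tout repose sur le fait que le comorphisme est formé d'homomorphismes \emph{locaux} de $\R$-algèbres de corps résiduel~$\R$, sans quoi rien n'assurerait que~$f^\#(\xi_i)$ calcule la $i$-ième coordonnée de~$f(x)$. Une fois ce point acquis, le reste n'est que routine, le théorème de composition et l'isomorphisme $\SR^k(\R^m)/\I(G)\iso\SR^k(G)$ faisant tout le travail.
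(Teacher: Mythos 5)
Votre démonstration est correcte et suit exactement la voie que le papier sous-entend : l'énoncé y est laissé sans preuve (\qed) comme conséquence immédiate du corollaire d'extension $\SR^k(\R^m)/\I(G)\iso\SR^k(G)$ et du corollaire analogue pour les applications $\R^n\ra\R^m$, dont la preuve repose précisément sur les mêmes ingrédients que les vôtres (continuité via le Corollaire~\ref{cokregcont}, composition via le Corollaire~\ref{cor.compositionregulues}, et identification $f_i=f^\#(x_i)$ par localité du comorphisme). Votre ajout du relèvement des $f^\#(\xi_i)$ de $G$ à $\R^m$ et de l'argument sur les corps résiduels explicite ce que le papier tient pour acquis.
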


Soit $k$ un entier surnaturel. Soit~$(X,\SO)$ une variété $k$-régulue
affine.  Soit $f$ une fonction $k$-régulue sur~$X$, i.e., $f$ est une
section globale de~$\SO$. L'ensemble des zéros de~$f$ dans le spectre
$\Spec\SO(X)$ est noté~$\V(f)$, i.e.,
$$
\V(f)=\{p\in\Spec\SO(X)\suchthat f\in p\}\;.
$$ Son complémentaire est noté~$\U(f)$. Plus gén\'eralement, si $E$
est un sous-ensemble de~$\SO(X)$, l'ensemble des zéros communs
dans~$\Spec\SO(X)$ des éléments de~$E$ est noté~$\V(E)$, et son
complémentaire est~$\U(E)$. Plus précisément,
$$ \V(E)=\{p\in\Spec\SO(X)\suchthat E\subseteq p\}
$$
et
$$
\U(E)=\{p\in\Spec\SR^k(F)\suchthat E\not\subseteq p\}.
$$ Comme précédemment, les sous-ensembles de la forme~$\U(E)$ constituent une
topologie sur~$\Spec\SO(X)$~\cite[Chapitre~1]{EGA1}. L'espace
topologique~$\Spec\SO(X)$ est quasi-compact. Plus généralement, les
ouverts de la forme~$\U(f_1,\ldots,f_m)$ sont quasi-compacts.

Soit $x\in X$. On note~$\M_x$ l'idéal maximal de~$\SO(X)$ des
fonctions $k$-régulues sur~$X$ s'annulant en~$x$.
La Proposition~\ref{maximal} implique alors l'énoncé suivant:

\begin{prop}
Soit $k$ un entier surnaturel.  Soit $X$ une variété $k$-régulue
affine.  Soit $\M$ un idéal maximal de $\SO(X)$. Il existe un et un
seul~$x\in X$ tel que~$\M_x=\M$.\qed
\end{prop}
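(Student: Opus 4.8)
The plan is to reduce everything to Proposition~\ref{maximal} by means of the restriction isomorphism. We may assume that $X=(F,\SR_F^k)$ for some $k$-régulu closed subset $F$ of $\R^n$, so that $\SO(X)=\SR^k(F)$. Recall that the restriction map $\pi\colon\SR^k(\R^n)\to\SR^k(F)$ is surjective with kernel $\I(F)$, inducing the isomorphism $\SR^k(\R^n)/\I(F)\iso\SR^k(F)$ established earlier. The whole argument is then formal: maximal ideals of the quotient $\SR^k(F)$ correspond, via $\pi^{-1}$, to maximal ideals of $\SR^k(\R^n)$ containing $\I(F)$, and Proposition~\ref{maximal} identifies the latter with points of $\R^n$.

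Concretely, given a maximal ideal $\M$ of $\SO(X)=\SR^k(F)$, I would first lift it: since $\pi$ is surjective, $\tilde\M=\pi^{-1}(\M)$ is a maximal ideal of $\SR^k(\R^n)$ containing $\I(F)$. By Proposition~\ref{maximal} there is a unique $x\in\R^n$ with $\tilde\M=\M_x^{\R^n}$, where I write $\M_x^{\R^n}=\{g\in\SR^k(\R^n)\mid g(x)=0\}$. The key point, and really the only place where a genuine result is used, is to check that $x$ actually lies in $F$. By Corollaire~\ref{cofkresthyp} there exists a $k$-régulu function $f$ on $\R^n$ with $\Z(f)=F$; this $f$ vanishes on $F$, hence $f\in\I(F)\subseteq\M_x^{\R^n}$, so $f(x)=0$, i.e.\ $x\in\Z(f)=F$. (Equivalently, $\I(F)\subseteq\M_x^{\R^n}$ says $x\in\Z(\I(F))$, and $\Z(\I(F))=F$ by the Nullstellensatz.)

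Having produced $x\in F$, I would identify $\M$ with the maximal ideal $\M_x$ of $\SO(X)$. Since $x\in F$, one has $\pi^{-1}(\M_x)=\{g\in\SR^k(\R^n)\mid g(x)=0\}=\M_x^{\R^n}$, where $\M_x=\{f\in\SO(X)\mid f(x)=0\}$. Thus $\pi^{-1}(\M)=\M_x^{\R^n}=\pi^{-1}(\M_x)$, and because $\pi^{-1}$ is injective on the set of ideals containing $\ker\pi=\I(F)$, we conclude $\M=\M_x$. For uniqueness, if $x\neq y$ are two points of $F$, then $\M_x^{\R^n}\neq\M_y^{\R^n}$ by the uniqueness clause of Proposition~\ref{maximal}, whence $\M_x=\pi(\M_x^{\R^n})\cap\,\ldots$ — more cleanly, $\pi^{-1}(\M_x)=\M_x^{\R^n}\neq\M_y^{\R^n}=\pi^{-1}(\M_y)$ forces $\M_x\neq\M_y$ in $\SO(X)$. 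This gives the uniqueness of the point associated to $\M$, completing the proof. I do not expect any serious obstacle here: the argument is a routine transfer along the quotient $\SR^k(\R^n)\twoheadrightarrow\SR^k(F)$, the one substantive ingredient being the Nullstellensatz equality $F=\Z(\I(F))$ used to guarantee $x\in F$.
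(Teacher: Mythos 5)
Votre démonstration est correcte et suit exactement la voie que le papier sous-entend : celui-ci se contente d'affirmer « La Proposition~\ref{maximal} implique alors l'énoncé suivant » sans détailler, et vous explicitez précisément cette réduction via l'isomorphisme $\SR^k(\R^n)/\I(F)\iso\SR^k(F)$, la correspondance des idéaux maximaux contenant $\I(F)$, et l'égalité $\Z(\I(F))=F$ pour garantir $x\in F$. Rien à redire, c'est le remplissage attendu de l'argument du papier.
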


Comme dans le cas~$\R^n$, on a une injection
$$
\iota\colon X\lra\Spec\SO(X).
$$ 
Le Théorème~\ref{thbijferirrprem} se généralise alors ainsi:

\begin{thm}
Soit $k$ un entier naturel.  Soit $X$ une variété $k$-régulue affine.
L'application~$\iota$ ci-dessus induit une bijection entre l'ensemble
des sous-ensembles fermés de~$\Spec\SO(X)$ et l'ensemble des
sous-ensembles fermés de~$X$. Plus précisément, pour tout
sous-ensemble fermé~$F$ de~$X$ il existe un et un seul ensemble
fermé~$G$ de~$\Spec\SO(X)$ tel que~$F=\iota^{-1}(G)$.\qed
\end{thm}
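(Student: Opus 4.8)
The plan is to reduce everything to the case $X=\RR^n$, already treated in the theorem above that establishes a bijection between the closed subsets of $\Spec\SR^k(\R^n)$ and the $k$-regulously closed subsets of $\RR^n$. Since both $\Spec\SO(X)$ and the underlying topological space of $X$ depend only on the isomorphism class of the locally ringed space $(X,\SO)$, I may assume that $X=F$ is a $k$-regulously closed subset of some $\RR^n$ with $\SO=\SR^k_F$, so that $\SO(X)=\SR^k(F)$. By the corollary above, the restriction morphism induces an isomorphism $\SR^k(\R^n)/\I(F)\iso\SR^k(F)$. Hence $\Spec\SO(X)$ is canonically homeomorphic to the closed subset $\V(\I(F))$ of $\Spec\SR^k(\R^n)$; and since $\bigcap_{x\in F}\M_x=\I(F)$, this closed set is exactly $\tilde F$. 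Under this homeomorphism the closed subsets of $\Spec\SO(X)$ correspond precisely to the closed subsets of $\Spec\SR^k(\R^n)$ contained in $\tilde F$.

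Next I would identify the map $\iota$ of the statement with a restriction. For $x\in F$ the maximal ideal $\M_x\subseteq\SR^k(F)$ of $k$-regulous functions vanishing at $x$ is the image of $\M_x\subseteq\SR^k(\R^n)$ under the quotient map, so under the homeomorphism above the point $\iota_X(x)$ of $\Spec\SO(X)=\tilde F$ is identified with $\iota(x)\in\Spec\SR^k(\R^n)$. In other words $\iota_X$ is the corestriction to $\tilde F$ of $\iota|_F$, and for any $Y\subseteq\tilde F$ one has $\iota_X^{-1}(Y)=\iota^{-1}(Y)$, the latter preimage being automatically contained in $F$ because $\iota^{-1}(\tilde F)=F$.

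It remains to transport the bijection. Since the $k$-regulous topology on $F$ is induced from that of $\RR^n$ and $F$ is itself $k$-regulously closed, the closed subsets of $X=F$ are exactly the $k$-regulously closed subsets of $\RR^n$ contained in $F$. The bijection of the $\RR^n$ case, $Y\mapsto\iota^{-1}(Y)$ with inverse $C\mapsto\tilde C$, then restricts to the sought bijection, provided one checks the equivalence $Y\subseteq\tilde F\iff\iota^{-1}(Y)\subseteq F$. The forward implication is $\iota^{-1}(\tilde F)=F$; for the converse, writing $C=\iota^{-1}(Y)$ one has $Y=\tilde C$ by the $\RR^n$ case, and $C\subseteq F$ yields $\iota(C)\subseteq\iota(F)\subseteq\tilde F$, whence $\tilde C\subseteq\tilde F$ by closedness of $\tilde F$. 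Thus every closed $A\subseteq X$ equals $\iota_X^{-1}(\tilde A)$ for the closed set $G=\tilde A\subseteq\tilde F=\Spec\SO(X)$, which is unique by the $\RR^n$ case; this yields both existence and uniqueness.

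The argument is essentially formal once the two identifications are secured; the only step demanding real care — and hence the main obstacle — is the compatibility $\iota_X=\iota|_F$ combined with the equivalence $Y\subseteq\tilde F\iff\iota^{-1}(Y)\subseteq F$, since these are precisely what allow the already established $\RR^n$-bijection to descend to $F$.
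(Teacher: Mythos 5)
Votre démonstration est correcte et suit exactement la voie que l'article sous-entend (le théorème y est énoncé avec \qed comme généralisation immédiate du cas $\R^n$) : réduction au cas $X=F$ fermé de $\R^n$, identification de $\Spec\SO(X)$ avec le fermé $\V(\I(F))=\tilde F$ de $\Spec\SR^k(\R^n)$ via l'isomorphisme $\SR^k(\R^n)/\I(F)\iso\SR^k(F)$, puis transport de la bijection déjà établie. Les deux vérifications que vous isolez ($\iota_X=\iota|_F$ et l'équivalence $Y\subseteq\tilde F\iff\iota^{-1}(Y)\subseteq F$) sont précisément les points que l'article laisse au lecteur, et vous les traitez correctement.
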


Pour les ouverts cela donne l'énoncé suivant:

\begin{cor}
Soit $k$ un entier naturel.  Soit $X$ une variété $k$-régulue affine.
Soit $U\subseteq X$ un ouvert. Il existe un et un seul ouvert~$V$
de~$\Spec\SO(X)$ tel que $\iota^{-1}(V)=U$.\qed
\end{cor}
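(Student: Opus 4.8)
Le plan est de déduire cet énoncé directement du théorème qui le précède, par simple passage au complémentaire. Plus précisément, partant d'un ouvert~$U\subseteq X$, je considérerais son complémentaire~$F=X\setminus U$, qui est un sous-ensemble fermé de~$X$. Le théorème précédent fournit alors un unique fermé~$G$ de~$\Spec\SO(X)$ tel que~$F=\iota^{-1}(G)$, et je poserais~$V=\Spec\SO(X)\setminus G$, qui est ouvert par construction.

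Il resterait à vérifier que~$\iota^{-1}(V)=U$, ce qui découle de la compatibilité de l'image réciproque avec le passage au complémentaire:
$$
\iota^{-1}(V)=\iota^{-1}(\Spec\SO(X)\setminus G)=X\setminus\iota^{-1}(G)=X\setminus F=U.
$$
Pour l'unicité, je supposerais qu'un autre ouvert~$V'$ de~$\Spec\SO(X)$ vérifie~$\iota^{-1}(V')=U$. Son complémentaire~$G'=\Spec\SO(X)\setminus V'$ serait alors un fermé satisfaisant~$\iota^{-1}(G')=X\setminus U=F$. L'unicité du fermé~$G$ associé à~$F$, garantie par le théorème précédent, forcerait~$G'=G$, et donc~$V'=V$.

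Je ne vois ici aucun obstacle véritable: l'argument est purement formel et repose entièrement sur la bijection entre fermés déjà établie, le passage aux complémentaires étant une involution qui échange fermés et ouverts de part et d'autre de~$\iota$. Le seul point à garder à l'esprit est que l'image réciproque commute avec le complémentaire, propriété valable pour toute application ensembliste, de sorte que la correspondance bijective sur les fermés se transporte sans effort en une correspondance bijective sur les ouverts.
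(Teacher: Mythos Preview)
Ta démonstration est correcte et correspond exactement à l'argument sous-entendu par le \qed\ du papier: le corollaire se déduit immédiatement du théorème précédent sur les fermés par passage au complémentaire, et c'est précisément ce que tu fais.
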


On a encore:

\begin{cor}
Soit $k$ un entier naturel.  Soit $X$ une variété $k$-régulue
affine. Tout ouvert de~$\Spec\SO(X)$ est de la forme~$\U(f)$, pour une
certaine fonction $k$-régulue sur~$X$.\qed
\end{cor}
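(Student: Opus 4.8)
Le plan est de se ramener au corollaire précédent, qui assure que pour tout ouvert $V$ de~$\Spec\SO(X)$, l'ensemble $U=\iota^{-1}(V)$ est un ouvert de~$X$ pour la topologie $k$-régulue, et que $V$ est \emph{l'unique} ouvert de~$\Spec\SO(X)$ dont l'image réciproque par~$\iota$ est égale à~$U$. Il suffira donc d'exhiber une fonction $k$-régulue~$f$ sur~$X$ telle que~$\iota^{-1}(\U(f))=U$~: par l'argument d'unicité, on aura alors~$\U(f)=V$, ce qui est précisément la conclusion voulue.

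On commencera par calculer l'image réciproque d'un ouvert basique. Pour toute fonction $k$-régulue~$f$ sur~$X$, et puisque $\iota(x)=\M_x$, on a
$$
\iota^{-1}(\U(f))=\{x\in X\suchthat f\notin\M_x\}=\{x\in X\suchthat f(x)\neq0\}=\D(f),
$$
où~$\D(f)$ désigne le lieu de non-annulation de~$f$ sur~$X$. Il s'agira donc de trouver~$f\in\SO(X)$ telle que~$\D(f)=U$, autrement dit telle que son lieu d'annulation~$\Z(f)$ soit égal au fermé $F'=X\setminus U$.

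Le point technique sera de voir que $F'$, qui n'est a priori fermé que pour la topologie induite sur~$X$, est en réalité un fermé $k$-régulu de l'espace ambiant~$\R^n$. En effet, écrivons $X=F$ pour un fermé $k$-régulu~$F$ de~$\R^n$~; la topologie $k$-régulue sur~$X$ étant induite par celle de~$\R^n$, on a~$F'=C\inter F$ pour un certain fermé $k$-régulu~$C$ de~$\R^n$. Comme~$F$ est lui-même $k$-régulument fermé dans~$\R^n$ et qu'une intersection de fermés $k$-régulus est encore un fermé $k$-régulu, $F'$ est bien un fermé $k$-régulu de~$\R^n$. On pourra alors appliquer le Corollaire~\ref{cofkresthyp} à~$F'$~: il existe une fonction $k$-régulue~$g$ sur~$\R^n$ telle que~$\Z(g)=F'$. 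En restreignant~$g$ à~$F$, et en utilisant l'isomorphisme $\SR^k(\R^n)/\I(F)\iso\SR^k(F)$ établi plus haut (qui montre en particulier que la restriction est surjective), on obtiendra une fonction~$f=g_{|F}\in\SR^k(F)=\SO(X)$ vérifiant
$$
\Z(f)=\Z(g)\inter F=F'\inter F=F',
$$
la dernière égalité provenant de~$F'\subseteq F$. Ainsi~$\D(f)=U$, d'où~$\iota^{-1}(\U(f))=U$, et l'unicité annoncée fournit~$\U(f)=V$.

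Le principal obstacle de cette preuve réside dans la vérification que~$F'$ est un fermé $k$-régulu de~$\R^n$, et non seulement un fermé de la topologie induite sur~$X$~: c'est là que l'on utilise de façon essentielle que~$X=F$ est lui-même $k$-régulument fermé et la stabilité des fermés $k$-régulus par intersection. Une fois ce point acquis, le Corollaire~\ref{cofkresthyp} et la surjectivité de la restriction fournissent directement la fonction~$f$ recherchée, et tout le reste n'est que la traduction, via la correspondance du corollaire précédent, entre ouverts de~$X$ et ouverts de~$\Spec\SO(X)$.
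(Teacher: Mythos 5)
Votre démonstration est correcte et suit exactement la voie que le papier laisse implicite (le \qed figure dans l'énoncé) : identifier l'ouvert $U=\iota^{-1}(V)$ de $X$, produire $f$ avec $\D(f)=U$ via le Corollaire~\ref{cofkresthyp} appliqué au fermé $k$-régulu $X\setminus U$ de $\R^n$ et la surjectivité de la restriction $\SR^k(\R^n)\to\SR^k(F)$, puis conclure par l'unicité du corollaire précédent. Rien à redire.
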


Comme auparavant, on identifiera~$X$ avec son image
dans~$\Spec\SO(X)$ via l'application~$\iota$.

\begin{cor}
Soit $k$ un entier surnaturel. Soit $X$ une variété $k$-régulue
affine.  La restriction du faisceau structural sur~$\Spec\SO(X)$ à~$X$
s'identifie avec le faisceau~$\SO$.\qed
\end{cor}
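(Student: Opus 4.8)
The plan is to mirror, over an arbitrary affine $k$-regulous variety, the proof of the statement over $\R^n$ (the corollary identifying the restriction to $\R^n$ of the structure sheaf of $\Spec\SR^k(\R^n)$ with $\SR^k$). Write $X=(F,\SR^k_F)$ with $F$ a closed $k$-regulous subset of some $\R^n$; the key extra input is the restriction surjection $\SR^k(\R^n)\to\SR^k(F)=\SO(X)$, with kernel $\I(F)$, together with the extension results of the section \og Variétés $k$-régulues affines\fg. Denote by $\widetilde{\SO}$ the restriction to $X$, along $\iota$, of the structure sheaf of $\Spec\SO(X)$. Since both $\widetilde\SO$ and $\SO$ are sheaves on $X$, it suffices to produce ring isomorphisms $\widetilde{\SO}(U)\iso\SO(U)$, natural in the $k$-regulous open $U$ of $X$ (i.e.\ compatible with restriction maps), so that they assemble into an isomorphism of sheaves.

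First I would treat $k$ finite. Let $U$ be a $k$-regulous open of $X$. By the two preceding corollaries (existence of a unique open $V\subseteq\Spec\SO(X)$ with $\iota^{-1}(V)=U$, and the fact that every open of $\Spec\SO(X)$ is of the form $\U(f)$), I may write $V=\U(f)$ with $f\in\SO(X)$; since $\iota^{-1}(\U(f))=\{x\in X\suchthat f(x)\neq0\}=\D(f)$, we get $\D(f)=U$. The standard description of the structure sheaf of an affine scheme on a basic open yields $\widetilde{\SO}(U)=\SO(X)_f$. It then remains to identify $\SO(X)_f$ with $\SO(U)=\SR^k_F(U)$, which is the analogue over $F$ of Proposition~\ref{restriction}.

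For that identification, the restriction morphism $\SO(X)\to\SR^k_F(U)$ sends $f$ to a unit (it does not vanish on $U$), hence factors through a map $\varphi_f\colon\SO(X)_f\to\SR^k_F(U)$, and I claim $\varphi_f$ is an isomorphism. Injectivity is here even cleaner than over $\R^n$: if $h\in\SO(X)$ restricts to $0$ on $U$, then $fh$ vanishes on $U$ (where $h=0$) and on $F\setminus U=\Z(f)$ (where $f=0$), hence $fh=0$ as a function on $F$, so $h/f^m=0$ in $\SO(X)_f$. For surjectivity, given $g\in\SR^k_F(U)$, I would lift everything to $\R^n$: choose $\tilde f\in\SR^k(\R^n)$ extending $f$ (surjectivity of the restriction $\SR^k(\R^n)\to\SR^k(F)$), so that $V'=\D(\tilde f)$ satisfies $V'\inter F=U$; extend $g$ to $G\in\SR^k(V')$ (the extension proposition for opens); apply the $k$-regulous Lojasiewicz inequality~\ref{LojaregulRn} on $\R^n$ to obtain $N$ such that $\tilde f^{N}G$ extends by $0$ to some $H\in\SR^k(\R^n)$. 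Then $\bar g=H_{|F}\in\SO(X)$ is the extension by $0$ of $f^{N}g$, so $\bar g_{|U}=f^{N}g$ and thus $g=\varphi_f(\bar g/f^{N})$. Naturality in $U$ is automatic since $\varphi_f$ is induced by the restriction maps of $\SO$.

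The case $k=\infty$ proceeds exactly as over $\R^n$: for a Zariski open $U$ of $X$, the ring $\SR^\infty(U)$ of regular functions on $U$ is the filtered colimit of the localizations $\SO(X)_f$ over regular $f$ not vanishing on $U$, and this colimit computes $\widetilde{\SO}(U)$. The main obstacle is the surjectivity of $\varphi_f$, that is, the analogue of Proposition~\ref{restriction} on $F$: the Lojasiewicz inequality~\ref{LojaregulRn} is available only on the ambient $\R^n$, and the reduction above is designed precisely to borrow it, by extending $f$ and $g$ to a neighbourhood $V'=\D(\tilde f)$ in $\R^n$, dividing there, and restricting the resulting global function back to $F$.
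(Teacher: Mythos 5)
Votre démonstration est correcte et suit essentiellement la voie que l'article laisse implicite (le corollaire y est donné sans preuve, comme transposition directe du cas de $\R^n$)~: identification $\widetilde\SO(U)=\SO(X)_f$ via la bijection entre ouverts, puis $\SO(X)_f\iso\SO(U)$ comme analogue de la Proposition~\ref{restriction}. Vous explicitez au passage le seul point réellement nouveau — la surjectivité de $\varphi_f$ sur le fermé $F$, obtenue en relevant $f$ et $g$ à $\R^n$ par les résultats d'extension puis en appliquant le Lemme~\ref{LojaregulRn} — ce qui est exactement l'ingrédient dont l'article a besoin et qu'il ne rédige pas.
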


Soit $M$ un $\SO(X)$-module. Le faisceau~$\tilde{M}$ sur le
schéma~$\Spec\SO(X)$ induit encore par restriction un
faisceau~$\tilde{M}$ sur~$X$. Pour un ouvert~$U$ de~$X$, on a
$$
\tilde{M}(U)=M_f
$$ lorsque~$k$ est fini, où~$f$ est une fonction $k$-régulue
sur~$X$ avec~$\D(f)=U$. 

Un faisceau \emph{$k$-régulu} sur~$X$ est un faisceau en
$\SO$-modules sur~$X$.  Soit~$\SF$ un faisceau $k$-régulu sur~$X$.  On
dira que le faisceau~$\SF$ est \emph{quasi-cohérent} s'il existe un
recouvrement de~$X$ par des ouverts $k$-régulus~$U_i$, $i\in I$, tels
que, pour tout~$i$, il existe un $\SO(U_i)$-module $M_i$ avec~$\tilde
M_i$ isomorphe à~$\SF_{|U_i}$.

La version $k$-régulue du Théorème~A de Cartan implique alors l'énoncé
suivant:

\begin{thm}\label{thAX}
Soit $k$ un entier naturel.  Soit $X$ une variété $k$-régulue affine.
Le foncteur qui associe à un $\SO(X)$-module $M$ le faisceau~$\tilde
M$ est une équivalence de catégories sur la catégorie des faisceaux
$k$-régulus quasi-cohérents sur~$X$. Le foncteur réciproque est le
foncteur ``sections globales'' $H^0$. En particulier, le
faisceau~$\tilde{M}$ est engendré par ses sections globales.\qed
\end{thm}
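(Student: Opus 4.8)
Le plan est de suivre, pour $X$, le même principe que pour le Théorème~\ref{thA}, en remplaçant $\R^n$ par $X$ et en utilisant le dictionnaire établi dans les corollaires qui précèdent. Posons $A=\SO(X)$. Le moteur de la preuve est l'équivalence classique, valable pour tout anneau commutatif, entre la catégorie des $A$-modules et celle des faisceaux quasi-cohérents sur le schéma affine $\Spec A$: elle associe à un $A$-module $M$ le faisceau $\tilde M$ et admet pour quasi-inverse le foncteur des sections globales $H^0$. Il s'agit de transporter cette équivalence sur $X$ le long de l'injection $\iota\colon X\hookrightarrow\Spec A$.

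Je rassemblerais d'abord les faits déjà obtenus pour le couple $(X,\Spec A)$: l'application $V\mapsto\iota^{-1}(V)$ est une bijection entre les ouverts de $\Spec A$ et les ouverts $k$-régulus de $X$; la restriction à $X$ du faisceau structural de $\Spec A$ s'identifie à $\SO$; et pour tout $A$-module $M$ on a $\tilde M(U)=M_f$ dès que $\D(f)=U$, valeur qui coïncide avec celle des sections de $\tilde M$ sur $\U(f)$ dans $\Spec A$. Comme la prise d'image réciproque commute aux réunions et aux intersections, cette bijection est un isomorphisme de treillis d'ouverts; par suite le foncteur de restriction $\SF\mapsto\SF_{|X}$, des faisceaux quasi-cohérents sur $\Spec A$ vers les faisceaux $k$-régulus sur $X$, est pleinement fidèle et préserve les sections globales, d'où $H^0(X,\tilde M_{|X})=H^0(\Spec A,\tilde M)=M$.

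L'obstacle principal est l'essentielle surjectivité de $\SF\mapsto\SF_{|X}$. Soit $\SF$ un faisceau $k$-régulu quasi-cohérent sur $X$. L'isomorphisme de treillis d'ouverts permet de lui associer un faisceau $\SF'$ sur $\Spec A$ en posant $\SF'(V)=\SF(\iota^{-1}(V))$. Par définition, $\SF$ admet un recouvrement par des ouverts $k$-régulus que l'on peut supposer de la forme $U_i=\D(f_i)$ avec $\SF_{|U_i}\iso\widetilde{M_i}$ pour un $\SO(U_i)$-module $M_i$, c'est-à-dire un $A_{f_i}$-module; comme $X$ est noethérien (sous-espace fermé de $\R^n$, cf. Corollaire~\ref{topregnoeth}), donc quasi-compact, ce recouvrement peut être pris fini. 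Sur les ouverts correspondants $\U(f_i)$ de $\Spec A$, le faisceau $\SF'$ est alors de la forme $\widetilde{M_i}$, donc $\SF'$ est quasi-cohérent. L'équivalence classique sur $\Spec A$ fournit $\SF'\iso\tilde M$ avec $M=H^0(\Spec A,\SF')=H^0(X,\SF)$, d'où $\SF\iso\tilde M_{|X}$. Ceci établit l'équivalence annoncée, de quasi-inverse $H^0$, et la dernière assertion — le faisceau $\tilde M$ est engendré par ses sections globales — en découle, cette propriété étant classique sur $\Spec A$. On peut aussi déduire l'énoncé directement du Théorème~\ref{thA} en plongeant $X$ comme fermé $k$-régulu d'un $\R^n$: l'isomorphisme $\SR^k(\R^n)/\I(F)\iso\SR^k(F)$ identifie les $A$-modules aux $\SR^k(\R^n)$-modules annulés par $\I(F)$, et l'équivalence de~\ref{thA} se restreint en l'équivalence cherchée sur $X$.
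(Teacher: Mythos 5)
Votre démonstration est correcte et suit essentiellement la même voie que l'article, qui déduit l'énoncé (sans détailler) de la version sur $\R^n$ et du dictionnaire établi dans les corollaires précédents (bijection des ouverts via $\iota$, identification du faisceau structural, description $\tilde M(U)=M_f$). Vous explicitez soigneusement le transport de l'équivalence classique le long de $\iota\colon X\hookrightarrow\Spec\SO(X)$, y compris l'essentielle surjectivité via la quasi-compacité de $X$, ce qui est exactement l'argument sous-entendu par le texte.
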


De même, on a une version $k$-régulue du Théorème~B de Cartan pour les
variétés $k$-régulues affines:

\begin{thm}\label{thBX}
Soit $k$ un entier naturel.  Soit $X$ une variété $k$-régulue affine.
Soit~$\SF$ un faisceau $k$-régulu quasi-cohérent sur~$X$. Alors,
$$
H^i(X,\SF)=0
$$
pour tout~$i\neq0$.\qed
\end{thm}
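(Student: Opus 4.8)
Le plan est de ramener l'énoncé au théorème classique d'annulation de la cohomologie des faisceaux quasi-cohérents sur un schéma affine, dans le même esprit que le cas de~$\R^n$ (Théorème~\ref{thB}). D'après la version $k$-régulue du Théorème~A (Théorème~\ref{thAX}), le faisceau~$\SF$ est de la forme~$\tilde M$, où~$M=H^0(X,\SF)$ est le $\SO(X)$-module de ses sections globales; rappelons que, par définition, $\tilde M$ sur~$X$ est la restriction à~$X$, le long de~$\iota$, du faisceau quasi-cohérent~$\tilde M$ sur le schéma affine~$\Spec\SO(X)$. On cherchera donc à comparer~$H^i(X,\SF)$ avec~$H^i(\Spec\SO(X),\tilde M)$.

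L'ingrédient essentiel est fourni par les corollaires qui précèdent: l'application~$\iota\colon X\lra\Spec\SO(X)$ induit une bijection, respectant les inclusions, entre les ouverts de~$X$ et les ouverts de~$\Spec\SO(X)$, d'inverse~$\iota^{-1}$, et la restriction le long de~$\iota$ du faisceau structural de~$\Spec\SO(X)$ s'identifie à~$\SO$. Ainsi~$\iota$ réalise un isomorphisme entre le treillis des ouverts de~$X$ et celui de~$\Spec\SO(X)$, compatible avec les faisceaux structuraux (même si~$\iota$ n'est pas surjective au niveau des points). On en déduit que, si~$U$ est un ouvert de~$X$ et~$\tilde U$ l'ouvert correspondant de~$\Spec\SO(X)$, alors~$\SF(U)=\tilde M(\tilde U)=M_f$ pour toute~$f$ $k$-régulue avec~$\D(f)=U$: les sections sur des ouverts correspondants coïncident.

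Il s'ensuit que cet isomorphisme de treillis transporte la cohomologie. Concrètement, tout recouvrement ouvert de~$X$ correspond à un recouvrement ouvert de~$\Spec\SO(X)$ donnant le même complexe de Čech, et, plus intrinsèquement, la cohomologie d'un faisceau ne dépend que du site de ses ouverts muni de ses recouvrements; on obtient ainsi un isomorphisme
$$
H^i(X,\SF)\iso H^i(\Spec\SO(X),\tilde M)
$$
pour tout~$i$. Comme~$\Spec\SO(X)$ est un schéma affine et~$\tilde M$ un faisceau quasi-cohérent sur ce schéma, le théorème classique d'annulation (Serre-Grothendieck, valable sans hypothèse noethérienne, ce qui importe ici puisque~$\SO(X)$ n'est en général pas noethérien) donne~$H^i(\Spec\SO(X),\tilde M)=0$ pour tout~$i\neq0$, d'où la conclusion.

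La principale difficulté sera l'étape de transfert de la cohomologie: il faudra justifier soigneusement que l'isomorphisme de treillis d'ouverts induit par~$\iota$, compatible avec les faisceaux structuraux, fournit bien une équivalence entre les catégories de faisceaux de modules sur~$X$ et sur~$\Spec\SO(X)$ qui préserve les sections globales, et donc la cohomologie, et que sous cette équivalence~$\SF$ correspond précisément au faisceau quasi-cohérent~$\tilde M$ (c'est ici qu'interviennent de manière cruciale la quasi-cohérence de~$\SF$ et le Théorème~\ref{thAX}). L'annulation sur le schéma affine, elle, est classique.
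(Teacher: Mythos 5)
Votre démonstration est correcte, mais elle suit un chemin différent de celui du papier. Le papier déduit le Théorème~\ref{thBX} directement du Théorème~\ref{thAX}~: l'équivalence de catégories entre $\SO(X)$-modules et faisceaux quasi-cohérents, d'inverse $H^0$, entraîne que le foncteur «~sections globales~» est exact sur les faisceaux quasi-cohérents, d'où l'annulation des $H^i$ pour $i\neq0$. Vous procédez autrement~: vous transportez tout le calcul cohomologique sur le schéma affine $\Spec\SO(X)$ en utilisant le fait que $\iota$ induit un isomorphisme entre le treillis des ouverts de~$X$ et celui de~$\Spec\SO(X)$, compatible avec les faisceaux structuraux (de sorte que $\iota_*$ et $\iota^{-1}$ sont des équivalences exactes entre catégories de faisceaux abéliens, et $H^i(X,\SF)\iso H^i(\Spec\SO(X),\tilde M)$), puis vous invoquez l'annulation de Serre--Grothendieck pour les faisceaux quasi-cohérents sur un schéma affine, dans sa version non noethérienne. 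Votre approche est plus longue mais rend explicite un point que l'argument du papier laisse implicite~: l'exactitude de $H^0$ sur la sous-catégorie des faisceaux quasi-cohérents ne donne pas formellement l'annulation des foncteurs dérivés $H^i$ calculés dans la catégorie de tous les faisceaux abéliens~; le passage par $\Spec\SO(X)$, où ce point est un théorème classique, comble précisément cette lacune. Vous avez aussi raison de souligner que l'hypothèse noethérienne doit être évitée puisque $\SO(X)$ ne l'est pas en général. En contrepartie, l'argument du papier est plus court et reste interne à la géométrie régulue.
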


En particulier les théorèmes \ref{thAX} et \ref{thBX} s'appliquent pour tout $k\in \N$ lorsque $X$ est une variété réelle algébrique affine singulière. L'énoncé est alors en tout point identique au cas algébriquement clos ! Cf.~\cite[Chap. III]{Ha77}.

\section{Ensembles r\'egulument ferm\'es}

Le but de cette section est de déterminer les sous-ensembles
$k$-régulument fermés de $\R^n$ pour $k$ et $n$ des entiers naturels.
Ceci fait, si $F$ est un sous-ensemble $k$-régulument fermé de $\R^n$
(par exemple si $F$ est une sous-variété réelle algébrique de $\R^n$),
on saura caractériser les sous-ensembles $k$-régulument fermés de $F$
via la topologie induite.

\subsection*{Ensembles alg\'ebriquement constructibles}

On rappelle qu'un sous-ensemble semi-alg\'ebrique de $\R^n$ est 
alg\'ebriquement constructible s'il appartient \`a l'alg\`ebre 
bool\'eenne engendr\'e par les sous-ensembles Zariski ferm\'es de
$\R^n$. Ces ensembles forment une cat\'egorie constructible, 
de la m\^eme mani\`ere que les ensembles sym\'etriques par arcs. 
On renvoie \`a \cite{Paru-const,KurPar} pour une 
introduction aux cat\'egories constructibles.

Le but de cette section est de montrer que les sous-ensembles
$k$-r\'egulument fermés de $\R^n$ 
coïncident avec les sous-ensembles alg\'ebriquement constructibles
fermés de $\R^n$. On commence par deux r\'esultats pr\'ecisant la
place 
des points r\'eguliers ainsi que la dimension d'un ensemble 
$k$-r\'egulument ferm\'e vis-\`a-vis de son adh\'erence de Zariski.

Si $E$ est un sous-ensemble de $\R^n$, on notera dans la suite 
$\overline{E}^{\mathrm{eucl}}$ l'adhérence de $E$ dans $\R^n$ 
pour la topologie euclidienne.
\begin{thm}
\label{hypersurface} Soit $I$ un idéal premier de $\SRC(\R^n)$. On note
$J=I\cap \RR[x_1,\ldots ,x_n]$ et on suppose que $J$ est principal,
i.e. 
$J=(s)$ avec
$s\in\RR[x_1,\ldots ,x_n]$ irréductible. On note $V$ l'ensemble $\Z (J)$ et
$V_{\mathrm{reg}}$ l'ensemble des points lisses de $V$. Alors
$$\overline{V_{\mathrm{reg}}}^{\mathrm{eucl}}\subseteq \Z (I)
=\{ x\in\RR^n|\,\, I\subseteq \M_x\}=\{x\in\RR^n
|\,\,f(x)=0\, , \,\forall f\in I\}.$$
\end{thm}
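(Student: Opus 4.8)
Le plan est de se ramener à l'inclusion $V_{\mathrm{reg}}\subseteq \Z(I)$, puis de conclure par continuité. Les égalités $\Z(I)=\{x\mid I\subseteq\M_x\}=\{x\mid f(x)=0\ \forall f\in I\}$ sont immédiates, puisque par définition $\Z(I)=\bigcap_{f\in I}\Z(f)$. Comme toute fonction régulue est continue pour la topologie euclidienne, chaque $\Z(f)$ est fermé au sens euclidien, donc $\Z(I)$ l'est aussi comme intersection de fermés. Il suffit par conséquent d'établir $V_{\mathrm{reg}}\subseteq\Z(I)$, l'inclusion $\overline{V_{\mathrm{reg}}}^{\mathrm{eucl}}\subseteq\Z(I)$ en découlant aussitôt.

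Le point de départ algébrique est que $I$, étant premier, est radical, de sorte que le Lemme~\ref{reelintersection} assure que $J=(s)$ est réel et vérifie $\I(\Z(J))=J=(s)$. Comme $(s)$ est réel et premier de hauteur $1$, la dimension réelle de $V=\Z(s)$ coïncide avec celle de $\R[x_1,\ldots,x_n]/(s)$, soit $n-1$; en particulier $V$ est une hypersurface. C'est ce contrôle de dimension qui rendra valable l'argument de densité aux points lisses.

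Je fixerais ensuite $x_0\in V_{\mathrm{reg}}$ et $f\in I$, puis j'écrirais $f=p/q$ avec $p,q$ polynomiaux premiers entre eux; d'après la Proposition~\ref{codim}, on a $\codim\Z(q)\geq2$, donc $\dim\Z(q)\leq n-2$. L'observation clé est que l'identité $fq=p$, valable sur le domaine dense de $f$, s'étend à $\R^n$ tout entier par continuité puisque $p$ et $fq$ sont régulues; ainsi $p=fq\in I$, car $f\in I$ et $q\in\SR^0(\R^n)$. Comme $p$ est un polynôme, on obtient $p\in I\cap\R[x_1,\ldots,x_n]=J=(s)$, c'est-à-dire $s\mid p$; écrivons $p=s\,p'$. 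Au voisinage du point lisse $x_0$, $V$ est une sous-variété de dimension $n-1$, tandis que $\Z(q)\cap V$ est semi-algébrique de dimension $\leq n-2$, donc nulle part dense dans $V$; il existe alors une suite $(x_m)$ de points de $V\setminus\Z(q)$ tendant vers $x_0$. Pour chacun de ces points, $s(x_m)=0$ donne $p(x_m)=s(x_m)p'(x_m)=0$, d'où $f(x_m)=p(x_m)/q(x_m)=0$, et la continuité de $f$ entraîne $f(x_0)=0$. Ceci valant pour tout $f\in I$, on a $x_0\in\Z(I)$, et donc $V_{\mathrm{reg}}\subseteq\Z(I)$.

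Le passage le plus délicat me semble être le contrôle dimensionnel : il faut d'une part garantir $\dim V=n-1$ (via la réalité de $J$) afin que $V$ ne soit pas contenu dans $\Z(q)$, et d'autre part justifier proprement la densité euclidienne de $V\setminus\Z(q)$ au voisinage d'un point lisse, en s'appuyant sur $\dim(\Z(q)\cap V)<\dim V$. L'astuce algébrique $p=fq\in I\cap\R[x_1,\ldots,x_n]=(s)$ est en revanche immédiate dès que l'identité $fq=p$ a été prolongée par continuité à $\R^n$.
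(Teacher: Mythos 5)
Votre démonstration est correcte et suit pour l'essentiel la même stratégie que celle du texte : réduction à $V_{\mathrm{reg}}\subseteq\Z(I)$ par continuité des fonctions régulues, passage de $p=qf\in I$ à $p\in J=(s)$ donc $V\subseteq\Z(p)$, réalité de $J$ (Lemme~\ref{reelintersection}) donnant $\dim V=n-1$ contre $\dim\Z(q)\leq n-2$ (Proposition~\ref{codim}), puis conclusion par densité de $V\setminus\Z(q)$ au voisinage d'un point lisse. La seule différence est de présentation : là où vous concluez directement avec une suite $(x_m)$ de $V\setminus\Z(q)$ convergeant vers $x_0$, le texte raisonne par l'absurde et invoque le lemme de sélection des courbes pour produire un arc semi-algébrique aboutissant en $x$.
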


\begin{proof}
Les fonctions de $\SRC (\R^n)$ étant continues, il est suffisant de montrer que 
$$
V_{\mathrm{reg}}\subseteq \Z (I)=\{ x\in\RR^n|\,\, I\subseteq \M_x\}\;.
$$
Pour $x\in V_{\mathrm{reg}}$, on doit montrer que $I\subseteq \M_x$.
Supposons l'existence de $f\in I$ telle que $f\not\in\M_x$, c'est-\`a-dire 
$f(x)\not= 0$. On écrit
$f=\frac{p}{q}$ avec $p,q\in\RR[x_1,\ldots,x_n]$ premiers entre
eux. On obtient alors
$$p=q.f\in I\cap \RR[x_1,\ldots ,x_n]=J=(s)$$ et donc
$p=r.s$ avec
$r\in\RR[x_1,\ldots ,x_n]$. Ceci implique que $p$ s'annule
identiquement sur $V$, i.e. $V\subseteq  \Z(p)$. 
Le Lemme \ref{reelintersection} nous montre que
$J=(s)$ est un idéal réel et par conséquent 
$\dim \,(V=\Z(s))=n-1$ \cite[Thm. 4.5.1]{BCR}.
On en déduit que $\dim\,\Z(p)=n-1$.
Comme $\Z(q)\subseteq \Z(p)$ puisque $f\in \SRC (\R^n)$, on a aussi $q(x)=0$ car $f(x)\not=0$. 
On sait que $\dim\,
\Z(q)\leq n-2$ par la
 Proposition
\ref{codim}. Comme $x\in V_{\mathrm{reg}}$, il existe un voisinage ouvert
semi-algébrique de $x$ dans $V\subseteq \Z(p)$ de dimension $n-1$
\cite[Prop. 3.3.10]{BCR}.
On note $A$ le semi-algébrique $\Z(p)\setminus\Z
(q)$. D'après ce qu'on a dit précédemment, le 
point $x$ est un point adhérent à $A$.
Le lemme de sélection des courbes
\cite[Thm. 2.5.5]{BCR} pour le semi-algébrique $A$ 
nous fournit une fonction semi-algébrique continue $h\colon
[0,1]\rightarrow \RR^n$ telle que $h(0)=x$ et $h(]0,1])\subseteq A$.
Par conséquent $f\circ h\colon [0,1]\rightarrow \RR$ est une fonction
semi-algébrique continue telle que $(f\circ h)(]0,1])=0$ et $(f\circ
h)(0)\not=0$, ce qui contredit la continuité de $f$.
\end{proof}

On évalue maintenant la dimension d'un fermé
régulu vu comme un ensemble semi-algébrique, c'est-\`a-dire la dimension de son
adhérence de Zariski.

\begin{prop}
\label{dimension}
Soit $I$ un idéal radical de $\SRC(\R^n)$ et
$J=I\cap \RR[x_1,\ldots ,x_n]$ sa trace sur les polynômes.
On note $V$ l'ensemble $\Z (J)$. Alors 
$$\overline{\Z (I)}^{\mathrm{Zar}}=V$$
où $\overline{\Z (I)}^{\mathrm{Zar}}$ est l'adhérence pour la topologie de
Zariski de $\Z (I)$. En particulier $\dim\, \Z(I)=\dim \, V$.
\end{prop}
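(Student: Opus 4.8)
Le plan est de ramener le calcul de l'adhérence de Zariski de $\Z(I)$ à la détermination de l'idéal $\I(\Z(I))$ des \emph{polynômes} s'annulant sur $\Z(I)$. En effet, pour tout sous-ensemble $S\subseteq\R^n$, l'adhérence de Zariski s'exprime comme $\overline{S}^{\mathrm{Zar}}=\Z(\I(S))$, car $\Z(\I(S))$ est précisément le plus petit fermé de Zariski contenant $S$. Il suffit donc d'établir l'égalité d'idéaux de $\RR[x_1,\ldots,x_n]$
$$
\I(\Z(I))=J,
$$
puisqu'on en déduira immédiatement $\overline{\Z(I)}^{\mathrm{Zar}}=\Z(\I(\Z(I)))=\Z(J)=V$.

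Pour obtenir cette égalité, je distinguerais soigneusement l'idéal polynomial $\I(\Z(I))$ de l'idéal $\I_{\SR^0}(\Z(I))$ des fonctions $0$-régulues s'annulant sur $\Z(I)$. Un polynôme s'annulant sur $\Z(I)$ s'y annule aussi bien vu comme fonction régulue, et réciproquement, d'où la relation évidente
$$
\I(\Z(I))=\I_{\SR^0}(\Z(I))\cap\RR[x_1,\ldots,x_n].
$$
Le point décisif est alors le Nullstellensatz fort (Corollaire~\ref{Nullstellensatz}), qui donne $\I_{\SR^0}(\Z(I))=\Rad(I)$; comme $I$ est radical, $\Rad(I)=I$. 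En substituant, il vient
$$
\I(\Z(I))=I\cap\RR[x_1,\ldots,x_n]=J,
$$
ce qui achève la première assertion.

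Pour l'énoncé sur les dimensions, j'observerais d'abord que $\Z(I)$ est semi-algébrique: étant un fermé $0$-régulu, il est semi-algébriquement fermé d'après le Corollaire~\ref{zregulsa}. La théorie de la dimension des ensembles semi-algébriques (\cite[\S2.8]{BCR}) assure que la dimension d'un tel ensemble coïncide avec celle de son adhérence de Zariski, d'où
$$
\dim\Z(I)=\dim\overline{\Z(I)}^{\mathrm{Zar}}=\dim V.
$$

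Je m'attends à ce qu'il n'y ait pas de véritable obstacle: tout le contenu géométrique est déjà encapsulé dans le Nullstellensatz régulu, et le reste de l'argument est purement formel. La seule précaution à prendre est de ne pas confondre l'idéal $\I(\cdot)$ des polynômes s'annulant avec l'idéal $\I_{\SR^0}(\cdot)$ des fonctions régulues s'annulant, le lien entre les deux se faisant par l'intersection avec $\RR[x_1,\ldots,x_n]$.
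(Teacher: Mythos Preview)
Your argument is correct and uses essentially the same idea as the paper: the crucial step in both is the identity $\I_{\SR^0}(\Z(I))\cap\RR[x_1,\ldots,x_n]=I\cap\RR[x_1,\ldots,x_n]=J$, obtained from the regulous Nullstellensatz together with the hypothesis that $I$ is radical. The only difference is presentational: the paper argues by contradiction (assuming $V'=\overline{\Z(I)}^{\mathrm{Zar}}\subsetneq V$ and producing a polynomial $p\in\I(V')\setminus\I(V)$, then forcing $p\in J$), whereas you go directly via $\overline{\Z(I)}^{\mathrm{Zar}}=\Z(\I(\Z(I)))=\Z(J)$, which is a bit cleaner.
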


\begin{proof} Posons $V'=\overline{\Z (I)}^{\mathrm{Zar}}$. Alors $V'$ est inclus dans $V$
  car $V$ est un fermé de Zariski contenant $\Z(I)$.

Supposons maintenant que $V'$ soit strictement inclus dans $V$. Alors
$\I (V')=\I (\Z(I))$ contient strictement $\I (V)$, sinon
$\Z(\I(V'))=V'=\Z(\I(V))=V$. Par conséquent il existe $p\in \I
(V')$ tel que $p\not\in \I (V)$. Mais $p$ est une fonction régulue qui
s'annule identiquement sur $\Z(I)$ et donc, par le Nullstellensatz régulu,
$$p\in
\I_{\SRC}(\Z(I))\cap\RR[x_1,\ldots ,x_n] =I\cap \RR[x_1,\ldots
,x_n]=J.$$ 
Par conséquent $p(V)=0$, et donc $p$ appartient \`a $\I (V)$, en contradiction avec le choix de $p$.
\end{proof}

Dans la cas du plan, on peut construire {\it \`a la main} une fonction r\'egulue qui s\'epare les points isol\'es de l'adh\'erence euclidienne des points r\'eguliers d'une courbe, d\'emontrant au passage que l'adh\'erence au sens de la topologie euclidienne des points r\'eguliers de la courbe est un ensemble r\'egulument ferm\'e.

\begin{prop}
\label{revhypersurface} Soit $I$ un idéal premier de $\SRC(\R^2)$. Soit
$J=I\cap \RR[x_1,x_2]$ et notons $V$ l'ensemble $\Z (J)$. On note aussi $V_{\mathrm{reg}}$
l'ensemble des points lisses de $V$ et 
$W=\overline{V_{\mathrm{reg}}}^{\mathrm{eucl}}$. Alors
$$W= \Z (I)=\{ x\in\RR^2|\,\, I\subseteq \M_x\}=\{x\in\RR^2
|\,\,f(x)=0\,\,\forall f\in I\}.$$
\end{prop}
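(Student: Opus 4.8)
L'inclusion $W\subseteq\Z(I)$ n'est autre que le Théorème~\ref{hypersurface}, une fois son hypothèse vérifiée ; le plan est donc de se ramener au cas où $J$ est principal, puis d'établir l'inclusion réciproque $\Z(I)\subseteq W$ en fabriquant, pour chaque point de $V\setminus W$, une fonction régulue de $I$ qui n'y est pas nulle. D'abord, $J=I\cap\RR[x_1,x_2]$ est premier, donc de hauteur $0$, $1$ ou $2$. Si $J=(0)$, alors pour $f\in I$ écrite $f=p/q$ on a $p=qf\in I\cap\RR[x_1,x_2]=(0)$, donc $f=0$ ; ainsi $I=(0)$ et $\Z(I)=\RR^2=W$. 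Si $J=\pm_{x_0}$ est maximal, alors $\pm_{x_0}\subseteq I$ entraîne $\M_{x_0}=\Rad(\SRC(\RR^2)\cdot\pm_{x_0})\subseteq\Rad(I)=I$ par le Corollaire~\ref{radmax}, d'où $I=\M_{x_0}$ et $\Z(I)=\{x_0\}=W$. Reste le seul cas substantiel, la hauteur~$1$ : $J=(s)$ avec $s$ irréductible, et comme $J$ est réel (Lemme~\ref{reelintersection}) on a $\dim V=1$ par \cite[Thm.~4.5.1]{BCR}. C'est précisément le cadre du Théorème~\ref{hypersurface}, qui fournit $W\subseteq\Z(I)$.

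Ensuite j'analyserais $V\setminus W$. Le lieu singulier $V_{\mathrm{sing}}$ est un fermé de Zariski strict de la courbe $V$, donc fini, et $V_{\mathrm{reg}}$ est Zariski-dense dans $V$, donc infini. Un point $p\in V\setminus W$ admet une boule euclidienne $B$ avec $B\cap V_{\mathrm{reg}}=\emptyset$, de sorte que $B\cap V\subseteq V_{\mathrm{sing}}$ est fini et $p$ est un point isolé (euclidien) de $V$, nécessairement singulier. Ainsi $V\setminus W$ est un ensemble fini de points isolés singuliers de $V$. Comme $W$ est infini et que $W\subseteq\Z(I)$, toute fonction régulue de lieu des zéros fini est hors de $I$. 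Il suffit donc de montrer que chaque point isolé $x_0\in V\setminus W$ vérifie $x_0\notin\Z(I)$.

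Je fixerais $x_0$. Quitte à remplacer $s$ par $-s$ (toujours dans $I$), $s$ est strictement positive sur un voisinage épointé de $x_0$, et sa forme initiale en $x_0$ est positive semi-définie. Le c\oe ur de la preuve est la construction d'un polynôme $g_0\geq0$ tel que $\Z(g_0)=\{x_0\}$ et tel que le quotient $f_0=s/g_0$ s'étende en une fonction régulue sur $\RR^2$ avec $f_0(x_0)\neq0$, c'est-à-dire un $g_0$ reproduisant le comportement infinitésimal de $s$ en $x_0$. Un tel $g_0$ étant admis, on conclut : $\Z(g_0)=\{x_0\}$ étant fini, $g_0\notin I$, et l'égalité $f_0\,g_0=s\in I$ jointe à la primalité de $I$ donne $f_0\in I$ ; comme $f_0(x_0)\neq0$, on a $x_0\notin\Z(I)$. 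Ceci établit $\Z(I)\subseteq W$, d'où l'égalité ; en particulier $W=\Z(I)$ est régulument fermé, comme annoncé dans le préambule.

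La principale difficulté est la construction de $g_0$, c'est-à-dire la continuité et la non-annulation de $s/g_0$ en $x_0$ : le choix naïf $g_0=d_0^{\ell}$, où $d_0$ est le carré de la distance à $x_0$ et $2\ell$ la multiplicité de $s$ en $x_0$, échoue dès que la forme initiale de $s$ n'est pas proportionnelle à $d_0^{\ell}$, le quotient n'ayant alors pas de limite en $x_0$. Pour y remédier je choisirais $g_0$ adapté à la structure locale de la singularité isolée. Dans le plan, la résolution de Newton--Puiseux (ou le Théorème~\ref{thindeterminations} appliqué à la paire $(s,d_0)$) rend $s$ et $d_0$ à croisements normaux au-dessus de $x_0$ ; comme $x_0$ est isolé, les branches réelles de $V$ y sont absentes, de sorte que le transformé strict de $s$ ne s'annule pas sur la fibre exceptionnelle réelle. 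On peut alors prendre pour $g_0$ un polynôme dont le tiré en arrière a les mêmes multiplicités normales que $s$, ce qui fait de $f_0\circ\phi$ une fonction régulière de valeur non nulle sur cette fibre ; ainsi $f_0$ est régulue par le Théorème~\ref{threguliereaeclt} et $f_0(x_0)\neq0$, ce qui achève la construction.
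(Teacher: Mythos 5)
Votre architecture est exactement celle du texte~: réduction aux trois cas selon la hauteur de $J$ (avec un traitement correct des cas $J=(0)$ et $J$ maximal), inclusion $W\subseteq\Z(I)$ par le Théorème~\ref{hypersurface}, puis, pour chaque point isolé $x_0\in V\setminus W$, production d'une fonction $f_0\in I$ avec $f_0(x_0)\neq0$ en écrivant un élément de $J$ sous la forme $f_0g_0$ avec $g_0\notin I$ et en invoquant la primalité de $I$. Votre argument pour $g_0\notin I$ (lieu des zéros fini contre $W$ infini) est d'ailleurs une simplification correcte de celui du texte, qui passe par la réalité et la primalité de $J$. Le point qui ne tient pas est la construction de $g_0$. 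Vous demandez un polynôme $g_0\geq0$ avec $\Z(g_0)=\{x_0\}$ tel que $s/g_0$ se prolonge continûment en $x_0$ avec une valeur $c\neq0$~; cela exige $g_0\sim s/c$ au voisinage de $x_0$ et, après la résolution $\phi$ que vous invoquez, que $(s/g_0)\circ\phi$ soit \emph{constante} sur la fibre réelle connexe $\phi^{-1}(x_0)$. Il ne suffit donc pas que $g_0\circ\phi$ ait «~les mêmes multiplicités normales~» que $s\circ\phi$ le long des diviseurs exceptionnels~: il faut encore que les parties unités coïncident sur toute la fibre, c'est-à-dire que $g_0-cs$ s'annule à un ordre strictement supérieur à celui de $s$ le long de chaque composante exceptionnelle. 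Concilier cette osculation avec l'exigence globale $\Z(g_0)=\{x_0\}$ est un problème d'interpolation positive que votre esquisse n'aborde pas~: le candidat naturel $g_0=cs+h$ avec $h$ d'ordre supérieur réintroduit des zéros là où $s<0$, et rien ne garantit l'existence d'une solution polynomiale sous la forme exacte que vous demandez.

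La démonstration du texte contourne entièrement cette difficulté en travaillant avec $s^2$ au lieu de $s$~: l'inégalité de Lojasiewicz fournit un entier $N$ tel que $(x_1^2+x_2^2)^{2N}/s^2\to0$ en $x_0$, et l'on pose $g_0=s^2+(x_1^2+x_2^2)^{2N}$, polynôme dont le lieu des zéros est $\{x_0\}$ (c'est une somme de carrés dont les termes ne s'annulent simultanément qu'en $x_0$) et tel que $f_0=s^2/g_0$ se prolonge par $1$ en $x_0$. On conclut alors exactement comme vous le faites, avec $f_0g_0=s^2\in I$ et la primalité de $I$. Votre schéma est donc réparable à peu de frais, mais en l'état la pièce centrale de l'argument manque.
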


\begin{proof}
Si $J=(0)$ alors $I=(0)$ et le résultat est évident.
Si $J$ est maximal, le Corollaire \ref{maximal} donne le
résultat voulu.

On suppose donc que $J$ est de hauteur $1$, c'est-\`a-dire
$J=(s)$ avec $s\in \RR[x_1,x_2]$ irréductible. Ainsi $V=\Z (s)$ est une
courbe alg\'ebrique affine irréductible. L'idéal $J$ étant réel, on a $\I
(\Z(J))=J$. L'inclusion $$W\subseteq \Z
(I)$$ provient du Théorème \ref{hypersurface}.
Comme $J\subseteq I$, on obtient par ailleurs $\Z(I)\subseteq\Z (J)=V$. En résumé, on a
les inclusions suivantes $$W\subseteq \Z
(I)\subseteq V.$$
Pour terminer la preuve, pour tout $x\in V\setminus W$ on doit trouver $f\in
I$ telle que $f(W)=0$ et vérifiant $f(x)\not=0$.

Soit $x\in V\setminus W$. Supposons $x$ \'egal \`a l'origine $O$ pour simplifier les notations. Alors $O$ est un
zéro isolé de $s$. Il existe donc un disque fermé $B$ centré en
$O$ tel que $O$ soit l'unique zéro de $s$ dans $B$. En utilisant l'inégalité
de {\Lbarre}ojasiewicz pour $x_1^2+x_2^2$ et $\frac{1}{s^2}$ dans $B$, il existe un
entier strictement positif $N$ tel que $\frac{( x_1^2+x_2^2)^N}{s^2}$
s'étende de façon continue par $0$ en $O$ dans $B$. 
On pose $f=\frac{s^2}{s^2+( x_1^2+x_2^2)^{2N}}$. La fonction $f$ est
clairement continue en dehors de $O$ et peut être étendue de manière
continue par $1$
en $O$ car 
$$\lim_{(x_1,x_2)\rightarrow O}\frac{( x_1^2+x_2^2)^{2N}}{s^2}=0.$$
Par conséquent $f\in \SR^0(\R^2)$ et la restriction de $f$ à $V_{\mathrm{reg}}$ est
identiquement nulle puisque la restriction de $f$ à $V\setminus
\{ O\}$ est identiquement nulle. Comme $f$ est continue
on a aussi $f(W)=0$. Il reste donc à montrer que
$f$ appartient \`a $I$. D\'ej\`a
$$(s^2+(( x_1^2+x_2^2)^N)^2).f\in I,$$ 
et puisque $I$ est
un idéal premier, au moins un des deux termes du produit appartient à
$I$. 
Supposons qu'il s'agisse du premier, c'est-\`a-dire
$$(s^2+((
x_1^2+x_2^2)^N)^2)\in I\cap \RR[x_1,x_2]=J.$$ 
Alors $( x_1^2+x_2^2)^N$ appartient \`a $J$ car $J$ est un idéal réel. L'idéal $J$ étant de plus premier, on en déduit
que
$x_1^2+x_2^2\in J$. Mais alors $x_1\in J$ et $x_2\in J$ en invoquant une
nouvelle fois la r\'ealit\'e de $J$. On en déduit que $J$ est
maximal, en contradiction avec notre hypoth\`ese. Par conséquent $f$ appartient \`a $I$, ce qui termine la preuve de la proposition.
\end{proof}

En particulier, on constate que les courbes $0$-r\'egulument ferm\'ees du plan sont des ensembles Zariski constructibles ferm\'es. On montre ci-dessous que c'est en fait le cas pour tout ensemble $k$-r\'egulu.

\begin{thm}\label{const} Soit $k$ un entier naturel.
Les sous-ensembles $k$-r\'egulument ferm\'es de $\R^n$ coincident 
avec les sous-ensembles alg\'ebriquement constructibles ferm\'es de $\R^n$.
\end{thm}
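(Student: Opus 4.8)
We must establish two inclusions. The inclusion of the $k$-régulument fermés into the algebraically constructible closed sets is already essentially at hand: by Corollaire~\ref{coregfermeestconstructible} every $k$-régulument fermé $F$ admits a finite stratification into sous-ensembles localement fermés au sens de Zariski, so $F$ is Zariski constructible; and since every element of $\SR^k(\R^n)$ is continue pour la topologie euclidienne, $F=\Z(E)$ is an intersection of euclidean-closed sets and is therefore euclidean closed. Thus every $F$ $k$-régulument fermé is an algebraically constructible closed subset of $\R^n$.

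For the reverse inclusion, let $F\subseteq\R^n$ be algébriquement constructible and euclidean closed; I want to show that $F$ is $k$-régulument fermé. The cleanest route is through the blow-up characterization: by the converse of Corollaire~\ref{cor-eclt} (Proposition~\ref{recip-eclt}) it suffices to produce a composition finie d'éclatements à centres lisses $\phi\colon\widetilde{\R}^n\to\R^n$ such that $\phi^{-1}(F)$ is Zariski fermé. Note that this settles all $k$ at once, since $\phi$ does not depend on $k$. I would construct $\phi$ by induction on $d=\dim F$. For $d=0$ the set $F$ is fini, hence already Zariski fermé. For $d>0$, set $V=\overline{F}^{\mathrm{Zar}}$; by standard dimension theory (cf.\ Proposition~\ref{dimension}) one has $\dim V=d$. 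Choose a Zariski fermé $S\subseteq V$ with $\dim S<d$ containing $V_{\mathrm{sing}}$ and all components of $V$ of dimension $<d$, so that $V\setminus S$ is lisse of pure dimension $d$.

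Apply Hironaka (Théorème~\ref{thindeterminations}), éclatant seulement le long de centres lisses dont les points réels sont denses, exactly as in the proof of Théorème~\ref{thm.stratif}, to obtain $\phi_1\colon X_1\to\R^n$ making the transformé strict $\widetilde V$ lisse and $\phi_1^{-1}(S)$ together with the diviseur exceptionnel a diviseur à croisements normaux. The decisive claim is that $\phi_1^{-1}(F)\cap\widetilde V$ is then \emph{ouvert et fermé} in $\widetilde V$, hence a réunion de composantes connexes of $\widetilde V$, and in particular Zariski fermé. What remains, $\phi_1^{-1}(F\cap S)$, is again algébriquement constructible and euclidean fermé but of dimension $<d$; by the inductive hypothesis a further composition of éclatements lisses $\phi_2$, de centres au-dessus de $\phi_1^{-1}(S)$ so as not to disturb the top part, makes its image réciproque Zariski fermée. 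Taking $\phi=\phi_1\circ\phi_2$ gives $\phi^{-1}(F)$ Zariski fermé, as wanted. (The induction takes place in the category of variétés réelles algébriques affines lisses rather than in $\R^n$ alone, which is harmless.)

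The main obstacle is precisely the decisive claim that, after resolution, the top-dimensional part of $\phi^{-1}(F)$ has empty frontier inside the lieu lisse $\widetilde V$, i.e.\ is clopen. This encodes the ``two-sidedness'' of an algébriquement constructible fermé: along a diviseur à croisements normaux such a set cannot bound, and an arc analytique entering $\widetilde V$ from within $\phi^{-1}(F)$ must continue inside it. Proposition~\ref{revhypersurface} is exactly this phenomenon for une courbe du plan, proved there by an explicit construction de type Lojasiewicz (Lemme~\ref{LojaregulRn}) separating the points isolés réels of $V$ from $\overline{V_{\mathrm{reg}}}^{\mathrm{eucl}}$; the work in the general case is to reach, by résolution, a modèle lisse à croisements normaux in which this separation becomes the statement that a fermé constructible is a réunion de composantes connexes. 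An alternative, more hands-on proof would bypass $\phi$ and directly generalise Proposition~\ref{revhypersurface}, building the required fonction $k$-régulue by a power trick analogous to Exemple~\ref{cubique} (legitimate by Corollaire~\ref{cor.compositionregulues}) to upgrade a fonction $0$-régulue séparante to arbitrary finite $k$ without changing its lieu d'annulation.
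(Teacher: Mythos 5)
Your first inclusion is fine and matches the paper (Corollaire~\ref{coregfermeestconstructible} plus continuity). The converse, however, has two genuine problems. First, the reduction is circular: you invoke a \og converse of Corollaire~\ref{cor-eclt}\fg{} to claim that it suffices to make $\phi^{-1}(F)$ Zariski ferm\'e after blow-ups. But no statement of the paper, available before Th\'eor\`eme~\ref{const}, says that \og Zariski ferm\'e apr\`es \'eclatements \fg{} implies \og $k$-r\'egulument ferm\'e\fg. Proposition~\ref{recip-eclt} only asserts equivalence with \emph{alg\'ebriquement constructible ferm\'e}, and its forward direction is itself deduced in the paper by combining Th\'eor\`eme~\ref{const} with Corollaire~\ref{cor-eclt}. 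The implication you need --- from a regular equation of $\phi^{-1}(F)$ upstairs to a $k$-r\'egulue function downstairs with zero set exactly $F$ --- is not automatic ($\phi$ is not invertible, and Th\'eor\`eme~\ref{threguliereaeclt} only goes the other way); producing such a function is precisely the content of the theorem you are trying to prove.

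Second, even granting the reduction, your \og decisive claim\fg{} that $\phi_1^{-1}(F)\cap\widetilde V$ is ouvert et ferm\'e dans $\widetilde V$ is asserted, not proved; you yourself flag it as \og the main obstacle\fg{} and \og the work in the general case\fg, and the gesture towards Proposition~\ref{revhypersurface} and a \og power trick\fg{} does not carry it out. The paper's actual argument avoids both issues by a direct induction on $\dim C$: writing $V=\overline C^{\mathrm{Zar}}=C\cup Z$ with $Z$ the union of the lower-dimensional constructible components, taking $f$ r\'eguli\`ere with $\Z(f)=V$ and $g$ $k$-r\'egulue with $\Z(g)=Z$ (hypoth\`ese de r\'ecurrence), and building the separating function $h=f^2/(f^2+g^{2N})$ via the r\'egulue Lojasiewicz inequalities (Lemmes~\ref{LojaregulRn} et~\ref{LojaregulU}); a final multiplication by $l^{N'}$, where $\Z(l)=Z\cap C$ is again lower-dimensional, yields a $k$-r\'egulue function with zero set $C$. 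If you want to salvage your approach, you must either prove the clopen claim (essentially Kurdyka's arc-symmetry on a model \`a croisements normaux) \emph{and} supply an independent descent from \og r\'egulier en haut\fg{} to \og r\'egulu en bas\fg, or switch to the explicit Lojasiewicz construction.
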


\begin{proof} On sait d\'ej\`a que tout ensemble $k$-r\'egulument
  ferm\'e de $\R^n$ est alg\'ebriquement constructible 
d'apr\`es le Corollaire \ref{coregfermeestconstructible}. 
On montre la r\'eciproque par r\'ecurrence sur la dimension. 

En dimension nulle, le r\'esultat est clair. 
Soit $C$ un sous-ensemble alg\'ebriquement constructible ferm\'e de
$\R^n$. On peut supposer que $C$ est irr\'eductible, autrement dit que son adh\'erence de Zariski $V$ est une sous-vari\'et\'e r\'eelle alg\'ebrique irr\'eductible de $\RR^n$, quitte \`a raisonner composante par composante. On note $Z$ la r\'eunion des composantes alg\'ebriquement constructibles de $V$ de dimension strictement plus petite que $V$. Ainsi $V=C\cup Z$. Il existe une fonction $k$-r\'egulue $g$ d\'efinie sur $\RR^n$ telle que $\Z(g)=Z$ par hypoth\`ese de r\'ecurrence. Par ailleurs, soit $f$ une fonction r\'eguli\`ere sur $\RR^n$ telle que $\Z(f)=V$. D'apr\`es le Lemme \ref{LojaregulU}, il existe un entier naturel $N$ tel que la fonction $\frac{g^N}{f^2}$, $k$-r\'egulue sur $\D(f)$, se prolonge par $0$ sur $\D(f) \cup Z\setminus C$ en une fonction $k$-r\'egulue. Posons
$$h=\frac{f^2}{f^2+g^{2N}}.$$
Alors la fonction $h$ est $k$-r\'egulue sur $\D(g)$, et s'annule sur $C \cap \D(g)$. On la prolonge par $1$ sur $Z$. Elle est alors $k$-r\'egulue en dehors de $Z \cap C$ par choix de $N$. L'ensemble $Z\cap C$ est alg\'ebriquement constructible de dimension strictement plus petite que $C$, il existe donc une fonction $k$-r\'egulue $l$ telle que $Z\cap C=\Z(l)$ par hypoth\`ese de r\'ecurrence. D'apr\`es le Lemme \ref{LojaregulRn}, il existe alors un entier $N'$ tel que la fonction $l^{N'}h$ soit $k$-r\'egulue sur $\RR^n$. Elle satisfait $\Z(l^{N'}h)=C$, ce qui fait de $C$ un ensemble $k$-r\'egulument ferm\'e.
\end{proof}

\begin{cor}\label{cor-const} Pour $k$ et $k'$ des entiers naturels, les topologies k-r\'egulue et $k'$-r\'egulue sont \'equivalentes.
\end{cor}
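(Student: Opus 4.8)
The plan is to deduce the corollary immediately from Theorem~\ref{const}. A topology on~$\R^n$ is entirely determined by its collection of closed sets, so to prove that the $k$-régulue and $k'$-régulue topologies coincide it suffices to verify that they share the same closed sets. By construction, the closed sets of the $k$-régulue topology are precisely the $k$-régulument fermés of~$\R^n$, and likewise for~$k'$. Hence the whole question reduces to comparing these two collections of subsets of~$\R^n$.

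The essential observation is that Theorem~\ref{const} furnishes, for \emph{each} natural number~$k$, an intrinsic description of the $k$-régulument fermés that does not mention~$k$ at all: they are exactly the closed algebraically constructible subsets of~$\R^n$. First I would apply Theorem~\ref{const} with the integer~$k$, obtaining that the $k$-régulument fermés of~$\R^n$ are the closed algebraically constructible subsets; then I would apply it a second time with~$k'$, obtaining that the $k'$-régulument fermés are \emph{the same} family. Since both topologies therefore have as closed sets exactly the closed algebraically constructible subsets of~$\R^n$, their closed sets coincide, and so do the topologies.

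There is no real obstacle to overcome at this stage: all the difficulty has been absorbed into Theorem~\ref{const} and the results it relies upon, in particular the noetherianity of the $k$-régulue topology (Corollaire~\ref{topregnoeth}), which ensures that every $k$-régulument fermé is the zero set~$\Z(f)$ of a single $k$-régulue function, and the inclusion of the $k$-régulument fermés in the algebraically constructible sets (Corollaire~\ref{coregfermeestconstructible}). The only point worth stating explicitly in the write-up is that the characterization provided by Theorem~\ref{const} is uniform in~$k$, which is exactly what makes the two descriptions for~$k$ and~$k'$ literally identical.

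One may also note, as an illustration, that this corollary globally explains the phenomenon already encountered in Exemple~\ref{cubique}, where a single subset of~$\R^2$ was shown to be $k$-régulument fermé for every finite~$k$ by an ad hoc construction; Theorem~\ref{const} shows this coincidence of closed sets across all finite~$k$ is a general fact rather than an accident of that example.
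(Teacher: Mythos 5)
Votre démonstration est correcte et suit exactement la voie du papier, qui énonce ce corollaire comme conséquence immédiate du Théorème~\ref{const} : la caractérisation des fermés $k$-régulus comme fermés algébriquement constructibles étant uniforme en~$k$, les collections de fermés pour~$k$ et~$k'$ coïncident, donc les topologies aussi.
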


\subsection*{D\'ecomposition en ensembles sym\'etriques par arcs}

On sait qu'un sous-ensemble $k$-r\'egulument ferm\'e de $\R^n$ est un sous-ensemble alg\'ebriquement constructible ferm\'e d'apr\`es le Th\'eor\`eme \ref{const}. En particulier, un ensemble $k$-r\'egulument ferm\'e irr\'eductible est \'egal \`a la composante alg\'ebriquement constructible de dimension maximale de son adh\'erence de Zariski. 
Malheureusement, il est difficile de d\'ecrire g\'eom\'etriquement cette composante.
Le but de cette section est de d\'ecrire cette composante alg\'ebriquement constructible de dimension maximale en terme d'ensembles sym\'etriques par arcs. 

\begin{prop}
\label{annulregulue} Soit $V$ une sous-variété alg\'ebrique irréductible de $\RR^n$
de dimension $d$. Soit $S$ un sous-ensemble semi-alg\'ebrique de $V$ de dimension $d$ et soit $f\in\SRC(\RR^n)$
s'annulant identiquement sur $S$. Alors $f$ s'annule identiquement sur $V_{\mathrm{reg}}$ et donc sur la réunion $W$ des
composantes $\AR$-irréductibles de dimension maximale de $V$.
\end{prop}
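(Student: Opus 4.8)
Le plan est de combiner la rationalité de~$f$ avec la structure des ensembles symétriques par arcs. Rappelons d'abord que~$f$ étant régulue, elle est analytique par arcs (Corollaire~\ref{reguluestarcanalytique}), de sorte que son lieu de zéros~$\Z(f)$ est symétrique par arcs, i.e. $\AR$-fermé. Notons $W=\bigcup_i W_i$ la réunion des composantes $\AR$-irréductibles~$W_i$ de~$V$ de dimension maximale~$d$. Comme $\Z(f)$ est $\AR$-fermé, pour conclure que $W\subseteq\Z(f)$ il suffira, pour chaque~$i$, d'exhiber une partie $\AR$-dense de~$W_i$ sur laquelle~$f$ s'annule~: en effet, $W_i$ étant $\AR$-irréductible, toute partie $\AR$-fermée propre de~$W_i$ est de dimension~$<d$ d'après la théorie des composantes de Kurdyka~\cite{Kur}, donc il suffira de faire voir que~$f$ s'annule sur~$W_i$ en dehors d'un ensemble de dimension~$<d$.

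Venons-en au cœur rationnel de l'argument. Écrivons~$f=p/q$ avec~$p,q\in\RR[x_1,\ldots,x_n]$ premiers entre eux. La fonction polynomiale~$p$ s'annule identiquement sur~$S$~: sur~$S\setminus\pol(f)$ car~$f=0$ et~$q\neq0$, et sur~$S\cap\pol(f)$ car $\pol(f)\subseteq\Z(q)\subseteq\Z(p)$ d'après la Proposition~\ref{codim}. Comme~$\dim S=d=\dim V$ et que~$V$ est irréductible au sens de Zariski, $S$ est Zariski-dense dans~$V$, d'où~$p\in\I(V)$, autrement dit~$V\subseteq\Z(p)$. Par conséquent $f=p/q$ s'annule sur~$V\setminus\Z(q)$.

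Il reste à propager cette annulation à~$W$. Supposons~$V\not\subseteq\Z(q)$. Alors~$V\cap\Z(q)$ est une partie Zariski-fermée propre de~$V$ irréductible, donc de dimension~$<d$, et c'est en particulier une partie $\AR$-fermée. Pour chaque composante~$W_i$, l'intersection~$W_i\cap\Z(q)$ est alors $\AR$-fermée de dimension~$<d=\dim W_i$, donc propre, et~$W_i\setminus\Z(q)$ est une partie $\AR$-ouverte non vide de~$W_i$, donc $\AR$-dense par $\AR$-irréductibilité de~$W_i$. Comme~$f$ s'annule sur~$W_i\setminus\Z(q)\subseteq V\setminus\Z(q)$ et que~$\Z(f)$ est $\AR$-fermé, on obtient $W_i=\overline{W_i\setminus\Z(q)}^{\AR}\subseteq\Z(f)$. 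En réunissant sur les composantes de dimension maximale, $f$ s'annule sur~$W$.

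L'obstacle principal est le cas dégénéré~$V\subseteq\Z(q)=\pol(f)$ (qui ne peut se produire que lorsque~$d\leq n-2$)~: ici~$V\setminus\Z(q)=\emptyset$ et l'étape précédente est vide. On dispose néanmoins encore de~$V\subseteq\Z(p)$, donc~$f=p/q$ s'annule sur l'hypersurface privée de son indétermination~$\Z(p)\setminus\Z(q)$. Le point technique est alors de montrer que~$W\subseteq\overline{\Z(p)\setminus\Z(q)}^{\AR}$, ce que je traiterais soit par l'étude locale de~$\Z(p)$ le long de son lieu lisse, soit --- plus robustement --- en résolvant l'indétermination de~$f$ par une composition finie d'éclatements à centres lisses (Théorème~\ref{threguliereaeclt}), de façon à rendre~$f$ régulière et à faire sortir le transformé strict de~$V$ du lieu d'indétermination, ce qui ramène à l'étape générique ci-dessus. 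C'est cette propagation à travers le lieu d'indétermination qui constitue le nœud de la démonstration.
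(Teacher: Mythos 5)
Votre traitement du cas générique est correct et constitue même un raccourci intéressant par rapport à la preuve de l'article~: en écrivant $f=p/q$ sous forme irréductible, vous obtenez $V\subseteq\Z(p)$ par Zariski-densité de $S$ dans $V$, puis vous propagez l'annulation de $f$ de $V\setminus\Z(q)$ à $W$ par symétrie par arcs de $\Z(f)$, sans aucune résolution des singularités. Mais le cas dégénéré $V\subseteq\Z(q)=\pol(f)$ --- possible dès que $d\leq n-2$, et que vous identifiez vous-même comme «~le n\oe ud de la démonstration~» --- n'est pas traité~: vous n'en proposez que des pistes, et celle que vous détaillez (rendre $f$ régulière par éclatements et «~faire sortir le transformé strict de $V$ du lieu d'indétermination~») ne fonctionne pas telle quelle. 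Les centres de la résolution de l'indétermination de $f$ sont contenus dans $\pol(f)$, qui contient précisément $V$ dans ce cas~; $V$ peut donc être absorbée par un centre (par exemple si $V=\pol(f)$ est lisse) et ne posséder aucun transformé strict, et même lorsque le transformé strict $\tilde V$ existe, il resterait à garantir que sa trace sur $\phi^{-1}(S)$ est encore de dimension~$d$. C'est une lacune réelle, pas un simple détail de rédaction.

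L'ingrédient qui vous manque est l'hérédité de la rationalité (Remarque~\ref{rem.here} et l'isomorphisme $\SR^0(\R^n)/\I(V)\iso\SR^0(V)$)~: la restriction $f_{|V}$ est elle-même une fonction régulue \emph{sur $V$}, donc rationnelle sur $V$, c'est-à-dire de la forme $p'/q'$ avec $p'$, $q'$ régulières sur $V$ et $q'$ non identiquement nulle sur $V$ --- ce qui est faux pour l'écriture ambiante $p/q$ dans le cas dégénéré. C'est le point de départ de la preuve de l'article, qui résout ensuite $f_{|V}$ sur une résolution irréductible lisse $\phi\colon\tilde V\to V$ (Théorème~\ref{threguliereaeclt}), constate que la fonction régulière $f\circ\phi$ s'annule sur l'image réciproque de $S$, de dimension $d$ donc Zariski-dense dans $\tilde V$, d'où $f\circ\phi=0$ sur $\tilde V$ et $f=0$ sur $V_{\mathrm{reg}}$, et conclut par symétrie par arcs de $\Z(f)$. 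Une fois ce fait acquis, votre argument générique se relit d'ailleurs intrinsèquement sur $V$ avec $p'/q'$ à la place de $p/q$ et couvre les deux cas d'un coup.
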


Pour autant, il se peut que $f$ s'annule sur strictement plus que $W$ comme on le verra dans l'exemple \ref{c-ex}.

\begin{proof}[D\'emonstration de la Proposition \ref{annulregulue}]
La fonction $f$ reste r\'egulue en restriction \`a $V$ (cf. Remarque
\ref{rem.here}). Il existe donc une composition d'\'eclatements \`a
centres lisses $\phi\colon\tilde V\rightarrow V$, avec $\tilde V$ une
vari\'et\'e r\'eelle alg\'ebrique irr\'eductible non singuli\`ere,
telle que $f\circ\phi$ soit régulière sur $\tilde V$ d'apr\`es le Th\'eor\`eme \ref{threguliereaeclt}.

Comme $S$ est de dimension $d$, la fonction régulière $f\circ\phi$ s'annule sur
un sous-ensemble Zariski dense de $\tilde V$, donc sur $\tilde V$ qui est irr\'eductible. Alors $f$ s'annule sur les points r\'eguliers de $V$, donc aussi sur $W$ car $Z(f)$ est un ensemble sym\'etrique par arcs.
\end{proof}

Soit $E$ un ensemble semi-algébrique de $\RR^n$. D'apr\`es \cite[Cor. 2.15]{KurPar}, l'intersection des ensembles
symétriques par arcs contenant $E$ est un ensemble symétrique par arcs de m\^eme dimension que $E$, et noté
$\overline{E}^{\AR}$. De même, on note $\overline{E}^{C}$ l'adhérence
de $E$
pour la topologie algébriquement constructible (identique à la topologie régulue comme on vient de le voir).

On généralise maintenant le Théorème \ref{hypersurface} au cas non principal.
 
\begin{thm}
\label{hypersurfacegen}
Soit $I$ un idéal premier de $\SRC (\R^n)$. On note
$J=I\cap \RR[x_1,\ldots ,x_n]$ et $V$ l'ensemble $\Z (J)$. On note aussi
$V_{\mathrm{reg}}$ l'ensemble des points lisses de $V$. Alors 
$$
\Z (I)=\overline{V_{\mathrm{reg}}}^{C}
$$
est l'unique composante $C$-irréductible de dimension maximale $d$ de $V$. De plus, si $W$ désigne la réunion
des composantes $\AR$-irréductibles de $V$ de dimension $d$, alors
$$
\overline{V_{\mathrm{reg}}}^{\AR}=W\subseteq \Z (I)\;.
$$
\end{thm}

\begin{proof}
D'apr\`es le Th\'eor\`eme \ref{const}, on sait déjà que $\Z (I)$ est un ensemble $C$-irréductible fermé, c'est-à-dire un fermé régulu irréductible. On décompose $V=\cup_{i=1}^{s}W_i \cup\cup_{j=1}^{t}Z_j$ en composantes $C$-irréductibles fermées où les $W_i$ sont de dimension maximale $d$ et $\dim Z_j< d$ pour $j\in \{1,\ldots,t\}$. D'après la Proposition~\ref{dimension}, l'ensemble $\Z(I)$ est $C$-irréductible de dimension $d$. Par conséquent, on peut supposer que $\Z(I)=W_1$ et il existe $f_1\in \SRC (\R^n)$ telle que $W_1=\Z(f_1)$. La fonction $f_1$ s'annule en fait identiquement sur $V_{\mathrm{reg}}$ (Proposition~\ref{annulregulue}). On a donc 
$$
V_{\mathrm{reg}}\subset \Z(f_1)=\Z(I)\;.
$$

Si $s\geq 2$, on écrit $W_2=\Z(f_2)$ avec $f_2\in \SRC (\R^n)$ et de la même manière on obtient $V_{\mathrm{reg}}\subset \Z(f_2)=W_2$. Donc $V_{\mathrm{reg}}\subset W_1\cap W_2$ ce qui est impossible car $V_{\mathrm{reg}}$ est de dimension $d$. On en déduit que $\Z(I)$ est l'unique composante $C$-irréductible de dimension maximale de $V$ et finalement que $\Z (I)=\overline{V_{\mathrm{reg}}}^{C}$.

Remarquons que l'adhérence symétrique par arcs
$\overline{V_{\mathrm{reg}}}^{\AR}$ est égale à $W$ d'après
\cite[Thm. 2.21, Lem. 2.22]{KurPar}. Comme $V_{\mathrm{reg}}\subset \Z(I)$, on en tire $
\overline{V_{\mathrm{reg}}}^{\AR}=W\subseteq \Z (I)
$.

\end{proof}

On poursuit l'\'etude des relations entre les fonctions r\'egulues sur une vari\'et\'e r\'eelle alg\'ebrique et celles sur la r\'eunion de ses composantes $\AR$-irréductibles de dimension maximale.

\begin{prop}
\label{hypersurface2}  
Soit $V$ une sous-variété alg\'ebrique irréductible de $\RR^n$. 
On note $W=\overline{V_{\mathrm{reg}}}^{\AR}$ la réunion des
composantes $\AR$-irréductibles de dimension maximale de $V$. Alors
$$\I_{\SRC} (W)=\I_{\SRC}(V_{\mathrm{reg}})$$
est un idéal premier de $\SRC (\R^n)$.
\end{prop}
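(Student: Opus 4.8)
Le plan est de déduire l'égalité des deux idéaux puis leur primalité de la seule Proposition~\ref{annulregulue}. Pour l'égalité, l'inclusion $\I_{\SRC}(W)\subseteq\I_{\SRC}(V_{\mathrm{reg}})$ résulte aussitôt de $V_{\mathrm{reg}}\subseteq\overline{V_{\mathrm{reg}}}^{\AR}=W$. Pour la réciproque, je partirais d'une fonction $f\in\SRC(\RR^n)$ s'annulant identiquement sur $V_{\mathrm{reg}}$; comme $V$ est irréductible de dimension $d$, son lieu régulier $V_{\mathrm{reg}}$ est un sous-ensemble semi-algébrique de $V$ de dimension $d$ (le lieu singulier étant de dimension $<d$, cf.~\cite[Prop.~3.3.10]{BCR}), de sorte que la Proposition~\ref{annulregulue} appliquée à $S=V_{\mathrm{reg}}$ entraîne que $f$ s'annule sur $W$, c'est-à-dire $f\in\I_{\SRC}(W)$.

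Pour la primalité, je considérerais $f,g\in\SRC(\RR^n)$ telles que $fg\in\I_{\SRC}(V_{\mathrm{reg}})$, autrement dit $V_{\mathrm{reg}}\subseteq\Z(f)\cup\Z(g)$. On écrit alors
$$V_{\mathrm{reg}}=(V_{\mathrm{reg}}\cap\Z(f))\cup(V_{\mathrm{reg}}\cap\Z(g)),$$
réunion de deux sous-ensembles semi-algébriques, les ensembles $\Z(f)$ et $\Z(g)$ étant semi-algébriques d'après la Proposition~\ref{regulsa}. La dimension d'une réunion finie de semi-algébriques étant le maximum des dimensions et $\dim V_{\mathrm{reg}}=d$, l'un des deux morceaux, disons $S=V_{\mathrm{reg}}\cap\Z(f)$, est de dimension $d$. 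La fonction $f$ s'annule identiquement sur ce sous-ensemble $S$ de $V$ de dimension maximale, donc la Proposition~\ref{annulregulue} force $f$ à s'annuler sur $W$, d'où $f\in\I_{\SRC}(W)=\I_{\SRC}(V_{\mathrm{reg}})$. Enfin $V_{\mathrm{reg}}$ étant non vide, l'idéal est propre, ce qui achève de montrer qu'il est premier.

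Le point central — et la seule véritable difficulté, déjà encapsulée dans la Proposition~\ref{annulregulue} — est le phénomène de rigidité suivant: l'annulation d'une fonction régulue sur une \emph{seule} partie de dimension maximale entraîne son annulation sur \emph{toutes} les composantes $\AR$-irréductibles de dimension maximale de $V$. C'est précisément ce qui interdit de séparer ces composantes par des fonctions régulues et rend l'idéal premier, alors même que $W$ peut être $\AR$-réductible.
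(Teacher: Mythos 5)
Votre démonstration est correcte, et elle diffère de celle de l'article sur le point essentiel, la primalité. Pour l'égalité $\I_{\SRC}(W)=\I_{\SRC}(V_{\mathrm{reg}})$, l'article observe simplement que $\Z(f)$ est symétrique par arcs (Corollaire~\ref{reguluestarcanalytique}) et contient donc $\overline{V_{\mathrm{reg}}}^{\AR}=W$ dès qu'il contient $V_{\mathrm{reg}}$~; votre détour par la Proposition~\ref{annulregulue} donne la même conclusion. Pour la primalité, l'article refait l'argument d'éclatement~: il restreint $f$ et $g$ à $V$ (Remarque~\ref{rem.here}), applique le Théorème~\ref{threguliereaeclt} pour rendre $f\circ\phi$ et $g\circ\phi$ régulières sur une résolution irréductible lisse $\tilde V$, et conclut par intégrité de l'anneau des fonctions régulières sur $\tilde V$. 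Vous évitez ce second recours à la résolution en décomposant $V_{\mathrm{reg}}=(V_{\mathrm{reg}}\cap\Z(f))\cup(V_{\mathrm{reg}}\cap\Z(g))$, en invoquant le fait que la dimension d'une réunion finie d'ensembles semi-algébriques est le maximum des dimensions pour en extraire un morceau de dimension $d$, puis en réappliquant la Proposition~\ref{annulregulue}. Les deux preuves reposent en dernière analyse sur le même mécanisme (l'éclatement est caché dans la preuve de~\ref{annulregulue}), mais la vôtre a le mérite de ne l'utiliser qu'à travers un énoncé déjà établi et de rendre explicite le rôle de la dimension, tandis que celle de l'article est plus directe mais duplique l'argument de résolution. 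Vos points laissés implicites --- $V_{\mathrm{reg}}$ est semi-algébrique non vide de dimension $d$ (le lieu singulier étant un fermé algébrique propre de dimension strictement plus petite), et $\Z(f)$, $\Z(g)$ sont semi-algébriques d'après la Proposition~\ref{regulsa} --- sont tous corrects.
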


\begin{proof}
Si $f\in \I_{\SRC}(V_{\mathrm{reg}})$ alors $f\in\I_{\SRC} (W)$ car $\Z(f)$ est un
ensemble symétrique par arcs. On a donc $\I_{\SRC} (W)=\I_{\SRC}(V_{\mathrm{reg}})$.

Soient $f$ et $g$ deux fonctions régulues telles que le produit $f.g$
s'annule identiquement sur $V_{\mathrm{reg}}$. Les fonctions $f$ et $g$ restent r\'egulues en restriction \`a $V$ d'apr\`es la Remarque \ref{rem.here}, il existe donc une composition d'\'eclatements \`a centres lisses $\phi\colon\tilde V\rightarrow V$ 
telle que $f\circ\phi$ et $g\circ\phi$ soient régulières sur $\tilde V$, avec $\tilde V$ une vari\'et\'e r\'eelle alg\'ebrique irr\'eductible non singuli\`ere, d'apr\`es le Th\'eor\`eme \ref{threguliereaeclt}. Le
produit de fonctions régulières $(f\circ\phi).(g\circ\phi)$ 
s'annule identiquement sur la variété irréductible $\tilde V$. 
On peut donc conclure que, par exemple, $f\circ \phi$ s'annule identiquement sur 
$\tilde V$ et donc $f$ s'annule identiquement sur $V_{\mathrm{reg}}$
i.e. $f\in \I_{\SRC}(V_{\mathrm{reg}})$, ce qui termine la preuve.
\end{proof}

\begin{prop}
\label{irred}
Soit $V$ une sous-variété alg\'ebrique irréductible de $\RR^n$. On note
$W=\overline{V_{\mathrm{reg}}}^{\AR}$ 
la réunion des
composantes $\AR$-irréductibles de dimension maximale de $V$.
Si $V=W$ (en particulier si $V$ est lisse), alors $V$ est un ensemble
régulument fermé et régulument irréductible.
\end{prop}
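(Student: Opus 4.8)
The plan is to realize $V$ as the zero locus $\Z(I)$ of a \emph{prime} ideal $I$ of regulous functions, after which both assertions follow from the correspondence between prime ideals and irreducible regulously closed sets. I would take $I=\I_{\SRC}(V_{\mathrm{reg}})$; by Proposition~\ref{hypersurface2} this equals $\I_{\SRC}(W)$ and is a prime ideal of $\SRC(\R^n)$. Its polynomial trace $J=I\cap\R[x_1,\ldots,x_n]$ consists of the polynomials vanishing on $V_{\mathrm{reg}}$, and since the smooth locus of an irreducible variety is Zariski-dense, $J=\I(V)$ and hence $\Z(J)=V$. Thus $I$ satisfies the hypotheses of Corollaire~\ref{corhypersurfacegen}, with the ``$V$'' and ``$V_{\mathrm{reg}}$'' appearing there being exactly ours, and that corollary gives $\Z(I)=\overline{V_{\mathrm{reg}}}^{C}$.

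Next I would pin down $\overline{V_{\mathrm{reg}}}^{C}$ by squeezing it between the three relevant topologies. The arc-symmetric topology $\AR$ is finer than the regulous topology — equivalently, the algebraically constructible topology, by Th\'eor\`eme~\ref{const} — which is in turn finer than the Zariski topology, so passing to closures reverses these comparisons. Since $V$ is Zariski-closed and contains $V_{\mathrm{reg}}$, it is regulously closed and therefore contains $\overline{V_{\mathrm{reg}}}^{C}$, whereas $W=\overline{V_{\mathrm{reg}}}^{\AR}\subseteq\overline{V_{\mathrm{reg}}}^{C}$. This produces the chain
$$
W=\overline{V_{\mathrm{reg}}}^{\AR}\subseteq\overline{V_{\mathrm{reg}}}^{C}=\Z(I)\subseteq V,
$$
which the hypothesis $V=W$ collapses to a string of equalities; hence $\Z(I)=V$.

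Finally I would read off the conclusion. The identity $V=\Z(I)$ exhibits $V$ as the common zero set of a family of regulous functions, so $V$ is regulously closed; and since $I$ is prime with $\Z(I)=\iota^{-1}(\V(I))$, Th\'eor\`eme~\ref{thbijferirrprem} identifies $V$ with the irreducible regulously closed set attached to the prime $I$, so $V$ is regulously irreducible. The smooth case is automatic: a smooth $V$ satisfies $V=V_{\mathrm{reg}}$, which is Zariski-closed hence arc-symmetric, forcing $W=V$. The whole argument is essentially a bookkeeping assembly of the earlier results; the points that require care are verifying that the polynomial trace $J$ recovers $\I(V)$ — so that the variety produced by Corollaire~\ref{corhypersurfacegen} is genuinely our $V$ — and orienting the finer/coarser relations among the $\AR$, constructible, and Zariski topologies correctly so that the closures of $V_{\mathrm{reg}}$ trap $V$ from both sides.
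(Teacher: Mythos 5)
Your proof is correct, and it shares with the paper the one non-trivial ingredient: the primality of $\I_{\SRC}(V_{\mathrm{reg}})=\I_{\SRC}(W)$ given by la Proposition~\ref{hypersurface2}. Where you diverge is in how the two conclusions are assembled. The paper gets ``régulument fermé'' in one line ($V$ is Zariski-closed, and regular functions are regulous), whereas you re-derive $V=\Z(I)$ by running $I=\I_{\SRC}(V_{\mathrm{reg}})$ through le Corollaire~\ref{corhypersurfacegen} and then squeezing $W=\overline{V_{\mathrm{reg}}}^{\AR}\subseteq\overline{V_{\mathrm{reg}}}^{C}\subseteq V$ between the $\AR$, constructible and Zariski closures; this is more work but it is sound (your verification that $J=I\cap\R[x_1,\ldots,x_n]$ equals $\I(V)$, via the Zariski density of $V_{\mathrm{reg}}$ in the irreducible variety $V$, is exactly the point that needed checking). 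For irreducibility, the paper argues directly: if $V=V_1\cup V_2$ with $V_i=\Z(f_i)$ (such $f_i$ exist by le Théorème~\ref{thmradprinc}), then $f_1f_2$ lies in the prime ideal $\I_{\SRC}(V)$, forcing $V\subseteq V_1$ or $V\subseteq V_2$. You instead invoke le Théorème~\ref{thbijferirrprem}, which packages the same Nullstellensatz-based correspondence between primes and irreducible regulous closed sets; since that theorem is already established, your citation is legitimate and arguably cleaner conceptually, at the cost of invoking heavier machinery for what the paper does in three lines.
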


\begin{proof}
Comme les fonctions régulières sont régulues, $V$ est un fermé régulu.
De plus $V=W$ donc l'idéal $\I_{\SRC} (W)=\I_{\SRC}(V)$ est un idéal premier par 
la Proposition \ref{hypersurface2}. 

Supposons que $V$ soit la réunion $V_1\cup
V_2$ de deux
fermés régulus non-vides. D'apr\`es le Théorème \ref{thmradprinc}, il
existe des fonctions régulues $f_1,f_2$ telles que $V_1=\Z(f_1)$ et
$V_2=\Z (f_2)$. Alors le produit $f_1.f_2$ appartient \`a l'id\'eal premier $\I_{\SRC}(V)$ et on en
déduit que $V\subseteq V_1$ ou $V\subseteq V_2$.
\end{proof}

 Soit $V\subset \RR^n$ une sous-variété alg\'ebrique irréductible. On note
$W=\overline{V_{\mathrm{reg}}}^{\AR}$ 
la réunion des
composantes $\AR$-irréductibles de dimension maximale de $V$. La même
preuve que celle de la proposition précédente montre que $W$ est un ensemble
régulument irréductible. Mais ce n'est pas en g\'en\'eral un ensemble
régulument fermé.

\begin{ex}\label{c-ex} 
On considère la sous-variété alg\'ebrique irréductible $V$ de
$\RR^4=\RR^2\times\RR^2$ 
de dimension~$2$ donnée par les équations 
$$
\begin{cases}
(x+2)(x+1)(x-1)(x-2)+y^2=0\;;\\
u^2=xv^2\;.
\end{cases}
$$
Cette vari\'et\'e est un ensemble r\'egulument irr\'eductible.

\begin{figure}[ht]
\centering
\includegraphics[height =3cm]{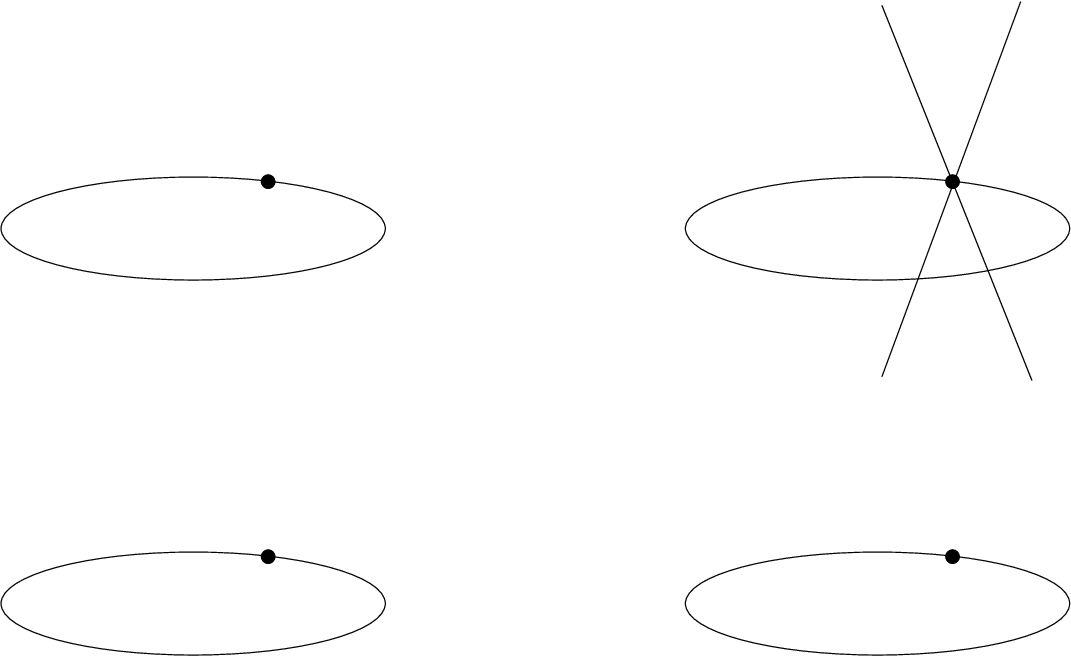}
\caption{Un ferm\'e r\'egulument irr\'eductible.}
        \label{fig.contre-exemple}
\end{figure}

Sur la figure, on a représenté la fibre de $(x,y,u,v)\mapsto (x,y)$ au
dessus de chaque point des deux composantes connexes de  la courbe
hyperelliptique d'équation $(x+2)(x+1)(x-1)(x-2)+y^2=0$. Au dessus de
l'un des ovales la fibre est un point, alors qu'au dessus de l'autre
ovale, la fibre est form\'ee de deux droites s\'ecantes.

La variété $V$ possède deux composantes connexes $W$ et $Z$, avec $W$ l'unique composante symétrique par arcs irréductible de dimension $2$ de
$V$. Le lieu singulier de $V$ est la courbe hyperelliptique $C$
d'équations $(x+2)(x+1)(x-1)(x-2)+y^2=0$ et $u=v=0$, possédant deux
composantes connexes $Z$ et $Y$ avec $Y\subset W$. On suppose que
$W$ est un fermé régulu. Il existe donc $f\in\SRC (\R^4)$
telle que $\Z (f)=W$ et par conséquent $f$ s'annule identiquement sur
$Y$ mais pas sur $Z$. Une telle fonction ne peut exister par la
Proposition \ref{annulregulue} en considérant sa restriction à $C=Z\cup Y$.
\end{ex}

Les r\'esultats pr\'ec\'edents permettent de d\'eterminer la dimension de Krull de
l'anneau $\SR^k(\R^n)$ pour $k\in\N$. 

\begin{prop}
\label{dimensionregulue} Soit $k\in\N$. L'espace topologique $\R^n$ est de dimension
$n$ pour la topologie régulue. L'anneau $\SR^k(\R^n)$ est par conséquent
de dimension de Krull $n$.
\end{prop}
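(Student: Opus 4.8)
Le plan est de calculer d'abord la dimension topologique de $\R^n$ muni de la topologie régulue, puis d'en déduire la dimension de Krull de l'anneau. D'après le Corollaire~\ref{cor-const}, la topologie $k$-régulue ne dépend pas de $k\in\N$, et l'on peut donc travailler avec $k=0$ et utiliser les résultats de cette section. Pour passer de la dimension topologique à la dimension de Krull, j'utiliserais le Théorème~\ref{thbijferirrprem}, qui fournit une bijection \emph{renversant les inclusions} entre $\Spec\SR^k(\R^n)$ et les fermés régulument irréductibles de~$\R^n$ (à un idéal premier~$p$ correspond le fermé~$\Z(p)=\iota^{-1}(\V(p))$, et $p\subseteq q$ entraîne $\Z(q)\subseteq\Z(p)$). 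Une chaîne strictement croissante d'idéaux premiers de longueur~$m$ correspond ainsi à une chaîne strictement décroissante de fermés régulument irréductibles de même longueur, et réciproquement~; les deux dimensions coïncident donc.

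Pour la minoration, je considérerais un drapeau de sous-espaces linéaires
$$
\{O\}=L_0\subsetneq L_1\subsetneq\cdots\subsetneq L_n=\R^n,
$$
où $\dim L_i=i$. Chaque $L_i$ est une sous-variété algébrique lisse et irréductible, donc un fermé régulument irréductible d'après la Proposition~\ref{irred}. Cette chaîne de longueur~$n$ montre que la dimension topologique est au moins~$n$.

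Pour la majoration, le point clé est que toute inclusion stricte $F\subsetneq G$ de fermés régulument irréductibles fait strictement croître la dimension. Posons $V_F=\overline{F}^{\mathrm{Zar}}$ et $V_G=\overline{G}^{\mathrm{Zar}}$~; comme $F$ et $G$ sont régulument irréductibles, ce sont des variétés algébriques irréductibles avec $V_F\subseteq V_G$, et la Proposition~\ref{dimension} donne $\dim F=\dim V_F$ et $\dim G=\dim V_G$. Si $V_F\subsetneq V_G$, l'irréductibilité force $\dim V_F<\dim V_G$, d'où $\dim F<\dim G$. Si en revanche $V_F=V_G$, le Corollaire~\ref{corhypersurfacegen} donne $F=\overline{(V_F)_{\mathrm{reg}}}^{C}=\overline{(V_G)_{\mathrm{reg}}}^{C}=G$, ce qui contredit l'inclusion stricte. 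Ainsi $\dim F<\dim G$ dans tous les cas. Le long d'une chaîne stricte $F_0\subsetneq\cdots\subsetneq F_m$, les dimensions sont des entiers strictement croissants, donc $m\leq\dim F_m\leq n$ et la dimension topologique est au plus~$n$. Combinée à la minoration, elle vaut exactement~$n$.

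Le principal obstacle est le traitement du cas $V_F=V_G$ dans la majoration : c'est là qu'intervient de manière cruciale l'unicité de la composante algébriquement constructible irréductible de dimension maximale d'une variété irréductible, fournie par le Corollaire~\ref{corhypersurfacegen} (lui-même appuyé sur le Théorème~\ref{const} et la Proposition~\ref{dimension}). C'est précisément cette unicité qui fait défaut en géométrie régulière ($k=\infty$), où un même fermé de Zariski irréductible peut porter des idéaux premiers distincts, et qui explique pourquoi l'énoncé sur la dimension de Krull est spécifique au cas $k$ fini.
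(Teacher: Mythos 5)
Votre démonstration est correcte et suit le même schéma global que celle du texte : réduction de la dimension de Krull à la dimension topologique via le Nullstellensatz et la bijection du Théorème~\ref{thbijferirrprem}, minoration par une chaîne de fermés de Zariski lisses irréductibles rendus régulument irréductibles par la Proposition~\ref{irred}, et majoration en comparant une chaîne de fermés régulument irréductibles à celle de leurs adhérences de Zariski au moyen de la Proposition~\ref{dimension}. La seule différence substantielle est le traitement du cas où deux termes consécutifs ont la même adhérence de Zariski. Le texte montre que $S=W_{i+1}\setminus W_i$ est régulument dense dans $W_{i+1}$, donc d'adhérence de Zariski égale à $V_{i+1}$, tout en étant contenu dans le lieu singulier de $V_{i+1}$ puisque $(V_{i+1})_{\mathrm{reg}}\subseteq W_i$ par la Proposition~\ref{hypersurfacegen} ; la contradiction vient alors d'un défaut de dimension. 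Vous invoquez à la place le Corollaire~\ref{corhypersurfacegen}, qui donne $F=\overline{(V_F)_{\mathrm{reg}}}^{C}$ et rend donc l'application $F\mapsto\overline{F}^{\mathrm{Zar}}$ injective sur les fermés régulument irréductibles, ce qui règle ce cas immédiatement et dégage bien le point conceptuel (l'unicité de la composante algébriquement constructible irréductible de dimension maximale, qui fait précisément défaut pour $k=\infty$). Votre variante est un peu plus directe ; celle du texte n'utilise que la Proposition~\ref{hypersurfacegen}, dont le corollaire que vous citez est d'ailleurs déduit, de sorte que les deux arguments reposent en définitive sur le même socle. Veillez seulement à justifier d'un mot que $V_F\subsetneq V_G$ entre fermés de Zariski irréductibles entraîne $\dim V_F<\dim V_G$ : cela résulte de ce que $J_F$ et $J_G$ sont des idéaux premiers réels (Lemme~\ref{reelintersection}), pour lesquels la dimension du lieu des zéros coïncide avec la dimension de Krull du quotient.
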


\begin{proof}
Soit $$W_0\subsetneq W_1\subsetneq\ldots\subsetneq W_m\subseteq \R^n$$
une suite d'inclusions strictes de fermés régulus irréductibles avec
$m>n$. Pour $i=0,\ldots, m$, on note $I_i=\I_{\SRC}(W_i)$,
$J_i=I_i\cap \R [x_1,\ldots,x_n]$ et
$V_i=\overline{W_i}^{Zar}$. On a $\dim\, W_i=\dim\, V_i$ et de plus 
$V_i$ est
irréductible pour la topologie de Zariski car $V_i=\Z (J_i)$ d'après
la Proposition \ref{dimension} (l'idéal $J_i$ est bien un
idéal premier). On a aussi
$W_i=\overline{(V_i)_{reg}}^{C}$ d'après le Théorème \ref{hypersurfacegen}.
On obtient par conséquent une suite d'inclusions strictes de fermés de
Zariski irréductibles 
$$V_0\subsetneq V_1\subsetneq\ldots\subsetneq V_m\subsetneq \R^n$$
avec $m>n$, ce qui est impossible.
La dimension régulue de $\R^n$ est donc
inférieure à $n$.
Il est clairement possible de construire une suite d'inclusions strictes
$$V_0\subsetneq V_1\subsetneq\ldots\subsetneq V_n= \R^n$$ de fermés de
Zariski irréductibles lisses. D'après la Proposition \ref{irred}, les
$V_i$ sont des fermés régulus irréductibles ce qui prouve
que la dimension régulue de $\R^n$ est bien $n$. 
Soit $k\in\NN$. On rappelle que les topologies $k$-régulue et
$0$-régulues coincident.
En utilisant le Nullstellensatz $k$-régulu, on en déduit que la dimension
de Krull de $\SR^k (\R^n)$ est $n$.
\end{proof}

\begin{exem}\label{exem.umbrellas}
On considère trois exemples  (les deux premiers sont classiques).

\begin{itemize}

 \item[$\bullet$] Le parapluie de Whitney~:\par
Considérons la sous-variété $V$ d'équation $zx^2=y^2$ dans
$\R^3$. Le parapluie $V$ est un ensemble régulument fermé
et vérifie la relation
$V=\overline{V_{\mathrm{reg}}}^{\AR}$. Par
la Proposition \ref{irred}, c'est un ensemble régulument fermé et irréductible.
\begin{figure}[ht]
\centering
\includegraphics[height =6cm]{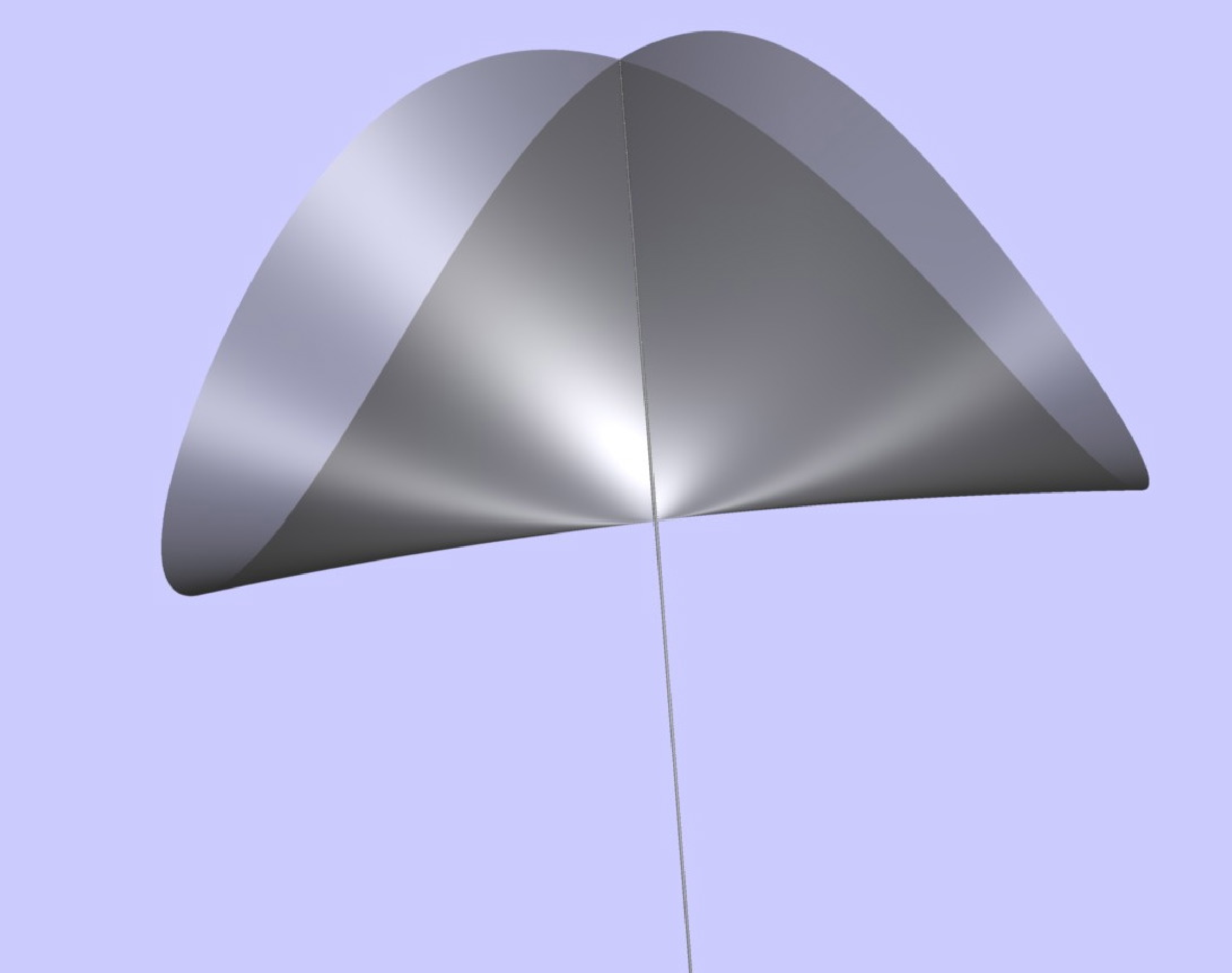}
\caption{Parapluie de Whitney.}
        \label{fig.whitney}
\end{figure}

\item[$\bullet$] Le parapluie de Cartan~:\par
Considérons la sous-variété $V$ d'équation $z(x^2+y^2)=x^3$ de $\R^3$. La
décomposition de $V$ en ensembles $\AR$-irréductibles est $W\cup Z$ où
$W=\overline{V_{\mathrm{reg}}}^{\AR}$ est la toile du parapluie et $Z$ est le
manche. L'ensemble $W$ est régulument fermé car $W=\Z (f)$ avec
$f=z-\frac{x^3}{x^2+y^2}$.
Par la Proposition \ref{hypersurface2}, $W$
est régulument irréductible. L'ensemble $Z$ est un ensemble
régulument fermé car $Z=\Z(x^2+y^2)$. L'ensemble $V$ est donc un
ensemble régulument fermé et non-régulument irréductible.
\begin{figure}[ht]
\centering
\includegraphics[height =6cm]{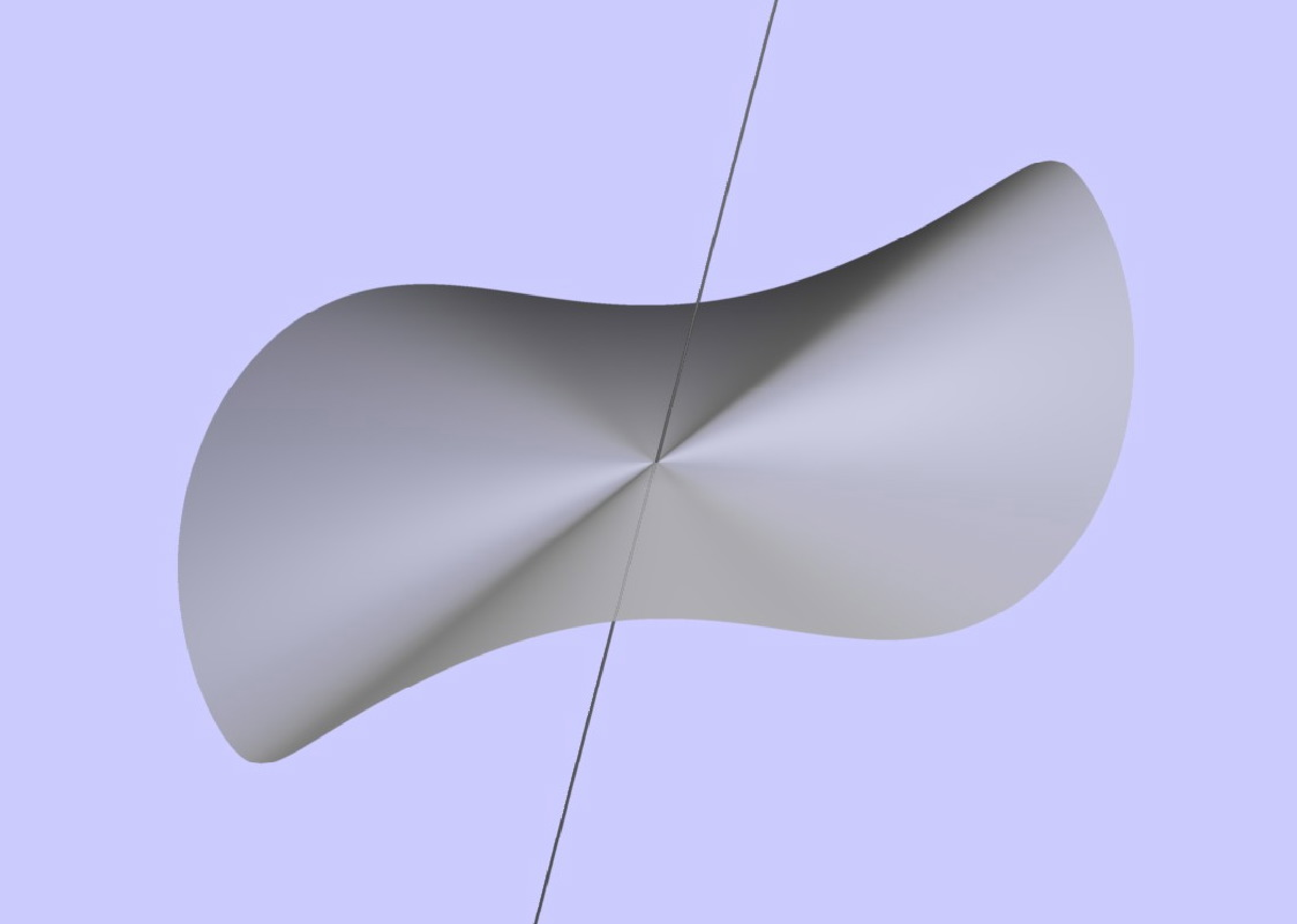}
\caption{Parapluie de Cartan.}
        \label{fig.cartan}
\end{figure}

\item[$\bullet$] Un parapluie cornu~:\par
Considérons la sous-variété $V$ d'équation 
$s(x,y,z)=x^2+y^2((y-z^2)^2+yz^3)=x^2+y^4+y^2z^4+y^3z^3-2y^3z^2=0$ 
de $\R^3$. La
décomposition de $V$ en ensembles $\AR$-irréductibles est $W\cup Z$ où
$W=\overline{V_{\mathrm{reg}}}^{\AR}$ est la corne du parapluie et $Z$ est le
manche. L'ensemble $W$ est régulument fermé car $W=\Z (f)$ avec
$f=z^2\frac{s(x,y,z)}{x^2+y^4+y^2z^4}$. 
Par la Proposition \ref{hypersurface2}, $W$
est régulument irréductible. Le manche $Z$ est un ensemble
régulument fermé car $Z=\Z(x^2+y^2)$. Comme pour le parapluie de
Cartan, l'ensemble $V$ est donc un
ensemble régulument fermé et non-régulument irréductible.
\begin{figure}[ht]
\centering
\includegraphics[height =6cm]{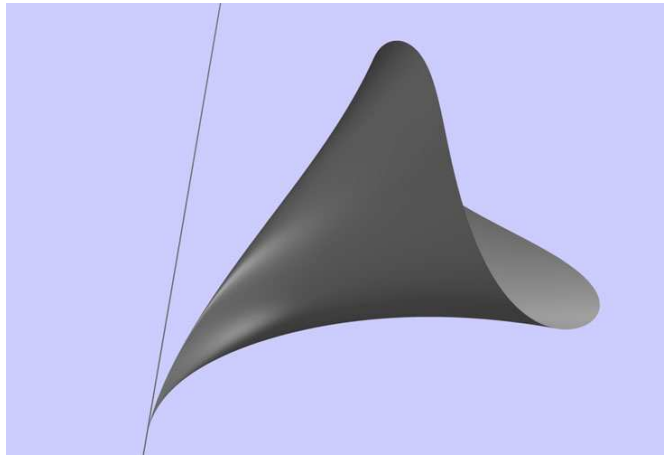}
\caption{Un parapluie cornu.}
        \label{fig.horned}
\end{figure}

\end{itemize}
\end{exem}

On énonce maintenant un théorème qui résume ce qui précède et qui via le Nullstellensatz, est équivalent au Théorème~\ref{hypersurfacegen}.
\begin{thm}
\label{thmcaracterisation}
Les fermés irréductibles régulus de $\R^n$ sont les ensembles du type
$$\overline{V_{\mathrm{reg}}}^{C}$$
où $V_{\mathrm{reg}}$ désigne l'ensemble des points lisses d'une
variété réelle algébrique irréductible $V\subset \R^n$.
\end{thm}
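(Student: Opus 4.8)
Le plan est de démontrer les deux inclusions en passant par la correspondance entre fermés régulus irréductibles de $\R^n$ et idéaux premiers de $\SRC(\R^n)$ établie au Théorème~\ref{thbijferirrprem}. L'essentiel du travail ayant déjà été accompli dans les propositions précédentes, il restera surtout à assembler ces résultats de façon cohérente. On pourra se contenter de $k=0$ puisque les topologies $k$-régulues coïncident pour tout $k\in\N$ (Corollaire~\ref{cor-const}).

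Pour l'inclusion directe, je partirais d'un fermé régulu irréductible $F$ de $\R^n$. D'après le Théorème~\ref{thbijferirrprem}, il s'écrit $F=\Z(I)$ pour un unique idéal premier $I$ de $\SRC(\R^n)$. Je poserais alors $J=I\cap\RR[x_1,\ldots,x_n]$: cet idéal est premier, comme contraction d'un idéal premier, et réel d'après le Lemme~\ref{reelintersection}, de sorte que $V:=\Z(J)$ est une sous-variété réelle algébrique irréductible de $\R^n$ vérifiant $\I(V)=J$. Le Corollaire~\ref{corhypersurfacegen} appliqué à $I$ donnerait directement $F=\Z(I)=\overline{V_{\mathrm{reg}}}^{C}$, c'est-à-dire la forme annoncée.

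Pour la réciproque, je partirais d'une sous-variété réelle algébrique irréductible $V\subset\R^n$ et considérerais $\overline{V_{\mathrm{reg}}}^{C}$. La Proposition~\ref{hypersurface2} assure que $I:=\I_{\SRC}(V_{\mathrm{reg}})$ est un idéal premier de $\SRC(\R^n)$; le Théorème~\ref{thbijferirrprem} garantit alors que $\Z(I)$ est un fermé régulu irréductible. Il resterait à identifier $\Z(I)$ avec $\overline{V_{\mathrm{reg}}}^{C}$: comme $V_{\mathrm{reg}}$ est Zariski-dense dans $V$ irréductible, la trace $J=I\cap\RR[x_1,\ldots,x_n]$ se réduit aux polynômes s'annulant sur $V_{\mathrm{reg}}$, donc à $\I(V)$, d'où $\Z(J)=V$; le Corollaire~\ref{corhypersurfacegen} fournit à nouveau $\Z(I)=\overline{V_{\mathrm{reg}}}^{C}$, ce qui conclut.

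La difficulté principale ne réside pas dans cet assemblage final, essentiellement formel, mais dans les énoncés préparatoires déjà acquis — en particulier le fait, contenu dans le Corollaire~\ref{corhypersurfacegen} (lui-même appuyé sur les Propositions~\ref{hypersurfacegen} et \ref{dimension} et sur le Théorème~\ref{const}), que le lieu des zéros d'un idéal premier régulu est exactement l'adhérence constructible des points lisses de sa trace Zariski. Le seul point délicat à surveiller ici est la cohérence entre les diverses adhérences en jeu (le lieu $V_{\mathrm{reg}}$, sa clôture symétrique par arcs $W=\overline{V_{\mathrm{reg}}}^{\AR}$, et la clôture constructible $\overline{\,\cdot\,}^{C}$), ainsi que la vérification que la trace polynomiale d'un idéal premier régulu redonne bien l'idéal de la variété Zariski sous-jacente.
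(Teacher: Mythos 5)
Votre démonstration est correcte et suit exactement la voie prévue par l'article, qui énonce ce théorème comme un simple résumé des résultats précédents sans preuve séparée~: la correspondance du Théorème~\ref{thbijferirrprem} combinée au Corollaire~\ref{corhypersurfacegen} pour l'inclusion directe, et la Proposition~\ref{hypersurface2} avec l'identification $\Z(J)=V$ pour la réciproque. L'assemblage que vous proposez est précisément celui que les auteurs sous-entendent.
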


Le r\'esultat suivant permet de construire 
de fa\c con algorithmique le plus petit ferm\'e alg\'ebriquement
constructible contenant les points r\'eguliers d'une vari\'et\'e
r\'eelle alg\'ebrique affine et donc de caractériser les fermés irréductibles
régulus de $\R^n$.

\begin{prop}\label{sym-necessaire}
Soit $V\subset \RR^n$ une sous-vari\'et\'e r\'eelle alg\'ebrique irr\'eductible. Soit $W$ la r\'eunion des composantes sym\'etriques par arcs de dimension maximale de $V$. Soit $Z$ une composante sym\'etrique par arc irr\'eductible de dimension strictement plus petite que $V$. Si
$$\dim \overline Z^{Zar} \cap W =\dim Z,$$
alors $Z$ est dans $\overline W^{C}$.
\end{prop}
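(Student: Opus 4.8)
The plan is to prove the statement directly: I will show that every regulous function vanishing on $W$ vanishes also on $Z$, and then invoke the Nullstellensatz to conclude $Z\subseteq\overline{W}^{C}$. First I would record that
$$
\overline{W}^{C}=\Z(\I_{\SRC}(W)).
$$
Indeed, $\Z(\I_{\SRC}(W))$ is régulument fermé and contains $W$; and if $F$ is any fermé régulu with $W\subseteq F$, then $\I_{\SRC}(F)\subseteq\I_{\SRC}(W)$, whence by Corollaire~\ref{Nullstellensatz} we get $\Z(\I_{\SRC}(W))\subseteq\Z(\I_{\SRC}(F))=F$. Thus $\Z(\I_{\SRC}(W))$ is the smallest fermé régulu contenant $W$, i.e. it equals $\overline{W}^{C}$. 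It therefore suffices to check that $f_{|Z}=0$ for every $f\in\I_{\SRC}(W)$.

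To do so I would pass to $V'=\overline{Z}^{Zar}$. Since $Z$ is $\AR$-irréductible it is in particular Zariski-irréductible, so $V'$ is an irreducible real algebraic variety; moreover $Z$ is semi-algébrique, hence $\dim V'=\dim Z=:d'$. Consider the semi-algébrique set $S=V'\cap W$. The hypothesis $\dim(\overline{Z}^{Zar}\cap W)=\dim Z$ says precisely that $S$ has dimension $d'=\dim V'$, i.e. $S$ is a semi-algébrique subset of $V'$ of maximal dimension. Now fix $f\in\I_{\SRC}(W)$; as $f$ vanishes on $W\supseteq S$, it vanishes identically on $S$.

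The key step is then to apply Proposition~\ref{annulregulue} to the irreducible variety $V'$, the semi-algébrique set $S\subseteq V'$ of dimension $\dim V'$, and the function $f$: it gives that $f$ vanishes identically on the réunion $W'$ of the composantes $\AR$-irréductibles of maximal dimension of $V'$. Finally I would observe that $Z\subseteq W'$: the set $Z$ is $\AR$-irréductible, Zariski-dense in $V'$, and of dimension $d'=\dim V'$, so the (unique) composante $\AR$-irréductible of $V'$ contenant $Z$ has dimension $d'$ and is therefore one of the maximal-dimensional components composing $W'$. Hence $f_{|Z}=0$, and since $f\in\I_{\SRC}(W)$ was arbitrary, $Z\subseteq\Z(\I_{\SRC}(W))=\overline{W}^{C}$.

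The argument is a reduction to Proposition~\ref{annulregulue}, so the steps requiring genuine care are the two dimension-bookkeeping checks: that the standing hypothesis makes $S=V'\cap W$ of full dimension in $V'$ (which is exactly what licenses \ref{annulregulue}), and that $Z$, being $\AR$-irréductible and Zariski-dense in $V'$, sits inside the réunion $W'$ of the maximal-dimensional $\AR$-composantes rather than being absorbed into a lower-dimensional part of $V'$. I expect this latter point—pinning down that $Z$ is itself a maximal-dimensional $\AR$-composante of its own adhérence de Zariski—to be the only subtle verification; everything else is formal.
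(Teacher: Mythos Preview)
Your proof is correct and follows exactly the same route as the paper: pass to $V'=\overline{Z}^{\mathrm{Zar}}$, apply Proposition~\ref{annulregulue} to $V'$, $S=V'\cap W$ and an arbitrary $f\in\I_{\SRC}(W)$, and conclude $f$ vanishes on $Z$. You are simply more explicit than the paper about two points it leaves tacit, namely the identification $\overline{W}^{C}=\Z(\I_{\SRC}(W))$ and the verification that $Z$ lies in the maximal-dimensional $\AR$-part $W'$ of $V'$.
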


\begin{rem} La r\'eciproque de la Proposition \ref{sym-necessaire} est fausse, voir l'Exemple \ref{ex-algo}.
\end{rem}

\begin{proof}[D\'emonstration de la Proposition \ref{sym-necessaire}]
Notons que l'adh\'erence de Zariski de $Z$ forme une 
vari\'et\'e r\'eelle alg\'ebrique irr\'eductible de m\^eme dimension que $Z$.
Soit $f$ une fonction r\'egulue d\'efinie sur $\RR^n$ 
et s'annulant sur $W$. Par hypoth\`ese, la fonction $f$ 
s'annule sur un ensemble semi-alg\'ebrique $S=\overline Z^{Zar} \cap
W$ de m\^eme dimension que $\overline Z^{Zar}$, donc sur $Z$ aussi
d'apr\`es la Proposition \ref{annulregulue}. 
Par cons\'equent $Z$ appartient \`a l'adh\'erence 
pour la topologie alg\'ebriquement constructible de $W$.
\end{proof}

On d\'ecrit maintenant de mani\`ere algorithmique l'unique composante alg\'ebriquement constructible irr\'eductible de dimension maximale d'une vari\'et\'e r\'eelle alg\'ebrique affine en termes d'ensembles sym\'etriques par arcs irr\'eductibles. Soit $V\subset \RR^n$ une sous-vari\'et\'e r\'eelle alg\'ebrique affine irr\'eductible de dimension $d$. On note
$$V=\cup_{i\in I_d}W_i \cup_{i=0}^{d-1} \cup_{j\in I_i} Z_j^i$$
la d\'ecomposition de $V$ en sous-ensembles sym\'etriques par arcs
irréductibles, avec $\dim Z_j^i=i$ pour $j\in I_i$.
On note $W$ la r\'eunion des composantes sym\'etriques par arcs de dimension maximale de $V$.

 Soit $j\in I_{d-1}$.
\begin{enumerate}
\item Si $Z_j^{d-1}$ satisfait la condition de la Proposition \ref{sym-necessaire}, alors $Z_j^{d-1} \subset \overline W^{C}$. 
\item Sinon, $Z_j^{d-1}$ n'est pas tout entier contenu dans $\overline
  W^{C}$. N\'eanmoins, un sous-ensemble sym\'etrique par arcs de
  $Z_j^{d-1}$ peut \^etre contenu dans $\overline
  W^{C}$. L'intersection $Z_j^{d-1} \cap W$ est un ensemble
  sym\'etrique par arcs de dimension au plus $d-2$, et on rajoute les
  composantes sym\'etriques par arcs irr\'eductibles de son
  adh\'erence de Zariski $\overline {Z_j^{d-1} \cap W}^{Zar}$ \`a
  celles de dimension au plus $d-2$ de $V$. 
Proc\'edant de m\^eme pour tout $j\in I_{d-1}$, on obtient ainsi une 
r\'eunion d'ensembles sym\'etriques par arcs irr\'eductibles de 
dimension au plus $d-2$
$$\cup_{i=0}^{d-2} \cup_{j\in I_i^2} Z_j^i,$$
o\`u $I_i^2$ est un ensemble d'indices fini contenant $I_i$.
\end{enumerate}

Notons $W^2$ la r\'eunion de $W$ et des composantes $Z_j^{d-1}$, 
pour $j \in I_{d-1}$,  satisfaisant la condition de la 
Proposition \ref{sym-necessaire}.
On renouvelle maintenant les op\'erations $(1)$ et $(2)$ avec $W^2$ 
\`a la place de $W$ et en consid\'erant les indices $j$ appartenant
\`a $I_{d-2}^2$. Ainsi, on construit pas \`a pas, en au plus $d-1$
\'etapes, 
l'adh\'erence pour la topologie alg\'ebriquement constructible de $W$.

\begin{ex}\label{ex-algo} On modifie l\'eg\`erement l'exemple
  \ref{c-ex} 
de la fa\c con suivante. On considère la sous-variété alg\'ebrique 
irréductible $V'$ de
$\RR^6$ 
de dimension~$3$ donnée par les équations 
$$
\begin{cases}
(x+2)(x+1)(x-1)(x-2)(x-4)(x-5)+y^2=0\;;\\
w^2+u^2=xv^2\;;\\
t^2=(x-3)w^2\;.
\end{cases}
$$
Notons $C$ la courbe donn\'ee par les \'equations de $V'$ en faisant $t=u=v=w=0$.
La d\'ecomposition en sous-ensembles sym\'etriques par arcs irr\'eductibles de $V'$ est de la forme $V'=Z'_1\cup Z'_2 \cup W'$ avec $Z_1'$ l'ovale de $C$ contenant le point de coordonn\'ees $(-2,0,0,0,0,0)$, $Z_2'$ la surface contenant l'ovale de $C$ contenant le point de coordonn\'ees $(1,0,0,0,0,0)$, et $W'$ la partie de dimension trois contenant le dernier ovale de $C$.

\begin{figure}[ht]
\centering
\includegraphics[height =3cm]{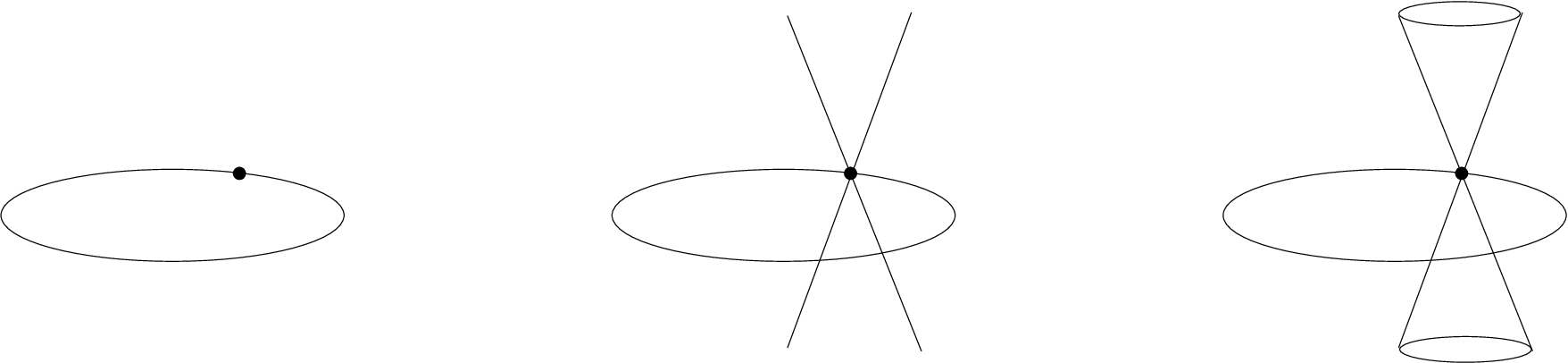}
\caption{Une chaine d'ensembles sym\'etriques par arcs.}
        \label{exem-algo}
\end{figure}

Soit $T\in \RR[x,y,t,u,v,w]$ l'\'equation d'un tore contenant en son int\'erieur l'ovale $C\cap W'$  (c'est-\`a-dire l'ovale de droite sur le dessin). On suppose de plus que $T$ est n\'egatif \`a l'int\'erieur du tore et positif \`a l'ext\'erieur (cf. Example 3.2.8 dans \cite{BCR}). On d\'efinit alors $V$ dans $\RR^7$, en ajoutant une variable $s$, par les \'equations de $V'$ auxquelles on rajoute l'\'equation $s^2=T$.

La vari\'et\'e $V$ ainsi obtenue forme un rev\^etement double de $V'$ \`a l'ext\'erieur du tore, et efface la partie \`a l'int\'erieur du tore. Notons $W$ la partie sym\'etrique par arcs de dimension maximale de $V$. 
Soit $Z_1$ une composante sym\'etrique par arcs de dimension un de $V$ s'envoyant sur l'ovale $Z_1'$ (de gauche sur le dessin) de la courbe $C$ par le rev\^etement. L'adh\'erence de Zariski de $Z_1$, form\'ee de quatre ovales vivant au dessus des ovales $Z_1'$ et $Z_2'\cap C$ de la courbe $C$, ne rencontre pas $W$ par construction de $V$. Ainsi $Z_1$ ne satisfait pas l'hypoth\`ese de la Proposition \ref{sym-necessaire}. Pour autant, $Z_1$ est bien dans l'adh\'erence pour la topologie alg\'ebriquement constructible de $W$. En effet, les deux composantes sym\'etriques par arcs de dimension deux de $V$ satisfont les hypoth\`eses de la Proposition \ref{sym-necessaire} puisque $\overline {Z_2'}^{Zar}=V'\cap \{t=0\}$, et donc font partie de $\overline W^{C}$. Notons $Z_2$ leur r\'eunion. Alors l'adh\'erence de Zariski de $Z_1$ rencontre $Z_2$ le long d'une courbe, donc d'apr\`es la Proposition \ref{sym-necessaire}, $Z_1$ est dans l'adh\'erence pour la topologie alg\'ebriquement constructible de $Z_2$, donc dans celle de $W$.
\end{ex}

\end{document}